%\pdfoutput=1
%\def\draftlabels{true}

\documentclass[reqno]{amsart}

\setlength{\textwidth}{\paperwidth}
\addtolength{\textwidth}{-1.5in}
\addtolength{\topmargin}{-0.5in}
\addtolength{\textheight}{1.0in}
\calclayout

% Load packages.
\usepackage[unicode]{hyperref}
\usepackage[lite]{amsrefs}
\usepackage{amsmath,amssymb}
% This is the narrow empty set symbol.  mathabx redefines \emptyset.
\let\zeroslash\emptyset
\usepackage{xspace}

% Load tikz-cd and its add-ons.
%\usepackage{tikz-cd}
%\usetikzlibrary{calc}

% Our "paragraphs" are implemented as LaTeX subsections.
% Conveniently, subsections have the header format I want for paragraphs.
% Include paragraphs in the PDF outline, but do not number them.
\def\paragraph{\subsection}
\setcounter{tocdepth}{2}
\setcounter{secnumdepth}{2}
% Hack \@sect so that level 2 (subsection, our paragraph) has boldfaced numbers.
\makeatletter
\def\@sect#1#2#3#4#5#6[#7]#8{%
  \edef\@toclevel{\ifnum#2=\@m 0\else\number#2\fi}%
  \ifnum #2>\c@secnumdepth \let\@secnumber\@empty
  \else \@xp\let\@xp\@secnumber\csname the#1\endcsname\fi
  \@tempskipa #5\relax
  \ifnum #2>\c@secnumdepth
    \let\@svsec\@empty
  \else
    \refstepcounter{#1}%
    \edef\@secnumpunct{%
      \ifdim\@tempskipa>\z@ % not a run-in section heading
        \@ifnotempty{#8}{.\@nx\enspace}%
      \else
        \@ifempty{#8}{.}{.\@nx\enspace}%
      \fi
    }%
    %\@ifempty{#8}{%
      \ifnum #2=\tw@ \def\@secnumfont{\bfseries}\fi{}%
    \protected@edef\@svsec{%
      \ifnum#2<\@m
        \@ifundefined{#1name}{}{%
          \ignorespaces\csname #1name\endcsname\space
        }%
      \fi
      \@seccntformat{#1}%
    }%
  \fi
  \ifdim \@tempskipa>\z@ % then this is not a run-in section heading
    \begingroup #6\relax
    \@hangfrom{\hskip #3\relax\@svsec}{\interlinepenalty\@M #8\par}%
    \endgroup
    \ifnum#2>\@m \else \@tocwrite{#1}{#8}\fi
  \else
  \def\@svsechd{#6\hskip #3\@svsec
    \@ifnotempty{#8}{\ignorespaces#8\unskip
       \@addpunct.}%
    \ifnum#2>\@m \else \@tocwrite{#1}{#8}\fi
  }%
  \fi
  \global\@nobreaktrue
  \@xsect{#5}}
\makeatother

\makeatletter
\def\pxspace{\@ifnextchar.{\@}{.\@\xspace}}
\makeatother

\makeatletter
% \citeHerr needs \protect in fragile arguments, particularly inside \paragraph.
\def\citeHerr%
{\@ifnextchar*{\citeHerrs}{\cite{Herr1973a}\allowbreak\cite{Herr1973a-en}}}
\def\citeHerrs*#1#2{\cite{Herr1973a}*{#1}\allowbreak\cite{Herr1973a-en}*{#2}}
\makeatother

\makeatletter
% Directly defines a label at the current location.
% #1 is the label ID.  #2 is its text manifestation.
\newcommand{\manuallabel}[2]{\edef\@currentlabel{#2}\label{#1}}
\makeatother
% Like \item, but specifies what the label ID is.
% #1 is the text item label.  #2 is the label ID.
\newcommand{\itemlabel}[2]{\item[#1]\manuallabel{#2}{#1}}

\newcounter{theoremgroup}
\newenvironment{subnumbering}{%
  \global\let\thetheoremsave=\thetheorem
  \global\let\theHtheoremsave=\theHtheorem
  \setcounter{theoremgroup}{\value{theorem}}
  \setcounter{theorem}{0}
  \gdef\thetheorem{\thesection.\arabic{theoremgroup}.\arabic{theorem}}
  % Assumes that section numbers are in theorem numbers.
  \gdef\theHtheorem{\theHsection.\arabic{theoremgroup}.\arabic{theorem}}
}{%
  \global\let\thetheorem=\thetheoremsave
  \global\let\theHtheorem=\theHtheoremsave
  \setcounter{theorem}{\value{theoremgroup}}
}

\newcommand{\hatzero}{\hat{0}}
\newcommand{\hatone}{\hat{1}}
\newcommand{\mvert}{\,|\,}      % "such that":  \{ x \in S \mvert x > y \}
      % variable sized
% For Bourbaki-style intervals, ] on the left and [ on the right to
% indicate open intervals.
	% "Bourbaki left-facing open"
	% "Bourbaki right-facing open"
	% "Bourbaki left-facing close"
	% "Bourbaki right-facing close"
% To insert commentary in a sequence of equations.
\newcommand{\interject}[1]%
{\noalign{\begin{quote}#1\end{quote}}}
% To make the leading colon in an anonymous function.
\newcommand{\leadingcolon}{\mathopen: \mathpunct{\hbox{}}}

\newtheorem{theorem}{Theorem}[section]

\newtheorem{definition}[theorem]{Definition}

\newtheorem{lemma}[theorem]{Lemma}

\newtheorem{remark}[theorem]{Remark}
% Limit to substantial experimental observations; otherwise use Remark.

% Manually numbered theorem, for use in introduction.

\newenvironment{theoremcustom}[1]{
  
  \theoremcustomx
}{\endtheoremcustomx}

\newcommand{\scrC}{\mathcal{C}}

\newcommand{\mbbN}{\mathbb{N}}

\newcommand{\mbbZ}{\mathbb{Z}}
\newcommand{\relgamma}{\mathrel{\gamma}}
\newcommand{\relxi}{\mathrel{\xi}}

\newcommand{\incomp}{\mathrel{||}}
\newcommand{\supdelta}{\textsuperscript{$\delta$}}
\newcommand{\supprime}{\textsuperscript{$\prime$}}

\newcommand{\lffc}{l.f.f.c\pxspace}
\newcommand{\mcs}{m.c.\ system\xspace}
\newcommand{\mcss}{m.c.\ systems\xspace}

\renewcommand{\P}[2]{{\phi_{#1}^{#2}}} 		% \phi_x^y
						% \P was previously pilcrow
\newcommand{\PI}[2]{{{\phi_{#1}^{#2}}^{-1}}}       % {\phi_x^y}^{-1}
\newcommand{\F}[2]{{F_{#1}^{#2}}}                 % F_x^y
\newcommand{\I}[2]{{I_{#1}^{#2}}}                 % I_x^y
\newcommand{\Z}[2]{{0_{#1}^{#2}}}                 % 0_x^y
\renewcommand{\O}[2]{{1_{#1}^{#2}}}               % 1_x^y
						% \0 was previously emptyset
\newcommand{\PP}[2]{{\Phi_{#1}^{#2}}}             % \Phi_x^y
\newcommand{\PPI}[2]{{\Psi_{#1}^{#2}}}            % \Psi_x^y

\newcommand{\rmif}{\textup{if }}	% for specifying cases

% End the paragraph of environment begins that do not inherently do so,
% like paragraph and proof.
\newcommand{\disconnect}{\leavevmode\par}

% LaTeX processes this like \ref but Software/label-check doesn't flag it.
\newcommand{\forwardref}[1]{\ref{#1}}

\begin{document}

% Must be after \begin{document}.
\ifdefined\draftlabels
  \global\let\labelorig\label
  \global\def\label#1{[label:#1]\ \labelorig{#1}}
\fi

\title[Modular lattices --- gluing and dissection]%
{On the structure of modular lattices --- Unique gluing and dissection}
\author{Dale R. Worley}
\email{worley@alum.mit.edu}
\date{Mar 16, 2025} % format is Mmm dd, yyyy.

\begin{abstract}
This work proves that the process of gluing finite lattices to form
a larger lattice is bijective, that is each lattice is the glued sum
of a unique system of finite lattices, provided
the class of lattices is constrained to modular, locally-finite
lattices with finite covers.
The results of this work are not surprising given the prior
literature, but this seems to be the first proof that the processes of
gluing and dissection can be made inverses, and hence that gluing is
bijective.
\end{abstract}

\maketitle

\tableofcontents
%\listoffigures

% Equivalences from Herr1973a to this paper:
%    (1)  = lem:(1)-for-Lam
%    (2)  = lem:(2)
%    (3)  = MCb lem:(1)-for-Lam
%    (4)  = MCd-for-Lam
%    (6)  = lem:(6) lem:(6)-for-Lam
%    (7)  = lem:(7)
%    (8)  = lem:(8)-for-Lam
%    (9)  = lem:(9) lem:(9)-dual
%    (10) = def:L-leq lem:(10)+
%    (11) = lem:(11)
%    (12) = lem:(12)-polytone lem:(12)-monotone
%    (13) = lem:(13)
%    (13*) = lem:(13*)
%    (14) = lem:(14)-induction lem:(14) lem:(10)+strict
%    (14*) = lem:(14*)
%    (15) = lem:(15)
%    (15*) = lem:(15*)
%    (16) = lem:(16)
%    (16*) = lem:(16*)
%    (17) = MCe MCa
%    (18) = MCb
%    (19) = MCc
%    (20) = MCd(a)
%    (20delta) = MCd(b)
% Th. 2.1

\textit{v1 was published in 2025.
The changes in v2, published in 2025, are:  Statements that
depend the on monotony (axiom \forwardref{MCb2}) are marked with a dagger
($\dagger$).
Some proofs have been strengthened to no longer depend on monotony.
The numbering of statements is preserved between versions.
In some cases, a theorem \textup{xx.yy} in v1 has been split into two
theorems, \textup{xx.yy} and \textup{xx.yy.1}.}

\section{Introduction}

This work proves that the process of gluing lattices to form
a larger lattice can be made bijective, that is for each lattice there
is a unique (up to isomorphism) connected system whose sum is the
given lattice, provided
that the class of connected systems and the class of sum lattices are
suitably constrained.

The foundation work on lattice gluing is Herrmann \cite{Herr1973a}
(with English translation \cite{Herr1973a-en}).
Further development and exposition are in Day and
Herrmann \cite{DayHerr1988a}, Day and Frees \cite{DayFrees1990a},
and Haiman \cite{Haim1991a}*{sec.~1}.
The results of this work are not surprising given the prior
literature, but this seems to be the first proof that the processes of
gluing and dissection can be made inverses, and hence that gluing can be
made bijective.

Note that the theorems of \cite{Herr1973a} are stated only for
lattices of finite length.  We do not limit lattices to finite length.
Many of the proofs of \cite{Herr1973a} do not depend on finite length,
and generally those proofs have been copied or included by reference
here.

The
class of lattices we work with here is chosen for its utility within
the combinatorics of tableaux.
Specifically, we are interested in modular, \lffc (locally-finite and
with finite covers) lattices because they
are used in Fomin's growth diagram
construction.\cite{Fom1994a}\cite{Fom1995a}
The corresponding constraints on connected systems are that
(1) the skeleton lattice is \lffc;
(2) the blocks are finite, modular, and complemented; and
(3) the connected system is monotone.
(The monotone property is that no block is ``contained in'' a
block either above or below it, in a suitable sense.)

This work is a first step in the program
of \citeHerr*{sec.~0}{sec.~0},
``Als entscheidende Anwendung ergibt sich eine M\"oglichkeit, modulare
Verb\"ande endlicher L\"ange durch projektive Geometrien darzustellen ...''
(``A crucial application is representing finite-length modular lattices
using projective geometries ...''),
which is further elaborated in \cite{Wor2024c}.

The main result is:

\begin{theoremcustom}{\forwardref{th:inverses}}
The two mappings (1) gluing a \mcs to create a modular, \lffc lattice and
(2) dissecting a modular, \lffc lattice to create a \mcs are mutually
inverse mappings between
the category of isomorphism classes of \mcss and
the category of isomorphism classes of modular, \lffc lattices.
Thus, the two mappings are both bijective.
The the skeleton of the \mcs has a
minimum element iff the corresponding lattice has a minimum element.
\end{theoremcustom}

\section{Preliminaries}

\paragraph{Conventions for definitions}
In a sections we may define a symbol for an arbitrarily chosen
object of some class.  Consequently, some further definitions in the
section will be
implicitly parameterized by the chosen object because they explicitly depend
on the chosen object or on another object parameterized by the chosen object.
Similarly, when the constructions of such a section are ``imported''
into another section, a value for the chosen object will be fixed, and
not only the
symbol for that object becomes defined but implicitly all of the
definitions parameterized by that object as well.

\paragraph{Finite covers}

\begin{definition}
We define that
a lattice has \emph{finite covers} if every element in the
lattice has a finite number of upper and lower covers.
\end{definition}

\begin{definition} \label{def:lffc}
We define that
a lattice is \emph{locally finite with finite covers} ---
abbreviated \emph{\lffc} --- if it is both locally finite and has
finite covers (both upper and lower).
\end{definition}

\begin{remark}
\textup{
Note that finite covers is ``locally finite width''
property, but does not imply finite width.  For instance,
$\mbbZ \times \mbbZ$ is modular and \lffc, but all its ranks are
infinite.
}
\end{remark}

\begin{definition} \label{def:finitary}
We define that
a lattice is \emph{finitary} if all of its principal (downward) ideals
are finite.
\end{definition}

\begin{lemma}
If a lattice is finitary and has finite upper covers, then it is \lffc.
\end{lemma}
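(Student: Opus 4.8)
The plan is to unpack the definition of \lffc (Definition~\ref{def:lffc}) into its constituent requirements and verify each directly from the two hypotheses. A lattice is \lffc when it is locally finite --- every interval is finite --- and has finite covers, meaning every element has only finitely many upper and lower covers. So I would treat local finiteness, finite lower covers, and finite upper covers as three separate obligations and dispatch each in turn.

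First I would establish local finiteness. For any interval $[a,b] = \{x \mvert a \le x \le b\}$, note that $[a,b]$ is contained in the principal downward ideal $\{y \mvert y \le b\}$, which is finite because the lattice is finitary (Definition~\ref{def:finitary}). Since a subset of a finite set is finite, $[a,b]$ is finite, and local finiteness follows.

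Next I would verify finite covers. The hypothesis supplies finite upper covers directly. For the lower covers, observe that the lower covers of any element $x$ form a subset of the principal downward ideal $\{y \mvert y \le x\}$, which is finite by the finitary hypothesis; hence $x$ has only finitely many lower covers. Together with the assumed finite upper covers, this gives finite covers.

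With both local finiteness and finite covers in hand, the lattice is \lffc. I expect no genuine obstacle here: the argument reduces to the single observation that a principal downward ideal contains both the relevant intervals and the relevant lower-cover sets, so finiteness of the ideals forces both conclusions. The only point worth flagging is the bookkeeping --- the finitary hypothesis alone yields local finiteness and finite lower covers, so the separate assumption of finite upper covers is precisely what is needed to complete the list of \lffc requirements.
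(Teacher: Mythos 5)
Your proof is correct; the paper in fact states this lemma without proof, and your argument (intervals $[a,b]$ and lower-cover sets both sit inside the finite principal ideal below $b$ resp.\ $x$, so the finitary hypothesis yields local finiteness and finite lower covers, with finite upper covers assumed) is exactly the routine verification the paper leaves implicit.
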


\paragraph{Tolerances}

\begin{definition} \cite{Band1981a}*{sec.~1}
We define that
a reflexive and symmetric binary relation $\relxi$ on a lattice $L$ is
a \emph{tolerance} if $\xi$ is compatible with the
meet and join of $L$, that is,
\begin{enumerate}
\item $a \relxi b$ and $c \relxi d$ imply $a \vee c \relxi b \vee d$, and
\item $a \relxi b$ and $c \relxi d$ imply $a \wedge c \relxi b \wedge d$.
\end{enumerate}
\end{definition}

\begin{lemma} \label{lem:tol} \cite{Band1981a}*{Lem.~1.1}
If $\relxi$ is a tolerance on a lattice $L$, then:
\begin{enumerate}
\item \label{lem:tol:i1} $x \relxi z$ and $x \leq y \leq z$ imply
$x \relxi y$ and $y \relxi z$.
\item \label{lem:tol:i2} $x \relxi y$ iff $x \vee y \relxi x \wedge y$,
\item \label{lem:tol:i3} $t \relxi x$, $t \relxi y$, and $t \leq x \wedge y$
imply $t \relxi x \vee y$.
\item \label{lem:tol:i4} $t \relxi x$, $t \relxi y$, and $t \geq x \vee y$
imply $t \relxi x \wedge y$.
\item \label{lem:tol:i5} $x \relxi x \vee y$ and $y \relxi x \vee y$
imply $x \vee y \relxi x \wedge y$, $x \relxi y$, and
$x \wedge y \relxi x, y$, that is, $\relxi$ holds among every pair of
$x$, $y$, $x \vee y$, and $x \wedge y$.
\item \label{lem:tol:i6} $x \relxi x \wedge y$ and $y \relxi x \wedge y$
imply $x \wedge y \relxi x \vee y$, $x \relxi y$, and
$x \vee y \relxi x, y$, that is, $\relxi$ holds among every pair of
$x$, $y$, $x \wedge y$, and $x \vee y$.
\end{enumerate}
\end{lemma}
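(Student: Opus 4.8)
The plan is to derive every assertion from just the two compatibility axioms together with reflexivity, using the standard lattice identities (idempotency $a \vee a = a$, $a \wedge a = a$ and absorption $a \vee (a \wedge b) = a$, $a \wedge (a \vee b) = a$) to collapse joins and meets back to single elements. The governing constraint — and the source of the one genuine subtlety — is that a tolerance need not be transitive, so no deduction may chain two relations through a shared element; every step must instead feed two \emph{already-known} tolerant pairs into one of the compatibility axioms. I would begin with part~\ref{lem:tol:i1}: given $x \relxi z$, pair it with the reflexive $y \relxi y$ and apply meet-compatibility to get $x \wedge y \relxi z \wedge y$, which reads $x \relxi y$ since $x \le y \le z$; the dual join-compatibility step gives $y \relxi z$.

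Next I would dispatch parts~\ref{lem:tol:i3} and~\ref{lem:tol:i4}, which are essentially immediate and which the later parts lean on. From $t \relxi x$ and $t \relxi y$, join-compatibility yields $t \vee t \relxi x \vee y$, i.e.\ $t \relxi x \vee y$, giving~\ref{lem:tol:i3} (the order hypothesis $t \le x \wedge y$ is not actually needed for the conclusion); meet-compatibility dually yields $t \relxi x \wedge y$, giving~\ref{lem:tol:i4}. With these in hand, the forward direction of~\ref{lem:tol:i2} follows by first pairing $x \relxi y$ with the reflexive $x \relxi x$ and $y \relxi y$ under join-compatibility to obtain $x \vee y \relxi x$ and $x \vee y \relxi y$, and then invoking~\ref{lem:tol:i4} with $t = x \vee y$ to conclude $x \vee y \relxi x \wedge y$.

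The converse of~\ref{lem:tol:i2} is the step where the absence of transitivity bites hardest, and this is the part I expect to be the main obstacle. From $x \vee y \relxi x \wedge y$ I would meet against the reflexive $x \relxi x$ and $y \relxi y$, using absorption to extract $x \relxi x \wedge y$ and $x \wedge y \relxi y$. The naive impulse is to chain these through the common element $x \wedge y$, which is illegal; the correct move is instead to feed them into \emph{join}-compatibility, where absorption collapses $x \vee (x \wedge y) = x$ and $(x \wedge y) \vee y = y$, producing exactly $x \relxi y$. Recognizing that a compatibility step with inputs chosen so that absorption erases the shared element is the legitimate substitute for transitivity is the one point that requires care; everything else is bookkeeping.

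Finally, parts~\ref{lem:tol:i5} and~\ref{lem:tol:i6} are mutually dual and assemble from the earlier parts. For~\ref{lem:tol:i5}, from $x \relxi x \vee y$ and $y \relxi x \vee y$ I apply~\ref{lem:tol:i4} with $t = x \vee y$ to get $x \vee y \relxi x \wedge y$; then~\ref{lem:tol:i2} delivers $x \relxi y$, and meeting $x \vee y \relxi x \wedge y$ against the reflexive relations peels off $x \wedge y \relxi x$ and $x \wedge y \relxi y$, so that $\relxi$ holds on all six pairs among $x$, $y$, $x \vee y$, $x \wedge y$. Part~\ref{lem:tol:i6} is identical with meet and join interchanged, using~\ref{lem:tol:i3} in place of~\ref{lem:tol:i4}.
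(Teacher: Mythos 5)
Your proof is correct and follows the same overall architecture as the paper's: part (1) by joining/meeting the hypothesis against a reflexive pair, and parts (5)--(6) assembled from (4), (2), and the peeling-off step (resp.\ their duals). The only difference is that you supply explicit proofs of (2)--(4), which the paper delegates to Bandelt's Lem.~1.1; those arguments are sound --- including your observation that the order hypotheses in (3) and (4) are not needed under the compatibility axioms as stated, and your absorption-based derivation of $x \relxi y$ from $x \vee y \relxi x \wedge y$, which correctly avoids the illegal transitivity step.
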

\begin{proof}
\leavevmode

Regarding (\ref{lem:tol:i1}):
Since $x \relxi z$ and $y \relxi y$, $x \vee y \relxi z \vee y$, which
is equivalent to $y \relxi z$.
Similarly, $x \wedge y \relxi z \wedge y$, which
is equivalent to $x \relxi y$.

Regarding (\ref{lem:tol:i2}), (\ref{lem:tol:i3}), and (\ref{lem:tol:i4}):
see Bandelt \cite{Band1981a}*{Lem.~1.1(1--3)~Proof}.

Regarding (\ref{lem:tol:i5}):
By (\ref{lem:tol:i4}), $x \vee y \relxi x \wedge y$.
By (\ref{lem:tol:i2}), $x \relxi y$.
By (\ref{lem:tol:i1}), $x \wedge y \relxi x, y$.

Regarding (\ref{lem:tol:i6}):
Proved dually to (\ref{lem:tol:i5}).
\end{proof}

\begin{definition} \label{def:leq-gamma}
If $\relxi$ is a tolerance on a lattice $L$, for $x, y \in L$, we define
$x \leq_\xi y$ iff $x \leq y$ and $x \relxi y$.
We define $x \geq_\xi y$ iff $x \geq y$ and $x \relxi y$.
\end{definition}

Note that $\leq_\xi$ is \emph{not} transitive.

\paragraph{Properties of finite modular lattices}

\begin{theorem} \label{th:complemented}
If $M$ is a finite modular lattice, the following are equivalent:
\begin{enumerate}
\item \label{th:complemented:one} $\hatone$ is the join of atoms,
\item \label{th:complemented:zero} $\hatzero$ is the meet of coatoms,
\item \label{th:complemented:comp} $M$ is complemented,
\item \label{th:complemented:rc} $M$ relatively complemented,
\item \label{th:complemented:atom} $M$ is atomistic, and
\item \label{th:complemented:coatom} $M$ is coatomistic.
\end{enumerate}
\end{theorem}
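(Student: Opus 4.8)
The plan is to prove the cycle of implications $(\ref{th:complemented:comp}) \Rightarrow (\ref{th:complemented:rc}) \Rightarrow (\ref{th:complemented:one}) \Rightarrow (\ref{th:complemented:atom}) \Rightarrow (\ref{th:complemented:comp})$ together with a self-dual argument that folds in the remaining items. The key structural fact to exploit throughout is that in a \emph{finite modular} lattice the Jordan--H\"older condition holds: every maximal chain between two comparable elements has the same length, so we have a well-defined rank function $r$ that is both submodular and supermodular, i.e.\ $r(x) + r(y) = r(x \vee y) + r(x \wedge y)$. This equation is what makes complementation propagate locally. The whole theorem is manifestly self-dual (items \ref{th:complemented:one}/\ref{th:complemented:zero}, \ref{th:complemented:atom}/\ref{th:complemented:coatom} are dual pairs, and \ref{th:complemented:comp}/\ref{th:complemented:rc} are self-dual), so I would prove one ``direction'' and invoke duality to get its partner for free, cutting the work in half.

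First I would establish $(\ref{th:complemented:comp}) \Rightarrow (\ref{th:complemented:rc})$, which is the classical fact that a complemented modular lattice is relatively complemented; the proof is a short computation. Given $a \leq x \leq b$ in an interval and a complement $x'$ of $x$ in all of $M$, the element $(x' \vee a) \wedge b$ is a relative complement of $x$ in $[a,b]$, and verifying $x \vee \bigl((x' \vee a) \wedge b\bigr) = b$ and $x \wedge \bigl((x' \vee a) \wedge b\bigr) = a$ uses the modular law twice. Next, $(\ref{th:complemented:rc}) \Rightarrow (\ref{th:complemented:one})$: given relative complementation, I would show that the interval $[\hatzero, \hatone]$ is generated by atoms by induction on the rank of $\hatone$. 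If $r(\hatone) > 0$, pick any atom $p$; in the interval $[p, \hatone]$ relative complementation still holds, so by induction $\hatone$ is a join of atoms of that interval, each of which is covered by a join of $p$ with an atom of $M$, and unwinding this expresses $\hatone$ as a join of atoms of $M$. Then $(\ref{th:complemented:one}) \Leftrightarrow (\ref{th:complemented:atom})$: the statement that $\hatone$ is a join of atoms is equivalent to every element being a join of atoms, because for any $x$ the interval $[\hatzero, x]$ is itself a finite modular lattice inheriting the same property via relative complementation, and an induction on rank shows each element is the join of the atoms below it.

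To close the cycle I would prove $(\ref{th:complemented:atom}) \Rightarrow (\ref{th:complemented:comp})$: if $M$ is atomistic, then given any $x$ I build a complement greedily, repeatedly joining in atoms not below the current element as long as the join with $x$ has not reached $\hatone$, using the rank equation to guarantee that at each step the meet with $x$ stays at $\hatzero$ and the rank of the join strictly increases; finiteness forces termination at a genuine complement. All the ``dual'' items, namely $(\ref{th:complemented:zero})$ and $(\ref{th:complemented:coatom})$, then follow by applying the already-proven equivalences to the order-dual lattice $M^{\mathrm{op}}$, which is again finite and modular, and by noting that \ref{th:complemented:comp} and \ref{th:complemented:rc} are preserved under passing to the dual.

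The main obstacle I anticipate is the inductive step in $(\ref{th:complemented:rc}) \Rightarrow (\ref{th:complemented:one})$ and $(\ref{th:complemented:atom})$: one must be careful that atoms of a subinterval $[p,\hatone]$ can be pulled back to atoms of $M$ in a way compatible with joins, and this is exactly where modularity (through the rank equation) is indispensable --- the corresponding statement fails for non-modular lattices, so any argument that does not visibly use $r(x)+r(y)=r(x\vee y)+r(x\wedge y)$ at this step is suspect. Everything else is either a direct modular-law manipulation or a formal appeal to duality.
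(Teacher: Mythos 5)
Your argument is self-contained where the paper's is not --- the paper simply routes everything through item~(\ref{th:complemented:comp}) and cites Birkhoff for the three nontrivial implications ((\ref{th:complemented:one})~$\Rightarrow$~(\ref{th:complemented:comp}), (\ref{th:complemented:comp})~$\Rightarrow$~(\ref{th:complemented:rc}), (\ref{th:complemented:comp})~$\Rightarrow$~(\ref{th:complemented:atom})), handling (\ref{th:complemented:zero}) and (\ref{th:complemented:coatom}) by duality and the remaining directions as immediate. Your modular-law computation for (\ref{th:complemented:comp})~$\Rightarrow$~(\ref{th:complemented:rc}), your rank-based greedy construction of complements, and your use of duality all match the standard proofs behind those citations and are sound.

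There is, however, a circularity in the cycle as you have ordered it. You prove (\ref{th:complemented:rc})~$\Rightarrow$~(\ref{th:complemented:one})~$\Rightarrow$~(\ref{th:complemented:atom})~$\Rightarrow$~(\ref{th:complemented:comp}), but your justification of (\ref{th:complemented:one})~$\Rightarrow$~(\ref{th:complemented:atom}) --- that the interval $[\hatzero,x]$ ``inherits the same property via relative complementation'' --- invokes relative complementation at a point in the cycle where it is not yet available: from hypothesis~(\ref{th:complemented:one}) alone you know neither (\ref{th:complemented:comp}) nor (\ref{th:complemented:rc}), since those are only recovered after the cycle closes. And the implication cannot be pushed through directly: from $\hatone=\bigvee p_i$ you cannot conclude $x=\bigvee(x\wedge p_i)$ in a merely modular lattice. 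The repair is to reorder the cycle. Your greedy complement construction needs only that $\hatone$ is a join of atoms (if $x\vee y_k\neq\hatone$ then some atom lies outside $x\vee y_k$), not full atomisticity, so it proves (\ref{th:complemented:one})~$\Rightarrow$~(\ref{th:complemented:comp}) directly. Then (\ref{th:complemented:comp})~$\Rightarrow$~(\ref{th:complemented:rc})~$\Rightarrow$~(\ref{th:complemented:atom}) follows by your interval argument (for $x>\hatzero$, let $a$ be the join of the atoms below $x$ and take a relative complement of $a$ in $[\hatzero,x]$; if it were nonzero it would contain an atom below $x$ not below $a$), and (\ref{th:complemented:atom})~$\Rightarrow$~(\ref{th:complemented:one}) is trivial. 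With that reordering every tool you use is legitimately available when you use it, and the rest of your write-up, including the duality step for (\ref{th:complemented:zero}) and (\ref{th:complemented:coatom}), goes through unchanged.
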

\begin{proof}
\leavevmode

Regarding \ref{th:complemented:one} $\Rightarrow$ \ref{th:complemented:comp}:
\cite{Birk1967a}*{Th.~IV.6} Since $M$ is a finite modular lattice,
if $\hatone$ is the join of atoms, $M$ is complemented.

Regarding \ref{th:complemented:zero} $\Rightarrow$ \ref{th:complemented:comp}:
Dually to
\ref{th:complemented:one} $\Rightarrow$ \ref{th:complemented:comp},
if $\hatzero$ is the meet of coatoms,
the dual of $M$ is complemented.  But
the complemented property is self-dual, so $M$ is also complemented.

Regarding \ref{th:complemented:comp} $\Rightarrow$ \ref{th:complemented:rc}:
\cite{Birk1967a}*{Th.~I.14}
Any complemented modular lattice is relatively complemented.

Regarding \ref{th:complemented:comp} $\Rightarrow$ \ref{th:complemented:atom}:
\cite{Birk1967a}*{Th.~I.15~Cor.}
In a finite complemented modular lattice, every element
is the join of those atoms $\leq$ it.

Regarding \ref{th:complemented:comp} $\Rightarrow$ \ref{th:complemented:coatom}:
The complemented property is self-dual, so the dual of $M$ is also
complemented.
Thus in the dual of $M$, every element is the join of those atoms
$\leq$ it.
And so in $M$, every element is the meet of those coatoms $\geq$ it.

Regarding \ref{th:complemented:rc} $\Rightarrow$ \ref{th:complemented:comp},
\ref{th:complemented:atom} $\Rightarrow$ \ref{th:complemented:one}, and
\ref{th:complemented:coatom} $\Rightarrow$ \ref{th:complemented:zero}:
These are immediate.
\end{proof}

\section{Modular connected systems}

%\flagenv{theorem,lemma,corollary,proposition}
\newcommand{\flagstart}{\leavevmode\unskip}
%\flagstartend{\flagstart}{}{}
%\flagkey{MONOTONE}{$\dagger$}
%\flagscan{on}

In this section we define \emph{modular connected systems}, a variant
of the constructions
introduced by Herrmann in \citeHerr*{sec.~4}{sec.~4}
and Day and Herrmann in \cite{DayHerr1988a}*{Def.~4.7}.
We also define isomorphism
between modular connected systems.

\begin{definition} \label{def:mcs}
We define a \emph{modular connected system}%
\footnote{We dispense with the ubiquitous ``$S$-'' prefixes when
defining terms for our gluing construction.}
--- abbreviated a \emph{\mcs} --- to be comprised of:
\begin{enumerate}
\item[\textbullet] a \emph{skeleton} lattice $S$,
\item[\textbullet] an \emph{overlap tolerance} $\relgamma$, which is
a tolerance on $S$,
\item[\textbullet] a family of \emph{blocks} $(L_x)_{x \in S}$, which
are lattices, and
\item[\textbullet] a family of \emph{connections}
$(\P{x}{y})_{x, y \in S, x \leq_\gamma y}$, which are mappings,
\end{enumerate}
that obey these axioms:
\begin{enumerate}
\itemlabel{(MC1)}{MCf} The skeleton $S$ is \lffc.
\itemlabel{(MC2)}{MCg} The blocks $L_x$ are finite modular complemented lattices,
\itemlabel{(MC3)}{MCe}
% Herr (17)
Each $\P{x}{y}$ is a lattice isomorphism from a filter $\F{x}{y}$ of
$L_x$ to an ideal $\I{x}{y}$ of $L_y$.%
\footnote{Note that these are \emph{lattice} filters and ideals,
not \emph{poset} filters and ideals;
the filters are closed under meet and the ideals are closed under join.
Since each $L_\bullet$ is finite, the filters $\F{\bullet}{\bullet}$ and
ideals $\I{\bullet}{\bullet}$ are all principal.}
% Herr (17)
\itemlabel{(MC4)}{MCa} For any $x \in S$, $\F{x}{x} = \I{x}{x} = L_x$
and $\P{x}{x}$ is the identity map on $L_x$.
\itemlabel{(MC5)}{MCh} If $x \leq_\gamma y \leq_\gamma z$ in $S$ and
$\I{x}{y} \cap \F{y}{z} \neq \zeroslash$, then $x \relgamma z$.
% Herr (19)
\itemlabel{(MC6)}{MCc} For every $x \leq z \leq y$ in $S$ where
$x \relgamma y$, then
$\F{x}{y} = \PI{x}{z}(\I{x}{z} \cap \F{z}{y})$,
$\I{x}{y} = \P{z}{y}(\I{x}{z} \cap \F{z}{y})$, and
$\P{x}{y} = \P{z}{y} \circ \P{x}{z}|_{\F{x}{y}}$.%
\footnote{That is, $\P{x}{y} = \P{z}{y} \circ \P{x}{z}$ if
$\P{x}{z}$ is considered as a partial function
from $L_x$ to $L_z$ and $\P{z}{y}$ is considered as a partial
function from $L_z$ to $L_y$.}
% Herr (20) and (20\supdelta)
\itemlabel{(MC7)}{MCd} For every $x, y \in S$ for which
$x \relgamma y$,%
\footnote{And thus $\relgamma$ holds among every pair of $x$, $y$, $x \vee y$,
and $x \wedge y$ by lem.~\ref{lem:tol}(\ref{lem:tol:i1})(\ref{lem:tol:i2}).}
\begin{enumerate}
\item $\I{x}{x \vee y} \cap \I{y}{x \vee y} \subset \I{x \wedge y}{x \vee y}$
and
\item $\F{x \wedge y}{x} \cap \F{x \wedge y}{y} \subset \F{x \wedge y}{x \vee y}$.
\end{enumerate}
% Herr (18)
\itemlabel{(MC8.1)}{MCb1} If $x \lessdot y$ in $S$, then
$x \relgamma y$ (and thus $\I{x}{y}$ and $\F{x}{y}$ exist).
%\flagdependkey{MONOTONE}{MCb2}{on}
\itemlabel{(MC8.2)}{MCb2} If $x \lessdot y$ in $S$
(and thus $\I{x}{y}$ and $\F{x}{y}$ exist),
$\F{x}{y} \neq L_x$, and $\I{x}{y} \neq L_y$.%
\footnote{Thus, the \mcs is monotone in the stronger,
self-dual sense.\cite{Herr1973a-en}*{Def.~T.1}}
\end{enumerate}
By abuse of language, we say that the blocks $L_x$ ``are a \mcs''
when the remaining parts of the \mcs are implicit.
\end{definition}

Note that axiom (MC8) in v1 of this article has been split
into two parts, \ref{MCb1} and \ref{MCb2} (monotony).  The results that
depend on monotony are marked with a dagger ($\dagger$).

Note that many of our symbols have both a subscript and a superscript that
are elements of $S$ --- $\P{x}{y}$, $\F{x}{y}$, $\I{x}{y}$, and others
--- In all such
cases, the symbol is defined only for $x \leq y$ (and usually only if
$x \leq_\gamma y$); the subscript is
the \emph{lower} element and the superscript is the \emph{upper}
element of $S$.  This convention is regardless of exactly how the symbol is
related to $x$ and $y$.  E.g.,\ $\F{x}{y} \subset L_x$ but
$\I{x}{y} \subset L_y$.

Including the overlap tolerance in the definition of a \mcs is annoying,
but the distinction between pairs of elements $x$ and $y$ in $S$ for
which $x \relgamma y$ and pairs for which $x \not\relgamma y$ seems to
be fundamental
--- all alternative definitions in \cite{DayHerr1988a} introduce the
same distinction in some other guise.%
\footnote{A parallel is the distinction in set theory between
properties of objects that can be collected into a set and those which
can only be collected into a proper class.}

We will show in th.~\forwardref{th:dissection-of-sum} that
monotony, \ref{MCb2}, is necessary to prove
there to be only one \mcs with a particular sum lattice.

\begin{remark}
\textup{Note that we do not require the skeleton to be modular.
In particular, \citeHerr*{Satz~7.2 and~7.3}{Th.~7.2 and~7.3} shows that
any \emph{finite} lattice is the dissection skeleton of some finite (and
hence \lffc) modular lattice.
It is an open question whether every \lffc lattice is the
skeleton of some modular, \lffc lattice.}

\textup{We are stricter than \cite{Herr1973a} in that we require the blocks to
be finite, whereas Herrmann only requires them to have finite height.
We also require them to be modular, which Herrmann treats
as an add-on property, and to be complemented.
We are also stricter in that we require the skeleton to be \lffc.
On the other hand, we are looser in that \cite{Herr1973a} requires the
skeleton to be finite height and we do not.}
\end{remark}

\begin{definition} \label{def:mcs-dual}
The \emph{dual} of a \mcs is defined as the construction:
\begin{enumerate}
\item a skeleton lattice $S^\delta$ (the dual lattice of $S$),
\item the overlap tolerance $\relgamma$, which operates on the
elements of $S^\delta$, which are the same as the elements of $S$,
\item the blocks $({L_x}^\delta)_{x \in S^\delta}$ (the dual lattices
of the $L_x$),
\item the connections $(\PI{y}{x})_{x, y \in S^\delta, x \leq_\gamma y}$
($x, y \in S^\delta, x \leq_\gamma y$ is equivalent to
($y, x \in S, y \leq_\gamma x$).
\end{enumerate}
\end{definition}

\begin{theorem} \label{th:mcs-dual}
The dual of a \mcs is a \mcs.
The dual of the dual of a \mcs is itself.
\end{theorem}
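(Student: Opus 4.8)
The plan is to verify that the four-part construction of Definition~\ref{def:mcs-dual} satisfies axioms \ref{MCf}--\ref{MCb2}, handling each axiom by reducing it to the corresponding axiom for the original \mcs applied to the order-reversed data. The first task is to record the translation dictionary. Fix $x \leq_\gamma y$ in $S^\delta$, equivalently $y \leq_\gamma x$ in $S$ (legitimate since $\relgamma$ is symmetric). The dual block at $x$ is ${L_x}^\delta$, and the dual connection indexed by this pair is $\PI{y}{x}$. Since $\P{y}{x}$ is a lattice isomorphism from the filter $\F{y}{x}$ of $L_y$ onto the ideal $\I{y}{x}$ of $L_x$, its inverse $\PI{y}{x}$ carries $\I{y}{x}$ onto $\F{y}{x}$; and because order reversal turns ideals into filters and filters into ideals while a lattice isomorphism remains one between the dual lattices, $\PI{y}{x}$ is a lattice isomorphism from the filter $\I{y}{x}$ of ${L_x}^\delta$ onto the ideal $\F{y}{x}$ of ${L_y}^\delta$. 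This establishes \ref{MCe} for the dual and yields the dictionary: for the pair $(x,y)$ the dual filter is $\I{y}{x}$ and the dual ideal is $\F{y}{x}$.

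Several axioms are then immediate. \ref{MCf} holds because local finiteness and finite covers (both upper and lower) are self-dual properties, so $S^\delta$ is \lffc. \ref{MCg} holds because finiteness, modularity, and---by the self-duality noted in the proof of Theorem~\ref{th:complemented}---the complemented property are all preserved under $L \mapsto L^\delta$. \ref{MCa} holds because the dual connection of the diagonal pair $(x,x)$ is $\PI{x}{x} = \id^{-1} = \id$ on all of ${L_x}^\delta$. For \ref{MCh}, a chain $x \leq_\gamma y \leq_\gamma z$ in $S^\delta$ is the chain $z \leq_\gamma y \leq_\gamma x$ in $S$, and the dual intersection hypothesis translates via the dictionary to $\I{z}{y} \cap \F{y}{x} \neq \zeroslash$, whence the original \ref{MCh} gives $z \relgamma x$, i.e.\ $x \relgamma z$. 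Axiom \ref{MCb1} follows because $x \lessdot y$ in $S^\delta$ is $y \lessdot x$ in $S$, giving $y \relgamma x$ and hence $x \relgamma y$; and \ref{MCb2} follows because the dual filter $\I{y}{x}$ and dual ideal $\F{y}{x}$ are proper exactly when the original \ref{MCb2}, applied to $y \lessdot x$, gives $\I{y}{x} \neq L_x$ and $\F{y}{x} \neq L_y$.

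The main work is \ref{MCc} and \ref{MCd}, where I expect the bookkeeping to be the only real obstacle. For \ref{MCc}, a chain $x \leq z \leq y$ in $S^\delta$ with $x \relgamma y$ is the chain $y \leq z \leq x$ in $S$ with $y \relgamma x$, and I will apply the original \ref{MCc} with lower element $y$, middle $z$, upper $x$. Substituting the dictionary entries, the three dual identities become the three original identities, with the roles of the filter-equation and the ideal-equation \emph{interchanged}: the dual filter equation for $(x,y)$ is the original ideal equation $\I{y}{x} = \P{z}{x}(\I{y}{z} \cap \F{z}{x})$, the dual ideal equation is the original filter equation $\F{y}{x} = \PI{y}{z}(\I{y}{z} \cap \F{z}{x})$, and the dual composition law $\PI{y}{x} = \PI{y}{z} \circ \PI{z}{x}$ (on the appropriate domain) is just the inverse of the original $\P{y}{x} = \P{z}{x} \circ \P{y}{z}$. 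The argument for \ref{MCd} is analogous and lighter: since $\vee$ and $\wedge$ are interchanged in $S^\delta$, the dual of part~(a) is exactly the original part~(b) and vice versa, each reducing to a containment of the form $\F{x \wedge y}{x} \cap \F{x \wedge y}{y} \subset \F{x \wedge y}{x \vee y}$.

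Finally, for the involution claim I will compute the double dual directly. We have $S^{\delta\delta} = S$, $\relgamma$ unchanged, and ${L_x}^{\delta\delta} = L_x$. For $a \leq_\gamma b$ in $S^{\delta\delta} = S$, Definition~\ref{def:mcs-dual} applied twice makes the double-dual connection the inverse of the $S^\delta$-connection indexed by the pair $(b,a)$ with $b \leq_\gamma a$ in $S^\delta$; that $S^\delta$-connection is $\PI{a}{b}$, so the double-dual connection is $(\PI{a}{b})^{-1} = \P{a}{b}$, recovering the original data exactly and proving that dualization is an involution.
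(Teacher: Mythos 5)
Your verification is correct: the dictionary (dual filter at $(x,y)$ is $\I{y}{x}$, dual ideal is $\F{y}{x}$), the interchange of the filter- and ideal-equations in \ref{MCc}, the swap of parts (a) and (b) in \ref{MCd}, and the double-dual computation all check out. The paper states Theorem~\ref{th:mcs-dual} with no proof at all, treating the axiom-by-axiom check as routine, so your write-up simply supplies in full the verification the paper leaves implicit; there is no divergence of approach to report.
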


We will use this fact often in proofs to avoid providing dual
arguments for the duals of facts that we have proven.

\begin{definition} \label{def:mcs-iso}
We define an isomorphism $\chi$ between a \mcss
$\scrC$ (comprised of skeleton $S$, overlap tolerance $\relgamma$,
blocks $L_\bullet$, connections $\P{\bullet}{\bullet}$,
connection sources
$\F{\bullet}{\bullet}$, and connection targets
$\I{\bullet}{\bullet}$) and
a \mcs $\scrC^\prime$ (comprised of skeleton $S^\prime$, overlap
tolerance $\relgamma^\prime$,
blocks $L_\bullet^\prime$, connections
$\P{\bullet}{\bullet}^\prime$, connection sources
$\F{\bullet}{\bullet}^\prime$, and connection targets
$\I{\bullet}{\bullet}^\prime$) to be comprised of:
\begin{enumerate}
\item a lattice isomorphism $\chi_S$ from $S$ to $S^\prime$ and
\item a family of lattice isomorphisms $\chi_{Bx}$ for every $x \in S$,
with $\chi_{Bx}$ being an isomorphism from $L_x$ to
$L^\prime_{\chi_S(x)}$,
\end{enumerate}
for which:
\begin{enumerate}
\item for $x, y \in S$, $x \relgamma y$ iff
$\chi_S(x) \relgamma^\prime \chi_S(y)$,
\item for $x \leq_\gamma y \in S$,
$\F{\chi_S(x)}{\chi_S(y)}^\prime = \chi_{Bx}(\F{x}{y})$ and
$\I{\chi_S(x)}{\chi_S(y)}^\prime = \chi_{By}(\I{x}{y})$,
\item for $x \leq_\gamma y \in S$,
$\chi_{By} \circ \P{x}{y} =
\P{\chi_S(x)}{\chi_S(y)}^\prime \circ \chi_{Bx}|_{\F{x}{y}}$.%
\footnote{That is, for all $a \in \F{x}{y}$,
$\chi_{By}(\P{x}{y}(a)) = \P{\chi_S(x)}{\chi_S(y)}^\prime(\chi_{Bx}(a))$.}
\end{enumerate}
\end{definition}

\begin{theorem} \label{th:mcs-iso-equiv}
Isomorphism between \mcss is an equivalence relation.
\end{theorem}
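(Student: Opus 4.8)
The plan is to verify the three defining properties of an equivalence relation --- reflexivity, symmetry, and transitivity --- by exhibiting in each case the appropriate candidate isomorphism of \mcss and checking that it satisfies the three compatibility conditions of Definition \ref{def:mcs-iso}. Since an isomorphism $\chi$ consists of a lattice isomorphism $\chi_S$ of skeletons together with a family of lattice block isomorphisms $\chi_{Bx}$, the natural candidates are the identity, the inverse, and the composite, assembled from the corresponding maps on skeletons and on blocks. The only genuine care needed is the bookkeeping of indices, since the block map $\chi_{Bx}$ is indexed by $x \in S$ but has target $L'_{\chi_S(x)}$.

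For reflexivity, I would take $\chi_S = \id_S$ and $\chi_{Bx} = \id_{L_x}$ for each $x \in S$. Each is a lattice isomorphism, and the three conditions of Definition \ref{def:mcs-iso} collapse to trivial identities: condition (1) reads $x \relgamma y$ iff $x \relgamma y$, while conditions (2) and (3) read $\F{x}{y} = \F{x}{y}$, $\I{x}{y} = \I{x}{y}$, and $\P{x}{y} = \P{x}{y}$.

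For symmetry, given an isomorphism $\chi$ from $\scrC$ to $\scrC'$, I would define a candidate inverse by setting $(\chi^{-1})_S = \chi_S^{-1}$ and, for each $x' \in S'$, $(\chi^{-1})_{Bx'} = (\chi_{B\chi_S^{-1}(x')})^{-1}$; these are lattice isomorphisms in the reverse direction. Condition (1) follows by applying the biconditional for $\chi$ to the pair $\chi_S^{-1}(x'), \chi_S^{-1}(y')$. For conditions (2) and (3), I would apply $(\chi_{Bx})^{-1}$ (respectively $(\chi_{By})^{-1}$) to both sides of the corresponding equation for $\chi$ and substitute $x = \chi_S^{-1}(x')$, $y = \chi_S^{-1}(y')$; the isomorphism equation for $\chi$ then rearranges into precisely the required equation for $\chi^{-1}$.

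For transitivity, given isomorphisms $\chi$ from $\scrC$ to $\scrC'$ and $\psi$ from $\scrC'$ to $\scrC''$, I would form the composite by setting $(\psi\chi)_S = \psi_S \circ \chi_S$ and $(\psi\chi)_{Bx} = \psi_{B\chi_S(x)} \circ \chi_{Bx}$ for each $x \in S$; each is again a lattice isomorphism. Condition (1) follows by chaining the two biconditionals, and conditions (2) and (3) follow by applying the condition for $\chi$ and then the condition for $\psi$ at the shifted index $\chi_S(x)$, using $(\psi_S \circ \chi_S)(x) = \psi_S(\chi_S(x))$ to match sources and targets. I expect the main (and only minor) obstacle to be exactly this index-chasing: one must track that each block isomorphism lives over the correctly transported skeleton element at every stage, so that the filters, ideals, and connection maps being compared are those actually attached to those elements.
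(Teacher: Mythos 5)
Your proposal is correct. The paper in fact states this theorem without any proof, treating it as routine; your argument --- identity, inverse, and composite isomorphisms, with the index bookkeeping that $\chi_{Bx}$ lands in $L'_{\chi_S(x)}$ handled explicitly --- is exactly the standard verification the paper implicitly relies on, and all three conditions of Definition~\ref{def:mcs-iso} check out as you describe.
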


\section{Gluing}
The gluing of a set of finite lattices into a sum lattice was
introduced in Herrmann \citeHerr*{sec.~1}{sec.~1}.

\paragraph{Basic properties of modular connected systems}

\begin{definition}
In this section, we define $\scrC$ to be an arbitrarily chosen \mcs
comprised of
skeleton $S$, overlap tolerance $\relgamma$,
blocks $L_\bullet$, connections $\P{\bullet}{\bullet}$,
connection sources
$\F{\bullet}{\bullet}$, and connection targets
$\I{\bullet}{\bullet}$).
\end{definition}

\begin{definition}
If $x \leq_\gamma y$,
we define $\Z{x}{y}$ to be the minimum of $\F{x}{y}$ and
$\O{x}{y}$ to be the maximum of $\I{x}{y}$.
\end{definition}

\begin{lemma}
If $x \leq_\gamma y$,
\begin{enumerate}
\item $\O{x}{y} = \P{x}{y}\, \hatone_{L_x}$ and
\item $\Z{y}{y} = \PI{x}{y}\, \hatzero_{L_y}$.
\end{enumerate}
\end{lemma}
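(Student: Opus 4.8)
The plan is to use nothing more than the fact that $\P{x}{y}$ is a lattice isomorphism from the filter $\F{x}{y}$ onto the ideal $\I{x}{y}$ (axiom \ref{MCe}), together with the finiteness of the blocks (axiom \ref{MCg}). Since a lattice isomorphism between two finite lattices carries whatever top and bottom elements they possess to one another, both claims reduce to identifying the top of $\F{x}{y}$ and the bottom of $\I{x}{y}$ explicitly.

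First I would observe that, because $L_x$ is finite and $\F{x}{y}$ is a \emph{lattice} filter of $L_x$ --- hence nonempty and upward closed (indeed principal, as the footnote to axiom \ref{MCe} records) --- the global top $\hatone_{L_x}$ lies in $\F{x}{y}$ and is its maximum. As $\P{x}{y}$ is an isomorphism onto $\I{x}{y}$, it sends the maximum of its domain to the maximum of $\I{x}{y}$, which is $\O{x}{y}$ by definition. This yields $\O{x}{y} = \P{x}{y}\,\hatone_{L_x}$, proving (1).

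For (2) I would argue dually: $\I{x}{y}$ is a lattice ideal of the finite lattice $L_y$, hence nonempty and downward closed, so the global bottom $\hatzero_{L_y}$ lies in $\I{x}{y}$ and is its minimum. The inverse isomorphism $\PI{x}{y}$ then carries this minimum to the minimum of $\F{x}{y}$, namely $\Z{x}{y}$, giving $\Z{x}{y} = \PI{x}{y}\,\hatzero_{L_y}$. Alternatively, (2) follows from (1) by invoking th.~\ref{th:mcs-dual} to pass to the dual \mcs, in which the roles of filters and ideals, and of top and bottom, are interchanged.

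There is no genuine obstacle here. The only step requiring a word of care is the assertion that a lattice filter (resp.\ ideal) of a finite lattice attains the global top (resp.\ bottom), which is immediate from upward (resp.\ downward) closure together with nonemptiness; everything else is the standard fact that an order isomorphism preserves extrema.
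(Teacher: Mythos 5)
Your proof is correct, and it is the argument the paper evidently intends: the lemma is stated without proof there, being regarded as immediate from \ref{MCe} exactly as you spell out (a filter of the finite lattice $L_x$ contains and is maximized by $\hatone_{L_x}$, and the isomorphism $\P{x}{y}$ carries this maximum to the maximum $\O{x}{y}$ of $\I{x}{y}$, with (2) dual). Note only that the paper's item (2) reads $\Z{y}{y}$, which is surely a typo for $\Z{x}{y}$ --- the version you actually prove, and the one consistent with the ``transported down'' remark that follows the lemma.
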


Thus we can say that $\Z{x}{y}$ is $\hatzero_{L_y}$ ``transported
down'' to $L_x$ and $\O{x}{y}$ is $\hatone_{L_x}$ ``transported
up'' to $L_y$.

\begin{lemma}\flagstart$\dagger$ \label{lem:monotone-0-1}
If $x < y$ and $x \relgamma y$,
\begin{enumerate}
\item $\O{x}{y} < \hatone_{L_y}$, and
\item $\Z{x}{y} > \hatzero_{L_x}$.
\end{enumerate}
\end{lemma}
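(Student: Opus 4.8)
The plan is to reduce the general strict case $x<y$ to the covering case, where monotony \ref{MCb2} applies directly, and then to transport the resulting strict inequality along the interval $[x,y]$ using the composition law \ref{MCc}. Before starting I would record that $x \relgamma y$ together with $x \leq z \leq y$ forces $x \relgamma z$ and $z \relgamma y$ by lem.~\ref{lem:tol}(\ref{lem:tol:i1}), so that every connection written below is in fact defined.

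For part (1) I would first note that the interval $[x,y]$ is finite, since $S$ is \lffc. As $x<y$, this interval has a coatom $w$, that is, an element with $x \leq w \lessdot y$. By \ref{MCb1} we have $w \relgamma y$, so $\I{w}{y}$ exists, and monotony \ref{MCb2} makes it a proper ideal of the finite lattice $L_y$; a proper ideal cannot contain $\hatone_{L_y}$, so its maximum satisfies $\O{w}{y} < \hatone_{L_y}$. I would then feed $w$ into \ref{MCc} as the intermediate element, obtaining $\I{x}{y} = \P{w}{y}(\I{x}{w} \cap \F{w}{y})$; since $\I{x}{w} \cap \F{w}{y} \subseteq \F{w}{y}$ and $\P{w}{y}$ carries $\F{w}{y}$ onto $\I{w}{y}$, this exhibits $\I{x}{y} \subseteq \I{w}{y}$. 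Hence $\O{x}{y} \in \I{w}{y}$, so $\O{x}{y} \leq \O{w}{y} < \hatone_{L_y}$.

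For part (2) I would either invoke duality through th.~\ref{th:mcs-dual} or run the mirror argument directly: choose an atom $z_1$ of $[x,y]$, so that $x \lessdot z_1 \leq y$; by \ref{MCb1} we have $x \relgamma z_1$, monotony makes $\F{x}{z_1}$ a proper filter, and so $\Z{x}{z_1} > \hatzero_{L_x}$; then \ref{MCc} with intermediate element $z_1$ gives $\F{x}{y} = \PI{x}{z_1}(\I{x}{z_1} \cap \F{z_1}{y}) \subseteq \F{x}{z_1}$, whence $\Z{x}{y} \geq \Z{x}{z_1} > \hatzero_{L_x}$.

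The essential content sits in the two containments $\I{x}{y} \subseteq \I{w}{y}$ and $\F{x}{y} \subseteq \F{x}{z_1}$, which is exactly where \ref{MCc} does the work; the remainder is bookkeeping. The steps I expect to need the most care are the reduction to a cover and the verification that the invoked connections are genuinely defined --- that is, applying the tolerance lemma correctly so that $\P{w}{y}$, $\P{x}{w}$ and their inverses exist --- together with checking that a \emph{proper} ideal of the finite lattice $L_y$ really forces the \emph{strict} inequality $\O{w}{y} < \hatone_{L_y}$ and not merely $\leq$.
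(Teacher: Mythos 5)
Your proposal is correct, and it is in fact more complete than the paper's own argument. The paper's proof is a one-liner: it observes that $\O{x}{y} = \hatone_{L_y}$ would force $\I{x}{y} = L_y$ and declares this to ``violate \ref{MCb2}'' --- but \ref{MCb2} is stated only for covering pairs $x \lessdot y$, whereas the lemma is asserted for arbitrary $x < y$. Your reduction supplies exactly the step the paper leaves tacit: pick a coatom $w$ of the finite interval $[x,y]$, apply \ref{MCb1} and \ref{MCb2} to the cover $w \lessdot y$ to get $\O{w}{y} < \hatone_{L_y}$, and then transport via the containment $\I{x}{y} \subseteq \I{w}{y}$. That containment is precisely the content of lem.~\ref{lem:(6)}, which the paper only states \emph{after} this lemma; your rederivation of it directly from \ref{MCc} avoids any forward reference, at the cost of duplicating an argument the paper will make anyway. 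Your care about the strictness of $\O{w}{y} < \hatone_{L_y}$ (a proper lattice ideal of the finite lattice $L_y$, being downward closed, cannot contain the top, so its maximum is strictly below $\hatone_{L_y}$) and about the tolerance relations needed for the connections to exist is also correct. The dual argument for part (2) is fine either as written or via th.~\ref{th:mcs-dual}.
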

\begin{proof}
\leavevmode

Regarding $\O{x}{y} < \hatone_{L_y}$:
Of necessity, $\O{x}{y} \leq \hatone_{L_y}$.
But if $\O{x}{y} = \hatone_{L_y}$, then $\I{x}{y} = L_y$, which
violates \ref{MCb2}.
$\Z{x}{y} > \hatzero_{L_x}$ is proved dually.
\end{proof}

% Herr1973a (5) seems to be trivial in our organization.

\begin{lemma}\citeHerr*{(6)}{(6)} \label{lem:(6)}
If $x, y, z \in S$, $x \leq z \leq y$, and $x \relgamma y$
(implying by lem.~\ref{lem:tol}(\ref{lem:tol:i1}), $x \relgamma z$ and
$z \relgamma y$),
then $\F{x}{y} \subset \F{x}{z}$ and $\I{x}{y} \subset \I{z}{y}$.
\end{lemma}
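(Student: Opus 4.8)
The plan is to read off both inclusions directly from axiom \ref{MCc}, which under exactly the present hypotheses ($x \le z \le y$ and $x \relgamma y$) expresses $\F{x}{y}$ and $\I{x}{y}$ as images of the single set $\I{x}{z} \cap \F{z}{y}$ under the maps $\PI{x}{z}$ and $\P{z}{y}$ respectively. The entire argument then rests on the elementary fact that applying a map to a smaller set yields a smaller image.

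For $\F{x}{y} \subset \F{x}{z}$, I would begin from the identity $\F{x}{y} = \PI{x}{z}(\I{x}{z} \cap \F{z}{y})$ supplied by \ref{MCc}. Since $\I{x}{z} \cap \F{z}{y} \subset \I{x}{z}$, and \ref{MCe} guarantees that $\PI{x}{z}$ is the inverse isomorphism carrying the ideal $\I{x}{z}$ of $L_z$ back onto the filter $\F{x}{z}$ of $L_x$, it follows that $\F{x}{y} \subset \PI{x}{z}(\I{x}{z}) = \F{x}{z}$.

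For $\I{x}{y} \subset \I{z}{y}$, I would argue dually, starting from $\I{x}{y} = \P{z}{y}(\I{x}{z} \cap \F{z}{y})$, also from \ref{MCc}. Here $\I{x}{z} \cap \F{z}{y} \subset \F{z}{y}$, and $\P{z}{y}$ carries the filter $\F{z}{y}$ onto the ideal $\I{z}{y}$, so $\I{x}{y} \subset \P{z}{y}(\F{z}{y}) = \I{z}{y}$.

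I do not anticipate any genuine obstacle: the statement is a formal consequence of \ref{MCc}, and the only point requiring attention is to keep straight the domains and codomains of the connection maps so that the monotone-image step is applied to the correct ambient filter or ideal. The hypotheses $x \relgamma y$ and $x \le z \le y$ are precisely what is needed both to invoke \ref{MCc} and to ensure (via lem.~\ref{lem:tol}(\ref{lem:tol:i1})) that all the filters and ideals appearing above are in fact defined.
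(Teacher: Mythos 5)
Your proposal is correct. It differs mildly from the paper's route, though both rest entirely on axiom \ref{MCc}. You read the inclusions straight off the set-level identities $\F{x}{y} = \PI{x}{z}(\I{x}{z} \cap \F{z}{y})$ and $\I{x}{y} = \P{z}{y}(\I{x}{z} \cap \F{z}{y})$, shrinking the argument of each map to $\I{x}{z}$ (resp.\ $\F{z}{y}$) and using that $\PI{x}{z}$ carries $\I{x}{z}$ onto $\F{x}{z}$ and $\P{z}{y}$ carries $\F{z}{y}$ onto $\I{z}{y}$. The paper instead uses the composition identity from \ref{MCc}: it applies $\PI{x}{z}\circ\PI{z}{y}=\PI{x}{y}$ to $\hatzero_{L_y}$, obtains $\Z{x}{z} = \PI{x}{z}\,\hatzero_{L_z} \leq \PI{x}{z}(\PI{z}{y}\,\hatzero_{L_y}) = \Z{x}{y}$, and concludes by comparing the generators of the principal filters. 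Your version is marginally more economical in that it never invokes principality of the filters and ideals (which the paper gets from finiteness of the blocks), whereas the paper's version delivers the explicit inequality $\Z{x}{z}\leq\Z{x}{y}$ between generators as a byproduct. Either argument is complete and uses exactly the stated hypotheses.
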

\begin{proof}
\leavevmode

Of necessity,
\begin{equation*}
\hatzero_{L_z} \leq \PI{z}{y}\, \hatzero_{L_y}
\end{equation*}
Then
\begin{alignat*}{2}
\Z{x}{z}
& = \PI{x}{z}\, \hatzero_{L_z} \\
& \leq \PI{x}{z}(\PI{z}{y}\, \hatzero_{L_y}) \\
\interject{by \ref{MCc},}
& = \PI{x}{y}\, \hatzero_{L_y} \\
& = \Z{x}{y}
\end{alignat*}
Which shows that $\F{x}{y} \subset \F{x}{z}$.

We prove $\I{x}{y} \subset \I{z}{y}$ dually.
\end{proof}

\ref{MCd} can be strengthened:

\begin{lemma}\citeHerr*{(7)}{(7)} \label{lem:(7)}
For any $x, y \in S$ for which $x \relgamma y$
(and thus $x \vee y \relgamma x \wedge y$),
\begin{enumerate}
\item $\I{x}{x \vee y} \cap \I{y}{x \vee y} = \I{x \wedge y}{x \vee y}$ and
\item $\F{x \wedge y}{x} \cap \F{x \wedge y}{y} = \F{x \wedge y}{x \vee y}$.
\end{enumerate}
\end{lemma}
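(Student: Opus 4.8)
The plan is to derive each equality by pairing the inclusion that axiom~\ref{MCd} already supplies with its reverse, obtaining the reverse inclusion from two applications of Lemma~\ref{lem:(6)}. Concretely, \ref{MCd}(a) gives $\I{x}{x \vee y} \cap \I{y}{x \vee y} \subset \I{x \wedge y}{x \vee y}$ and \ref{MCd}(b) gives $\F{x \wedge y}{x} \cap \F{x \wedge y}{y} \subset \F{x \wedge y}{x \vee y}$, so the only thing left to prove in part~(1) is $\I{x \wedge y}{x \vee y} \subset \I{x}{x \vee y} \cap \I{y}{x \vee y}$ and in part~(2) is $\F{x \wedge y}{x \vee y} \subset \F{x \wedge y}{x} \cap \F{x \wedge y}{y}$.

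For part~(1) I would first record that $x \wedge y \relgamma x \vee y$, which is exactly the parenthetical hypothesis of the statement and follows from $x \relgamma y$ by lem.~\ref{lem:tol}(\ref{lem:tol:i2}); together with $x \wedge y \leq x \leq x \vee y$ and $x \wedge y \leq y \leq x \vee y$ this lets me invoke Lemma~\ref{lem:(6)} along both chains. Applying the ideal half of Lemma~\ref{lem:(6)} to the triple $(x \wedge y, x, x \vee y)$ yields $\I{x \wedge y}{x \vee y} \subset \I{x}{x \vee y}$, and applying it to $(x \wedge y, y, x \vee y)$ yields $\I{x \wedge y}{x \vee y} \subset \I{y}{x \vee y}$. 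Intersecting the two right-hand sides gives the required reverse inclusion, and combining with \ref{MCd}(a) produces the equality.

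Part~(2) is handled symmetrically, using instead the filter half of Lemma~\ref{lem:(6)}: the triple $(x \wedge y, x, x \vee y)$ gives $\F{x \wedge y}{x \vee y} \subset \F{x \wedge y}{x}$ and $(x \wedge y, y, x \vee y)$ gives $\F{x \wedge y}{x \vee y} \subset \F{x \wedge y}{y}$, whose intersection supplies the reverse inclusion to pair with \ref{MCd}(b). Alternatively one could deduce part~(2) from part~(1) by passing to the dual \mcs via th.~\ref{th:mcs-dual}, since the filter condition is the ideal condition of the dual. I do not expect any real obstacle here: the content is entirely carried by Lemma~\ref{lem:(6)}, and the only point requiring care is checking the $\relgamma$-hypotheses of that lemma, which reduce to the single fact $x \wedge y \relgamma x \vee y$ recorded above.
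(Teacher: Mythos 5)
Your proposal is correct and is precisely the argument the paper intends: the paper's proof is the one-line remark that the lemma follows from \ref{MCd} and lem.~\ref{lem:(6)}, and you have simply spelled out the two applications of lem.~\ref{lem:(6)} (along the chains $x \wedge y \leq x \leq x \vee y$ and $x \wedge y \leq y \leq x \vee y$, licensed by $x \wedge y \relgamma x \vee y$ from lem.~\ref{lem:tol}) that supply the reverse inclusions. No gaps; same approach.
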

\begin{proof}
\leavevmode

This is a direct consequence of \ref{MCd} and lem.~\ref{lem:(6)}.
\end{proof}

\begin{lemma} \label{lem:MCd-stronger}
If $z \leq_\gamma x, y$ in $S$, then
$\F{z}{x} \cap \F{z}{y} = \F{z}{x \vee y}$.
If $x, y \leq_\gamma z$ in $S$, then
$\I{x}{z} \cap \I{y}{z} = \I{x \wedge y}{z}$.
\end{lemma}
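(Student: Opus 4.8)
The plan is to prove only the first identity; the second then follows by dualizing via Theorem~\ref{th:mcs-dual}, since interchanging $\leq$ with $\geq$, $\wedge$ with $\vee$, and $\F{}{}$ with $\I{}{}$ carries the hypothesis $z\leq_\gamma x,y$ and the conclusion $\F{z}{x}\cap\F{z}{y}=\F{z}{x\vee y}$ to the hypothesis $x,y\leq_\gamma z$ and the conclusion $\I{x}{z}\cap\I{y}{z}=\I{x\wedge y}{z}$.

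First I would record the tolerance relations needed for every symbol below to be defined. From $z\leq_\gamma x$, $z\leq_\gamma y$ and $z\leq x\wedge y\leq x\vee y$, Lemma~\ref{lem:tol}(\ref{lem:tol:i1}) gives $z\relgamma(x\wedge y)$, $(x\wedge y)\relgamma x$, and $(x\wedge y)\relgamma y$, while Lemma~\ref{lem:tol}(\ref{lem:tol:i3}) gives $z\relgamma(x\vee y)$; applying Lemma~\ref{lem:tol}(\ref{lem:tol:i1}) once more to $z\relgamma(x\vee y)$ along $z\leq x\wedge y\leq x\vee y$ yields $(x\wedge y)\relgamma(x\vee y)$, hence $x\relgamma y$ by Lemma~\ref{lem:tol}(\ref{lem:tol:i2}). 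Thus all of $\F{z}{x}$, $\F{z}{y}$, $\F{z}{x\vee y}$, $\F{x\wedge y}{x}$, $\F{x\wedge y}{y}$, $\F{x\wedge y}{x\vee y}$, $\I{z}{x\wedge y}$, and $\P{z}{x\wedge y}$ are defined, and Lemma~\ref{lem:(7)} is applicable to the pair $x,y$.

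The heart of the argument is to transport everything down to the block $L_{x\wedge y}$ using \ref{MCc} with the middle element $x\wedge y$. For each $w\in\{x,y,x\vee y\}$, the triple $z\leq x\wedge y\leq w$ together with $z\relgamma w$ lets \ref{MCc} write $\F{z}{w}=\PI{z}{x\wedge y}(\I{z}{x\wedge y}\cap\F{x\wedge y}{w})$. Since $\P{z}{x\wedge y}$ is a lattice isomorphism, $\PI{z}{x\wedge y}$ is injective on $\I{z}{x\wedge y}$, so it commutes with intersection of the two subsets $\I{z}{x\wedge y}\cap\F{x\wedge y}{x}$ and $\I{z}{x\wedge y}\cap\F{x\wedge y}{y}$ of its domain $\I{z}{x\wedge y}$. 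Intersecting the two expressions for $\F{z}{x}$ and $\F{z}{y}$ therefore produces $\PI{z}{x\wedge y}\big(\I{z}{x\wedge y}\cap\F{x\wedge y}{x}\cap\F{x\wedge y}{y}\big)$, and Lemma~\ref{lem:(7)}(2) collapses $\F{x\wedge y}{x}\cap\F{x\wedge y}{y}$ to $\F{x\wedge y}{x\vee y}$. The resulting set is exactly the \ref{MCc} expression for $\F{z}{x\vee y}$, which gives the claimed equality.

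The one point demanding care --- and the only real obstacle --- is the bookkeeping of hypotheses rather than any deep idea: one must confirm that $z\relgamma(x\wedge y)$, $z\relgamma(x\vee y)$, and $x\relgamma y$ all hold so that each connection and each filter invoked actually exists, and must check that the subsets fed to $\PI{z}{x\wedge y}$ genuinely lie in its domain $\I{z}{x\wedge y}$ before using injectivity to pull the intersection inside. Once the tolerance relations are in place, the computation is the routine chain above, and the strengthening over Lemma~\ref{lem:(7)} is precisely the replacement of the base $x\wedge y$ by an arbitrary $z\leq_\gamma x,y$, achieved by the downward transport of \ref{MCc}.
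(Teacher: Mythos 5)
Your proposal is correct and follows essentially the same route as the paper's proof: transport everything down to $L_{x\wedge y}$ via \ref{MCc}, pull the intersection inside $\PI{z}{x\wedge y}$ using injectivity, collapse $\F{x\wedge y}{x}\cap\F{x\wedge y}{y}$ with lem.~\ref{lem:(7)}, and dualize for the second identity. Your explicit verification of the tolerance relations (in particular $z\relgamma x\vee y$ via lem.~\ref{lem:tol}(\ref{lem:tol:i3})) is a detail the paper leaves implicit, but the argument is the same.
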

\begin{proof}
\leavevmode

Regarding $\F{z}{x} \cap \F{z}{y} = \F{z}{x \vee y}$:
\begin{alignat*}{2}
\interject{By \ref{MCc},}
\F{z}{x} \cap \F{z}{y}
& = \PI{z}{x \wedge y}(\I{z}{x \wedge y} \cap \F{x \wedge y}{x})
\cap \PI{z}{x \wedge y}(\I{z}{x \wedge y} \cap \F{x \wedge y}{y}) \\
\interject{because the $\PI{\bullet}{\bullet}$ are one-to-one,}
& = \PI{z}{x \wedge y}(\I{z}{x \wedge y} \cap \F{x \wedge y}{x}
\cap \F{x \wedge y}{y}) \\
\interject{by lem.~\ref{lem:(7)},}
& = \PI{z}{x \wedge y}(\I{z}{x \wedge y} \cap \F{x \wedge y}{x \vee y}) \\
\interject{by \ref{MCc} again,}
& = \F{z}{x \vee y}
\end{alignat*}

$\I{x}{z} \cap \I{y}{z} = \I{x \wedge y}{z}$ is proved dually.
\end{proof}

% Herr1973a (8) seems to be trivial in our organization.

\begin{lemma} \label{lem:between-equal}
If $x, y \leq z \leq w$ in $S$, $x, y \relgamma w$, $a \in \F{x}{w}$,
$b \in \F{y}{w}$, and $\P{x}{w}\,a = \P{y}{w}\,b$,
then $\P{x}{z}\,a = \P{y}{z}\,b$.
\end{lemma}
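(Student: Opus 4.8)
The plan is to factor both composite connections through the intermediate block $L_z$ by means of axiom \ref{MCc}, and then to cancel the common outer connection $\P{z}{w}$ using its injectivity.

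First I would record which connections are available. Since $x \leq z \leq w$ with $x \relgamma w$, lemma~\ref{lem:tol}(\ref{lem:tol:i1}) yields $x \relgamma z$ and $z \relgamma w$; similarly $y \leq z \leq w$ with $y \relgamma w$ yields $y \relgamma z$. Thus $\P{x}{z}$, $\P{y}{z}$, and $\P{z}{w}$ all exist. Next, I would apply \ref{MCc} to the chain $x \leq z \leq w$ (permissible since $x \relgamma w$), giving $\P{x}{w} = \P{z}{w} \circ \P{x}{z}|_{\F{x}{w}}$, so that $\P{x}{w}\,a = \P{z}{w}(\P{x}{z}\,a)$; the same axiom also gives $\F{x}{w} = \PI{x}{z}(\I{x}{z} \cap \F{z}{w})$, whence $\P{x}{z}\,a \in \I{x}{z} \cap \F{z}{w} \subset \F{z}{w}$. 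Applying \ref{MCc} symmetrically to $y \leq z \leq w$ gives $\P{y}{w}\,b = \P{z}{w}(\P{y}{z}\,b)$ with $\P{y}{z}\,b \in \F{z}{w}$.

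Finally, the hypothesis $\P{x}{w}\,a = \P{y}{w}\,b$ rewrites as $\P{z}{w}(\P{x}{z}\,a) = \P{z}{w}(\P{y}{z}\,b)$. Because $\P{z}{w}$ is a lattice isomorphism on $\F{z}{w}$ by \ref{MCe}, hence injective there, and both arguments lie in $\F{z}{w}$, I may cancel $\P{z}{w}$ to obtain $\P{x}{z}\,a = \P{y}{z}\,b$, which is the desired conclusion.

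The only delicate point — and the step I would flag as the main obstacle, though it is minor — is verifying that both intermediate images genuinely lie in the common domain $\F{z}{w}$, so that the cancellation of $\P{z}{w}$ is legitimate. This is exactly what the domain description $\F{x}{w} = \PI{x}{z}(\I{x}{z} \cap \F{z}{w})$ in \ref{MCc} supplies; without it the injectivity argument would be unjustified.
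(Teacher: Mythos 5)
Your proposal is correct and follows essentially the same route as the paper's proof: factor both maps through $L_z$ via \ref{MCc} and cancel the injective $\P{z}{w}$. The only difference is that you spell out the domain verification that the intermediate images lie in $\F{z}{w}$, which the paper leaves implicit.
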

\begin{proof}
\leavevmode

By \ref{MCc},
$$ \P{z}{w}(\P{x}{z}\,a) = \P{x}{w}\,a =
\P{y}{w}\,b = \P{z}{w}(\P{y}{z}\,b) $$
Since $\P{z}{w}$ is one-to-one, $\P{x}{z}\,a = \P{y}{z}\,b$.
\end{proof}

%\flagdependlabel{lem:between-equal-dual}{lem:between-equal}
\begin{lemma} \label{lem:between-equal-dual}
If $w \leq z \leq x,y$ in $S$, $x, y \relgamma w$, $a \in \I{w}{x}$,
$b \in \I{w}{y}$, and $\PI{w}{x}\,a = \PI{w}{y}\,b$,
then $\PI{z}{x}\,a = \PI{z}{y}\,b$.
\end{lemma}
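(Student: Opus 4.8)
The plan is to read this statement as the order-dual of Lemma~\ref{lem:between-equal} and to discharge it using the duality of \mcss from Theorem~\ref{th:mcs-dual}. Passing to the dual system $S^\delta$, the chain $w \leq z \leq x, y$ in $S$ becomes $x, y \leq z \leq w$ in $S^\delta$, the overlap tolerance $\relgamma$ is unchanged, each inverse connection $\PI{w}{x}$ of $S$ is the forward connection from $L_x^\delta$ to $L_w^\delta$ in $S^\delta$ (per Definition~\ref{def:mcs-dual}), and each target ideal $\I{w}{x}$ of $S$ becomes the source filter of that dual connection. Under this dictionary the hypotheses $a \in \I{w}{x}$, $b \in \I{w}{y}$, and $\PI{w}{x}\,a = \PI{w}{y}\,b$ are precisely the hypotheses of Lemma~\ref{lem:between-equal} applied in $S^\delta$, and its conclusion translates back to $\PI{z}{x}\,a = \PI{z}{y}\,b$. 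Since the dual of a \mcs is again a \mcs, Lemma~\ref{lem:between-equal} applies verbatim, and the result follows.

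Should one prefer a self-contained argument, the primal proof dualizes directly. Inverting the composition law in axiom~\ref{MCc} for the chains $w \leq z \leq x$ and $w \leq z \leq y$ gives $\PI{w}{x} = \PI{w}{z} \circ \PI{z}{x}$ and $\PI{w}{y} = \PI{w}{z} \circ \PI{z}{y}$ on the relevant domains, so that $\PI{w}{z}(\PI{z}{x}\,a) = \PI{w}{x}\,a = \PI{w}{y}\,b = \PI{w}{z}(\PI{z}{y}\,b)$; because $\PI{w}{z}$ is one-to-one, the two inner elements coincide, which is the claim.

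The one point requiring care is the bookkeeping of domains. For the direct route I would first invoke lem.~\ref{lem:tol}(\ref{lem:tol:i1}) to obtain $w \relgamma z$ and $z \relgamma x$ (and likewise for $y$) so that all four connections exist, and then use lem.~\ref{lem:(6)} together with axiom~\ref{MCc} to check that $a \in \I{w}{x} \subseteq \I{z}{x}$ lies in the domain of $\PI{z}{x}$ and that $\PI{z}{x}\,a$ lands in $\I{w}{z}$, the domain of $\PI{w}{z}$; for the dual route the analogous obstacle is merely verifying that Definition~\ref{def:mcs-dual} sends filters to ideals and connections to inverse connections as asserted. In either case this is routine, and I expect no genuine difficulty beyond the translation of notation.
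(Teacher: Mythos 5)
Your proposal is correct and matches the paper, whose entire proof of this lemma is the single sentence that it is proved dually to lem.~\ref{lem:between-equal}; your first paragraph makes that duality explicit and your second paragraph simply writes out the resulting one-line computation via the inverted composition law of \ref{MCc}. The domain bookkeeping you flag is handled exactly as you suggest, so there is nothing to add.
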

\begin{proof} \disconnect
This lemma is proved dually to lem.~\ref{lem:between-equal}.
\end{proof}

\paragraph{The sum}

% We want lem. 4.1 here, as it's used to show that ~ is equivalent to
% the dual definition.
\begin{lemma} \citeHerr*{Hilfs.~4.1}{Prop.~4.1} \label{lem:4.1}
Let $x, y \in S$, $a \in L_x$, and $b \in L_y$.
Then the following are equivalent:
\begin{enumerate}
\item There is a $z \in S$ with
$z \geq_\gamma x, y$; $a \in \F{x}{z}$; $b \in \F{y}{z}$;
and $\P{x}{z}\,a = \P{y}{z}\,b$.
\item $x \relgamma y$, $a \in \F{x}{x \vee y}$, $b \in \F{y}{x \vee y}$,
and $\P{x}{x \vee y}\,a = \P{y}{x \vee y}\,b$.
\item There is a $z \in S$ with
$z \leq_\gamma x, y$; $a \in \I{x}{z}$; $b \in \I{y}{z}$;
and $\PI{z}{x}\,a = \PI{z}{y}\,b$.
\item $x \relgamma y$, $a \in \I{x \wedge y}{x}$, $b \in \I{x \wedge y}{y}$,
and $\PI{x \wedge y}{x}\,a = \PI{x \vee y}{y}\,b$.
\end{enumerate}
\end{lemma}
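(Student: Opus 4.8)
The plan is to prove the four conditions equivalent by a short cycle: establish the two ``existential $\Leftrightarrow$ canonical'' equivalences together with one ``top $\Leftrightarrow$ bottom'' equivalence, and obtain the remaining one by duality. Concretely I would prove (1)$\Leftrightarrow$(2) and (2)$\Leftrightarrow$(4) directly; since (1)$\Leftrightarrow$(2) is a statement about an arbitrary \mcs, applying it to the dual \mcs (th.~\ref{th:mcs-dual}) yields exactly (3)$\Leftrightarrow$(4), closing the cycle (1)$\Leftrightarrow$(2)$\Leftrightarrow$(4)$\Leftrightarrow$(3). I read the last relation in~(4) as $\PI{x \wedge y}{x}\,a = \PI{x \wedge y}{y}\,b$, since $\PI{x \vee y}{y}$ would require $x \vee y \leq y$; this is also what the argument below produces.

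For (1)$\Leftrightarrow$(2): the direction (2)$\Rightarrow$(1) is immediate by taking $z = x \vee y$, once one checks $x \vee y \geq_\gamma x, y$, which holds because $x \relgamma y$ forces $\relgamma$ among all of $x$, $y$, $x \vee y$, $x \wedge y$ by lem.~\ref{lem:tol}(\ref{lem:tol:i1})(\ref{lem:tol:i2}). For (1)$\Rightarrow$(2), given $z \geq_\gamma x, y$ I have $x \vee y \leq z$; applying lem.~\ref{lem:tol}(\ref{lem:tol:i1}) along $x \leq x \vee y \leq z$ and $y \leq x \vee y \leq z$ gives $x \relgamma (x \vee y)$ and $y \relgamma (x \vee y)$, and then lem.~\ref{lem:tol}(\ref{lem:tol:i5}) yields $x \relgamma y$. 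The memberships $a \in \F{x}{x \vee y}$, $b \in \F{y}{x \vee y}$ follow from the inclusions $\F{x}{z} \subset \F{x}{x \vee y}$ of lem.~\ref{lem:(6)}, and the equation $\P{x}{x \vee y}\,a = \P{y}{x \vee y}\,b$ is precisely the conclusion of lem.~\ref{lem:between-equal} applied with middle element $x \vee y$ and top element $z$.

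The crux is (2)$\Leftrightarrow$(4), which I would settle by a ``diamond'' argument over $x \wedge y \leq x, y \leq x \vee y$ using lem.~\ref{lem:(7)} and the composition axiom \ref{MCc}. Write $m = x \wedge y$ and $j = x \vee y$. Assuming (2), put $c = \P{x}{j}\,a = \P{y}{j}\,b$; then $c \in \I{x}{j} \cap \I{y}{j} = \I{m}{j}$ by lem.~\ref{lem:(7)}, so $c = \P{m}{j}\,d$ for a unique $d \in \F{m}{j} = \F{m}{x} \cap \F{m}{y}$, which places $d$ in the domains of both $\P{m}{x}$ and $\P{m}{y}$. Axiom \ref{MCc} applied to $m \leq x \leq j$ gives $\P{x}{j}(\P{m}{x}\,d) = \P{m}{j}\,d = c = \P{x}{j}\,a$, so $a = \P{m}{x}\,d \in \I{m}{x}$ by injectivity of $\P{x}{j}$, and symmetrically $b = \P{m}{y}\,d \in \I{m}{y}$; hence $\PI{m}{x}\,a = d = \PI{m}{y}\,b$, which is~(4). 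The converse (4)$\Rightarrow$(2) runs the same diagram in reverse: from $d := \PI{m}{x}\,a = \PI{m}{y}\,b \in \F{m}{x} \cap \F{m}{y} = \F{m}{j}$, axiom \ref{MCc} forces $a = \P{m}{x}\,d \in \F{x}{j}$ and $b = \P{m}{y}\,d \in \F{y}{j}$, with $\P{x}{j}\,a = \P{m}{j}\,d = \P{y}{j}\,b$.

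I expect the main difficulty to be bookkeeping rather than conceptual: at each transport one must verify that the element actually lies in the domain of the connection being applied, so that \ref{MCc} and the injectivity of the connections are legitimately invoked, and one must track which intersection lem.~\ref{lem:(7)} collapses --- the two images meeting in $\I{x}{j} \cap \I{y}{j} = \I{m}{j}$ for (2)$\Rightarrow$(4), and dually the two filters $\F{m}{x} \cap \F{m}{y} = \F{m}{j}$ for (4)$\Rightarrow$(2). With (1)$\Leftrightarrow$(2) and (2)$\Leftrightarrow$(4) in hand, (3)$\Leftrightarrow$(4) is the dual of (1)$\Leftrightarrow$(2) via th.~\ref{th:mcs-dual}, and all four statements are thereby equivalent.
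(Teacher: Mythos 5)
Your proposal is correct and follows essentially the same route as the paper: (2)$\Leftrightarrow$(1) by taking $z = x \vee y$ and lem.~\ref{lem:tol}, (1)$\Rightarrow$(2) via lem.~\ref{lem:(6)} and lem.~\ref{lem:between-equal}, (2)$\Leftrightarrow$(4) by the diamond argument through $\F{x \wedge y}{x \vee y}$ using \ref{MCd}/lem.~\ref{lem:(7)} and \ref{MCc}, and (3)$\Leftrightarrow$(4) by duality. The only cosmetic differences are that you unpack lem.~\ref{lem:between-equal} into a direct \ref{MCc}-plus-injectivity argument and obtain (3)$\Leftrightarrow$(4) wholesale from th.~\ref{th:mcs-dual} rather than dualizing each implication separately; you also correctly identify the typo $\PI{x \vee y}{y}$ for $\PI{x \wedge y}{y}$ in item~(4).
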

\begin{proof}
\leavevmode

Regarding (2) $\Rightarrow$ (1) and (4) $\Rightarrow$ (3):
Lem.~\ref{lem:tol}(\ref{lem:tol:i1})(\ref{lem:tol:i2}) shows that
$x \relgamma y$ implies $x \vee y \relgamma x,y$ and
$x \wedge y \relgamma x,y$ and thus we can choose $z = x \vee y$ for
(1) or $z = z \wedge y$ for (3).

Regarding (1) $\Rightarrow$ (2):
Applying lem.~\ref{lem:between-equal} to
$\P{x}{z}\,a = \P{y}{z}\,b$ shows that
$\P{x}{x \vee y}\,a = \P{y}{x \vee y}\,b$.

Regarding (3) $\Rightarrow$ (4):
Applying lem.~\ref{lem:between-equal-dual} to
$\PI{z}{x}\,a = \PI{z}{y}\,b$ shows that
$\PI{x \vee y}{x}\,a = \PI{x \vee y}{y}\,b$.

Regarding (2) $\Rightarrow$ (4):
By \ref{MCd}(a), there is a
$c \in \F{x \wedge y}{x \vee y} \subset L_{x \wedge y}$ such that
$\P{x \wedge y}{x \vee y}\, c = \P{x}{x \vee y}\, a =
\P{y}{x \vee y}\, b$.
By lem.~\ref{lem:between-equal},
$\P{x \wedge y}{x}\, c = a$ and so
$c = \PI{x \wedge y}{x}\, a$.
Likewise, $c = \PI{x \wedge y}{y}\, b$, so
$\PI{x \wedge y}{x}\, a = \PI{x \wedge y}{y}\, b$.

The implication (4) $\Rightarrow$ (2) is proved dually
using \ref{MCd}(b) and lem.~\ref{lem:between-equal-dual}.
\end{proof}

\begin{definition} \label{def:sim}
Given a \mcs, we define $M = \{ (x, a) \mvert x \in S, a \in L_x \}$.
We define the relation $\sim$ on $M$ as:
$(x, a) \sim (y, b)$ iff $x \relgamma y$, $a \in \F{x}{x \vee y}$,
$b \in \F{y}{x \vee y}$, and
$\P{x}{x \vee y}\,a = \P{y}{x \vee y}\,b$.
\end{definition}

\begin{lemma} \label{lem:sim-dual}
Given $(x, a), (y, b) \in M$,
$(x, a) \sim (y, b)$ iff $x \relgamma y$, $a \in \I{x}{x \wedge y}$,
$b \in \I{y}{x \wedge y}$, and
$\PI{x \wedge y}{x}\,a = \PI{x \wedge y}{y}\,b$.
\end{lemma}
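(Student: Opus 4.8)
The plan is to read the claim off directly from Lemma~\ref{lem:4.1}, since no genuinely new argument is required. First I would observe that the defining condition for $(x,a) \sim (y,b)$ in Definition~\ref{def:sim} is verbatim condition~(2) of Lemma~\ref{lem:4.1}: both assert $x \relgamma y$, $a \in \F{x}{x \vee y}$, $b \in \F{y}{x \vee y}$, and $\P{x}{x \vee y}\,a = \P{y}{x \vee y}\,b$. Thus membership in $\sim$ is synonymous with condition~(2) holding for the pair, and the problem reduces to matching the target characterization against the remaining conditions of that lemma.

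Next I would identify the characterization asserted here with condition~(4) of Lemma~\ref{lem:4.1}. Here the only thing demanding attention is the subscript/superscript convention: the relevant ideals are $\I{x \wedge y}{x} \subset L_x$ and $\I{x \wedge y}{y} \subset L_y$ (lower index $x \wedge y$, upper index the block in which the ideal lives), and the relevant maps are the inverse connections $\PI{x \wedge y}{x} = (\P{x \wedge y}{x})^{-1}$ and $\PI{x \wedge y}{y} = (\P{x \wedge y}{y})^{-1}$, which carry those ideals back into $L_{x \wedge y}$. Written with the indices in this form, the stated condition is exactly condition~(4): $x \relgamma y$, $a \in \I{x \wedge y}{x}$, $b \in \I{x \wedge y}{y}$, and $\PI{x \wedge y}{x}\,a = \PI{x \wedge y}{y}\,b$.

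Finally I would invoke Lemma~\ref{lem:4.1}, which establishes that conditions~(1)--(4) are mutually equivalent; in particular (2) holds iff (4) holds. Combining this with the two identifications above yields precisely the asserted biconditional. I expect the only real obstacle to be the bookkeeping of the index convention --- confirming that the meet $x \wedge y$ plays here the dual role that the join $x \vee y$ plays in Definition~\ref{def:sim}, and that each inverse connection $\PI{x \wedge y}{\cdot}$ is applied to an element residing in the correct block so that the expression is well defined. Once that alignment is checked, the lemma is an immediate translation of the already-proven equivalence, requiring no independent argument.
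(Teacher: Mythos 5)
Your proposal is correct and is essentially identical to the paper's own proof, which likewise just observes that Definition~\ref{def:sim} is condition~(2) of Lemma~\ref{lem:4.1} and that the asserted characterization is condition~(4) (modulo the subscript/superscript convention, which you handle correctly). No further comment is needed.
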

\begin{proof}
\leavevmode

This follows directly by applying lem.~\ref{lem:4.1} to
def.~\ref{def:sim}.
\end{proof}

\begin{lemma} \label{lem:sim-leq-z}
If $x \leq y$ in $S$:
\begin{enumerate}
\item $(x,a) \sim (y,b)$ iff $x \relgamma y$, $a \in \F{x}{y}$,
and $\P{x}{y}\,a = b$.
\item $(x,a) \sim (y,b)$ iff $x \relgamma y$, $b \in \I{x}{y}$,
and $\PI{x}{y}\,b = a$.
\end{enumerate}
\end{lemma}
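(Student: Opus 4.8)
The plan is to obtain both equivalences by specializing the general definition of $\sim$ to the case $x \leq y$, where the skeleton join and meet collapse, and then invoking axiom \ref{MCa} to trivialize the conditions attached to the block on which no genuine transport occurs.

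For part (1), since $x \leq y$ we have $x \vee y = y$. Substituting this into Definition \ref{def:sim}, the relation $(x,a) \sim (y,b)$ unfolds to: $x \relgamma y$, $a \in \F{x}{y}$, $b \in \F{y}{y}$, and $\P{x}{y}\,a = \P{y}{y}\,b$. Now axiom \ref{MCa} gives $\F{y}{y} = L_y$ and that $\P{y}{y}$ is the identity on $L_y$, so the condition $b \in \F{y}{y}$ is vacuous (we always have $b \in L_y$) and the final equation reduces to $\P{x}{y}\,a = b$. This is exactly the asserted form, so part (1) follows with no further work.

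For part (2) the cleanest route is to read part (1) through the connection $\P{x}{y}$, which by axiom \ref{MCe} is a lattice isomorphism from $\F{x}{y}$ onto $\I{x}{y}$ with inverse $\PI{x}{y}$. Under the common hypothesis $x \relgamma y$, the pair of conditions ``$a \in \F{x}{y}$ and $\P{x}{y}\,a = b$'' is then equivalent to ``$b \in \I{x}{y}$ and $\PI{x}{y}\,b = a$'', simply because $\P{x}{y}$ and $\PI{x}{y}$ are mutually inverse bijections between $\F{x}{y}$ and $\I{x}{y}$. Alternatively, one may specialize Lemma \ref{lem:sim-dual} directly: since $x \wedge y = x$, axiom \ref{MCa} makes $a \in \I{x}{x} = L_x$ vacuous and reduces $\PI{x}{x}\,a = \PI{x}{y}\,b$ to $a = \PI{x}{y}\,b$, again yielding the claim.

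There is no real obstacle here; the statement is a routine unfolding of the definition. The only point requiring care is the bookkeeping of the skeleton convention — that $x \leq y$ forces $x \vee y = y$ and $x \wedge y = x$ — together with the correct application of \ref{MCa} to strip away the identity connection on the block where no actual transport takes place.
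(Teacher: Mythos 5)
Your proof is correct and follows essentially the same route as the paper: part (1) by specializing def.~\ref{def:sim} with $x \vee y = y$ and stripping the identity connection via \ref{MCa}, and part (2) by the corresponding specialization of lem.~\ref{lem:sim-dual} (your ``alternative'' route is exactly the paper's; your first route via the bijectivity of $\P{x}{y}$ is an equally valid shortcut). Nothing is missing.
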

\begin{proof}
\leavevmode

This follows directly by applying \ref{MCa} to def.~\ref{def:sim} and
lem.~\ref{lem:sim-dual}.
\end{proof}

\begin{theorem} \citeHerr*{(21)}{(21)} \label{th:equiv-rel}
$\sim$ is an equivalence relation on $M$.
\end{theorem}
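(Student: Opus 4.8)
The plan is to verify the three defining properties of an equivalence relation; reflexivity and symmetry are routine, and transitivity carries all the difficulty.

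\emph{Reflexivity and symmetry.} Since $\relgamma$ is reflexive and $x \vee x = x$, and since \ref{MCa} gives $\F{x}{x} = L_x$ and $\P{x}{x} = \id$, the conditions of def.~\ref{def:sim} hold for $(x,a) \sim (x,a)$ for every $(x,a) \in M$. Symmetry is immediate, because those conditions are unchanged under exchanging $(x,a)$ with $(y,b)$ (as $\relgamma$ is symmetric and $x \vee y = y \vee x$).

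\emph{Transitivity.} Assume $(x,a) \sim (y,b)$ and $(y,b) \sim (z,c)$, and write $p = x \vee y$, $q = y \vee z$, and $w = x \vee y \vee z = p \vee q$. The idea is to transport $a$, $b$, and $c$ up into the single block $L_w$, show they land on a common element, and then read off $(x,a) \sim (z,c)$ from lem.~\ref{lem:4.1}. First I would pin down the overlap relations. From $x \relgamma y$ and $y \relgamma z$, lem.~\ref{lem:tol} gives $y \leq_\gamma p$ and $y \leq_\gamma q$, and since $y \leq p \wedge q$, lem.~\ref{lem:tol}(\ref{lem:tol:i3}) yields $y \relgamma w$; meanwhile lem.~\ref{lem:MCd-stronger} gives $\F{y}{p} \cap \F{y}{q} = \F{y}{w}$, so from $b \in \F{y}{p} \cap \F{y}{q}$ we obtain $b \in \F{y}{w}$. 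Because $y \leq p \leq w$ and $y \relgamma w$, lem.~\ref{lem:tol}(\ref{lem:tol:i1}) then forces $p \relgamma w$ (and symmetrically $q \relgamma w$).

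The crux is to promote $x \relgamma p$ to $x \relgamma w$, for which I would use the overlap axiom \ref{MCh}. Put $b' = \P{y}{p}\,b$; by $(x,a) \sim (y,b)$ we also have $b' = \P{x}{p}\,a$, so $b' \in \I{x}{p}$. Applying \ref{MCc} to $y \leq p \leq w$ shows $\P{y}{p}\,b \in \I{y}{p} \cap \F{p}{w}$, whence $b' \in \F{p}{w}$; thus $\I{x}{p} \cap \F{p}{w}$ contains $b'$ and so is nonempty, and \ref{MCh} applied to $x \leq_\gamma p \leq_\gamma w$ gives $x \relgamma w$. Now \ref{MCc} applied to $x \leq p \leq w$ gives $a = \PI{x}{p}\,b' \in \F{x}{w}$ and $\P{x}{w}\,a = \P{p}{w}\,b'$; the same axiom gives $\P{y}{w}\,b = \P{p}{w}\,b'$, so $\P{x}{w}\,a = \P{y}{w}\,b$. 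The symmetric argument on the $q$-side yields $z \relgamma w$, $c \in \F{z}{w}$, and $\P{z}{w}\,c = \P{y}{w}\,b$. Hence $w \geq_\gamma x, z$ with $a \in \F{x}{w}$, $c \in \F{z}{w}$, and $\P{x}{w}\,a = \P{z}{w}\,c$, which is condition (1) of lem.~\ref{lem:4.1} (with $x,z$ and $w$ in the roles of $x,y$ and $z$); its implication (1) $\Rightarrow$ (2) is exactly the statement $(x,a) \sim (z,c)$ of def.~\ref{def:sim}.

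The main obstacle is precisely the step establishing $x \relgamma w$ (equivalently $x \relgamma z$): the overlap tolerance $\relgamma$ is not transitive, so $x \relgamma y$ and $y \relgamma z$ alone do not give $x \relgamma z$. What saves the argument is that the common element $b$ produces a witness $b'$ lying in the intersection $\I{x}{p} \cap \F{p}{w}$, so that \ref{MCh} can be invoked; maneuvering that witness into the correct filter is the one place where \ref{MCc} and lem.~\ref{lem:MCd-stronger} must be combined carefully.
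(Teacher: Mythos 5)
Your proof is correct and takes essentially the same route as the paper's: both arguments transport $a$, $b$, $c$ into the top block $L_{x\vee y\vee z}$, use a witness element lying in an intersection $\I{\bullet}{\bullet}\cap\F{\bullet}{\bullet}$ to invoke \ref{MCh} (the essential step, since $\relgamma$ is not transitive), compose connections with \ref{MCc}, and descend to $x\vee z$ via lem.~\ref{lem:between-equal} (which you reach through lem.~\ref{lem:4.1}(1)$\Rightarrow$(2)). The only divergence is bookkeeping: the paper routes its witness through the meet $(x\vee y)\wedge(y\vee z)$ using \ref{MCd} directly, whereas you work with the two joins $x\vee y$ and $y\vee z$ separately via lem.~\ref{lem:MCd-stronger}, and you make explicit the overlaps $x\relgamma w$ and $z\relgamma w$ that the paper's composition computation uses only implicitly.
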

\begin{proof}
\leavevmode

Regarding transitivity of $\sim$:
Let $(x_0, a_0) \sim (x_1, a_1)$ and $(x_1, a_1) \sim (x_2, a_2)$, so
\begin{gather*}
x_0 \vee x_1 \relgamma x_0, x_1
\textrm{ and }
x_1 \vee x_2 \relgamma x_1, x_2 \\
\P{x_0}{x_0 \vee x_1}\,a_0 = \P{x_1}{x_0 \vee x_1}\,a_1
\textrm{ and }
\P{x_1}{x_1 \vee x_2}\,a_1 = \P{x_2}{x_1 \vee x_2}\,a_2.
\end{gather*}
Define $x = (x_0 \vee x_1) \wedge (x_1 \vee x_2)$ and
$y = x_0 \vee x_1 \vee x_2$.
Since $x_1 \leq x \leq x_0 \vee x_1, x_1 \vee x_2$, we know
$x_1 \relgamma x$, $x \relgamma x_0 \vee x_1$,
$x \relgamma x_1 \vee x_2$, and by
lem.~\ref{lem:tol}(\ref{lem:tol:i6}),
$x \relgamma (x_0 \vee x_1) \vee (x_1 \vee x_2) = y$.
By \ref{MCc},
$a_1 \in \F{x_1}{x}$, $\P{x_1}{x}\,a_1 \in \F{x}{x_0 \vee x_1}$,
and $\P{x_1}{x}\,a_1 \in \F{x}{x_1 \vee x_2}$, and so
by \ref{MCd}, $\P{x_1}{x}\,a_1 \in \F{x}{y}$.
This means that
$\P{x_1}{x}\,a_1 \in \I{x_1}{x} \cap \F{x}{y} \neq \zeroslash$ and
by \ref{MCh}, $x_1 \relgamma y$ and then by
lem.~\ref{lem:tol}(\ref{lem:tol:i1}), $x_0 \vee x_1 \relgamma y$ and
$x_1 \vee x_2 \relgamma y$.
Since $x_1 \leq x \leq y$, by \ref{MCc}, $a_1 \in \F{x_1}{y}$.
Using \ref{MCc} again, $\P{x_1}{x_0 \vee x_1}\,a_1 \in \F{x_0 \vee x_ 1}{y}$.
Similarly, $\P{x_1}{x_1 \vee x_2}\,a_1 \in \F{x_1 \vee x_ 2}{y}$.
By the premises, $\P{x_0}{x_0 \vee x_1}\,a_0 = \P{x_1}{x_0 \vee x_1}\,a_1
\in \F{x_0 \vee x_ 1}{y}$, so
\begin{alignat*}{2}
\P{x_0}{y}\,a_0 & = \P{x_0 \vee x_1}{y}(\P{x_0}{x_0 \vee x_1}\, a_0) \\
& = \P{x_0 \vee x_1}{y}(\P{x_1}{x_0 \vee x_1}\, a_1) \\
& = \P{x_1}{y}\,a_1 \\
& = \P{x_1 \vee x_2}{y}(\P{x_1}{x_1 \vee x_2}\, a_1) \\
\interject{because of the premise $(x_1,a_1)\sim(x_2,a_2)$,
$\P{x_1}{x_1 \vee x_2}\,a_1 = \P{x_2}{x_1 \vee x_2}\,a_2$, so}
& = \P{x_1 \vee x_2}{y}(\P{x_2}{x_1 \vee x_2}\, a_2) \\
& = \P{x_2}{y}\,a_2
\end{alignat*}
By lem.~\ref{lem:between-equal},
since $x_0, x_2 \leq x_0 \vee x_2 \leq y$,
$\P{x_0}{x_0 \vee x_2}\,a_0 = \P{x_2}{x_0 \vee x_2}\,a_2$
and thus $(x_0, a_0) \sim (x_2, a_2)$.

Reflexivity and symmetry are trivially satisfied.
\end{proof}

\begin{definition} \label{def:sum}
The \emph{sum} $L$ of a \mcs $\scrC$ is defined to be the set $M$
modulo the equivalence relation $\sim$.
\end{definition}

\begin{definition} \label{def:kappa}
The map $\kappa$ is defined to be the canonical projection of $M$ onto $L$
for the equivalence relation $\sim$.
\end{definition}

\begin{definition} \label{def:pi}
For each $x \in S$, we define the canonical mapping
$\pi_x: L_x \rightarrow L: y \mapsto \kappa\,(x, y)$.
\end{definition}

\begin{definition} \label{def:Lam}
We define $\Lambda_x = \pi_x\, L_x$.
For $x \in S$, we define $0_x = \pi_x\,\hatzero_{L_x}$ and
$1_x = \pi_x\,\hatone_{L_x}$.
\end{definition}

\begin{lemma} \label{lem:canon-1-1}
The mappings $\pi_x$ are bijective between $L_x$ and $\Lambda_x$.
\end{lemma}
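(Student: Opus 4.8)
The plan is to observe that surjectivity is built into the definition, so the entire content of the lemma is injectivity, which collapses immediately to the reflexive case of $\sim$ via axiom \ref{MCa}. By def.~\ref{def:Lam}, $\Lambda_x = \pi_x\,L_x$ is by construction the image of $L_x$ under $\pi_x$, so $\pi_x$ maps $L_x$ onto $\Lambda_x$ with nothing further to prove. It remains to show that $\pi_x$ is one-to-one: if $\pi_x\,a = \pi_x\,b$ for $a, b \in L_x$, then $a = b$.

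First I would unfold the hypothesis. By def.~\ref{def:pi}, $\pi_x\,a = \pi_x\,b$ reads $\kappa\,(x,a) = \kappa\,(x,b)$, which by def.~\ref{def:kappa} means exactly $(x,a) \sim (x,b)$. Next I would apply the comparable-case characterization of $\sim$: since $x \leq x$, lem.~\ref{lem:sim-leq-z}(1) applies with both skeleton coordinates equal to $x$, giving that $(x,a) \sim (x,b)$ iff $x \relgamma x$, $a \in \F{x}{x}$, and $\P{x}{x}\,a = b$. (Equivalently, one can unfold def.~\ref{def:sim} directly using $x \vee x = x$.)

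Finally I would invoke \ref{MCa}, which asserts $\F{x}{x} = L_x$ and that $\P{x}{x}$ is the identity map on $L_x$. Under this, the membership $a \in \F{x}{x}$ is automatic and the equation $\P{x}{x}\,a = b$ reduces to $a = b$, establishing injectivity. Combined with the tautological surjectivity, this gives that $\pi_x$ is a bijection between $L_x$ and $\Lambda_x$.

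The only thing to be careful about — and it is bookkeeping rather than a genuine obstacle — is confirming that the machinery of $\sim$ is legitimately applicable when the two skeleton elements coincide. This is immediate because $\leq$ and $\relgamma$ are reflexive and $x \vee x = x$, so all the hypotheses of lem.~\ref{lem:sim-leq-z} (or of def.~\ref{def:sim}) are met trivially. There is no real difficulty here; the lemma is essentially a direct corollary of axiom \ref{MCa}.
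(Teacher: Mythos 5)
Your proposal is correct and follows essentially the same route as the paper: surjectivity is immediate from the definition of $\Lambda_x$, and injectivity comes from unfolding $(x,a) \sim (x,b)$ (the paper uses def.~\ref{def:sim} directly rather than lem.~\ref{lem:sim-leq-z}, but as you note these coincide here) and applying \ref{MCa} to conclude $\P{x}{x}\,a = \P{x}{x}\,b$ forces $a = b$. No gaps.
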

\begin{proof}
\leavevmode

By the definition of $\Lambda_x$, $\pi_x$ is onto.
To prove that $\pi_x$ is one-to-one:
If $\pi_x(a) = \pi_x(b)$ for $a, b \in L_x$, then
$\kappa\,(x,a) = \kappa\,(x,b)$ and $(x,a) \sim (x,b)$.
Then by def.~\ref{def:sim},
$\P{x}{x}\,a = \P{x}{x}\,b$, which, since $\P{x}{x}$ is the identity
on $L_x$, means $a = b$.
\end{proof}

\begin{lemma}\citeHerr*{(6)}{(6)} \label{lem:(6)-for-Lam}
If $x, y, z \in S$, $x \leq z \leq y$, and $x \relgamma y$
(implying by lem.~\ref{lem:tol}(\ref{lem:tol:i1}),
$x \relgamma z$ and $z \relgamma y$).
Then $\Lambda_x \cap \Lambda_y \subset \Lambda_z$.
\end{lemma}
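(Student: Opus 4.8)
The plan is to chase a single element of the intersection through the definitions of the sum. Take any $\lambda \in \Lambda_x \cap \Lambda_y$. By def.~\ref{def:Lam} there are $a \in L_x$ and $b \in L_y$ with $\lambda = \pi_x\,a = \pi_y\,b$, so $\kappa\,(x,a) = \kappa\,(y,b)$ and hence $(x,a) \sim (y,b)$. Since $x \leq y$ and $x \relgamma y$ by hypothesis, I would invoke lem.~\ref{lem:sim-leq-z}(1) to conclude that $a \in \F{x}{y}$ and $\P{x}{y}\,a = b$. The point of this first step is to replace the abstract equivalence $(x,a)\sim(y,b)$ with the concrete membership of $a$ in the filter $\F{x}{y}$.

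The key move is then to transport $a$ up only as far as $z$. Because $x \leq z \leq y$ and $x \relgamma y$, lem.~\ref{lem:(6)} gives $\F{x}{y} \subset \F{x}{z}$, so $a \in \F{x}{z}$ as well. I would then set $c = \P{x}{z}\,a \in L_z$ and apply lem.~\ref{lem:sim-leq-z}(1) a second time, now to the pair $x \leq z$ (noting $x \relgamma z$ by lem.~\ref{lem:tol}(\ref{lem:tol:i1})), obtaining $(x,a) \sim (z,c)$. Consequently $\lambda = \pi_x\,a = \pi_z\,c \in \Lambda_z$, which is exactly the desired conclusion.

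I do not expect any serious obstacle here: the whole argument rests on two already-established facts, namely (i) that membership in $\sim$ for comparable skeleton elements is governed by the filter $\F{x}{y}$ together with the connection $\P{x}{y}$ (lem.~\ref{lem:sim-leq-z}), and (ii) that these filters shrink as the upper index grows (lem.~\ref{lem:(6)}). The one thing worth flagging is that both hypotheses $\lambda \in \Lambda_x$ and $\lambda \in \Lambda_y$ are genuinely needed: membership in $\Lambda_x$ supplies the representative $a$, while membership in $\Lambda_y$ is precisely what forces $a \in \F{x}{y}$, so that lem.~\ref{lem:(6)} can be applied to push $a$ into $\F{x}{z}$ in the first place.
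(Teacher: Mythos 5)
Your proof is correct and follows essentially the same route as the paper's: both reduce $\lambda \in \Lambda_x \cap \Lambda_y$ to $(x,a)\sim(y,b)$ via lem.~\ref{lem:sim-leq-z}, transport $a$ up to $L_z$ through $\P{x}{z}$, and apply lem.~\ref{lem:sim-leq-z} a second time to land in $\Lambda_z$. The only cosmetic difference is that you justify $a \in \F{x}{z}$ by an explicit appeal to lem.~\ref{lem:(6)} and relate $(x,a)$ to $(z,c)$, whereas the paper factors $\P{x}{y} = \P{z}{y}\circ\P{x}{z}$ via \ref{MCc} and relates $(z,c)$ to $(y,b)$; the content is the same.
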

\begin{proof}
\leavevmode

Given $a \in \Lambda_x \cap \Lambda_y$,
\begin{alignat*}{2}
\kappa\,(x, \pi_x^{-1}\,a) & = a = \kappa\,(y, \pi_y^{-1}\,a) \\
(x, \pi_x^{-1}\,a) & \sim (y, \pi_y^{-1}\,a) \\
\interject{by lem.~\ref{lem:sim-leq-z},}
\pi_y^{-1}\,a & = \P{x}{y}(\pi_x^{-1}\,a) \\
& = \P{z}{y}(\P{x}{z}(\pi_x^{-1}\,a)) \\
\interject{again by lem.~\ref{lem:sim-leq-z},}
(z, \P{x}{z}(\pi_x^{-1}\,a)) & \sim (y, \pi_y^{-1}\,a) \\
\kappa\,(z, \P{x}{z}(\pi_x^{-1}\,a)) & = \kappa\,(y, \pi_y^{-1}\,a) = a
\end{alignat*}
Thus, $a \in \Lambda_z$.
\end{proof}

\begin{lemma}
If $x \leq_\gamma y$ in $S$ and $a \in L_x, \F{x}{y}$, then
$\pi_y(\P{x}{y}\,a) = \pi_x\,a$.
If $x \leq_\gamma y$ in $S$ and $b \in L_y, \I{x}{y}$, then
$\pi_x(\PI{x}{y}\,b) = \pi_y\,b$.
\end{lemma}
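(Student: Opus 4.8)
The plan is to read both claims off directly from lem.~\ref{lem:sim-leq-z}, which already packages the content of $\sim$ in the special case $x \leq y$, together with the definitions of $\kappa$ and $\pi_x$. There is no real work to do beyond matching hypotheses to that lemma.

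For the first claim, I would start from the hypothesis $x \leq_\gamma y$ (i.e.\ $x \leq y$ and $x \relgamma y$, by def.~\ref{def:leq-gamma}) and $a \in \F{x}{y}$. Setting $b := \P{x}{y}\,a$, the three conditions ``$x \relgamma y$, $a \in \F{x}{y}$, and $\P{x}{y}\,a = b$'' are exactly the right-hand side of lem.~\ref{lem:sim-leq-z}(1), so that lemma yields $(x, a) \sim (y, \P{x}{y}\,a)$. Applying $\kappa$ and unfolding def.~\ref{def:pi} then gives
\[
\pi_x\,a = \kappa\,(x,a) = \kappa\,(y, \P{x}{y}\,a) = \pi_y(\P{x}{y}\,a),
\]
which is the asserted equality.

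For the second claim I would proceed dually: with $b \in \I{x}{y}$ and $a := \PI{x}{y}\,b$, the conditions ``$x \relgamma y$, $b \in \I{x}{y}$, and $\PI{x}{y}\,b = a$'' match the right-hand side of lem.~\ref{lem:sim-leq-z}(2), giving $(x, \PI{x}{y}\,b) \sim (y, b)$; applying $\kappa$ produces $\pi_x(\PI{x}{y}\,b) = \pi_y\,b$.

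I expect no genuine obstacle here. The heavy lifting was already done in th.~\ref{th:equiv-rel} (that $\sim$ is an equivalence relation) and in lem.~\ref{lem:sim-leq-z}; the present lemma is merely the observation that $\pi_x$ and $\pi_y$ agree after transport along $\P{x}{y}$, which is immediate once $(x,a)$ and $(y, \P{x}{y}\,a)$ are known to be $\sim$-equivalent. The only point requiring mild care is invoking the two halves of lem.~\ref{lem:sim-leq-z} in the correct orientation --- using part (1) for the $\F{x}{y}$/$\P{x}{y}$ direction and part (2) for the $\I{x}{y}$/$\PI{x}{y}$ direction --- and keeping straight that $\F{x}{y} \subset L_x$ while $\I{x}{y} \subset L_y$, so that the membership hypotheses are placed in the appropriate block.
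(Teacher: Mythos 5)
Your proof is correct and follows essentially the same route as the paper: both reduce the claim to showing $(x,a)\sim(y,\P{x}{y}\,a)$ and then apply $\kappa$ and def.~\ref{def:pi}. The only cosmetic difference is that you invoke lem.~\ref{lem:sim-leq-z} (which packages the $x\leq y$ case of $\sim$), whereas the paper unfolds def.~\ref{def:sim} directly; these are the same computation.
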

\begin{proof}
\leavevmode

\begin{alignat*}{2}
\interject{By \ref{MCd},}
\P{y}{x \vee y}(\P{x}{y}\,a) & = \P{x}{x \vee y}\,a \\
\interject{by def.~\ref{def:sim},}
(y, \P{x}{y}\,a) & \sim (x, a) \\
\interject{by def.~\ref{def:kappa},}
\kappa\,(y, \P{x}{y}\,a) &= \kappa\,(x, a) \\
\interject{by def.~\ref{def:pi},}
\pi_y(\P{x}{y}\,a) & = \pi_x\,a
\end{alignat*}

Dually, we show that $\pi_x(\PI{x}{y}\,b) = \pi_y\,b$.
\end{proof}

\paragraph{The sum is a poset}

\begin{definition} \label{def:leq-sub-x}
For every $v \in S$,
we define on $L$ the binary relation $\leq_v$:
Given $x, y \in L$, $x \leq_v y$ iff
$x, y \in \Lambda_v$ and $\pi_v^{-1}\,x \leq \pi_v^{-1}\,y$
(as elements of $L_v$).
We define $x \geq_v y$ iff $y \leq_v x$.
We define $x <_v y$ iff $\pi_v^{-1}\,x < \pi_v^{-1}\,y$, or
equivalently
$x \leq_v y$ and $\pi_v^{-1}\,x \neq \pi_v^{-1}\,y$, or
equivalently (since $\pi_v$ is bijective)
$x \leq_v y$ and $x \neq y$.
\end{definition}

\begin{lemma} \label{lem:leq-sub-x-po}
For every $x \in S$,
the relation $\leq_x$ is a partial ordering on $\Lambda_x$.
\end{lemma}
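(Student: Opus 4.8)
The plan is to observe that $\leq_x$ is simply the pullback of the lattice order $\leq$ on $L_x$ through the bijection $\pi_x^{-1}\colon \Lambda_x \to L_x$ furnished by lemma~\ref{lem:canon-1-1}. Once this is recognized, the three poset axioms transfer directly from $L_x$, since pulling back a partial order through an \emph{injection} again yields a partial order. So the whole argument reduces to checking reflexivity, antisymmetry, and transitivity, each of which is a one-line appeal to the corresponding property of $\leq$ on the block $L_x$.

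Concretely, I would first note that def.~\ref{def:leq-sub-x} restricts $a \leq_x b$ (for $a, b \in \Lambda_x$) to the condition $\pi_x^{-1}\,a \leq \pi_x^{-1}\,b$ in $L_x$; the side condition $a, b \in \Lambda_x$ is automatic here because we are working within $\Lambda_x$. For reflexivity, $\pi_x^{-1}\,a \leq \pi_x^{-1}\,a$ holds in $L_x$, so $a \leq_x a$. For transitivity, $a \leq_x b$ and $b \leq_x c$ give $\pi_x^{-1}\,a \leq \pi_x^{-1}\,b \leq \pi_x^{-1}\,c$ in $L_x$, whence $\pi_x^{-1}\,a \leq \pi_x^{-1}\,c$ and $a \leq_x c$.

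The only step where more than the order axioms on $L_x$ is needed is antisymmetry: from $a \leq_x b$ and $b \leq_x a$ we obtain $\pi_x^{-1}\,a \leq \pi_x^{-1}\,b$ and $\pi_x^{-1}\,b \leq \pi_x^{-1}\,a$, hence $\pi_x^{-1}\,a = \pi_x^{-1}\,b$ by antisymmetry in $L_x$, and then $a = b$ because $\pi_x$ is \emph{injective} on $L_x$ by lemma~\ref{lem:canon-1-1}. This is the single place where the bijectivity of $\pi_x$ (rather than mere order-theoretic bookkeeping) is invoked, and it is the main — though still routine — obstacle: without knowing $\pi_x$ is one-to-one, equality of preimages would not yield equality of elements of $\Lambda_x$. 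Since lemma~\ref{lem:canon-1-1} supplies exactly this injectivity, there is no real difficulty, and the proof is a short verification.
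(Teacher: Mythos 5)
Your proposal is correct and is essentially the paper's own argument: the paper's proof is a one-line appeal to def.~\ref{def:leq-sub-x} and the bijectivity of $\pi_x$ from lem.~\ref{lem:canon-1-1}, which you have simply spelled out axiom by axiom, correctly isolating injectivity as the ingredient needed for antisymmetry.
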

\begin{proof}
\leavevmode

This follows from def.~\ref{def:leq-sub-x} and
lem.~\ref{lem:canon-1-1} ($\pi_x^{-1}$ is bijective).
\end{proof}

% Herr (9)
\begin{lemma} \citeHerr*{(9)}{(9)} \label{lem:(9)}
For $a, b \in L$ and $x, y \in S$, if $a \leq_x b$ and $a \in \Lambda_y$, then
$a \leq_{x \vee y} b$ (which implies $a, b \in \Lambda_{x \vee y}$).
\end{lemma}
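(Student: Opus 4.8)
The plan is to push everything down into the single block $L_x$, exploiting that $\leq_x$ is literally the order of $L_x$ carried across the bijection $\pi_x$ (lem.~\ref{lem:canon-1-1}), and that $\F{x}{x \vee y}$ is a \emph{filter} of $L_x$ rather than an arbitrary subset. The point is that the hypothesis $a \in \Lambda_y$ will force $\pi_x^{-1}\,a$ into the filter $\F{x}{x \vee y}$, and then $a \leq_x b$ will carry $\pi_x^{-1}\,b$ into the same filter by upward closure.

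First I would unpack the hypotheses. Since $a \leq_x b$, both $a, b \in \Lambda_x$; write $\alpha = \pi_x^{-1}\,a$ and $\beta = \pi_x^{-1}\,b$, so that $\alpha \leq \beta$ in $L_x$. Since also $a \in \Lambda_y$, the pairs $(x, \alpha)$ and $(y, \pi_y^{-1}\,a)$ both project under $\kappa$ to $a$, hence $(x, \alpha) \sim (y, \pi_y^{-1}\,a)$. Unwinding def.~\ref{def:sim} then gives $x \relgamma y$ — so that $x \vee y \relgamma x, y$ by lem.~\ref{lem:tol}(\ref{lem:tol:i1})(\ref{lem:tol:i2}), and in particular $\F{x}{x \vee y}$ exists — and, crucially, $\alpha \in \F{x}{x \vee y}$.

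The key step, and the only place where real structure is used, is to promote $\beta$ into the same filter. Because $\F{x}{x \vee y}$ is a filter of $L_x$ by \ref{MCe}, it is closed upward; since $\alpha \in \F{x}{x \vee y}$ and $\alpha \leq \beta$, I conclude $\beta \in \F{x}{x \vee y}$. I expect this upward-closure observation to be the crux: it is the step that converts the raw membership $a \in \Lambda_y$ together with the order relation into a statement about $b$, and it would fail for a merely order-theoretic ideal/filter lacking the lattice-filter closure guaranteed by \ref{MCe}.

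The remainder is routine transport. With $\alpha, \beta \in \F{x}{x \vee y}$, $x \leq x \vee y$, and $x \relgamma x \vee y$, lem.~\ref{lem:sim-leq-z} gives $(x, \alpha) \sim (x \vee y, \P{x}{x \vee y}\,\alpha)$ and $(x, \beta) \sim (x \vee y, \P{x}{x \vee y}\,\beta)$, so that $a, b \in \Lambda_{x \vee y}$ with $\pi_{x \vee y}^{-1}\,a = \P{x}{x \vee y}\,\alpha$ and $\pi_{x \vee y}^{-1}\,b = \P{x}{x \vee y}\,\beta$. Finally, since $\P{x}{x \vee y}$ is a lattice isomorphism (hence order-preserving) and $\alpha \leq \beta$, I get $\pi_{x \vee y}^{-1}\,a \leq \pi_{x \vee y}^{-1}\,b$, which by def.~\ref{def:leq-sub-x} is exactly $a \leq_{x \vee y} b$, with the containment $a, b \in \Lambda_{x \vee y}$ recorded along the way.
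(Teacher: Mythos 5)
Your proposal is correct and follows essentially the same route as the paper's proof: obtain $(x,\pi_x^{-1}a)\sim(y,\pi_y^{-1}a)$ from $a\in\Lambda_x\cap\Lambda_y$ to place $\pi_x^{-1}a$ in $\F{x}{x\vee y}$, use the upward closure of that filter to pull in $\pi_x^{-1}b$, and transport the order through the isomorphism $\P{x}{x\vee y}$ to land in $\Lambda_{x\vee y}$. The only cosmetic difference is that you invoke lem.~\ref{lem:sim-leq-z} where the paper inserts the identity $\P{x\vee y}{x\vee y}$ by hand, which amounts to the same thing.
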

\begin{proof}
\leavevmode
% avoid extra vspace
\begin{alignat*}{2}
\interject{Since $a \in \Lambda_x$ and $a \in \Lambda_y$,
$\pi_x^{-1}\, a$ and $\pi_y^{-1}\,a$ exist and by
def.~\ref{def:sum}, \ref{def:kappa}, and~\ref{def:pi},}
\kappa\,(x, \pi_x^{-1}\,a) & = a = \kappa(y, \pi_y^{-1}\,a) \\
\interject{by def.~\ref{def:kappa},}
(x, \pi_x^{-1}\,a) & \sim (y, \pi_y^{-1}\,a) \\
\interject{by def.~\ref{def:sim},
(a) $\pi_x^{-1}\,a \in \F{x}{x \vee y}$, $\pi_y^{-1}\,a \in \F{y}{x \vee y}$, and}
\P{x}{x \vee y}(\pi_x^{-1}\,a) & = \P{y}{x \vee y}(\pi_y^{-1}\,a) \\
\interject{since $\P{x \vee y}{x \vee y}$ is the identity,}
\P{x}{x \vee y}(\pi_x^{-1}\,a) & =
\P{x \vee y}{x \vee y}(\P{y}{x \vee y}(\pi_y^{-1}\,a)) \\
\interject{by def.~\ref{def:sim},}
(x, \pi_x^{-1} a) & \sim (x \vee y, \P{y}{x \vee y}(\pi_y^{-1}\,a)) \\
\interject{by def.~\ref{def:kappa},}
\kappa\,(x, \pi_x^{-1}\,a) & = \kappa\,(x \vee y, \P{y}{x \vee y}(\pi_y^{-1}\,a)) \\
\interject{applying def.~\ref{def:pi} to both sides,}
a & = \pi_{x \vee y}(\P{y}{x \vee y}(\pi_y^{-1}\,a)) \\
\pi_{x \vee y}^{-1}\,a & = \P{y}{x \vee y}(\pi_y^{-1}\,a) \tag{b} \\
\interject{And thus, (c) $a \in \Lambda_{x \vee y}$.}
\interject{By hypothesis,}
a & \leq_x b \\
\interject{by def.~\ref{def:leq-sub-x},}
\pi_x^{-1}\,a & \leq \pi_x^{-1}\,b \\
\interject{by (a) above, $\pi_x^{-1}\,a \in \F{x}{x \vee y}$,
so $\pi_x^{-1}\,b \in \F{x}{x \vee y}$, and
since $\P{x}{x \vee y}$ is an isomorphism on $\F{x}{x \vee y}$,}
\P{x}{x \vee y}(\pi_x^{-1}\,a) & \leq \P{x}{x \vee y}(\pi_x^{-1}\,b) \tag{d} \\
\interject{since $\P{x \vee y}{x \vee y}$ is the identity,}
\P{x}{x \vee y}(\pi_x^{-1}\,b) & =
\P{x \vee y}{x \vee y}(\P{x}{x \vee y}(\pi_x^{-1}\,b)) \\
\interject{by def.~\ref{def:sim},}
(x, \pi_x^{-1} b) & \sim (x \vee y, \P{x}{x \vee y}(\pi_x^{-1}\,b)) \\
\interject{by def.~\ref{def:kappa},}
\kappa\,(x, \pi_x^{-1} b) & = \kappa\,(x \vee y, \P{x}{x \vee y}(\pi_x^{-1}\,b)) \\
\interject{applying def.~\ref{def:pi} to both sides,}
b & = \pi_{x \vee y}(\P{x}{x \vee y}(\pi_x^{-1}\,b)) \\
\pi_{x \vee y}^{-1}\,b & = \P{x}{x \vee y}(\pi_x^{-1}\,b) \tag{e} \\
\interject{And thus, (f) $b \in \Lambda_{x \vee y}$.}
\interject{Substituting (b) and (d) into (e),}
\pi_{x \vee y}^{-1}\,a & \leq \pi_{x \vee y}^{-1}\,b \tag{g} \\
\interject{by def.~\ref{def:leq-sub-x}, (c), (f), and (g) together show}
a & \leq_{x \vee y} b
\end{alignat*}
\end{proof}

%\flagdependlabel{lem:(9)-dual}{lem:(9)}
\begin{lemma} \label{lem:(9)-dual}
For $a, b \in L$ and $x, y \in S$, if $a \leq_y b$ and $b \in \Lambda_x$, then
$a \leq_{x \wedge y} b$ (which implies $a, b \in \Lambda_{x \wedge y}$).
\end{lemma}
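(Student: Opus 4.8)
The plan is to obtain this lemma as the exact order-dual of lem.~\ref{lem:(9)}, using th.~\ref{th:mcs-dual} to make the dual argument legitimate rather than merely suggestive. Applying the dual $S \to S^\delta$, $L_x \to {L_x}^\delta$, $\P{x}{y} \to \PI{y}{x}$ turns every hypothesis of the present statement into a hypothesis of lem.~\ref{lem:(9)}: the meet $x \wedge y$ in $S$ is the join in $S^\delta$, the membership $b \in \Lambda_x$ is untouched, and $a \leq_y b$ becomes the reversed relation once the order inside each block is flipped. First I would record the duality dictionary that makes this substitution exact.

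The key preliminary is that the sum, its canonical projection, and the images $\Lambda_\bullet$ are literally the same objects for $\scrC$ and for its dual $\scrC^\delta$. This is exactly what lem.~\ref{lem:sim-dual} delivers: it rewrites $\sim$ purely in terms of meets, ideals $\I{\bullet}{\bullet}$, and inverse connections $\PI{\bullet}{\bullet}$, which is precisely def.~\ref{def:sim} read off the dual \mcs. Hence $M$, the relation $\sim$, the map $\kappa$, each $\pi_x$, and each $\Lambda_x$ agree between $\scrC$ and $\scrC^\delta$. Since the order of ${L_x}^\delta$ is the reverse of that of $L_x$, def.~\ref{def:leq-sub-x} read in $\scrC^\delta$ gives, writing $\leq'_v$ for the resulting relation, the identity $a \leq'_v b \iff b \leq_v a$.

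With this dictionary in hand I would apply lem.~\ref{lem:(9)} inside $\scrC^\delta$: if $a \leq'_x b$ and $a \in \Lambda_y$, then $a \leq'_{x \wedge y} b$ (the join in $S^\delta$ being $x \wedge y$), and moreover $a, b \in \Lambda_{x \wedge y}$. Translating through $a \leq'_v b \iff b \leq_v a$ yields: if $b \leq_x a$ and $a \in \Lambda_y$, then $b \leq_{x \wedge y} a$. Relabelling $a \leftrightarrow b$ and $x \leftrightarrow y$ reproduces the target statement verbatim, and the parenthetical $a, b \in \Lambda_{x \wedge y}$ comes along from the corresponding clause of lem.~\ref{lem:(9)}.

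The only genuine work is the bookkeeping of the first two paragraphs — confirming that $\scrC^\delta$ produces the same underlying sum set and merely reverses the local orders $\leq_v$ — since everything after that is a mechanical translation; the combinatorial content was already spent in lem.~\ref{lem:(9)}. Alternatively, if one prefers not to identify the sum of the dual with the dual of the sum, the same result can be written out directly by copying the computation of lem.~\ref{lem:(9)} with $\F{\bullet}{\bullet} \to \I{\bullet}{\bullet}$, $\P{\bullet}{\bullet} \to \PI{\bullet}{\bullet}$, $\vee \to \wedge$ throughout, citing lem.~\ref{lem:sim-dual} and lem.~\ref{lem:sim-leq-z}(2) where def.~\ref{def:sim} and lem.~\ref{lem:sim-leq-z}(1) were used; I expect this explicit version to be the one recorded, in the ``proved dually'' style already used for lem.~\ref{lem:between-equal-dual}.
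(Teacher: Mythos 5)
Your proposal matches the paper's proof, which consists entirely of the sentence ``This is proved dually to lem.~\ref{lem:(9)}''; your two paragraphs of bookkeeping (that the dual \mcs{} of th.~\ref{th:mcs-dual} yields the same $M$, $\sim$, $\kappa$, $\pi_x$, and $\Lambda_x$ via lem.~\ref{lem:sim-dual}, with only the local orders $\leq_v$ reversed) are a correct and fuller justification of that one-line appeal to duality. No gap.
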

\begin{proof}
\leavevmode

This is proved dually to lem.~\ref{lem:(9)}.
\end{proof}

\begin{lemma} \label{lem:(2)}
For $a, b \in L$ and $x \leq y$ in $S$
with $a, b \in \Lambda_x \cap \Lambda_y$,
$a \leq_x b$ iff $a \leq_y b$.
\end{lemma}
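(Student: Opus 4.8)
The plan is to transport both $a$ and $b$ down into the single block $L_x$ and observe that the connection $\P{x}{y}$ is an order isomorphism carrying the comparison in $L_x$ to the comparison in $L_y$, so that the two relations $\leq_x$ and $\leq_y$ must agree on elements common to $\Lambda_x$ and $\Lambda_y$.

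First I would name the preimages: set $a_x = \pi_x^{-1}\,a$, $a_y = \pi_y^{-1}\,a$, $b_x = \pi_x^{-1}\,b$, and $b_y = \pi_y^{-1}\,b$, all of which exist and are well-defined because $a, b \in \Lambda_x \cap \Lambda_y$ and each $\pi_\bullet$ is a bijection onto $\Lambda_\bullet$ by lem.~\ref{lem:canon-1-1}. Since $\kappa\,(x, a_x) = a = \kappa\,(y, a_y)$, we have $(x, a_x) \sim (y, a_y)$, and by def.~\ref{def:sim} this in particular forces $x \relgamma y$ (so that $\F{x}{y}$, $\I{x}{y}$, and $\P{x}{y}$ are defined). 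Because $x \leq y$, lem.~\ref{lem:sim-leq-z}(1) then yields $a_x \in \F{x}{y}$ and $\P{x}{y}\,a_x = a_y$. Applying the same argument to $b$ gives $b_x \in \F{x}{y}$ and $\P{x}{y}\,b_x = b_y$.

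It then remains to observe that $\P{x}{y}$, being by \ref{MCe} a lattice isomorphism from the filter $\F{x}{y}$ onto the ideal $\I{x}{y}$, is in particular an order isomorphism onto its image; hence for $a_x, b_x \in \F{x}{y}$ we have $a_x \leq b_x$ in $L_x$ if and only if $\P{x}{y}\,a_x \leq \P{x}{y}\,b_x$, that is, if and only if $a_y \leq b_y$ in $L_y$. Unwinding def.~\ref{def:leq-sub-x}, the statement $a \leq_x b$ is by definition exactly $a_x \leq b_x$ (the membership $a, b \in \Lambda_x$ being given), and $a \leq_y b$ is exactly $a_y \leq b_y$; so the two are equivalent, which is the claim.

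I do not expect a real obstacle here: the content is entirely a bookkeeping reduction to a single block. The only points that need care are confirming that $x \relgamma y$ holds (so that the connection machinery is available at all), which comes for free from $a \in \Lambda_x \cap \Lambda_y$ through def.~\ref{def:sim}, and noting that a \emph{lattice} isomorphism between the relevant filter and ideal is automatically both order-preserving and order-reflecting, so that comparability transfers in both directions rather than merely one.
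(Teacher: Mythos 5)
Your proof is correct. It takes a different route from the paper's: the paper disposes of the forward direction in one line by citing lem.~\ref{lem:(9)} (with $a \leq_x b$ and $a \in \Lambda_y$ giving $a \leq_{x\vee y} b = a \leq_y b$) and then gets the reverse direction by duality via lem.~\ref{lem:(9)-dual}, whereas you argue directly inside the blocks, using lem.~\ref{lem:sim-leq-z} to identify $\pi_y^{-1}\,a = \P{x}{y}(\pi_x^{-1}\,a)$ and $\pi_y^{-1}\,b = \P{x}{y}(\pi_x^{-1}\,b)$ and then invoking the fact that the connection $\P{x}{y}$ is an order isomorphism from $\F{x}{y}$ onto $\I{x}{y}$, which gives both directions of the equivalence at once. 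Your argument is essentially a specialization of the computation buried inside the proof of lem.~\ref{lem:(9)} to the comparable case $x \leq y$, so it is more self-contained and symmetric (no appeal to duality), at the cost of redoing by hand work the paper has already packaged into lem.~\ref{lem:(9)}; the paper's version is shorter but leans on the more general lemma and its dual. The only points worth being explicit about, which you do handle, are that $x \relgamma y$ must be extracted from $a \in \Lambda_x \cap \Lambda_y$ before the connection machinery can be used, and that the order on $\F{x}{y}$ and $\I{x}{y}$ is the one induced from $L_x$ and $L_y$ respectively, so that comparisons transfer unambiguously.
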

\begin{proof}
\leavevmode

Regarding $\Rightarrow$:
By hypothesis $a \leq_x b$ and $a \in \Lambda_y$, so by
lem.~\ref{lem:(9)},
$a \leq_{x \vee y} b$, which is equivalent to $a \leq_y b$.

Regarding $\Leftarrow$:
This is proved dually to the $\Rightarrow$ case.
\end{proof}

\begin{definition} \label{def:asc-seq-new}
We define an \emph{ascending sequence of length $n$} to be a pair
$((x_i)_{1 \leq i \leq n}, (a_i)_{0 \leq i \leq n})$ where:
\begin{enumerate}
\item for all $1 \leq i \leq n$, $x_i \in S$ and
for all $0 \leq i \leq n$, $a_i \in L$,
\item $x_1 \leq x_2 \leq \cdots \leq x_n$, and
\item $a_0 \leq_{x_1} a_1 \leq_{x_2} \cdots \leq_{x_n} a_n$.
\end{enumerate}
We call the $x_\bullet$ \emph{blocks} of the ascending sequence (since
they are elements of $S$ which index the blocks $L_\bullet$), and we call the
$a_\bullet$ the \emph{elements} of the sequence.
By sloppy language,
we say that the ascending sequence is \emph{from $a$ to $b$}.
\end{definition}

Note that by def.~\ref{def:leq-sub-x}, in any ascending sequence
$(x_\bullet, a_\bullet)$ of length $n$,
for all $1 \leq i \leq n$, $a_{i-1}, a_i \in \Lambda_{x_i}$.
Thus, for all $1 \leq i \leq n-1$,
$a_i \in \Lambda_{x_i} \cap \Lambda_{x_{i+1}}$.

(Lemma 4.28 in v1 has been renumbered
lemma~\forwardref{lem:asc-seq-lift-strong}.)

\begin{lemma} \label{lem:asc-seq-prepend}
Given an ascending sequence $(x_\bullet, a_\bullet)$ of length $n$,
$y \in S$ where $y \leq x_1$, and $b \leq_y a_0$,
then the sequence
$(x_\bullet^\prime, a_\bullet^\prime)$ of length $n+1$ defined by
\begin{alignat*}{2}
x_i^\prime & =
\begin{cases}
y & \rmif i = 1 \\
x_{i-1} & \rmif 1 < i \leq n+1
\end{cases} \\
a_i^\prime & =
\begin{cases}
b & \rmif i = 0 \\
a_{i-1} & \rmif 1 \leq i \leq n+1
\end{cases}
\end{alignat*}
is a valid ascending sequence.
\end{lemma}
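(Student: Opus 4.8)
The plan is to verify directly that the prepended sequence $(x_\bullet^\prime, a_\bullet^\prime)$ satisfies the three defining conditions of an ascending sequence in def.~\ref{def:asc-seq-new}. The only content is bookkeeping: the index shift must align the new initial relation, supplied by the hypothesis $b \leq_y a_0$, with the relations inherited from the original sequence.

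First I would check membership, condition~(1), together with the block chain, condition~(2). For membership, $y \in S$ by hypothesis and each $x_i^\prime = x_{i-1} \in S$ for $1 < i \leq n+1$; likewise $b \in L$ since $b \leq_y a_0$ forces $b \in \Lambda_y \subseteq L$ by def.~\ref{def:leq-sub-x}, and each $a_i^\prime = a_{i-1} \in L$. For the block chain, I would observe that $x_1^\prime = y \leq x_1 = x_2^\prime$ by the hypothesis $y \leq x_1$, and that the relations $x_i^\prime = x_{i-1} \leq x_i = x_{i+1}^\prime$ for $2 \leq i \leq n$ are exactly the inequalities of the original chain $x_1 \leq \cdots \leq x_n$; hence $x_1^\prime \leq x_2^\prime \leq \cdots \leq x_{n+1}^\prime$.

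Finally, for the element chain, condition~(3), the first relation $a_0^\prime \leq_{x_1^\prime} a_1^\prime$ unwinds to $b \leq_y a_0$, which is precisely the hypothesis, while for $2 \leq i \leq n+1$ the relation $a_{i-1}^\prime \leq_{x_i^\prime} a_i^\prime$ unwinds to $a_{i-2} \leq_{x_{i-1}} a_{i-1}$, a relation of the original sequence. Thus $a_0^\prime \leq_{x_1^\prime} a_1^\prime \leq_{x_2^\prime} \cdots \leq_{x_{n+1}^\prime} a_{n+1}^\prime$, and all three conditions hold. I expect no real obstacle here: the lemma is a direct unwinding of the definition, and the entire proof is the observation that the index shift lines up the hypothesis with the inherited relations correctly.
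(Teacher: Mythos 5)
Your proof is correct: the paper states this lemma without any proof at all, treating it as an immediate consequence of def.~\ref{def:asc-seq-new}, and your direct verification of the three defining conditions (with the index bookkeeping and the observation that $b \leq_y a_0$ supplies both $b \in \Lambda_y$ and the new first link of the element chain) is exactly the routine argument the paper leaves implicit.
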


\begin{subnumbering}

\begin{lemma} \label{lem:asc-seq-append}
Given an ascending sequence $(x_\bullet, a_\bullet)$ of length $n$
$y \in S$ where $y \geq x_n$, and $a_n \leq_y b$,
then the sequence
$(x_\bullet^\prime, a_\bullet^\prime)$ of length $n+1$ defined by
\begin{alignat*}{2}
x_i^\prime & =
\begin{cases}
x_i & \rmif 1 \leq i < n+1 \\
y & \rmif i = n+1
\end{cases} \\
a_i^\prime & =
\begin{cases}
a_i & \rmif 0 \leq i < n+1 \\
b& \rmif i = n+1
\end{cases}
\end{alignat*}
is a valid ascending sequence.
\end{lemma}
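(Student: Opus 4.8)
The plan is to verify directly that the appended sequence $(x_\bullet^\prime, a_\bullet^\prime)$ satisfies the three defining conditions of def.~\ref{def:asc-seq-new}, exactly in parallel with the dual lem.~\ref{lem:asc-seq-prepend}. Because we are inserting $y$ and $b$ at the \emph{top} of the sequence rather than at the bottom, no indices are shifted: we have $x_i^\prime = x_i$ for $1 \leq i \leq n$ and $a_i^\prime = a_i$ for $0 \leq i \leq n$, with only the single new pair $x_{n+1}^\prime = y$ and $a_{n+1}^\prime = b$ adjoined. This makes the bookkeeping even cleaner than in the prepend case, where every index is displaced by one.

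First I would dispatch condition (1): each $x_i^\prime$ is either one of the original $x_j \in S$ or equals $y \in S$, and each $a_i^\prime$ is either one of the original $a_j \in L$ or equals $b \in L$. For condition (2), the chain $x_1^\prime \leq \cdots \leq x_{n+1}^\prime$ splits into the original inequalities $x_i \leq x_{i+1}$ for $1 \leq i \leq n-1$, which hold by hypothesis, together with the single new junction $x_n^\prime = x_n \leq y = x_{n+1}^\prime$, which is exactly the assumption $y \geq x_n$. For condition (3), the chain of $\leq_{x_i^\prime}$ relations splits the same way: for $1 \leq i \leq n$ the relation $a_{i-1}^\prime \leq_{x_i^\prime} a_i^\prime$ is verbatim the original $a_{i-1} \leq_{x_i} a_i$, and the one new relation $a_n^\prime \leq_{x_{n+1}^\prime} a_{n+1}^\prime$ is precisely the supplied hypothesis $a_n \leq_y b$. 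With all three conditions confirmed, the appended object is a valid ascending sequence of length $n+1$.

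There is no genuine obstacle here; the content is entirely a matter of matching indices to the definition. The only point warranting any care is confirming that the new top junction relations line up exactly with the two hypotheses $y \geq x_n$ and $a_n \leq_y b$, which they do without modification precisely because appending (unlike prepending) leaves the existing indices fixed. Accordingly, I expect the written proof to be a one-line appeal to def.~\ref{def:asc-seq-new}, verifying the three conditions in turn.
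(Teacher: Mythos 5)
Your verification is correct: the paper gives no proof for this lemma at all, treating it (like its dual, lem.~\ref{lem:asc-seq-prepend}) as an immediate consequence of def.~\ref{def:asc-seq-new}, and your direct check of the three conditions is exactly the argument being left implicit. Your observation that the two new junction conditions are verbatim the hypotheses $y \geq x_n$ and $a_n \leq_y b$ is the whole content, and it is right.
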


\begin{definition} \label{def:asc-seq-prepend-append}
The construction in lem.~\ref{lem:asc-seq-prepend} is called
\emph{prepending an element $b$ and block $y$}.
The construction in lem.~\ref{lem:asc-seq-append} is called
\emph{appending an element $b$ and block $y$}.
\end{definition}

\begin{lemma} \label{lem:asc-seq-lift-weak}
Given an ascending sequence $(x_\bullet, a_\bullet)$ of length $n$
and a $y \in S$ where $a_0 \in \Lambda_y$,
then the sequence
$(x_\bullet^\prime, a_\bullet^\prime)$ of length $n$ defined by
$x_i^\prime = x_i \vee y$ and $a_i^\prime = a_i$
is a valid ascending sequence.
\end{lemma}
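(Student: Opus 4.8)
The plan is to check the three defining conditions of def.~\ref{def:asc-seq-new} for the candidate sequence $(x_\bullet', a_\bullet')$, with essentially all the work in condition~(3). Conditions~(1) and~(2) are immediate: each $x_i' = x_i \vee y$ lies in $S$ and each $a_i' = a_i$ lies in $L$, and since $x_1 \leq \cdots \leq x_n$, joining each term with the fixed $y$ preserves the chain, so $x_1 \vee y \leq \cdots \leq x_n \vee y$. What remains is to establish the ascending relations $a_{i-1} \leq_{x_i \vee y} a_i$ for $1 \leq i \leq n$.

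I would prove these by induction on $i$, abbreviating $y_i = x_i \vee y$ and setting $y_0 = y$. The tool is lem.~\ref{lem:(9)}, which promotes a relation $a \leq_x b$ to $a \leq_{x \vee z} b$ provided the lower element $a$ lies in $\Lambda_z$. The tempting invariant $a_{i-1} \in \Lambda_y$ is false in general --- membership in $\Lambda_y$ does not travel up the chain --- and recognizing this is the one real obstacle. The remedy is to carry instead the weaker invariant $a_{i-1} \in \Lambda_{y_{i-1}}$, which is precisely the $\Lambda$-membership delivered by the conclusion of the preceding step, and which still suffices thanks to the join-absorption identity $x_i \vee y_{i-1} = x_i \vee x_{i-1} \vee y = x_i \vee y = y_i$ (using $x_{i-1} \leq x_i$; for $i = 1$ this degenerates to $x_1 \vee y = y_1$).

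Concretely, for the base case $a_0 \leq_{x_1} a_1$ and $a_0 \in \Lambda_y = \Lambda_{y_0}$ hold by hypothesis, so lem.~\ref{lem:(9)} yields $a_0 \leq_{y_1} a_1$ and in particular $a_1 \in \Lambda_{y_1}$. For the inductive step I assume $a_{i-2} \leq_{y_{i-1}} a_{i-1}$, which by def.~\ref{def:leq-sub-x} gives $a_{i-1} \in \Lambda_{y_{i-1}}$; combining this with the given $a_{i-1} \leq_{x_i} a_i$ and applying lem.~\ref{lem:(9)} with $z = y_{i-1}$ produces $a_{i-1} \leq_{y_i} a_i$, whose conclusion also records $a_i \in \Lambda_{y_i}$ to prime the next step. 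Chaining these relations over $i = 1, \ldots, n$ is exactly condition~(3), completing the verification that $(x_\bullet', a_\bullet')$ is a valid ascending sequence. I expect no further difficulty beyond the choice of induction invariant just described, since every other ingredient reduces to the absorption identity and a single invocation of lem.~\ref{lem:(9)} per step.
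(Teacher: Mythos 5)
Your proof is correct and is essentially the paper's argument: the paper also reduces everything to condition (3), applies lem.~\ref{lem:(9)} once per link, and handles the fact that $a_i$ need not lie in $\Lambda_y$ by recursing on the tail sequence with $y$ replaced by $x_1 \vee y$ --- which is exactly your invariant $a_{i-1} \in \Lambda_{x_{i-1} \vee y}$ together with the absorption $x_i \vee (x_{i-1} \vee y) = x_i \vee y$, just packaged as an induction on the length $n$ with a prepend step instead of a forward induction on $i$. No gap.
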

\begin{proof} \disconnect
Properties (1) and (2) of an ascending sequence
for $(x_\bullet^\prime, a_\bullet^\prime)$
are trivial; what remains is to show is property (3), that for
all $1 \leq i \leq n+1$, $a_{i-1}^\prime \leq_{x_i^\prime} a_i^\prime$,
which is equivalent to $a_{i-1} \leq_{x_i \vee y} a_i$.

We prove this by induction on $n$.
The base case is $n = 0$, in which case there is one element, $a_0$
and no blocks, and the lemma is trivial.

For $n > 0$,
we apply lem.~\ref{lem:(9)} to $a_0 \leq_{x_1} a_1$ and
$a_0 \in \Lambda_y$ to show $a_0 \leq_{x_1 \vee y} a_1$ and
$a_1 \in \Lambda_{x_1 \vee y}$.
If $n = 1$, this shows that $((x_1 \vee y), (a_0, a_1))$ is an
ascending sequence and proves the lemma.

If $n > 1$,
we apply this lemma inductively to the ascending sequence
$((x_i)_{2 \leq i \leq n}, (a_i)_{2 \leq i \leq n})$ and block $x_1 \vee y$,
which has length $n-1$.
The resulting ascending sequence has first block
$x_2 \vee (x_1 \vee y) = x_2 \vee y$ and first
element $a_2$.  Since $x_1 \vee y \leq x_2 \vee y$ and
$a_0 \leq_{x_1 \vee y} a_1$, we can prepend
$x_2 \vee y$ and $a_0$ to the resulting ascending sequence to
construct the ascending sequence that proves the lemma.
\end{proof}

\begin{lemma} \label{lem:asc-seq-lower}
Given an ascending sequence $(x_\bullet, a_\bullet)$ of length $n$
and a $y \in S$ where $a_n \in \Lambda_y$,
then the sequence
$(x_\bullet^\prime, a_\bullet^\prime)$ of length $n$ defined by
$x_i^\prime = x_i \wedge y$ and $a_i^\prime = a_i$
is a valid ascending sequence.
\end{lemma}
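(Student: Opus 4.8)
The plan is to prove this dually to lem.~\ref{lem:asc-seq-lift-weak}, using lem.~\ref{lem:(9)-dual} in place of lem.~\ref{lem:(9)} and working downward from the top element $a_n$ (where the hypothesis $a_n \in \Lambda_y$ sits) rather than upward from $a_0$. Properties (1) and (2) of def.~\ref{def:asc-seq-new} are immediate for $(x_\bullet^\prime, a_\bullet^\prime)$: each $x_i \wedge y$ lies in $S$, and from $x_1 \leq \cdots \leq x_n$ monotonicity of meet gives $x_1 \wedge y \leq \cdots \leq x_n \wedge y$. What remains is property (3), namely $a_{i-1} \leq_{x_i \wedge y} a_i$ for all $1 \leq i \leq n$, which I would establish by induction on $n$.

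The base case $n = 0$ is trivial (a single element and no blocks). For $n > 0$, I would first apply lem.~\ref{lem:(9)-dual} to $a_{n-1} \leq_{x_n} a_n$ together with $a_n \in \Lambda_y$; this yields $a_{n-1} \leq_{x_n \wedge y} a_n$ and, as part of its conclusion, $a_{n-1} \in \Lambda_{x_n \wedge y}$. If $n = 1$ this already gives the single required inequality and finishes the proof.

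For $n > 1$ I would apply the lemma inductively to the initial segment $((x_i)_{1 \leq i \leq n-1}, (a_i)_{0 \leq i \leq n-1})$, which has length $n-1$, with the block $x_n \wedge y$; this is legitimate because the top element $a_{n-1}$ of that segment lies in $\Lambda_{x_n \wedge y}$. The induction lowers each block $x_i$ (for $i < n$) to $x_i \wedge (x_n \wedge y)$, and since $x_i \leq x_n$ for these $i$ this simplifies to $x_i \wedge y$, exactly the blocks wanted. Finally, because $x_{n-1} \wedge y \leq x_n \wedge y$ and $a_{n-1} \leq_{x_n \wedge y} a_n$, I would append the block $x_n \wedge y$ and element $a_n$ via lem.~\ref{lem:asc-seq-append} to recover $(x_\bullet^\prime, a_\bullet^\prime)$ in full. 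The one point needing care is the bookkeeping of the meets: the inductive block must be $x_n \wedge y$ (not $y$ alone) so that $a_{n-1} \in \Lambda_{x_n \wedge y}$ can be invoked, while the identity $x_i \wedge x_n = x_i$ for $i < n$ is what collapses the doubly-met blocks back to $x_i \wedge y$. This is precisely the dual of the prepend step in lem.~\ref{lem:asc-seq-lift-weak}, with meet, append, and the top element playing the roles of join, prepend, and the bottom element.
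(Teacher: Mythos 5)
Your proposal is correct and is precisely what the paper intends: the paper's proof consists of the single sentence that the result is proved dually to lem.~\ref{lem:asc-seq-lift-weak} using lem.~\ref{lem:(9)-dual}, and you have simply written out that dual argument in full (meet for join, the initial segment for the final segment, append for prepend), with the bookkeeping $x_i \wedge x_n = x_i$ handled correctly. No gaps.
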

\begin{proof}
We prove this dually to lem.~\ref{lem:asc-seq-lift-weak} using
lem.~\ref{lem:(9)-dual}.
\end{proof}

\begin{lemma} \label{lem:asc-seq-concat}
Given an ascending sequence $(x_\bullet, a_\bullet)$ of length $n$
and an ascending sequence $(y_\bullet, b_\bullet)$ of length $m$
with $a_n = b_0$, then the sequence
$(z_\bullet, d_\bullet$ of length $n+m$ defined by
\begin{alignat*}{2}
z_i & =
\begin{cases}
x_i & \rmif 1 \leq i \leq n \\
y_{i-n} \vee x_n & \rmif n+1 \leq n+m
\end{cases} \\
d_i & =
\begin{cases}
a_i & \rmif 0 \leq i < n \\
a_n & \rmif i = n \\
b_{i-n} & \rmif n < i \leq n+m
\end{cases}
\end{alignat*}
is a valid ascending sequence.
\end{lemma}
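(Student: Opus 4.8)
The plan is to verify directly the three defining conditions of def.~\ref{def:asc-seq-new} for the proposed sequence $(z_\bullet, d_\bullet)$, organizing the work so that the first $n$ blocks reproduce the given sequence $(x_\bullet, a_\bullet)$ verbatim while the last $m$ blocks are recognized as the output of lem.~\ref{lem:asc-seq-lift-weak} applied to $(y_\bullet, b_\bullet)$.

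Condition (1) is immediate, since each $z_i$ is either some $x_i$ or a join $y_{i-n} \vee x_n$ (hence in $S$) and each $d_i$ is one of the given elements of $L$. For condition (2) I would split the block chain at the index $n$: for $1 \le i \le n$ the inequalities $z_1 \le \cdots \le z_n$ are exactly $x_1 \le \cdots \le x_n$; at the seam, $z_n = x_n \le y_1 \vee x_n = z_{n+1}$ trivially; and for $n+1 \le i \le n+m$ monotonicity of $\vee$ turns $y_1 \le \cdots \le y_m$ into $y_1 \vee x_n \le \cdots \le y_m \vee x_n$.

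The substance is condition (3). For the steps $1 \le i \le n$ the required $d_{i-1} \le_{z_i} d_i$ are literally the relations $a_{i-1} \le_{x_i} a_i$ of the first sequence (note $d_n = a_n$). For the steps $n+1 \le i \le n+m$ I would invoke lem.~\ref{lem:asc-seq-lift-weak}: applying it to the ascending sequence $(y_\bullet, b_\bullet)$ with lifting block $x_n$ produces precisely the sequence with blocks $y_j \vee x_n$ and elements $b_j$, whose ascending relations $b_{j-1} \le_{y_j \vee x_n} b_j$ are exactly the needed $d_{i-1} \le_{z_i} d_i$ after the index shift $j = i-n$ (using $d_n = a_n = b_0$ for the seam step $j=1$). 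The two chains join at $d_n = b_0$, so together they yield the full chain $d_0 \le_{z_1} \cdots \le_{z_{n+m}} d_{n+m}$.

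The one hypothesis of lem.~\ref{lem:asc-seq-lift-weak} that must be checked is that the bottom element of $(y_\bullet, b_\bullet)$ lies in $\Lambda_{x_n}$, and this is the only point in the argument requiring any thought. It holds because $b_0 = a_n$ by assumption, and $a_n \in \Lambda_{x_n}$ by the remark following def.~\ref{def:asc-seq-new} (the top element of an ascending sequence lies in the block indexed by its last block). Thus the apparent obstacle dissolves, and the remainder is pure index bookkeeping.
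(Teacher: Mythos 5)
Your proposal is correct and follows essentially the same route as the paper: both apply lem.~\ref{lem:asc-seq-lift-weak} to $(y_\bullet, b_\bullet)$ with lifting block $x_n$ (justified by $b_0 = a_n \in \Lambda_{x_n}$) and then splice the lifted sequence onto $(x_\bullet, a_\bullet)$ at the seam $z_n = x_n \leq y_1 \vee x_n$. Your write-up merely spells out the index bookkeeping that the paper leaves implicit.
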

\begin{proof}
Applying lem.~\ref{lem:asc-seq-lift-weak} to $(y_\bullet, b_\bullet)$
and $x_n$, construct the ascending
sequence $((y_i \vee x_n)_{1 \leq i \leq m}, (b_i)_{0 \leq i \leq m})$.
The validity of the new sequence, the validity of $(x_\bullet, a_\bullet)$,
and $z_n = x_n \leq y_1 \vee x_n = z_{n+1}$
together prove all the needed facts to show that
$(z_\bullet, d_\bullet)$ is a valid ascending sequence.
\end{proof}

% This lemma is in v1, so it needs to be preserved as 4.28 or 4.28.x.
% But we want to deprecate its use.
\begin{lemma} \label{lem:asc-seq-lift-strong}
Given an ascending sequence $(x_\bullet, a_\bullet)$ of length $n$
and an $x \in S$ where $a_0 \in \Lambda_x$,
then the sequence $(x_\bullet^\prime, a_\bullet^\prime)$ of length
$n+1$ defined by
\begin{alignat*}{2}
x_i^\prime & =
\begin{cases}
x & \rmif i = 1 \\
x_{i-1} \vee x & \rmif 1 < i \leq n+1
\end{cases} \\
a_i^\prime & =
\begin{cases}
a_0 & \rmif i = 0 \\
a_{i-1} & \rmif 1 \leq i \leq n+1
\end{cases}
\end{alignat*}
is a valid ascending sequence.
\end{lemma}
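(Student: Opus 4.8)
The plan is to build the target sequence in two moves, reusing the two construction lemmas already established rather than arguing by induction directly. First I would apply lem.~\ref{lem:asc-seq-lift-weak} to the given ascending sequence $(x_\bullet, a_\bullet)$ together with the element $x$. The hypothesis $a_0 \in \Lambda_x$ is exactly what that lemma requires, so it produces a valid ascending sequence of length $n$ whose blocks are $(x_i \vee x)_{1 \leq i \leq n}$ and whose elements are unchanged, $(a_i)_{0 \leq i \leq n}$.

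Next I would prepend a block and an element to this lifted sequence via lem.~\ref{lem:asc-seq-prepend}, taking the prepended block to be $x$ and the prepended element to be $a_0$. Two side conditions must be checked. First, $x$ must lie below the first block $x_1 \vee x$ of the lifted sequence, which is immediate. Second, I need $a_0 \leq_x a_0$, where $a_0$ is simultaneously the element being prepended and the first element of the lifted sequence; this is just reflexivity of $\leq_x$ on $\Lambda_x$, which holds because $a_0 \in \Lambda_x$ and $\leq_x$ is a partial ordering by lem.~\ref{lem:leq-sub-x-po}. Both conditions being routine, the prepend is licensed.

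Finally I would confirm that the sequence produced by this prepend coincides with $(x_\bullet^\prime, a_\bullet^\prime)$ as defined in the statement. The prepended block gives $x_1^\prime = x$, while the shifted blocks give $x_i^\prime = x_{i-1} \vee x$ for $1 < i \leq n+1$, matching the definition; the prepended element gives $a_0^\prime = a_0$, while the shifted elements give $a_1^\prime = a_0,\ a_2^\prime = a_1,\ \ldots,\ a_{n+1}^\prime = a_n$, which is exactly $a_i^\prime = a_{i-1}$ for $1 \leq i \leq n+1$. Since the composite of two valid constructions is valid, this establishes the lemma. I expect no genuine obstacle here: the only substantive verification, $a_0 \leq_x a_0$, is trivial reflexivity, and the remaining work is purely the index bookkeeping in this last matching step. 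In effect the result is lem.~\ref{lem:asc-seq-lift-weak} followed by a single prepend.
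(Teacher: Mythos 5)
Your proof is correct and is essentially identical to the paper's: both apply lem.~\ref{lem:asc-seq-lift-weak} to lift the blocks to $x_i \vee x$ and then use lem.~\ref{lem:asc-seq-prepend} with $a_0 \leq_x a_0$ and $x \leq x_1 \vee x$ to prepend block $x$ and element $a_0$. Your version just spells out the index bookkeeping that the paper leaves implicit.
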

\begin{proof} \disconnect
Apply lem.~\ref{lem:asc-seq-lift-weak} to $(x_\bullet, a_\bullet)$
and $x$.  The resulting sequence's first block is $x_1 \vee x$.
Then apply lem.~\ref{lem:asc-seq-prepend} to $a \leq_x a$
and $x \leq x_1 \vee x$ to
prepend to the resulting sequence the block $x$ and element $a$.
\end{proof}

\end{subnumbering}

% Herr (10)
\begin{definition}\citeHerr*{sec.~2}{sec.~2} \label{def:L-leq}
For $a, b \in L$, we define $a \leq b$ iff there exists
an ascending sequence $(x_\bullet, a_\bullet)$ of some length $n$ with
$a = a_0$ and $b = a_n$.
\end{definition}

\begin{lemma}\citeHerr*{(10)}{(10)} \label{lem:(10)+}
If $a \leq b$ in $L$ and if $a \in \Lambda_x$ for a specified
$x \in S$,
there is an ascending sequence $(y_\bullet, c_\bullet)$ of some length $n$
with $y_1 = x$, $c_0 = a$, and $c_n = b$ (which also shows $a \leq b$).
\end{lemma}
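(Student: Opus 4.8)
The plan is to unpack the hypothesis $a \leq b$ into a concrete witnessing ascending sequence and then correct its first block to the prescribed $x$ using the lifting machinery already in place. First I would invoke def.~\ref{def:L-leq}: since $a \leq b$, there is an ascending sequence $(x_\bullet, a_\bullet)$ of some length $m$ with $a_0 = a$ and $a_m = b$. Its first block $x_1$ need not coincide with the specified $x$, so the only real task is to replace that first block by $x$ while preserving the endpoints $a$ and $b$.

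The key tool is lem.~\ref{lem:asc-seq-lift-strong}, whose single hypothesis — that the initial element of the sequence lie in $\Lambda_x$ — is exactly our assumption $a = a_0 \in \Lambda_x$. Applying lem.~\ref{lem:asc-seq-lift-strong} to $(x_\bullet, a_\bullet)$ and $x$ yields an ascending sequence of length $m+1$ whose first block is $x$, whose initial element is $a_0 = a$, and whose final element is $a_m = b$ (the construction joins every original block with $x$ and prepends the block $x$ carrying the element $a$, which is legitimate because $a \leq_x a$ by reflexivity of the partial order $\leq_x$ on $\Lambda_x$). Renaming this sequence $(y_\bullet, c_\bullet)$ and setting $n = m+1$ gives $y_1 = x$, $c_0 = a$, and $c_n = b$, which is precisely the assertion; the parenthetical ``which also shows $a \leq b$'' is then immediate, since this very sequence witnesses $a \leq b$ through def.~\ref{def:L-leq}.

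I do not expect a genuine obstacle: lem.~\ref{lem:asc-seq-lift-strong} (the renumbered Lemma~4.28) was constructed for exactly this situation, and the proof reduces to checking that its lone hypothesis matches the given data, which it does verbatim. The only points worth stating explicitly are the passage from the abstract relation $a \leq b$ to a concrete sequence via def.~\ref{def:L-leq}, and the observation that the prescribed $x$ enters solely as the argument of the lift, so that forcing $y_1 = x$ costs nothing beyond one application of the established lemma.
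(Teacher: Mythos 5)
Your proof is correct and follows essentially the same route as the paper: the paper's own argument applies lem.~\ref{lem:asc-seq-lift-weak} and then lem.~\ref{lem:asc-seq-prepend}, which is exactly the composition that lem.~\ref{lem:asc-seq-lift-strong} packages, so invoking that lemma directly is just the same argument with one step pre-assembled.
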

\begin{proof} \disconnect
By def.~\ref{def:L-leq}, there exists an ascending sequence
from $a$ to $b$.  Define $y$ to be its first block, so
$a \in \Lambda_y$.
Applying lem.~\ref{lem:asc-seq-lift-weak} to the ascending sequence
and $x$ produces an ascending sequence from $a$ to $b$
with first block $y \vee x$ and showing $a \in \Lambda_{y \vee x}$.
Knowing that $a \leq_x a$ and $x \leq y \vee x$,
apply lem.~\ref{lem:asc-seq-prepend} to the ascending sequence
to prepend block $x$ and element $a$.
The resulting ascending sequence satisfies the requirements of the
conclusion.
\end{proof}

% This lemma is in v1, so it needs to be preserved as 4.31 or 4.31.x.
% But we want to deprecate its use.
\begin{lemma} (part of \citeHerr*{(14)}{(14)}) \label{lem:(10)+strict}
If $a \leq b$ in $L$ and if $a \in \Lambda_x$ for a specified
$x \in S$,
there is an ascending sequence $(y_\bullet, c_\bullet)$ of some length $n$
with $y_1 \geq x$, $c_0 = a$, $c_n = b$, and
the elements $y_\bullet$ are strictly increasing:
$y_i <_{c_{i+1}} y_{i+1}$.
\end{lemma}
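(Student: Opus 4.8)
The plan is to begin with an ascending sequence from $a$ to $b$ whose first block is already $x$, and then to repeatedly collapse any two consecutive steps that use the same block, so that the blocks of the surviving sequence strictly increase. Concretely, I would first invoke lem.~\ref{lem:(10)+} to obtain an ascending sequence $(x_\bullet, a_\bullet)$ of some length $n$ with $x_1 = x$, $a_0 = a$, and $a_n = b$; in particular $x_1 = x \geq x$. By def.~\ref{def:asc-seq-new} its blocks satisfy $x_1 \leq x_2 \leq \cdots \leq x_n$ in $S$, so the only obstruction to the asserted strict increase $y_1 < y_2 < \cdots < y_n$ in $S$ (the content of the condition $y_i <_{c_{i+1}} y_{i+1}$) is the presence of equal consecutive blocks $x_i = x_{i+1}$.

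The key step is to eliminate such repetitions one at a time. Suppose $x_i = x_{i+1}$ for some $1 \leq i \leq n-1$. The two steps $a_{i-1} \leq_{x_i} a_i$ and $a_i \leq_{x_{i+1}} a_{i+1}$ both take place in $\Lambda_{x_i}$, since $\Lambda_{x_{i+1}} = \Lambda_{x_i}$; hence all three of $a_{i-1}, a_i, a_{i+1}$ lie in $\Lambda_{x_i}$. Because $\leq_{x_i}$ is a partial order on $\Lambda_{x_i}$ by lem.~\ref{lem:leq-sub-x-po}, transitivity yields $a_{i-1} \leq_{x_i} a_{i+1}$. I would then delete the block $x_{i+1}$ and the element $a_i$, producing an ascending sequence of length $n-1$ in which step $i$ now reads $a_{i-1} \leq_{x_i} a_{i+1}$ and whose block chain is still weakly increasing, because $x_i = x_{i+1} \leq x_{i+2}$. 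The deleted element $a_i$ is interior ($1 \leq i \leq n-1$), so the endpoints $a_0 = a$ and $a_n = b$ are untouched; and the first block is unchanged, or, when $i = 1$, it becomes $x_1 = x$ again, so the condition $y_1 \geq x$ is preserved.

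Iterating this collapse terminates, since each application strictly decreases the length, and when no equal consecutive blocks remain the weakly increasing chain has become strictly increasing, giving exactly the claimed sequence together with $c_0 = a$, $c_n = b$, and $y_1 \geq x$. The only point that genuinely requires care --- and the nearest thing to an obstacle --- is justifying each collapse: one must confirm that the three relevant elements really do share the single block $\Lambda_{x_i}$, so that the lone transitivity step supplied by lem.~\ref{lem:leq-sub-x-po} is available and the merged step is a bona fide link of an ascending sequence. Once that membership is checked, the remainder is routine bookkeeping on the indices, and the preservation of the endpoints and of $y_1 \geq x$ is immediate since only interior data are ever removed.
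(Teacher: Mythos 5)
There is a genuine gap: you have proved a different statement from the one the lemma (and the paper's proof) intends. The lemma's lettering is admittedly garbled --- the clauses $y_1 \geq x$, $c_0 = a$, $c_n = b$ use $y_\bullet$ for the blocks and $c_\bullet$ for the elements, while the final clause swaps the two names --- but the intended meaning is pinned down by two facts: the relation $<_v$ of def.~\ref{def:leq-sub-x} is defined only for a subscript $v \in S$, so in ``$y_i <_{c_{i+1}} y_{i+1}$'' the $c$'s must be blocks and the $y$'s must be members of $L$; and def.~\ref{def:asc-seq-new} explicitly reserves the word \emph{elements} for the $L$-valued entries of an ascending sequence. So the lemma asserts that consecutive \emph{elements} of the sequence strictly increase (no consecutive repeats in $L$), whereas your collapsing argument makes the \emph{blocks} strictly increase in $S$. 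Your reading forced you to gloss the condition $y_i <_{c_{i+1}} y_{i+1}$ as strict increase in $S$, silently discarding a subscript that is undefined under that reading. The two conclusions are independent, and yours does not imply the lemma: the ascending sequence $a \leq_x a \leq_y b$ with $x < y$ in $S$ has strictly increasing blocks, survives your normalization untouched, and violates the intended conclusion.

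For contrast, the paper's proof attacks element repetitions directly: at any index where $y_i {\not<}_{c_{i+1}} y_{i+1}$, the relation $y_i \leq_{c_{i+1}} y_{i+1}$ forces $\pi_{c_{i+1}}^{-1}\,y_i = \pi_{c_{i+1}}^{-1}\,y_{i+1}$, hence $y_i = y_{i+1}$ in $L$ by bijectivity of $\pi_{c_{i+1}}$ (lem.~\ref{lem:canon-1-1}); it then deletes the duplicate element together with the block $c_{i+1}$, and the merged link $y_i \leq_{c_{i+2}} y_{i+2}$ is valid simply because $y_i = y_{i+1}$ --- no transitivity within a single $\Lambda$ is needed. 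A tell-tale you missed: when the repetition occurs at the start of the sequence, this deletion discards the first block and replaces it by a possibly larger one, which is exactly why the lemma weakens the conclusion from $y_1 = x$ (as in lem.~\ref{lem:(10)+}) to $y_1 \geq x$; your procedure never disturbs the first block and never needs that weakening, which should have flagged that your reading differed from the statement. Your block-collapsing step itself --- using transitivity of $\leq_{x_i}$ on $\Lambda_{x_i}$ from lem.~\ref{lem:leq-sub-x-po} --- is internally correct and would prove a complementary normalization (strictly increasing blocks), but it does not establish this lemma.
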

\begin{proof} \disconnect
By lem.~\ref{lem:(10)+}, there exists an ascending sequence
$(y_\bullet, c_\bullet)$ of length $n$ that satisfies all of the
requirements except
that there may be adjacent pairs of elements that are are not strictly
increasing, that is, for some $1 \leq i \leq n$,
$y_{i-1} = y_i$.

We can meet that requirement by repeatedly deleting the second element
of any adjacent pair of equal elements.  Since the sequence has a
finite length, this process eventually terminates.
(Note that if $a = b$, the
sequence may be reduced to length 0, which is valid and satisfies the
requirements.)

We remove adjacent equal elements as follows:
Let $y_i {\not<}_{c_{i+1}} y_{i+1}$ for some $0 \leq i < n$.
Since $y_i \leq_{c_{i+1}} y_{i+1}$; $y_i$ and $y_{i+1}$ must have the same images
under $\pi_{c_{i+1}}^{-1}$, and since $\pi_{c_{i+1}}^{-1}$ is bijective, they
must be equal as elements of $L$.  Then
\begin{gather*}
y_0 \leq_{c_1} \cdots \leq_{c_i} y_i = y_{i+1} \leq_{c_{i+2}} y_{i+2}
\leq_{c_{i+3}} \cdots \leq_{c_n} y_n \\
c_1 \leq \cdots \leq c_i \leq c_{i+1} \leq c_{i+2} \leq \cdots \leq c_n
\end{gather*}
We create a new ascending sequence:
\begin{gather*}
y_0 \leq_{c_1} \cdots \leq_{c_i} y_i \leq_{c_{i+2}} y_{i+2} \leq_{c_{i+3}}
\cdots \leq_{c_n} y_n \\
c_1 \leq \cdots \leq c_i \leq c_{i+2} \leq \cdots \leq c_n
\end{gather*}
This a valid ascending sequence because the only new condition on its
validity is $y_i \leq_{c_{i+2}}$, which follows because $y_i = y_{i+1}$.

Note that if $i = 0$, then the first element of the sequence remains
unchanged as $a$, but the first block is removed and replaced with
$y_1$, which is $\geq x$ but may be $> x$.
If $i = n-1$, the last element of the sequence is replaced with $y_{n-1}$
which is equal to the former first element $y_n$ and the last block is
removed and replaced with $c_{n-1}$.
\end{proof}

\begin{lemma} \label{lem:ref}
The relation $\leq$ on $L$ is reflexive.
\end{lemma}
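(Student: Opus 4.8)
The plan is to exhibit, for each $a \in L$, an ascending sequence from $a$ to $a$; by def.~\ref{def:L-leq} any such sequence witnesses $a \leq a$, and since $a$ is arbitrary this yields reflexivity. The cleanest witness is the length-$0$ ascending sequence.

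First I would recall from def.~\ref{def:asc-seq-new} that a length-$0$ ascending sequence has no blocks (the index set $1 \leq i \leq 0$ is empty) and exactly one element $a_0$; its defining conditions (2) and (3) are then vacuous, and condition (1) demands only $a_0 \in L$. This is precisely the degenerate case already relied upon as the base case in the proof of lem.~\ref{lem:asc-seq-lift-weak}. Hence for any $a \in L$, setting $a_0 = a$ gives a valid ascending sequence; with $n = 0$ we have $a = a_0 = a_n$, so $a \leq a$ by def.~\ref{def:L-leq}.

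I expect no genuine obstacle here: the only point needing care is the observation that the notion of ascending sequence admits the degenerate length-$0$ case, which def.~\ref{def:asc-seq-new} does permit. If one preferred to avoid the empty sequence entirely, one could instead note that every $a \in L$ lies in some $\Lambda_x$ (since $L = M/\mathord\sim$ is covered by the images $\pi_x$, each element of $L$ being $\kappa\,(x,a) = \pi_x\,a$ for some $(x,a) \in M$), and then the length-$1$ sequence $((x),(a,a))$ works, using reflexivity of $\leq_x$ supplied by lem.~\ref{lem:leq-sub-x-po}. Either route closes the argument; the length-$0$ version is the shorter of the two.
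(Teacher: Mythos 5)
Your proposal is correct and matches the paper's own proof, which exhibits the degenerate ascending sequence $((),(a))$ with no blocks and the single element $a$ as the witness for $a \leq a$ (the paper calls it ``length 1,'' apparently counting elements rather than blocks, but it is the same object as your length-$0$ sequence). Your alternative length-$1$ route via $((x),(a,a))$ is a harmless extra, but the main argument is essentially identical to the paper's.
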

\begin{proof}
\leavevmode

The ascending sequence $((), (a))$ (of length 1)
shows $a \leq a$.
\end{proof}

\begin{lemma} \label{lem:antisymmetric}
The relation $\leq$ on $L$ is antisymmetric.
\end{lemma}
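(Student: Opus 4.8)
The plan is to collapse the two witnessing ascending sequences into a single block, where comparison is a genuine partial order (lem.~\ref{lem:leq-sub-x-po}), and exploit the cyclic structure that $a$ reappears as both endpoints.

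Suppose $a \leq b$ and $b \leq a$. By def.~\ref{def:L-leq} there is an ascending sequence from $a$ to $b$ and one from $b$ to $a$; since the terminal element of the first equals the initial element of the second, lem.~\ref{lem:asc-seq-concat} concatenates them into a single ascending sequence $(z_\bullet, d_\bullet)$ of some length $N$ whose blocks increase, $z_1 \leq \cdots \leq z_N =: w$, and whose elements run $a = d_0 \leq_{z_1} d_1 \leq_{z_2} \cdots \leq_{z_N} d_N = a$.

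The key step is to lift this whole sequence to the single top block $w$. The element $a$ occurs as both endpoints $d_0$ and $d_N$, and $d_N \in \Lambda_{z_N} = \Lambda_w$ by def.~\ref{def:leq-sub-x}; hence $d_0 = a \in \Lambda_w$, which is exactly the hypothesis needed to apply lem.~\ref{lem:asc-seq-lift-weak} with $y = w$. Since every $z_i \leq w$, each block $z_i \vee w$ becomes $w$, so the lifted sequence reads $a = d_0 \leq_w d_1 \leq_w \cdots \leq_w d_N = a$ with all comparisons taking place in the single block $\Lambda_w$.

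Finally, because $\leq_w$ is a partial ordering on $\Lambda_w$ (lem.~\ref{lem:leq-sub-x-po}), transitivity along the chain gives $a \leq_w d_i$ and $d_i \leq_w a$ for every $i$, and antisymmetry of $\leq_w$ forces $d_i = a$; in particular $b = a$. I expect the only subtle point to be justifying $a \in \Lambda_w$ so that the lift lemma applies --- this is precisely where the cyclicity (that $a$ is both endpoints) is used, and it is what lets the argument avoid any appeal to monotony, consistent with the absence of a dagger on this statement.
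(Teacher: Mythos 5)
Your proof is correct and follows essentially the same route as the paper's: concatenate the two witnessing sequences via lem.~\ref{lem:asc-seq-concat}, use the cyclic return to $a$ to get $a \in \Lambda_{z_N}$, lift the whole sequence into the top block with lem.~\ref{lem:asc-seq-lift-weak}, and conclude by antisymmetry of $\leq_{z_N}$ on $\Lambda_{z_N}$. No gaps.
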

\begin{proof} \disconnect
Assume $a, b \in L$ with $a \leq b$ and $b \leq a$.
By def.~\ref{def:L-leq} there is an ascending sequence of length $n$
from $a$ to $b$ and an ascending sequence of length $m$ from $b$ to
$a$.
Using lem.~\ref{lem:asc-seq-concat}, we can
concatenate these ascending sequences to
construct an ascending sequence $(x_\bullet, c_\bullet)$
of length $n+m$ with $c_0 = a$, $c_n = b$, $c_{n+m} = a$,
and $c_0 = a = c_{n+m} \in \Lambda_{x_{n+m}}$.
Applying lem.~\ref{lem:asc-seq-lift-weak} to $(x_\bullet, c_\bullet)$
and $x_{n+m}$,
$((x_i \vee x_{n+m})_{1 \leq i \leq n+m}, (c_i)_{1 \leq i \leq x+m})$
is an ascending sequence.
But since $x_i \leq x_{n+m}$ for all $i$, this ascending sequence is
equal to $((x_{n+m})_{1 \leq i \leq n+m}, (c_i)_{1 \leq i \leq x+m})$.
By the transitivity of $\leq_{x_{n+m}}$, this shows that
$a \leq_{x_{n+m}} b \leq_{x_{n+m}} a$, which implies that
$\pi_{x_{n+m}}^{-1}\,a = \pi_{x_{n+m}}^{-1}\,b$, and since
$\pi_{x_{n+m}}$ is bijective, $a=$b.
\end{proof}

\begin{lemma} \label{lem:trans}
The relation $\leq$ on $L$ is transitive.
\end{lemma}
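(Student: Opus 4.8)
The plan is to reduce transitivity directly to the concatenation lemma for ascending sequences, lem.~\ref{lem:asc-seq-concat}, which has already done all the real work. Assume $a \leq b$ and $b \leq c$ in $L$. By def.~\ref{def:L-leq} there is an ascending sequence $(x_\bullet, a_\bullet)$ of some length $n$ from $a$ to $b$, so that $a_0 = a$ and $a_n = b$, and an ascending sequence $(y_\bullet, b_\bullet)$ of some length $m$ from $b$ to $c$, so that $b_0 = b$ and $b_m = c$.

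First I would observe that the junction condition of lem.~\ref{lem:asc-seq-concat} is met automatically: the terminal element of the first sequence and the initial element of the second are both $b$, i.e.\ $a_n = b = b_0$. Therefore I can apply lem.~\ref{lem:asc-seq-concat} to these two sequences to obtain a valid ascending sequence $(z_\bullet, d_\bullet)$ of length $n+m$. Reading off its endpoints from the definition in that lemma, $d_0 = a_0 = a$ and $d_{n+m} = b_m = c$, so this is an ascending sequence from $a$ to $c$. Invoking def.~\ref{def:L-leq} once more, the existence of this sequence shows $a \leq c$, which is exactly transitivity.

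There is essentially no obstacle here: the content of transitivity lives entirely in lem.~\ref{lem:asc-seq-concat}, and the present lemma is merely its specialization to the case where the two sequences share an endpoint. The only point requiring the smallest care is bookkeeping on the indices --- confirming that the concatenated sequence's first element is indeed $a$ and its last is $c$ --- but this is immediate from the explicit case definitions of $d_i$ in lem.~\ref{lem:asc-seq-concat}, and no appeal to the structure of the blocks, the connections, or the tolerance $\relgamma$ is needed beyond what is already packaged inside that lemma.
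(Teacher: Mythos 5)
Your proof is correct and is essentially identical to the paper's: both obtain ascending sequences from $a$ to $b$ and from $b$ to $c$ via def.~\ref{def:L-leq}, concatenate them with lem.~\ref{lem:asc-seq-concat}, and read off $a \leq c$. No discrepancies.
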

\begin{proof} \disconnect
Assume $a, b, c \in L$ with $a \leq b$ and $b \leq c$.
By def.~\ref{def:L-leq}, there are ascending sequences
from $a$ to $b$ and from $b$ to $c$.
Using lem.~\ref{lem:asc-seq-concat}, these can be concatenated to form
an ascending sequence from $a$ to $c$, which shows that $a \leq c$.
\end{proof}

\begin{lemma} \label{lem:poset}
$\leq$ on $L$ is a partial ordering.
\end{lemma}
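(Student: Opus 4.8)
The plan is to observe that the statement is nothing more than the conjunction of the three defining properties of a partial order, each of which has just been established as a separate lemma. By definition a binary relation is a partial ordering precisely when it is reflexive, antisymmetric, and transitive. Since $\leq$ on $L$ is the relation introduced in def.~\ref{def:L-leq}, the three immediately preceding lemmas supply exactly the ingredients required, and the proof reduces to citing them.

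Concretely, I would invoke lem.~\ref{lem:ref} for reflexivity, lem.~\ref{lem:antisymmetric} for antisymmetry, and lem.~\ref{lem:trans} for transitivity. Taken together these three facts state that $\leq$ is reflexive, antisymmetric, and transitive on $L$, which is precisely the assertion that $\leq$ is a partial ordering. No further construction is needed.

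I expect no genuine obstacle in the present lemma itself; it is a bookkeeping statement that merely packages the preceding results. The real work resides in the lemmas it cites, above all antisymmetry (lem.~\ref{lem:antisymmetric}), whose proof required concatenating two ascending sequences via lem.~\ref{lem:asc-seq-concat}, lifting them to a common maximal block $x_{n+m}$ using lem.~\ref{lem:asc-seq-lift-weak}, and then appealing to the bijectivity of $\pi_{x_{n+m}}$ from lem.~\ref{lem:canon-1-1}. Transitivity (lem.~\ref{lem:trans}) similarly rests on the concatenation construction. Once those are in hand, the present lemma follows at once.
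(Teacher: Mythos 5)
Your proposal is correct and matches the paper's own proof exactly: both simply cite lem.~\ref{lem:ref}, lem.~\ref{lem:antisymmetric}, and lem.~\ref{lem:trans} for reflexivity, antisymmetry, and transitivity respectively. No difference in approach.
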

\begin{proof}
\leavevmode

This is proved by
lem.~\ref{lem:ref}, \ref{lem:antisymmetric}, and~\ref{lem:trans}.
\end{proof}

% Herr (11)
\begin{lemma}\citeHerr*{(11)}{(11)} \label{lem:(11)}
If $a \leq c \leq b$ in $L$ and for some $x \in S$,
$a, b \in \Lambda_x$, then $c \in \Lambda_x$ and
$a \leq_x c \leq_x b$.
\end{lemma}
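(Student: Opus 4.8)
The plan is to reduce the whole statement to a single ascending sequence from $a$ to $b$ that passes through $c$ and then to collapse all of its blocks down to $x$, using that both endpoints $a$ and $b$ already lie in $\Lambda_x$. The guiding idea is that an all-$x$ ascending sequence is exactly a $\leq_x$-chain inside $\Lambda_x$, so once every block has been forced to equal $x$, membership of $c$ in $\Lambda_x$ and the ordering relations $a \leq_x c \leq_x b$ fall out of def.~\ref{def:leq-sub-x}.

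Concretely, I would proceed as follows. Since $a \leq c$ and $c \leq b$, by def.~\ref{def:L-leq} there is an ascending sequence from $a$ to $c$ and one from $c$ to $b$; because their shared endpoint is $c$, lem.~\ref{lem:asc-seq-concat} splices them into one ascending sequence $(z_\bullet, d_\bullet)$ from $a$ to $b$ in which $c$ occurs as the joining element $d_k$. Its first element is $a \in \Lambda_x$, so lem.~\ref{lem:asc-seq-lift-weak} applied with $y = x$ replaces each block $z_i$ by $z_i \vee x$, leaving the elements (hence $a$, $c$, $b$ and their positions) unchanged and making every block $\geq x$. Its last element is $b \in \Lambda_x$, so lem.~\ref{lem:asc-seq-lower} applied with $y = x$ now replaces each block $z_i \vee x$ by $(z_i \vee x) \wedge x = x$, again fixing the elements. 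The result is a valid ascending sequence all of whose blocks are exactly $x$, so property~(3) of def.~\ref{def:asc-seq-new} reads $a = d_0 \leq_x d_1 \leq_x \cdots \leq_x d_{n+m} = b$; by def.~\ref{def:leq-sub-x} each $d_i$, and in particular $c = d_k$, lies in $\Lambda_x$, and transitivity of $\leq_x$ on $\Lambda_x$ (lem.~\ref{lem:leq-sub-x-po}) yields $a \leq_x c \leq_x b$.

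The one delicate point, and the step I expect to carry the real content, is arranging that meeting with $x$ collapses every block to exactly $x$ rather than to something strictly below it; this is why I first lift all blocks by $\vee x$ so that they dominate $x$, and it is precisely here that the hypotheses $a \in \Lambda_x$ and $b \in \Lambda_x$ are used in a symmetric way, one to license the lift and the other to license the lower. Everything else is bookkeeping in the ascending-sequence calculus. It is worth noting that this argument invokes only the general sum machinery and not the monotony axiom~\ref{MCb2}, consistent with the lemma being stated without a dagger.
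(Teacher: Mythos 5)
Your proposal is correct and follows essentially the same route as the paper: both concatenate ascending sequences through $c$ and then use the membership of the endpoints in $\Lambda_x$ to force every block down to $x$, the only difference being that you package the collapse via lem.~\ref{lem:asc-seq-lift-weak} and lem.~\ref{lem:asc-seq-lower} (making every block exactly $x$ by absorption), whereas the paper prepends $x$ via lem.~\ref{lem:(10)+} and then walks backward with lem.~\ref{lem:(9)-dual} — the same underlying mechanism.
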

\begin{proof}
\leavevmode

As in previous proofs, by def.~\ref{def:L-leq}, there exist ascending
sequences from $a$ to
$c$ and from $c$ to $b$.  Using lem.~\ref{lem:(10)+}, these sequences can be
concatenated to produce
one sequence with starting element $a$, ending element $b$, and with $c$ as an
element.  We apply lem.~\ref{lem:(10)+} to the concatenated sequence
and $x$ to produce such a sequence with starting block $x$.

Since the starting block of the final sequence is $x$,
every block is $\geq x$.  The ending element is $b$ and
$b \in \Lambda_x$.  Applying lem.~\ref{lem:(9)-dual} inductively from
the end of the sequence to the beginning proves that every element
$\in \Lambda_x$ and every consecutive pair of elements is $\leq_x$.
That shows that $c \in \Lambda_x$ and by
lem.~\ref{lem:leq-sub-x-po}, $a \leq_x c \leq_x b$.
\end{proof}

\begin{lemma} \label{lem:pi-poset-iso}
If $a, b \in L_x$, then $a \leq b$ in $L_x$ iff
$\pi_x\,a \leq \pi_x\,b$ in $L$.
Equivalently, if $a^\prime, b^\prime \in \Lambda_x$,
then $a^\prime \leq_x b^\prime$ iff $a^\prime \leq b^\prime$.
\end{lemma}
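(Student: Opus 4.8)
The plan is to prove the second, equivalent formulation and observe that it immediately yields the first. First I would record why the two statements coincide: by lem.~\ref{lem:canon-1-1} the map $\pi_x$ is a bijection between $L_x$ and $\Lambda_x$, and by def.~\ref{def:leq-sub-x} the relation $a^\prime \leq_x b^\prime$ on $\Lambda_x$ holds exactly when $\pi_x^{-1}\,a^\prime \leq \pi_x^{-1}\,b^\prime$ in $L_x$. Thus substituting $a^\prime = \pi_x\,a$ and $b^\prime = \pi_x\,b$ turns the assertion ``$a \leq b$ in $L_x$ iff $\pi_x\,a \leq \pi_x\,b$'' into the assertion ``$a^\prime \leq_x b^\prime$ iff $a^\prime \leq b^\prime$'', so it suffices to establish the latter for $a^\prime, b^\prime \in \Lambda_x$.

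For the forward direction, suppose $a^\prime \leq_x b^\prime$. Then the single-block pair $((x), (a^\prime, b^\prime))$ satisfies the three conditions of def.~\ref{def:asc-seq-new}: condition (3) is literally the hypothesis $a^\prime \leq_x b^\prime$, while conditions (1) and (2) are trivial. Hence it is a valid ascending sequence from $a^\prime$ to $b^\prime$, and def.~\ref{def:L-leq} gives $a^\prime \leq b^\prime$.

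For the converse, suppose $a^\prime \leq b^\prime$ in $L$ with $a^\prime, b^\prime \in \Lambda_x$. By reflexivity (lem.~\ref{lem:ref}) we have the chain $a^\prime \leq b^\prime \leq b^\prime$ in $L$, whose endpoints $a^\prime$ and $b^\prime$ both lie in $\Lambda_x$. Applying lem.~\ref{lem:(11)} to this chain yields $a^\prime \leq_x b^\prime \leq_x b^\prime$, and in particular $a^\prime \leq_x b^\prime$, as required.

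The bulk of the work has already been carried out in lem.~\ref{lem:(11)}, so here there is no substantial obstacle. The only point requiring attention is the bookkeeping of the first paragraph—that the two formulations really are interchangeable—which follows purely from the bijectivity of $\pi_x$ and the definition of $\leq_x$; after that, each direction is a one-line appeal to an existing result.
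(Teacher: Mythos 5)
Your proposal is correct and follows essentially the same route as the paper: the forward direction via a length-one ascending sequence (def.~\ref{def:asc-seq-new} and def.~\ref{def:L-leq}), and the converse via lem.~\ref{lem:(11)} applied with both endpoints in $\Lambda_x$. The extra bookkeeping about the equivalence of the two formulations and the explicit chain $a^\prime \leq b^\prime \leq b^\prime$ are harmless elaborations of what the paper leaves implicit.
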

\begin{proof}
\leavevmode

Regarding $\Rightarrow$:
If $a \leq b$ in $L_x$, then $((x), (\pi_x(a), \pi_x(b)))$
is an ascending sequence
(of length 1) which shows that $\pi_x\,a \leq \pi_x\,b$.

Regarding $\Leftarrow$:
If $\pi_x\,a \leq \pi_x\,b$, since $\pi_x(a), \pi_x(b) \in \Lambda_x$,
by lem.~\ref{lem:(11)},
$\pi_x\,a \leq_x \pi_x\,b$, which is equivalent to $a \leq b$ in $L_x$.
\end{proof}

\paragraph{\texorpdfstring{The $\Pi$ intervals of $S$}%
{The \83\240\ intervals of S}}

\begin{definition} \label{def:Pi}
For any $a \in L$, we define
$\Pi_a = \{ x \in S \mvert a \in \Lambda_x \}$,
the set of $x \in S$ for which $L_x$ has an element
that maps to $a$ via $\pi_x$.
\end{definition}

\begin{lemma}\flagstart$\dagger$ \label{lem:0-1-extreme}
For any $x \in S$, $x$ is the maximum element of $\Pi_{0_x}$
and the minimum element of $\Pi_{1_x}$.
\end{lemma}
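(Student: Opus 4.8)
The plan is to prove the two assertions---that $x = \max \Pi_{0_x}$ and $x = \min \Pi_{1_x}$---dually, establishing the first in full and obtaining the second either by passing to the dual \mcs via th.~\ref{th:mcs-dual} or by repeating the argument with the dual formulas. First I would record the trivial half: $x \in \Pi_{0_x}$ and $x \in \Pi_{1_x}$, since $0_x = \pi_x\,\hatzero_{L_x} \in \Lambda_x$ and $1_x = \pi_x\,\hatone_{L_x} \in \Lambda_x$ by def.~\ref{def:Lam} and def.~\ref{def:Pi}. Thus $x$ genuinely belongs to each set, and it remains only to show it is extreme in the ordering of $S$.

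For maximality, I would take an arbitrary $y \in \Pi_{0_x}$, so that $0_x \in \Lambda_y$; hence there is some $b \in L_y$ with $\pi_y\,b = 0_x = \pi_x\,\hatzero_{L_x}$. By def.~\ref{def:kappa} and~\ref{def:pi} this is exactly the relation $(y,b) \sim (x,\hatzero_{L_x})$, and unpacking it via def.~\ref{def:sim} yields in particular $\hatzero_{L_x} \in \F{x}{x \vee y}$. Since $\F{x}{x \vee y}$ is a filter of $L_x$ with minimum $\Z{x}{x \vee y}$, membership of $\hatzero_{L_x}$ forces $\Z{x}{x \vee y} = \hatzero_{L_x}$. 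Now I would appeal to lem.~\ref{lem:monotone-0-1}: if $x < x \vee y$ (noting $x \relgamma x \vee y$ by lem.~\ref{lem:tol}), then $\Z{x}{x \vee y} > \hatzero_{L_x}$, a contradiction. Therefore $x \vee y = x$, i.e.\ $y \leq x$, establishing that $x$ is the maximum of $\Pi_{0_x}$.

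The minimality of $x$ in $\Pi_{1_x}$ follows dually: in the dual \mcs the roles of $\hatzero$ and $\hatone$, of filters and ideals, and of $\vee$ and $\wedge$ are interchanged, so maximality of $x$ in $\Pi_{0_x}$ for the dual is precisely minimality of $x$ in $\Pi_{1_x}$ for the original. Carried out directly, one takes $y \in \Pi_{1_x}$, writes $(x,\hatone_{L_x}) \sim (y,b)$, and extracts from lem.~\ref{lem:sim-dual} that $\hatone_{L_x} \in \I{x \wedge y}{x}$; since the maximum of this ideal is $\O{x \wedge y}{x}$, this forces $\O{x \wedge y}{x} = \hatone_{L_x}$, which by lem.~\ref{lem:monotone-0-1} can hold only when $x \wedge y = x$, i.e.\ $x \leq y$.

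I do not expect a serious obstacle; the substance of the lemma is exactly that monotony forbids a strictly larger (resp.\ smaller) block from sharing the image of $\hatzero_{L_x}$ (resp.\ $\hatone_{L_x}$). The only step that reaches past the definitions is the invocation of lem.~\ref{lem:monotone-0-1}, and this is precisely where axiom~\ref{MCb2} (monotony) enters, accounting for the dagger. The single point requiring care is to confirm $x \relgamma x \vee y$ (resp.\ $x \wedge y \relgamma x$) before applying lem.~\ref{lem:monotone-0-1}, which is immediate from $x \relgamma y$ together with lem.~\ref{lem:tol}.
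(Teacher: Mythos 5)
Your proof is correct and follows essentially the same route as the paper: unpack $0_x \in \Lambda_y$ into $(y,b) \sim (x,\hatzero_{L_x})$, deduce $\hatzero_{L_x} \in \F{x}{x\vee y}$, and invoke monotony to force $x \vee y = x$, with the $\Pi_{1_x}$ statement handled dually. The only cosmetic difference is that you route the monotony step through lem.~\ref{lem:monotone-0-1} while the paper cites \ref{MCb2} directly; since that lemma is itself the packaged consequence of \ref{MCb2} for general $x < y$ (not just covers), your version is, if anything, slightly more careful.
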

\begin{proof}
\leavevmode

Suppose $z \in \Pi_{0_x}$ and so $0_x \in \Lambda_z$.
Then there exists $a \in L_z$ such that
$(z, a) \sim (x, \hatzero_{L_x})$, requiring
$\P{z}{x \vee z}\,a = \P{x}{x \vee z}\,\hatzero_{L_x}$.
Thus $\hatzero_x \in \F{x}{x \vee z}$.
But since $\hatzero_{L_x}$ is the minimum of $L_x$, by \ref{MCb2},
it can only be in $\F{x}{x \vee z}$ if $x \vee z = x$.
Thus $z \leq x$, showing $x$ is the maximum of $\Pi_{0_x}$.

Dually, we prove $x$ is the minimum element of $\Pi_{1_x}$.
\end{proof}

\begin{lemma} \label{lem:Pi-conv-sub}
For any $a \in L$, $\Pi_a$ is a convex sublattice of $S$.
\end{lemma}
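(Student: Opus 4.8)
The plan is to verify separately the two defining properties of a convex sublattice: that $\Pi_a$ is closed under the meet and join of $S$, and that it is order-convex. The enabling observation, which I would record first, is that any two members of $\Pi_a$ are automatically $\gamma$-related. Indeed, if $x, y \in \Pi_a$ then $a \in \Lambda_x \cap \Lambda_y$, so writing $\alpha = \pi_x^{-1}\,a$ and $\beta = \pi_y^{-1}\,a$ we have $\kappa\,(x, \alpha) = a = \kappa\,(y, \beta)$ and hence $(x, \alpha) \sim (y, \beta)$; by def.~\ref{def:sim} this forces $x \relgamma y$. This single fact is what unlocks the $\gamma$-dependent lemmas below.

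For convexity, suppose $x, y \in \Pi_a$ with $x \leq z \leq y$ in $S$. Since $x \relgamma y$ by the observation above, lem.~\ref{lem:(6)-for-Lam} applies verbatim and gives $\Lambda_x \cap \Lambda_y \subset \Lambda_z$; as $a \in \Lambda_x \cap \Lambda_y$, we conclude $a \in \Lambda_z$, that is, $z \in \Pi_a$.

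For the sublattice property I would exploit reflexivity of the relations $\leq_\bullet$ together with lem.~\ref{lem:(9)} and its dual. Given $x, y \in \Pi_a$, the relation $a \leq_x a$ holds (since $a \in \Lambda_x$) and $a \in \Lambda_y$, so lem.~\ref{lem:(9)} yields $a \leq_{x \vee y} a$, and in particular $a \in \Lambda_{x \vee y}$, i.e.\ $x \vee y \in \Pi_a$. Dually, from $a \leq_y a$ and $a \in \Lambda_x$, lem.~\ref{lem:(9)-dual} gives $a \in \Lambda_{x \wedge y}$, so $x \wedge y \in \Pi_a$. (Nonemptiness is immediate: writing $a = \kappa\,(x_0, a_0)$ shows $x_0 \in \Pi_a$.)

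I expect no genuine obstacle here; the whole argument is routine once the $\gamma$-relation between members of $\Pi_a$ is in hand, which is the one step doing real work. The only points to watch are that lem.~\ref{lem:(6)-for-Lam} requires both $x \leq z \leq y$ and $x \relgamma y$ (both supplied), and that the trivial inequalities $a \leq_x a$ are legitimate precisely because $a$ lies in the corresponding $\Lambda$'s.
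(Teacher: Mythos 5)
Your proof is correct, and its skeleton --- first observe that any two members of $\Pi_a$ are $\gamma$-related via $(x,\pi_x^{-1}a) \sim (y,\pi_y^{-1}a)$, then check convexity and closure under join and meet separately --- is the same as the paper's. The difference is in how you discharge each part. The paper works at the element level: for join-closure it explicitly constructs $b'' = \P{x}{x \vee x'}\,b \in L_{x \vee x'}$ and verifies $\kappa\,(x \vee x', b'') = a$ from def.~\ref{def:sim}, with the dual argument via lem.~\ref{lem:sim-dual} for meets, and for convexity it pushes $b$ through $\F{x}{x''} \subset \F{x}{x'}$ using lem.~\ref{lem:(6)}. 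You instead route everything through the already-proven $\Lambda$-level lemmas: lem.~\ref{lem:(9)} applied to the reflexive instance $a \leq_x a$ gives $a \in \Lambda_{x \vee y}$, its dual gives $a \in \Lambda_{x \wedge y}$, and lem.~\ref{lem:(6)-for-Lam} gives convexity directly. All three citations are to results established before this lemma in the paper, so there is no circularity, and your uses are legitimate (in particular $a \leq_x a$ does hold exactly when $a \in \Lambda_x$, which is what $x \in \Pi_a$ supplies). Your version is shorter because it reuses machinery; the paper's is more self-contained at the level of the connection maps. One small note: for the join- and meet-closure steps your opening observation about $\gamma$-relatedness is not actually needed, since lem.~\ref{lem:(9)} and lem.~\ref{lem:(9)-dual} do not take $x \relgamma y$ as a hypothesis --- it is only the convexity step (via lem.~\ref{lem:(6)-for-Lam}) that consumes it.
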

\begin{proof}
\leavevmode

Given $x, x^\prime \in \Pi_a$, there exist $b \in L_x$ and
$b^\prime \in L_{x^\prime}$ such that $a = \kappa\,(x,b)
= \kappa\,(x^\prime,b^\prime)$. By def.~\ref{def:sim}, that implies
$(x,b) \sim (x^\prime,b^\prime)$ and so
$x \relgamma x^\prime$, $b \in \F{x}{x \vee x^\prime}$,
$b^\prime \in \F{x^\prime}{x \vee x^\prime}$, and
$\P{x}{x \vee x^\prime}\,b = \P{x^\prime}{x \vee x^\prime}\,b^\prime$.
Defining $x^{\prime\prime} = x \vee x^\prime$ and
$b^{\prime\prime} = \P{x}{x \vee x^\prime}$, we can assemble that
$x \relgamma x^{\prime\prime}$ and
$(x,b) \sim (x^{\prime\prime},b^{\prime\prime})$, and so
$\kappa\,(x^{\prime\prime},b^{\prime\prime}) = a$, showing
$x^{\prime\prime} \in \Pi_a$.  This shows that $\Pi_a$
is closed under joins.

Dually, using lem.~\ref{lem:sim-dual} instead of def.~\ref{def:sim},
we prove that $\Pi_a$ is closed under meets.

Given $x, x^{\prime\prime} \in \Pi_a$ and
$x \leq x^\prime \leq x^{\prime\prime}$,
there exist $b \in L_x$ and
$b^{\prime\prime} \in L_{x^{\prime\prime}}$ such that $a = \kappa\,(x,b)
= \kappa\,(x^{\prime\prime},b)$.
By def.~\ref{def:sim}, $x \relgamma x^{\prime\prime}$, so by
lem.~\ref{lem:tol}(\ref{lem:tol:i1}) $x \relgamma x^\prime$ and
by lem.~\ref{lem:(6)}, $b \in \F{x}{x^{\prime\prime}}$, and so
$b \in \F{x}{x^\prime}$.
Defining $b^\prime = \P{x}{x^{\prime\prime}}\, b$,
$\kappa\,(x^\prime,b^\prime) = a$ and so $x^\prime \in \Pi_a$.
Thus $\Pi_a$ is convex.
\end{proof}

\begin{subnumbering}

\begin{lemma}\flagstart$\dagger$ \label{lem:Pi-interval}
For any $a \in L$, $\Pi_a$ is a finite interval in $S$.
\end{lemma}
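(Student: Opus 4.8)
The plan is to build on what is already available: by lem.~\ref{lem:Pi-conv-sub}, $\Pi_a$ is a convex sublattice of $S$, and it is nonempty, since writing $a = \kappa\,(x,b)$ for some $(x,b) \in M$ gives $a \in \Lambda_x$, i.e.\ $x \in \Pi_a$. The whole statement then reduces to showing that $\Pi_a$ has both a minimum $m$ and a maximum $M$. Indeed, once these exist, convexity gives $[m,M] \subseteq \Pi_a$ while their extremality gives $\Pi_a \subseteq [m,M]$, so $\Pi_a = [m,M]$ is an interval; and since $S$ is \lffc by \ref{MCf}, hence locally finite, this interval is finite. So I would concentrate on producing a minimum, the maximum being dual via the dual \mcs of th.~\ref{th:mcs-dual}.

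To produce the minimum I would fix any $x_0 \in \Pi_a$ and set $Z = \{\, z \in \Pi_a \mvert z \leq x_0 \,\}$, which is nonempty ($x_0 \in Z$) and closed under meets, being the intersection of the sublattice $\Pi_a$ with the principal ideal $\{\, z \in S \mvert z \leq x_0 \,\}$. For each $z \in Z$ we have $a \in \Lambda_z \cap \Lambda_{x_0}$, so by def.~\ref{def:sim} $z \relgamma x_0$, and hence $\O{z}{x_0} \in L_{x_0}$ is defined. The heart of the argument is to show that $z \mapsto \O{z}{x_0}$ is strictly order-preserving on $Z$. Weak monotonicity is immediate from lem.~\ref{lem:(6)}, which gives $\I{z'}{x_0} \subseteq \I{z}{x_0}$, hence $\O{z'}{x_0} \leq \O{z}{x_0}$, whenever $z' \leq z \leq x_0$. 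For strictness, suppose $z' < z$ in $Z$ but $\O{z'}{x_0} = \O{z}{x_0}$; then the two principal ideals coincide, $\I{z'}{x_0} = \I{z}{x_0}$. Writing $\I{z}{x_0} = \P{z}{x_0}(\F{z}{x_0})$ by \ref{MCe} and $\I{z'}{x_0} = \P{z}{x_0}(\I{z'}{z} \cap \F{z}{x_0})$ by \ref{MCc}, and cancelling the injective $\P{z}{x_0}$, this yields $\F{z}{x_0} \subseteq \I{z'}{z}$; since $\hatone_{L_z} \in \F{z}{x_0}$, we would get $\O{z'}{z} = \hatone_{L_z}$, contradicting monotony in the form of lem.~\ref{lem:monotone-0-1} (note $z' \relgamma z$ by lem.~\ref{lem:tol}).

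Finally, strict monotonicity sends any strictly descending chain in $Z$ to a strictly descending chain in the finite lattice $L_{x_0}$, so $Z$ satisfies the descending chain condition. A nonempty meet-closed set with the descending chain condition has a least element: the descending chain condition supplies a minimal $m \in Z$, and meet-closedness upgrades it to a minimum, since for any $w \in Z$ we have $m \wedge w \in Z$ with $m \wedge w \leq m$, whence $m \wedge w = m$ by minimality and $m \leq w$. This $m$ is in fact the minimum of all of $\Pi_a$: for arbitrary $w \in \Pi_a$ the meet $m \wedge w$ again lies in $Z$, so $m \leq m \wedge w \leq w$. The maximum is obtained dually, and the reductions of the first paragraph then finish the proof. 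I expect the strict-monotonicity step to be the main obstacle, as it is exactly where monotony (axiom \ref{MCb2}, through lem.~\ref{lem:monotone-0-1}) is indispensable — which is why this statement carries a dagger — whereas the remaining reductions are formal.
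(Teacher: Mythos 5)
Your proof is correct and follows essentially the same strategy as the paper's: reduce via lem.~\ref{lem:Pi-conv-sub} to producing a minimum and a maximum, rule out infinite chains by exhibiting a strictly monotone invariant whose strictness comes from monotony through lem.~\ref{lem:monotone-0-1}, and then invoke closure under meet/join and the local finiteness of $S$. The only difference is the choice of invariant --- you use $z \mapsto \O{z}{x_0}$ into the fixed finite block $L_{x_0}$ (restricting to $Z = \Pi_a \cap {\downarrow}x_0$), whereas the paper uses the numeric rank $\rho_{L_y}(\pi_y^{-1}\,a)$ in the varying block $L_y$ --- and both hinge on monotony in exactly the same place.
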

\begin{proof} \disconnect
By lem.~\ref{lem:Pi-conv-sub}, $\Pi_a$ is a convex sublattice of $L$.
Assume that there is no maximum element of $\Pi_a$.  Then there is
an infinite ascending chain in $\Pi_a$,
$x_0 < x_1 < x_x < \cdots$.
For each $y \in \Pi_a$, define $\tau_y$ to be
$\rho_{L_y}(\pi^{-1}_y\,a)$, the rank of
$\pi^{-1}_y\,a$ in $L_y$, which since $L_y$ is a finite modular
lattice, is well-defined and $\in \mbbN$.

$\tau$ is a strictly decreasing map: Let $x < y$ in $\Pi_a$.
Since every $\PI{z}{w}$ is an isomorphism on its domain and image,
which are convex in $L_z$ and $L_w$, respectively,
$\PI{z}{w}$ preserves relative ranks.
\begin{alignat*}{2}
\tau_y
& = \rho_{L_y}(\pi^{-1}_y\,a) \\
& = \rho_{L_y}(\pi^{-1}_y\,a) - \rho_{L_y}(\hatzero_{L_y}) \\
& = \rho_{L_x}(\PI{x}{y}(\pi^{-1}_y\,a)) -
\rho_{L_x}(\PI{x}{y}(\hatzero_{L_y})) \\
& = \rho_{L_x}(\pi^{-1}_x\,a) -
\rho_{L_x}(\PI{x}{y}(\hatzero_{L_y})) \\
& = \tau_x - \rho_{L_x}(\PI{x}{y}(\hatzero_{L_y}))
\end{alignat*}
By lem.~\ref{lem:monotone-0-1} we know that
$\PI{x}{y}(\hatzero_{L_y}) = \hatzero^y_x > \hatzero_{L_x}$,
so $\rho_{L_x}(\PI{x}{y}(\hatzero_{L_y})) > 0$,
thus showing $\tau_y < \tau_x$.

This implies that $\tau_{x_0} > \tau_{x_1} > \cdots$, which is
impossible as they are all $\in \mbbN$, and so there is no infinite
ascending chain in
$\Pi_a$.  Since we have shown that $\Pi_a$ is closed under
joins, it then must have a maximum element.

Dually, we show that $\Pi_a$ has minimum element.

Together, these facts show that $\Pi_a$ is an interval in $S$, and
because $S$ is locally finite (by \ref{MCf}), $\Pi_a$ is finite.
\end{proof}

\end{subnumbering}

\paragraph{The relations of the \texorpdfstring{$\Lambda$ intervals of $L$}%
{\83\233\ intervals of L}}

\begin{lemma} \label{lem:MCd-for-Lam}
For any $x, y \in S$, $\Lambda_x \cap \Lambda_y = \Lambda_{x \wedge y} \cap \Lambda_{x \vee y}$.
\end{lemma}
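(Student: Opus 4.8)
The plan is to prove the two inclusions separately. The forward inclusion $\Lambda_x \cap \Lambda_y \subseteq \Lambda_{x \wedge y} \cap \Lambda_{x \vee y}$ follows directly from def.~\ref{def:sim} and its dual lem.~\ref{lem:sim-dual}, while the reverse inclusion rests on lem.~\ref{lem:(6)-for-Lam}.

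For $\subseteq$, I would take $a \in \Lambda_x \cap \Lambda_y$ and set $b = \pi_x^{-1}\,a \in L_x$ and $c = \pi_y^{-1}\,a \in L_y$, so that $(x, b) \sim (y, c)$. By def.~\ref{def:sim} this yields $x \relgamma y$ together with a common transported value $d = \P{x}{x \vee y}\,b = \P{y}{x \vee y}\,c \in L_{x \vee y}$. Since $x \relgamma x \vee y$ (by lem.~\ref{lem:tol}) and $x \leq x \vee y$, lem.~\ref{lem:sim-leq-z} then gives $(x, b) \sim (x \vee y, d)$, whence $a = \pi_{x \vee y}\,d \in \Lambda_{x \vee y}$. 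Applying lem.~\ref{lem:sim-dual} in place of def.~\ref{def:sim}, together with the second part of lem.~\ref{lem:sim-leq-z}, produces dually an element of $L_{x \wedge y}$ mapping to $a$, so $a \in \Lambda_{x \wedge y}$. Hence $a \in \Lambda_{x \wedge y} \cap \Lambda_{x \vee y}$.

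For $\supseteq$, if $\Lambda_{x \wedge y} \cap \Lambda_{x \vee y}$ is empty the inclusion is vacuous, so I would assume it contains some $a$. Then $a$ is the $\pi$-image of an element of $L_{x \wedge y}$ and of an element of $L_{x \vee y}$, so def.~\ref{def:sim} forces $(x \wedge y) \relgamma (x \vee y)$; since this is a statement about $S$ alone, one witness $a$ suffices to establish it for the whole set. Now $x \wedge y \leq x \leq x \vee y$ with $(x \wedge y) \relgamma (x \vee y)$, so lem.~\ref{lem:(6)-for-Lam} gives $\Lambda_{x \wedge y} \cap \Lambda_{x \vee y} \subseteq \Lambda_x$, and the chain $x \wedge y \leq y \leq x \vee y$ gives the same inclusion into $\Lambda_y$. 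Intersecting yields $\Lambda_{x \wedge y} \cap \Lambda_{x \vee y} \subseteq \Lambda_x \cap \Lambda_y$.

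The only delicate point is the reverse inclusion's reliance on the overlap tolerance $(x \wedge y) \relgamma (x \vee y)$, which is precisely the hypothesis lem.~\ref{lem:(6)-for-Lam} demands but which need not hold for arbitrary $x, y \in S$. The resolution is that membership of a single element in $\Lambda_{x \wedge y} \cap \Lambda_{x \vee y}$ already guarantees this tolerance relation, with the empty case handled trivially. I expect no further obstacle.
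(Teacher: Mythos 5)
Your proof is correct. The forward inclusion is essentially identical to the paper's: both pull $a$ back into $L_x$ and $L_y$, read off the $\sim$-relation, and push the common value up to $L_{x\vee y}$ (and dually down to $L_{x\wedge y}$). For the reverse inclusion you diverge slightly: the paper takes witnesses $d \in L_{x\wedge y}$, $e \in L_{x\vee y}$ and explicitly factors $\P{x\wedge y}{x\vee y}\,d = \P{x}{x\vee y}(\P{x\wedge y}{x}\,d)$ via \ref{MCc} to exhibit an element of $L_x$ mapping to $f$, whereas you invoke lem.~\ref{lem:(6)-for-Lam} twice, once with $x\wedge y \leq x \leq x\vee y$ and once with $x\wedge y \leq y \leq x\vee y$. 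Since lem.~\ref{lem:(6)-for-Lam} is proved by exactly that composition argument, your route delegates the work rather than redoing it --- a clean reuse. You are also right to flag the one delicate point: lem.~\ref{lem:(6)-for-Lam} needs $(x\wedge y) \relgamma (x\vee y)$ as a hypothesis, and your resolution (the empty case is vacuous; otherwise a single witness in $\Lambda_{x\wedge y}\cap\Lambda_{x\vee y}$ forces the tolerance via def.~\ref{def:sim}) is sound and does not create any circularity, since both lem.~\ref{lem:(6)-for-Lam} and def.~\ref{def:sim} precede this lemma. The paper's version avoids this case split only because it extracts the tolerance from the same witness implicitly.
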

\begin{proof}
\leavevmode

Regarding $\subset$:
Assume $c \in \Lambda_x \cap \Lambda_y$.
Then there exists $a \in L_x$ such that $\pi_x\,a = c$ and
there exists $b \in L_y$ such that $\pi_y\,b = c$.
This implies $(x,a) \sim (y, b)$,
$\kappa\,(x,a) = \kappa\,(y,b) = c$,
$\P{x}{x \vee y}\,a = \P{y}{x \vee y}\,b$, and
$\pi_{x \vee y}(\P{x}{x \vee y}\,a) = \pi_{x \vee y}(\P{y}{x \vee y}\,b) = c$.
Thus, $c \in \Lambda_{x \vee y}$.
Dually, we prove $c \in \Lambda_{x \wedge y}$.

Regarding $\supset$:
Assume $f \in \Lambda_{x \wedge y} \cap \Lambda_{x \vee y}$.
Then there exists $d \in L_{x \wedge y}$ such that $\pi_{x \wedge y}\,d = f$ and
there exists $e \in L_{x \vee y}$ such that $\pi_{x \vee y}\,e = f$.
This implies $(x \wedge y, d) \sim (x \vee y, e)$,
$\P{x \wedge y}{x \vee y}\,d = e$, and so
$\P{x}{x \vee y}(\P{x \wedge y}{x}\,d) = e$.
Thus, $\kappa\,(x, \P{x \wedge y}{x}\,d) = f$, showing $f \in \Lambda_x$.
Similarly, we prove $f \in \Lambda_y$.
\end{proof}

\begin{lemma} \label{lem:Lam-overlap-gamma}
For $x, y \in S$, then $\Lambda_x \cap \Lambda_y \neq \zeroslash$ iff
$x \relgamma y$.
\end{lemma}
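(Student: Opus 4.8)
The statement is a biconditional, and the plan is to prove the two directions separately, exhibiting a concrete witness element for the harder direction.

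The forward direction ($\Lambda_x \cap \Lambda_y \neq \zeroslash$ implies $x \relgamma y$) is essentially immediate from the definitions. If $a \in \Lambda_x \cap \Lambda_y$, then by def.~\ref{def:Lam} and def.~\ref{def:pi} there are $b \in L_x$ and $c \in L_y$ with $\pi_x\,b = a = \pi_y\,c$, so $\kappa\,(x,b) = \kappa\,(y,c)$ and hence $(x,b) \sim (y,c)$. The first conjunct in def.~\ref{def:sim} is precisely $x \relgamma y$, so this direction requires nothing further.

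For the converse I would exhibit a common element explicitly. Assume $x \relgamma y$; then by lem.~\ref{lem:tol}(\ref{lem:tol:i1})(\ref{lem:tol:i2}) (as already used in the proof of lem.~\ref{lem:4.1}) we get $x, y \leq_\gamma x \vee y$, so the connections $\P{x}{x \vee y}$ and $\P{y}{x \vee y}$ are defined. The natural witness is $0_{x \vee y} = \pi_{x \vee y}\,\hatzero_{L_{x \vee y}}$. Since $\I{x}{x \vee y}$ is a principal ideal of $L_{x \vee y}$ it contains the bottom $\hatzero_{L_{x \vee y}}$, so applying lem.~\ref{lem:sim-leq-z}(2) with lower block $x$, upper block $x \vee y$, and element $\hatzero_{L_{x \vee y}}$ yields $(x, \PI{x}{x \vee y}\,\hatzero_{L_{x \vee y}}) \sim (x \vee y, \hatzero_{L_{x \vee y}})$. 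Projecting by $\kappa$ gives $\pi_x(\PI{x}{x \vee y}\,\hatzero_{L_{x \vee y}}) = 0_{x \vee y}$, so $0_{x \vee y} \in \Lambda_x$. The identical argument with $y$ in place of $x$ gives $0_{x \vee y} \in \Lambda_y$, whence $0_{x \vee y} \in \Lambda_x \cap \Lambda_y$ and the intersection is nonempty.

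The only point needing care is the preliminary observation that $x \relgamma x \vee y$ and $y \relgamma x \vee y$, so that the connections into $L_{x \vee y}$ exist; this is exactly the consequence of lem.~\ref{lem:tol} invoked above, and is the main (though minor) obstacle. Everything else is a direct reading of the definitions, and in particular no appeal to monotony (\ref{MCb2}) is needed, consistent with the statement carrying no dagger. An equally good alternative witness is $1_{x \wedge y}$, handled dually via lem.~\ref{lem:sim-leq-z}(1); either choice completes the converse.
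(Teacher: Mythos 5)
Your proof is correct. The forward direction is identical to the paper's: an element of $\Lambda_x \cap \Lambda_y$ yields $(x,a) \sim (y,b)$, whose first conjunct is $x \relgamma y$. For the converse the overall skeleton is also the same --- exhibit a common element by passing through $x \vee y$ --- but you use a different and slightly more elementary justification for why the two ideals $\I{x}{x \vee y}$ and $\I{y}{x \vee y}$ share an element. The paper invokes lem.~\ref{lem:(7)}(1), i.e.\ $\I{x}{x \vee y} \cap \I{y}{x \vee y} = \I{x \wedge y}{x \vee y} \neq \zeroslash$, which ultimately rests on axiom \ref{MCd}; you instead observe that each $\I{\bullet}{x \vee y}$ is a nonempty (principal) lattice ideal of the finite lattice $L_{x \vee y}$ and therefore contains $\hatzero_{L_{x \vee y}}$, giving the explicit witness $0_{x \vee y}$ via lem.~\ref{lem:sim-leq-z}(2). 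This buys you independence from \ref{MCd} and a concrete, named witness (which is occasionally handy elsewhere, e.g.\ in the case analysis of lem.~\ref{lem:(14)-induction} where exactly the fact $0_{t_1}, 0_{u_1} \in \Lambda_x$ is extracted from this lemma); the paper's route, by contrast, identifies the whole intersection $\I{x}{x\vee y} \cap \I{y}{x \vee y}$ rather than a single point, which is the sharper statement it needs in other places. Both arguments correctly use only lem.~\ref{lem:tol}(\ref{lem:tol:i1})(\ref{lem:tol:i2}) to get $x, y \leq_\gamma x \vee y$, and neither needs monotony \ref{MCb2}, consistent with the undaggered statement.
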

\begin{proof}
\leavevmode

Regarding $\Rightarrow$:
Let $e \in \Lambda_x \cap \Lambda_y$.  Then there exists $a \in L_x$ such that
$\pi_x\,a = e$ and $b \in L_y$ such that $\pi_y\,b = e$.
Thus $(x, a) \sim (y, b)$, and by def.~\ref{def:sim},
$x \relgamma y$.

Regarding $\Leftarrow$:
Since $x \relgamma y$, by lem.~\ref{lem:(7)}(1),
$\I{x}{x \vee y} \cap \I{y}{x \vee y} = \I{x \wedge y}{x \vee y}
\neq \zeroslash$.
Let $a \in \I{x \wedge y}{x \vee y}$, so that
$a \in \I{x}{x \vee y}$ and $a \in \I{y}{x \vee y}$, and define
$a_x = \PI{x}{x \vee y}\,a$ and $a_y = \PI{y}{x \vee y}\,a$.
Then $(x, a_x) \sim (y, a_y)$ so that $\pi_x\,a_x = \pi_y\,a_y$.
Since $\pi_x\,a_x \in \Lambda_x$ and $\pi_y\,a_y \in \Lambda_y$,
$\Lambda_x \cap \Lambda_y \neq \zeroslash$.
\end{proof}

\begin{lemma} \citeHerr*{(1)}{(1)} \label{lem:(1)-for-Lam}
If $x \leq_\gamma y$, then
\begin{enumerate}
\item $\Lambda_x \cap \Lambda_y = \pi_x\,\F{x}{y} = \pi_y\,\I{x}{y}$ and
\item $\Lambda_x \cap \Lambda_y$ is a filter of $\Lambda_x$ and an ideal of $\Lambda_y$.
\end{enumerate}
\end{lemma}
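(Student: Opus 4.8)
The plan is to prove part (1) first by unwinding the equivalence relation $\sim$ directly, and then to obtain part (2) as a formal consequence of the fact that each $\pi_x$ is a lattice isomorphism onto $\Lambda_x$. Throughout I use that $x \leq_\gamma y$ supplies both $x \leq y$ and $x \relgamma y$, which is exactly the hypothesis format that lem.~\ref{lem:sim-leq-z} requires.

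For part (1) I would begin with the middle equality $\pi_x\,\F{x}{y} = \pi_y\,\I{x}{y}$. By axiom \ref{MCe}, $\P{x}{y}$ is a bijection from $\F{x}{y}$ onto $\I{x}{y}$, and by lem.~\ref{lem:sim-leq-z}(1) we have $(x,a) \sim (y, \P{x}{y}\,a)$ for every $a \in \F{x}{y}$; applying $\kappa$ gives $\pi_x\,a = \pi_y(\P{x}{y}\,a)$, and letting $a$ range over $\F{x}{y}$ yields $\pi_x\,\F{x}{y} = \pi_y\,\I{x}{y}$. I would then prove $\Lambda_x \cap \Lambda_y = \pi_x\,\F{x}{y}$ by two inclusions. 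For $\supseteq$, any element $\pi_x\,a$ with $a \in \F{x}{y}$ lies in $\Lambda_x$ by definition and in $\Lambda_y$ by the identity just derived. For $\subseteq$, given $c \in \Lambda_x \cap \Lambda_y$ I choose $a \in L_x$ and $b \in L_y$ with $\pi_x\,a = c = \pi_y\,b$; then $\kappa\,(x,a) = \kappa\,(y,b)$, so $(x,a) \sim (y,b)$, and since $x \leq y$ lem.~\ref{lem:sim-leq-z}(1) forces $a \in \F{x}{y}$, whence $c = \pi_x\,a \in \pi_x\,\F{x}{y}$.

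For part (2) I would invoke lem.~\ref{lem:pi-poset-iso} together with lem.~\ref{lem:canon-1-1}: $\pi_x$ is a bijective order isomorphism from the \emph{finite} lattice $L_x$ (axiom \ref{MCg}) onto $\Lambda_x$, hence a lattice isomorphism, and likewise $\pi_y$ carries $L_y$ isomorphically onto $\Lambda_y$. A lattice isomorphism sends filters to filters and ideals to ideals, so the image $\pi_x\,\F{x}{y}$ of the filter $\F{x}{y} \subset L_x$ is a filter of $\Lambda_x$, and the image $\pi_y\,\I{x}{y}$ of the ideal $\I{x}{y} \subset L_y$ is an ideal of $\Lambda_y$. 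By part (1) both images equal $\Lambda_x \cap \Lambda_y$, which completes the proof.

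I do not anticipate a genuine obstacle: the substance is concentrated in the $\subseteq$ inclusion of part (1), where lem.~\ref{lem:sim-leq-z} converts the set-theoretic coincidence $\pi_x\,a = \pi_y\,b$ into the membership $a \in \F{x}{y}$. The only point needing care is that ``filter of $\Lambda_x$'' and ``ideal of $\Lambda_y$'' be read with respect to the intrinsic lattice structures that $\pi_x$ and $\pi_y$ transport from the finite blocks $L_x$ and $L_y$, which is precisely what lem.~\ref{lem:pi-poset-iso} licenses.
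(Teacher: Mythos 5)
Your proposal is correct and follows essentially the same route as the paper: the key inclusion $\Lambda_x \cap \Lambda_y \subseteq \pi_x\,\F{x}{y}$ is obtained in both by pulling $c$ back through $\pi_x$ and $\pi_y$, observing $(x,a)\sim(y,b)$, and invoking lem.~\ref{lem:sim-leq-z}, while part (2) is read off from the $\pi_\bullet$ being isomorphisms. The only difference is organizational (you isolate the middle equality $\pi_x\,\F{x}{y}=\pi_y\,\I{x}{y}$ first, where the paper handles all three sets in one pair of inclusions), which does not change the substance.
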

\begin{proof}
\leavevmode

Regarding (1) $\subset$:
Assume $e \in \Lambda_x \cap \Lambda_y$.
Then there exists $a \in L_x$ such that $\pi_x\,a = e$ and
$b \in L_y$ such that $\pi_y\,b = e$.
Thus $(x, a) \sim (y, b)$, and by lem.~\ref{lem:sim-leq-z},
$\P{x}{y}\,a = b$, $\PI{x}{y}\,b = a$,
$a \in \F{x}{y}$, and $b \in \I{x}{y}$, showing
$e \in \pi_x\,\F{x}{y}$ and $e \in \pi_y\,\I{x}{y}$.

Regarding (1) $\supset$:
Assume $e \in \pi_x\,\F{x}{y}$.
Since $\F{x}{y} \subset L_x$, $e \in \Lambda_x$.
There exists $a \in \F{x}{y}$ such that $\pi_x\,a = e$.
Define $b = \P{x}{y}\,a$, so $b \in L_y$.
By lem.~\ref{lem:sim-leq-z}, $(x,a) \sim (y, b)$, so $\pi_y\,b = e$ and
$e \in \Lambda_y$.
Dually, we prove that $e \in \pi_y\,\I{x}{y}$ implies $e \in \Lambda_x$.

Regarding (2):
This statement follows trivially from (1) because the $\pi_\bullet$ are poset
isomorphisms by lem.~\ref{lem:pi-poset-iso}.
\end{proof}

\begin{lemma} (part of \citeHerr*{Satz~2.1}{Th.~2.1}) \label{lem:Lambda-inter}
$\Lambda_x$ is an interval in $L$.
\end{lemma}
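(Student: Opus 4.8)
The plan is to show that $\Lambda_x$ is exactly the interval $[0_x, 1_x]$ in $L$, where $0_x = \pi_x\,\hatzero_{L_x}$ and $1_x = \pi_x\,\hatone_{L_x}$ are the images of the bottom and top of the block $L_x$ under the canonical map, as in def.~\ref{def:Lam}. Both endpoints lie in $\Lambda_x$ by that definition, and $\Lambda_x$ is nonempty since $L_x$ is a (finite, hence nonempty) lattice, so it suffices to establish the two inclusions $\Lambda_x \subset [0_x, 1_x]$ and $[0_x, 1_x] \subset \Lambda_x$.

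For the inclusion $\Lambda_x \subset [0_x, 1_x]$, I would take any $c \in \Lambda_x$ and write $a = \pi_x^{-1}\,c \in L_x$, which exists and is unique since $\pi_x$ is bijective onto $\Lambda_x$ by lem.~\ref{lem:canon-1-1}. Since $\hatzero_{L_x} \leq a \leq \hatone_{L_x}$ holds in the lattice $L_x$, lem.~\ref{lem:pi-poset-iso} transports these two comparisons to $L$, giving $0_x = \pi_x\,\hatzero_{L_x} \leq \pi_x\,a = c$ and $c \leq \pi_x\,\hatone_{L_x} = 1_x$; hence $c \in [0_x, 1_x]$.

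The reverse inclusion $[0_x, 1_x] \subset \Lambda_x$ is where the real work has already been carried out: it is an immediate application of lem.~\ref{lem:(11)}. Indeed, if $0_x \leq c \leq 1_x$ in $L$, then taking the bounds $0_x$ and $1_x$, which both lie in $\Lambda_x$, lem.~\ref{lem:(11)} yields $c \in \Lambda_x$ at once. Combining the two inclusions gives $\Lambda_x = [0_x, 1_x]$, so $\Lambda_x$ is an interval in $L$, proving the lemma.

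The only genuinely delicate point in this argument --- that an element squeezed between two members of $\Lambda_x$ must itself belong to $\Lambda_x$ --- is precisely the content of lem.~\ref{lem:(11)}, which is already available, so I expect no new obstacle here. The one thing to get right is the identification of the correct endpoints $0_x$ and $1_x$; once these are chosen, the forward inclusion is a routine transport of the trivial comparisons $\hatzero_{L_x} \leq a \leq \hatone_{L_x}$ across the poset isomorphism $\pi_x$, and the backward inclusion is a direct citation.
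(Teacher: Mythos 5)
Your proof is correct and follows essentially the same route as the paper: both arguments reduce the claim to convexity of $\Lambda_x$ in $L$ (lem.~\ref{lem:(11)}) plus the existence of extreme elements. The only difference is that you identify the endpoints directly as $0_x$ and $1_x$ via the poset isomorphism $\pi_x$ (lem.~\ref{lem:pi-poset-iso}), whereas the paper obtains a maximum and minimum slightly less directly, from directedness of $\Lambda_x$ together with its finiteness; your version is a mild streamlining, not a different idea.
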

\begin{proof}
\leavevmode

First, we show that if $a, b \in \Lambda_x$, then there is an upper
bound $c \geq a, b$ in $\Lambda_x$.
Define $c = \pi_x(\pi_x^{-1}\,a \vee \pi_x^{-1}\,b)$.
Clearly $c \in \Lambda_x$.
By def.~\ref{def:leq-sub-x}, $c \geq_x a, b$ and
by lem.~\ref{lem:pi-poset-iso}, $c \geq a, b$.

Dually, we show that if $a, b \in \Lambda_x$, then there is a lower
bound $c \leq a, b$ in $\Lambda_x$.

Since $\Lambda_x$ (the image of $L_x$ under $\pi_x$) is finite and has
upper and lower bounds, it must have maximum and minimum elements.
Because lem.~\ref{lem:(11)} shows that $\Lambda_x$ is convex in $L$,
$\Lambda_x$ is an interval in $L$.
\end{proof}

Note that the following lemma shows that for any $x$,
$\Lambda_x$ (under the partial ordering inherited from $L$) is a lattice,
but not that $L$ as a whole is a lattice (under its partial ordering).

\begin{lemma} (part of \citeHerr*{Satz~2.1}{Th.~2.1}) \label{lem:pi-lattice-iso}
$\pi_x$ is a lattice isomorphism from $L_x$ to $\Lambda_x$.
\end{lemma}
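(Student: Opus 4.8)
The plan is to observe that $\pi_x$ is already known to be a bijective order isomorphism and then to invoke the standard fact that an order isomorphism automatically transports all existing meets and joins. There is very little genuinely new to prove here; the work has been done in the preceding lemmas, and what remains is to assemble them.

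First I would recall that by lem.~\ref{lem:canon-1-1} the map $\pi_x$ is a bijection from $L_x$ onto $\Lambda_x$, and that $L_x$ is a lattice by \ref{MCg}. Next, by lem.~\ref{lem:pi-poset-iso}, for $a, b \in L_x$ we have $a \leq b$ in $L_x$ iff $\pi_x\,a \leq \pi_x\,b$ in $L$; equivalently, the order $\leq_x$ on $\Lambda_x$ agrees with the order inherited from $L$. Since $L$ is a poset (lem.~\ref{lem:poset}), this inherited order is a partial order on $\Lambda_x$, and the displayed equivalence says precisely that both $\pi_x$ and $\pi_x^{-1}$ are monotone. Thus $\pi_x$ is an order isomorphism from $(L_x, \leq)$ onto $\Lambda_x$ equipped with the order inherited from $L$.

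It then remains to check that an order isomorphism preserves joins and meets. Given $a, b \in L_x$, the join $a \vee b$ exists in $L_x$. I would verify that $\pi_x(a \vee b)$ is the least upper bound of $\pi_x\,a$ and $\pi_x\,b$ within $\Lambda_x$: it is an upper bound because $\pi_x$ is monotone and $a \vee b \geq a, b$; and if $t \in \Lambda_x$ is any upper bound of $\pi_x\,a$ and $\pi_x\,b$, then $\pi_x^{-1}\,t \geq a, b$ by monotonicity of $\pi_x^{-1}$, whence $\pi_x^{-1}\,t \geq a \vee b$ and so $t \geq \pi_x(a \vee b)$. Hence $\pi_x(a \vee b) = \pi_x\,a \vee \pi_x\,b$ computed in $\Lambda_x$, and dually $\pi_x(a \wedge b) = \pi_x\,a \wedge \pi_x\,b$. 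Together with bijectivity, this shows that $\Lambda_x$ (under the order inherited from $L$) is a lattice and that $\pi_x$ is a lattice isomorphism.

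No step presents a real obstacle; the only point requiring care is that the joins and meets in $\Lambda_x$ must be understood as least upper and greatest lower bounds computed within the sub-poset $\Lambda_x$, which need not coincide with any join or meet taken in $L$ (indeed, $L$ need not be a lattice at all). The verification above uses only the order relation restricted to $\Lambda_x$, so this subtlety causes no difficulty.
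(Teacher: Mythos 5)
Your proposal is correct and follows essentially the same route as the paper: both reduce the claim to lem.~\ref{lem:pi-poset-iso} (that $\pi_x$ is an order isomorphism onto $\Lambda_x$ with the order inherited from $L$) together with the fact that $L_x$ is a lattice by \ref{MCg}. The only difference is that the paper's one-line proof additionally cites lem.~\ref{lem:Lambda-inter} ($\Lambda_x$ is an interval in $L$), whereas you instead write out the standard verification that an order isomorphism transports joins and meets computed within the image sub-poset --- which suffices for the statement as framed in the note preceding the lemma, and you correctly flag the subtlety that these are least upper bounds in $\Lambda_x$, not in $L$.
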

\begin{proof}
\leavevmode

Because $L_x$ is a lattice (\ref{MCg}),
$\pi_x$ is a partial order isomorphism from $L_x$ to $\Lambda_x$
(lem.~\ref{lem:pi-poset-iso}), and
$\Lambda_x$ is an interval in $L$ (lem.~\ref{lem:Lambda-inter}),
$\Lambda_x$ is a lattice (under the partial order of $L$)
and $\pi_x$ is a lattice isomorphism from $L_x$ to $\Lambda_x$.
\end{proof}

\paragraph{The sum is locally finite and has finite covers}

\begin{theorem} \label{th:locally-finite}
$L$ is locally finite.
\end{theorem}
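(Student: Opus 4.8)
The plan is to cover the interval $[a,b]$ by finitely many of the finite sets $\Lambda_z$ and then observe that a finite union of finite sets is finite. Concretely, given $a \leq b$ in $L$, I will show that there is a \emph{fixed} finite interval $J$ of the skeleton $S$ such that every $c \in [a,b]$ lies in $\Lambda_z$ for some $z \in J$; since each $\Lambda_z$ is finite (it is the image of the finite lattice $L_z$ under the isomorphism $\pi_z$ by lem.~\ref{lem:pi-lattice-iso} and \ref{MCg}) and $J$ is finite, $[a,b] \subseteq \bigcup_{z \in J}\Lambda_z$ is finite.

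The heart of the argument is a pair of monotonicity statements for the ``projection to the skeleton.'' First, (B): if $c \leq b$, then every $z \in \Pi_c$ satisfies $z \leq \max \Pi_b$. Indeed, since $c \in \Lambda_z$, lem.~\ref{lem:(10)+} produces an ascending sequence from $c$ to $b$ whose first block is $z$; its last block lies in $\Pi_b$ and dominates $z$, and $\Pi_b$ has a maximum by lem.~\ref{lem:Pi-interval}. Dually, using th.~\ref{th:mcs-dual} and the dual form of lem.~\ref{lem:(10)+} (which controls the \emph{last} block of a sequence), one obtains (A): if $a \leq c$, then every $z \in \Pi_c$ satisfies $z \geq \min \Pi_a$. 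Combining (A) and (B), for $c \in [a,b]$ and any $z \in \Pi_c$ we get $\min \Pi_a \leq z \leq \max \Pi_b$, so $\Pi_c \subseteq J := [\min \Pi_a,\, \max \Pi_b]$. Here $\min \Pi_a \leq \max \Pi_b$ follows from (B) applied with $c = a$, and $J$ is a finite interval because $S$ is locally finite (\ref{MCf}).

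It then remains only to assemble the pieces: every $c \in L$ is some $\kappa(x,\cdot)$, so $\Pi_c \neq \emptyset$, and by the above $\Pi_c \subseteq J$; picking any $z \in \Pi_c$ gives $c \in \Lambda_z$ with $z \in J$. Hence $[a,b] \subseteq \bigcup_{z \in J}\Lambda_z$, a finite union of finite sets, which is finite.

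I expect the main obstacle to be establishing (A) and (B) cleanly, i.e.\ showing that the blocks carrying elements of $[a,b]$ are confined to a bounded region of $S$; this is where lem.~\ref{lem:(10)+}, its dual, and the boundedness of $\Pi_a$ and $\Pi_b$ (lem.~\ref{lem:Pi-interval}) must be marshalled together. Note that the boundedness of $\Pi_a,\Pi_b$ is the only place monotony enters; a monotony-free argument would instead have to show directly that the ``extra'' blocks arising from non-monotone threading contribute no new elements to $[a,b]$ (the gluing identifies the relevant sub-intervals), which can be arranged by replacing a block $z \in \Pi_c$ lying outside $J$ with $(z \vee \min\Pi_a) \wedge \max\Pi_b$ and transporting $c$ along the connections, but this requires the more careful bookkeeping of lem.~\ref{lem:MCd-for-Lam}.
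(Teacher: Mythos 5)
Your argument is internally coherent and would be a clean, correct proof \emph{if monotony were assumed}: the covering scheme $[a,b]\subseteq\bigcup_{z\in J}\Lambda_z$ with $J=[\min\Pi_a,\max\Pi_b]$, justified by running an ascending sequence from $c$ up to $b$ (and dually down to $a$) to trap every block of $\Pi_c$ between $\min\Pi_a$ and $\max\Pi_b$, all checks out. But there is a genuine gap relative to the statement as the paper intends it. The theorem carries no dagger, and rem.~\ref{rem:locally-finite-monotony} states explicitly that the proof was reworked so as \emph{not} to depend on monotony (axiom \ref{MCb2}). Your proof's load-bearing ingredient is the existence of $\max\Pi_b$ and $\min\Pi_a$, which is exactly lem.~\ref{lem:Pi-interval} --- a daggered result whose proof needs \ref{MCb2} (via lem.~\ref{lem:monotone-0-1}) to make the rank function $\tau$ strictly decrease. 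Without monotony, $\Pi_a$ is only a convex sublattice (lem.~\ref{lem:Pi-conv-sub}) and may have no extrema, so your interval $J$ need not exist. Your closing remark about a monotony-free repair is circular for the same reason: the proposed replacement block $(z\vee\min\Pi_a)\wedge\max\Pi_b$ is built out of the very elements whose existence is in question.

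The paper sidesteps this by arguing per counterexample rather than per element: assuming infinitely many $c_i\in[a,b]$, it takes, for each $i$, one ascending sequence from $a$ through $c_i$ to $b$, forms the \emph{running} meets $y_i$ of the first blocks and joins $z_i$ of the last blocks (these exist because they are finite meets/joins, and $a\in\Lambda_{y_i}$, $b\in\Lambda_{z_i}$ by lem.~\ref{lem:Pi-conv-sub}), stabilizes a suitable index $q$ by a rank/corank argument, and then uses lem.~\ref{lem:asc-seq-lift-weak} and~\ref{lem:asc-seq-lower} to squeeze the block carrying $c_i$ into the single finite interval $[y_q,z_q]$ of $S$. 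That is, instead of bounding all of $\Pi_{c}$ uniformly, it exhibits for each $c_i$ \emph{one} block $w_i\in[y_q,z_q]$ with $c_i\in\Lambda_{w_i}$, which already yields the contradiction. If you are content to prove the theorem only for monotone \mcss, your route is shorter and more transparent; to match the paper's (un-daggered) claim you would need to replace the appeal to lem.~\ref{lem:Pi-interval} by some such finitary bounding device.
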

\begin{proof} \disconnect
Choose any $a \leq b$ in $L$.  We must show that the interval $[a, b]$
in $L$ has a finite number of members.
Assume there is an infinite number of distinct $c_i, c_2, c_3, \ldots$
in $L$ with $a \leq c_i \leq b$.

For each $c_i$, by def.~\ref{def:L-leq} and
lem.~\ref{lem:asc-seq-concat}, we can construct ascending sequences
from $a$ to $c_i$ and from $c_i$ to $b$ and then concatenate them to
create an ascending sequence
$(x_{i\bullet}, d_{i\bullet})$ of length $n_i$ with
$d_{i0} = a$, $d_{in_i} = b$, and for some $j_i$, $d_{ij_i} = c_i$.
Thus, for every $i$, $a \in \Lambda_{x_{i1}}$ and
$b \in \Lambda_{x_{in_i}}$.

For every $i$, define $y_i = \bigwedge_{1 \leq j \leq i} x_{j1}$
and $z_i = \bigvee_{1 \leq j \leq i} x_{jn_j}$.
So the $y_\bullet$ are a weakly decreasing sequence in $S$,
the $z_\bullet$ are a weakly increasing sequence in $S$,
$y_1 = x_{11}$, $z_1 = x_{1n_1}$, and
for every $i$,
$y_i \leq x_{i1} \leq x_{in_i} \leq z_i$,
$y_{i+1} = y_i \wedge x_{(i+1)1}$, and
$z_{i+1} = z_i \vee x_{(i+1)n_{i+1}}$.
By lem.~\ref{lem:Pi-conv-sub}, for every $i$, $a \in \Lambda_{y_i}$ and
$b \in \Lambda_{z_i}$.

Let $\rho_i$ be the corank of $\pi_{y_i}^{-1}\,a$ in $L_{y_i}$, that
is the relative rank from $\pi_{y_i}^{-1}\,a$  up to
$\hatone_{L_{y_i}}$.
Let $\sigma_i$ be the rank of $\pi_{z_i}^{-1}\,b$ in $L_{z_i}$, that
is the relative rank from $\hatzero_{L_{z_i}}$ up to
$\pi_{z_i}^{-1}\,b$.
Because each $\PI{y_{i+1}}{y_i}$ is an isomorphism from an ideal of
$L_{y_i}$ to a
filter of $L_{y_{i+1}}$, $\rho_i$ is a decreasing
function of $i$.  Since all $\rho_\bullet$ are non-negative integers,
there is some $k$ for which $\rho_j$ is constant for all $j \geq k$.
Dually, because each $\P{z_i}{z_{i+1}}$ is an isomorphism from a filter of
$L_{z_i}$ to an
ideal of $L_{z_{i+1}}$, $\sigma_i$ is a decreasing
function of $i$.  Since all $\sigma_\bullet$ are non-negative integers,
there is some $p$ for which $\sigma_j$ is constant for all $j \geq p$.
Define $q$ to be the maximum of $k$ and $p$.

For any $i \leq q$, define $w_i = x_{ij_i}$.  By the ascending
sequence $(x_{i\bullet}, d_{i\bullet})$,
$c_i = d_{ij_i} \in \Lambda_{x_{ij_i}} = \Lambda_{w_i}$.
And
\begin{equation*}
y_q \leq y_i \leq x_{i1} \leq x_{x_{ij_i}} = w_i \leq
x_{in_i} \leq z_i \leq z_q.
\end{equation*}

For any $i > q$, apply lem.~\ref{lem:asc-seq-lift-weak}
to $(x_{i\bullet}, d_{i\bullet})$, $y_q$, and
$d_{i0} = a \in \Lambda_{y_q}$
to construct the ascending sequence
$((x_{ij} \vee y_q)_{1 \leq j \leq n_i}, (d_{ij})_{0 \leq j \leq n_i})$.
Note $y_q \leq x_{i1} \vee y_q$ and
since $z_i \geq x_{in_i}$ and $z_i \geq z_q \geq y_q$,
we know $z_i \geq x_{in_i} \vee y_q$.
Now apply lem.~\ref{lem:asc-seq-lower} to
$((x_{ij} \vee y_q)_{1 \leq j \leq n_i},
(d_{ij})_{0 \leq j \leq n_i})$
to construct the ascending sequence
$(((x_{ij} \vee y_q) \wedge z_q)_{1 \leq j \leq n_i},
(d_{ij})_{0 \leq j \leq n_i})$.
Note $z_q \geq (x_{in_i} \vee y_q) \wedge z_q$ and
since $y_q \leq x_{i1} \vee y_q$ and $y_q \leq z_q$,
$y_q \leq (x_{i1} \vee y_q) \wedge z_q$.

For such an $i > q$, define
$w_i = (x_{ij_i} \vee y_q) \wedge z_q$.
By the ascending sequence we have constructed,
$c_i = d_{ij_i} \in \Lambda_{(x_{ij_i} \vee y_q) \wedge z_q} = \Lambda_{w_i}$.
And
\begin{equation*}
y_q \leq (x_{i1} \vee y_q) \wedge z_q \leq
(x_{ij_i} \vee y_q) \wedge z_q = w_i
\leq z_q.
\end{equation*}

Thus for any $i$, $c_i \in \Lambda_{w_i}$ and $y_q \leq w_i \leq z_q$.
Clearly, since each $c_i$ is distinct each has a distinct pair
$(w_i, c_i)$.  But there are only a finite number of such pairs;
since $S$ is locally finite, there are only a finite number of
possible $w_\bullet \in [y_q, z_q]$ and since each
$\Lambda_{w_\bullet}$ is finite, for each possible $w_\bullet$ there
are only a finite number of $c_\bullet$.
This contradicts the assumption that there are an infinite number of
distinct $c_i$ and shows that $L$ is locally finite.
\end{proof}

\begin{remark} \label{rem:locally-finite-monotony}
\textup{
In v1 of this paper, the proof that $L$ is locally finite depended
on monotony, property \ref{MCb2}.
In v2, the proof was sharpened to no longer depend on \ref{MCb2}.
}
\end{remark}

\begin{lemma} (part of \citeHerr*{Satz~2.1}{Th.~2.1}) \label{lem:covers-xfer}
If $a \lessdot b$ in $L$, then there exists $x \in S$ for which
$a, b \in \Lambda_x$ and
$\pi_x^{-1}\,a \lessdot \pi_x^{-1}\,b$.
If $a \lessdot b$ in $L_x$ for some $x \in S$, then
$\pi_x\,a \lessdot \pi_x\,b$.
\end{lemma}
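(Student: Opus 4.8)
The plan is to treat the two statements separately. The second (``upward'') statement is a short consequence of the isomorphism and convexity lemmas already available, while the first (``downward'') statement carries the real content, namely locating a single block that contains both $a$ and $b$.

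For the second statement I would assume $a \lessdot b$ in $L_x$. Applying lem.~\ref{lem:pi-poset-iso} gives $\pi_x\,a \leq \pi_x\,b$ in $L$, and these are distinct since $\pi_x$ is bijective. To see this is a cover, suppose $\pi_x\,a \leq c \leq \pi_x\,b$ in $L$. Because $\pi_x\,a, \pi_x\,b \in \Lambda_x$, lem.~\ref{lem:(11)} forces $c \in \Lambda_x$ with $\pi_x\,a \leq_x c \leq_x \pi_x\,b$, so $a \leq \pi_x^{-1}\,c \leq b$ in $L_x$; as $a \lessdot b$ in $L_x$ this yields $\pi_x^{-1}\,c \in \{a,b\}$, whence $c \in \{\pi_x\,a, \pi_x\,b\}$. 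Thus $\pi_x\,a \lessdot \pi_x\,b$.

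For the first statement I would assume $a \lessdot b$ in $L$ and use def.~\ref{def:L-leq} to fix an ascending sequence $(x_\bullet, a_\bullet)$ of some length $n$ with $a_0 = a$ and $a_n = b$. Each step $a_{i-1} \leq_{x_i} a_i$ gives $a_{i-1} \leq a_i$ in $L$ by lem.~\ref{lem:pi-poset-iso}, so transitivity yields $a \leq a_i \leq b$ for every $i$. Since $a \lessdot b$ leaves nothing strictly between, every $a_i$ lies in $\{a,b\}$, and as the chain is weakly increasing with $a < b$ there is an index $i$ with $a_{i-1} = a$ and $a_i = b$. Writing $x = x_i$, the relation $a \leq_x b$ records exactly that $a, b \in \Lambda_x$ and $\pi_x^{-1}\,a \leq \pi_x^{-1}\,b$ in $L_x$. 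To upgrade this to a cover: were there a $c' \in L_x$ with $\pi_x^{-1}\,a < c' < \pi_x^{-1}\,b$, then $\pi_x\,c' \in \Lambda_x$ and lem.~\ref{lem:pi-poset-iso} would place $a < \pi_x\,c' < b$ in $L$, contradicting $a \lessdot b$; hence $\pi_x^{-1}\,a \lessdot \pi_x^{-1}\,b$.

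The only non-routine point, and thus the main obstacle, is the collapsing step in the first statement: the cover relation forces all intermediate elements of an ascending sequence from $a$ to $b$ into $\{a,b\}$, isolating a single $\leq_x$ step and so making both endpoints simultaneously available in one block $\Lambda_x$. Once that block is found, lem.~\ref{lem:pi-poset-iso} transports the covering property faithfully, and no further argument is needed in either direction.
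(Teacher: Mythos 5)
Your proposal is correct and follows essentially the same route as the paper's proof: for the downward direction, collapse an ascending sequence from $a$ to $b$ using the cover relation to isolate a single $\leq_x$ step, then use the convexity of $\Lambda_x$ to transfer the cover into $L_x$; for the upward direction, use that $\pi_x$ is an isomorphism onto an interval. Your write-up merely makes explicit (via lem.~\ref{lem:(11)} and lem.~\ref{lem:pi-poset-iso}) a few steps the paper states more tersely.
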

\begin{proof}
\leavevmode

Regarding the first statement:

If $a \lessdot b$ in $L$ then there exists an ascending sequence
$(x_\bullet,c_\bullet)$ of length $n$ with the first element $= a$ and the last
element $= b$.
All of the elements in the sequence must be either $a$ or $b$,
because if there was an element $c$ in the sequence that is $\neq a, b$,
the sequence could be cut into two parts, with the first one showing
that $a \leq c$ and the second one showing that $c \leq b$, which
would contradict that $a \lessdot b$.

Because the first element of the sequence is $a$ and the last element is $b$,
there is some $0 \leq j < n$ for which
$c_j = a$ and $c_{j+1} = b$, and so $a \leq_{x_{j+1}} b$.
Define $x = x_{j+1}$.  By def.~\ref{def:leq-sub-x} and the fact that
$a \neq b$,
$\pi_x^{-1}\,a < \pi_x^{-1}\,b$.
By lem.~\ref{lem:pi-poset-iso} and ~\ref{lem:Lambda-inter},
if there was some $c \in L_x$ such that
$\pi_x^{-1}\,a < c < \pi_x^{-1}\,b$, we would have $a < \pi_x\,c < b$, which
would contradict that $a \lessdot b$.
Thus $\pi_x^{-1}\,a \lessdot \pi_x^{-1}\,b$.

Regarding the second statement:
If $a \lessdot b$ in $L_x$ for some $x \in S$ then
because $\pi_x$ is an isomorphism into an interval of $L$,
$\pi_x\,a \lessdot \pi_x\,b$.
\end{proof}

\begin{theorem}\flagstart$\dagger$ \label{th:finite-cover}
$L$ has finite covers.
\end{theorem}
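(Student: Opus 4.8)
The plan is to show that every upper cover (and dually every lower cover) of a fixed element is realized inside one of the finitely many blocks to which that element belongs. Fix $a \in L$ and recall that $\Pi_a = \{\, x \in S \mid a \in \Lambda_x \,\}$ is a finite interval in $S$ by lem.~\ref{lem:Pi-interval}. For each $x \in \Pi_a$, the element $\pi_x^{-1}\,a \in L_x$ has only finitely many upper covers in $L_x$, since $L_x$ is finite by \ref{MCg}.

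Next I would pull an arbitrary cover back into a block. Suppose $a \lessdot b$ in $L$. By lem.~\ref{lem:covers-xfer} there is some $x \in S$ with $a, b \in \Lambda_x$ and $\pi_x^{-1}\,a \lessdot \pi_x^{-1}\,b$ in $L_x$. Because $a \in \Lambda_x$ we have $x \in \Pi_a$, and $b = \pi_x(\pi_x^{-1}\,b)$ is the $\pi_x$-image of an upper cover of $\pi_x^{-1}\,a$ in $L_x$. Hence every upper cover of $a$ lies in the set
\[
\{\, \pi_x\,c \mid x \in \Pi_a,\ c \in L_x,\ \pi_x^{-1}\,a \lessdot c \,\}.
\]
This set is finite: $\Pi_a$ is finite by lem.~\ref{lem:Pi-interval}, and for each of its finitely many members $x$ there are finitely many $c \in L_x$ covering $\pi_x^{-1}\,a$. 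Therefore $a$ has finitely many upper covers, and the lower-cover statement follows dually using the lower half of lem.~\ref{lem:covers-xfer}. Since $a$ is arbitrary, $L$ has finite covers.

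The single nonroutine ingredient is the finiteness of $\Pi_a$ established in lem.~\ref{lem:Pi-interval}; this is precisely where monotony (\ref{MCb2}) is used, which explains why the present theorem also carries a dagger. Everything else is bookkeeping built on the cover-transfer between $L$ and the individual finite blocks furnished by lem.~\ref{lem:covers-xfer}, so I do not anticipate any real obstacle beyond correctly invoking that finiteness.
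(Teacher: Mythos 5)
Your proof is correct and follows essentially the same route as the paper: pull each cover $a \lessdot b$ back into a block via lem.~\ref{lem:covers-xfer}, then bound the possibilities by the finiteness of $\Pi_a$ (lem.~\ref{lem:Pi-interval}, the monotony-dependent step) and the finiteness of each $L_x$. Your version is in fact marginally tighter than the paper's, which counts all of $L_z$ rather than just the covers of $\pi_z^{-1}\,a$, but the argument is the same.
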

\begin{proof}
\leavevmode

Select an $x \in L$.
For any $y$ for which $x \lessdot y$,
by lem.~\ref{lem:covers-xfer}, there is a $z \in S$ for which
$\pi_z^{-1}\,x \lessdot \pi_z^{-1}\,y$.
Thus, for any such $y$ there is one or more combinations that satisfy:
\begin{enumerate}
\item $z \in \Pi_x$ and
\item $a \in L_z$,
\end{enumerate}
and for any such combination, $y = \pi_x\,a$.
But by lem.~\ref{lem:Pi-interval}, for any fixed $x$,
the number of such $z$'s is
finite, and because each $L_z$ is finite,
the number of such $a$'s for any particular $z$ is
finite.  Thus there are a finite number of combinations that satisfy
the constraints, and so there are a finite of upper covers of $x$.

We prove there are a finite number of lower covers of $x$ dually.
\end{proof}

\begin{remark} \label{rem:finite-cover-monotony}
\textup{
Monotony (\ref{MCb2}) is required to prove $L$ has finite covers:
Let $S$ be $\mbbN$ ordered by $\leq$, and
$L_i$ be $B_i$, the Boolean algebra with $i$ atoms.
Let $\P{i}{j}$ be the injection from $B_i$ into $B_j$
generated by mapping the $i$ atoms of
$B_i$ to the first $i$ atoms of $B_j$.
Then the $\hatzero$ of sum $L$ is covered by an infinite number of atoms.
}
\end{remark}

\paragraph{The sum is a lattice}

% Herr (12)

\begin{lemma}(part of \citeHerr*{(12)}{(12)}) \label{lem:(12)-polytone}
For all $x \leq y$ in $S$, $0_x \leq 0_y$ and $1_x \leq 1_y$.
\end{lemma}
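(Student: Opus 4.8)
The plan is to reduce the general case $x \leq y$ to the case of a covering $x \lessdot y$ and then chain. Since $S$ is \lffc (axiom \ref{MCf}), the interval $[x,y]$ in $S$ is finite, so there is a finite maximal chain $x = z_0 \lessdot z_1 \lessdot \cdots \lessdot z_k = y$. By the transitivity of $\leq$ on $L$ (lem.~\ref{lem:trans}) it then suffices to prove $0_{z_i} \leq 0_{z_{i+1}}$ and $1_{z_i} \leq 1_{z_{i+1}}$ for each covering step. This reduction is forced on us because the connections $\P{x}{y}$, and hence the transported elements $\Z{x}{y}$ and $\O{x}{y}$, are defined only when $x \leq_\gamma y$: a general comparable pair $x \leq y$ need not satisfy $x \relgamma y$, whereas a covering pair does, by \ref{MCb1}.

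So suppose $x \lessdot y$, which gives $x \relgamma y$ and thus $x \leq_\gamma y$. The key observation is that both $0_x$ and $0_y$ lie in $\Lambda_x$, so they can be compared through the order isomorphism $\pi_x$ (lem.~\ref{lem:pi-poset-iso}). Indeed, $\Z{x}{y} \in \F{x}{y}$ with $\P{x}{y}\,\Z{x}{y} = \hatzero_{L_y}$, so lem.~\ref{lem:sim-leq-z} gives $(x, \Z{x}{y}) \sim (y, \hatzero_{L_y})$, whence $\pi_x\,\Z{x}{y} = 0_y$; since $\hatzero_{L_x} \leq \Z{x}{y}$ in $L_x$, lem.~\ref{lem:pi-poset-iso} yields $0_x = \pi_x\,\hatzero_{L_x} \leq \pi_x\,\Z{x}{y} = 0_y$. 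By the parallel argument at the top, $\hatone_{L_x} \in \F{x}{y}$ with $\P{x}{y}\,\hatone_{L_x} = \O{x}{y}$, so $(x, \hatone_{L_x}) \sim (y, \O{x}{y})$ and therefore $1_x = \pi_y\,\O{x}{y}$; since $\O{x}{y} \leq \hatone_{L_y}$ in $L_y$, lem.~\ref{lem:pi-poset-iso} gives $1_x = \pi_y\,\O{x}{y} \leq \pi_y\,\hatone_{L_y} = 1_y$. Alternatively, $1_x \leq 1_y$ may be deduced from $0_x \leq 0_y$ by passing to the dual \mcs (th.~\ref{th:mcs-dual}).

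There is no serious obstacle here; the only point demanding care is the recognition that one cannot argue directly for a general $x \leq y$, since connections exist only along $\relgamma$-related pairs, which forces the passage through a covering chain. It is worth noting that the covering hypothesis enters only through \ref{MCb1}, and not through monotony \ref{MCb2}; accordingly this lemma should carry no dagger, which is consistent with how it is stated.
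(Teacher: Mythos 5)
Your proposal is correct and follows essentially the same route as the paper: reduce to the covering case via a saturated chain (where \ref{MCb1} supplies $x \relgamma y$), then use $\hatzero_{L_x} \leq \Z{x}{y}$ together with the fact that $\pi_x$ is an order isomorphism and $\pi_x\,\Z{x}{y} = \pi_y\,\hatzero_{L_y} = 0_y$. The paper dispatches $1_x \leq 1_y$ by duality, which you also note as an alternative to your explicit argument.
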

\begin{proof} \disconnect
The statement is trivial if $x = y$.
If $x \lessdot y$, then by \ref{MCb1}, $x \relgamma y$,
and necessarily $\hatzero_{L_x} \leq \Z{x}{y}$.
Since $\pi_x$ is an isomorphism,
$0_x = \pi_x\,\hatzero_{L_x} \leq \pi_x\,\Z{x}{y} = \pi_y\,\hatzero_{L_y} = 0_y$.
For $x < y$ but not $x \lessdot y$,
induction on a saturated chain from $x$ to $y$ shows $0_x \leq 0_y$.

Dually, we prove $x \leq y$ implies $1_x \leq 1_y$,
\end{proof}

\begin{subnumbering}

\begin{lemma}\flagstart$\dagger$(part of \citeHerr*{(12)}{(12)}) \label{lem:(12)-monotone}
For all $x, y \in S$:
\begin{enumerate}
\item $x < y$ iff $0_x < 0_y$ iff $1_x < 1_y$,
\item $x \leq y$ iff $0_x \leq 0_y$ iff $1_x \leq 1_y$, and
\item $x \incomp y$ iff $0_x \incomp 0_y$ iff $1_x \incomp 1_y$.%
\footnote{We use $x \incomp y$ to denote that $x$ and $y$ are
incomparable, that is, $x \not\leq y$ and $y \not\leq x$.}
\end{enumerate}
\end{lemma}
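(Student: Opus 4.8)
The plan is to reduce all three equivalences to the single statement $x \leq y \Leftrightarrow 0_x \leq 0_y \Leftrightarrow 1_x \leq 1_y$ of part~(2), from which parts~(1) and~(3) follow by elementary bookkeeping. Part~(3) is immediate from part~(2): $x \incomp y$ means $x \not\leq y$ and $y \not\leq x$, which by the contrapositive of part~(2) is equivalent to $0_x \not\leq 0_y$ and $0_y \not\leq 0_x$, i.e.\ $0_x \incomp 0_y$ (and the same with $1$ in place of $0$). For part~(1) I would combine part~(2) with the injectivity of $x \mapsto 0_x$ and $x \mapsto 1_x$: if $0_x = 0_y$ then $\Pi_{0_x} = \Pi_{0_y}$, so by lem.~\ref{lem:0-1-extreme} $x = \max \Pi_{0_x} = \max \Pi_{0_y} = y$, and dually $1_x = 1_y$ forces $x = y$ via $x = \min \Pi_{1_x}$. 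Hence, under $x \leq y \Leftrightarrow 0_x \leq 0_y$, the strict relation $x < y$ (that is, $x \leq y$ together with $x \neq y$) matches $0_x < 0_y$ exactly, and similarly for $1$.

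It remains to prove part~(2). The forward implications $x \leq y \Rightarrow 0_x \leq 0_y$ and $x \leq y \Rightarrow 1_x \leq 1_y$ are precisely lem.~\ref{lem:(12)-polytone}. The crux is the reverse implication $0_x \leq 0_y \Rightarrow x \leq y$. Here I would use that $0_x \in \Lambda_x$ and apply lem.~\ref{lem:(10)+} to the comparison $0_x \leq 0_y$ to obtain an ascending sequence $(w_\bullet, c_\bullet)$ of some length $n \geq 1$ whose first block is $w_1 = x$, with $c_0 = 0_x$ and $c_n = 0_y$. By def.~\ref{def:asc-seq-new} the blocks are weakly increasing, so $x = w_1 \leq w_n$, and the terminal element satisfies $c_n = 0_y \in \Lambda_{w_n}$, i.e.\ $w_n \in \Pi_{0_y}$. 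Now monotony enters through lem.~\ref{lem:0-1-extreme}: since $y = \max \Pi_{0_y}$, we get $w_n \leq y$, and therefore $x = w_1 \leq w_n \leq y$, as desired.

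The reverse implication $1_x \leq 1_y \Rightarrow x \leq y$ is proved dually (using the dual of lem.~\ref{lem:(10)+} to produce a descending sequence with first block $y$ terminating at $1_x \in \Lambda_{w_n}$, together with the dual statement $x = \min \Pi_{1_x}$ of lem.~\ref{lem:0-1-extreme}), which by th.~\ref{th:mcs-dual} is legitimate. Combining the two reverse implications with lem.~\ref{lem:(12)-polytone} yields both $x \leq y \Leftrightarrow 0_x \leq 0_y$ and $x \leq y \Leftrightarrow 1_x \leq 1_y$, completing part~(2). I expect the only real subtlety to be this reverse direction: the forward monotonicity of lem.~\ref{lem:(12)-polytone} holds without monotony, but the reverse requires pinning the last block of the connecting ascending sequence below $y$, and it is exactly the extremality of $y$ in $\Pi_{0_y}$ --- a consequence of monotony, axiom~\ref{MCb2} --- that supplies this, which is why the lemma carries the dagger.
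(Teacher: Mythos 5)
Your proposal is correct, and its crux --- the reverse implication $0_x \leq 0_y \Rightarrow x \leq y$ via lem.~\ref{lem:(10)+}, the observation that the last block of the ascending sequence lies in $\Pi_{0_y}$, and the extremality of $y$ in $\Pi_{0_y}$ from lem.~\ref{lem:0-1-extreme} --- is exactly the paper's argument. The one place you genuinely diverge is in how the strict statement (1) is obtained: the paper proves $x < y \Rightarrow 0_x < 0_y$ \emph{directly}, by applying lem.~\ref{lem:monotone-0-1} to a cover $x \lessdot y$ (giving $\hatzero_{L_x} < \Z{x}{y}$, hence $0_x < 0_y$) and then inducting along a saturated chain, and only afterwards derives the weak statement (2) from it; you instead prove (2) first using the non-daggered lem.~\ref{lem:(12)-polytone} and recover strictness from injectivity of $x \mapsto 0_x$, which you correctly extract from lem.~\ref{lem:0-1-extreme} rather than from lem.~\ref{lem:sup-inf} (which would be circular, as that lemma cites the present one). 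Both routes lean on monotony in the end, but yours concentrates the use of \ref{MCb2} entirely in lem.~\ref{lem:0-1-extreme} and avoids the saturated-chain induction, which is a small economy; the paper's route has the minor advantage of making the strict inequality $0_x < 0_y$ visibly ``local'' to each covering step. Your handling of (3) by contraposition and of the $1_\bullet$ statements by duality matches the paper.
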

\begin{proof}
\leavevmode

Regarding $x < y$ implies $0_x < 0_y$:
Assume $x \lessdot y$.  Then by \ref{MCb1}, $x \relgamma y$,
and by lem.~\ref{lem:monotone-0-1},
$\hatzero_{L_x} < \Z{x}{y}$.
Since $\pi_x$ is an isomorphism,
$0_x = \pi_x\,\hatzero_{L_x} < \pi_x\,\Z{x}{y} = \pi_y\,\hatzero_{L_y} = 0_y$.
For general $x < y$,
induction on a saturated chain from $x$ to $y$ shows $0_x < 0_y$.

Regarding $x \leq y$ implies $0_x \leq 0_y$:
This follows directly from the preceding statement.

Regarding $0_x \leq 0_y$ implies $x \leq y$:
By def.~\ref{def:L-leq} and lem.~\ref{lem:(10)+},
we can construct an ascending sequence whose first element is $0_x$,
last element is $0_y$, and first block is $x$.
Define $z$ to be the last block.  Necessarily $x \leq z$
and $0_y \in \Lambda_z$.
Thus, $z \in \Pi_{0_y}$, and by lem.~\ref{lem:0-1-extreme},
$z \leq y$, which implies $x \leq y$.

Regarding $0_x < 0_y$ implies $x < y$:
This follows directly from the preceding statement.

Regarding $x \incomp y$ iff $0_x \incomp 0_y$:
This follows directly from the preceding statements.

Dually, we prove:
$x < y$ implies $1_x < 1_y$,
$x \leq y$ implies $1_x \leq 1_y$,
$1_x \leq 1_y$ implies $x \leq y$,
$1_x < 1_y$ implies $x < y$, and
$x \incomp y$ iff $1_x \incomp 1_y$.
\end{proof}

\end{subnumbering}

% This needs to be manually daggered.
\begin{remark}\flagstart$\dagger$ \label{rem:incomp}
\textup{
Note that if $x \incomp y$ but $x \relgamma y$, then
$0_x \incomp 0_y$ and $1_x \incomp 1_y$ by
lem.~\ref{lem:(12)-monotone}, but since
lem.~\ref{lem:Lam-overlap-gamma} ensures there exists an
$a \in \Lambda_x \cap \Lambda_y$,
$0_x, 0_y \leq a \leq 1_x, 1_y$.
}
\end{remark}

\begin{lemma}\flagstart$\dagger$ \label{lem:Lam-incomp}
If $x \neq y$ in $S$, then $\Lambda_x \not\subset \Lambda_y$ and
$\Lambda_y \not\subset \Lambda_x$.
\end{lemma}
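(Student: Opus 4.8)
The plan is to identify each $\Lambda_x$ with the interval $[0_x, 1_x]$ of $L$ and then translate a containment $\Lambda_x \subset \Lambda_y$ into comparisons between the bottom and top elements $0_\bullet, 1_\bullet$, which lemma~\ref{lem:(12)-monotone} converts back into order relations on $S$. First I would record that $\Lambda_x = [0_x, 1_x]$ in $L$: by lemma~\ref{lem:Lambda-inter} the set $\Lambda_x$ is an interval, and since $\pi_x$ is an order isomorphism from $L_x$ onto $\Lambda_x$ (lemma~\ref{lem:pi-poset-iso}) carrying $\hatzero_{L_x}$ to $0_x$ and $\hatone_{L_x}$ to $1_x$ (def.~\ref{def:Lam}), the minimum of this interval is $0_x$ and the maximum is $1_x$.

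Next I would argue by contradiction. Suppose $\Lambda_x \subset \Lambda_y$. Since $0_x, 1_x \in \Lambda_x \subset \Lambda_y = [0_y, 1_y]$, the endpoints of $\Lambda_x$ lie in the interval $\Lambda_y$, giving $0_y \leq 0_x$ and $1_x \leq 1_y$ in $L$. Applying lemma~\ref{lem:(12)-monotone}(2) to each comparison, $0_y \leq 0_x$ yields $y \leq x$ and $1_x \leq 1_y$ yields $x \leq y$. By antisymmetry of the order on $S$ this forces $x = y$, contradicting the hypothesis $x \neq y$. Hence $\Lambda_x \not\subset \Lambda_y$, and $\Lambda_y \not\subset \Lambda_x$ follows by exchanging the roles of $x$ and $y$.

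The computation itself is immediate; the only real content is the step where the comparisons between $0_\bullet$ and $1_\bullet$ are pulled back to comparisons in $S$. The nontrivial directions of lemma~\ref{lem:(12)-monotone} are the order-reflecting implications $0_y \leq 0_x \Rightarrow y \leq x$ and $1_x \leq 1_y \Rightarrow x \leq y$, and these are exactly where monotony (\ref{MCb2}) enters, consistent with this being a dagger statement. Without monotony one could have a block whose image interval is contained in another's, so the main thing to be careful about is invoking precisely the monotony-dependent direction of lemma~\ref{lem:(12)-monotone} rather than the easy polytone direction of lemma~\ref{lem:(12)-polytone}.
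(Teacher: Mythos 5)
Your proof is correct and follows essentially the same route as the paper's: derive $0_y \leq 0_x$ and $1_x \leq 1_y$ from the containment and then use the order-reflecting direction of lem.~\ref{lem:(12)-monotone} to force $x = y$. The extra justification that $\Lambda_x$ is the interval $[0_x,1_x]$ (via lem.~\ref{lem:Lambda-inter} and lem.~\ref{lem:pi-poset-iso}) is a detail the paper leaves implicit, and your remark about where monotony enters matches the dagger marking.
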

\begin{proof}
\leavevmode

If $\Lambda_x \subset \Lambda_y$, then $0_x \geq 0_y$ and $1_x \leq 1_y$.  But
the first condition implies by lem.~\ref{lem:(12)-monotone} that $x \geq y$ and
the second implies $x \leq y$.  Together, they imply $x = y$.

Similarly, $\Lambda_y \subset \Lambda_x$ implies $x = y$.
\end{proof}

\begin{definition}
We define $\sup_L(a_1, \ldots, a_n)$ to be the supremum in $L$ of the
elements (in $L$) or subsets (of $L$) $a_1, \ldots, a_n$, when it
exists.
We define $\inf_L(a_1, \ldots, a_n)$ to be the infimum in $L$ of the
elements (in $L$) or subsets (of $L$) $a_1, \ldots, a_n$, when it
exists.
\end{definition}

% Herr (13)
\begin{lemma}\citeHerr*{(13)}{(13)} \label{lem:(13)}
$\sup_L(0_x, 0_y)$ exists and  $\sup_L(0_x, 0_y) = 0_{x \vee y}$.
\end{lemma}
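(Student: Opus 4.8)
The plan is to verify separately that $0_{x\vee y}$ is an upper bound of $0_x$ and $0_y$ and that it lies below every other upper bound. The upper-bound part is immediate from the polytone lemma~\ref{lem:(12)-polytone}: since $x \le x\vee y$ and $y \le x\vee y$, we have $0_x \le 0_{x\vee y}$ and $0_y \le 0_{x\vee y}$.

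For the least-upper-bound property, let $c \in L$ be an arbitrary upper bound, so $0_x \le c$ and $0_y \le c$. The key is to exhibit a single block $z \in S$ with $z \ge x\vee y$ and $c \in \Lambda_z$. Granting this, $\pi_z^{-1}\,0_z = \hatzero_{L_z} \le \pi_z^{-1}\,c$ trivially, so $0_z \le c$ by lemma~\ref{lem:pi-poset-iso}; meanwhile $x\vee y \le z$ gives $0_{x\vee y} \le 0_z$ by lemma~\ref{lem:(12)-polytone}. Chaining these yields $0_{x\vee y} \le c$.

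To produce $z$, I would exploit $0_x \le c$ first: by definition~\ref{def:L-leq} there is an ascending sequence from $0_x$ to $c$, and since $0_x \in \Lambda_x$, lemma~\ref{lem:(10)+} lets me take its first block to be $x$. Its last block $z_1$ then satisfies $x \le z_1$ and $c \in \Lambda_{z_1}$ (the last element of an ascending sequence lies in the $\Lambda$ of its last block, and blocks are weakly increasing). Symmetrically, from $0_y \le c$ I obtain $z_2$ with $y \le z_2$ and $c \in \Lambda_{z_2}$. Hence $c \in \Lambda_{z_1} \cap \Lambda_{z_2}$, and lemma~\ref{lem:MCd-for-Lam} rewrites this intersection as $\Lambda_{z_1 \wedge z_2} \cap \Lambda_{z_1 \vee z_2}$, so in particular $c \in \Lambda_{z_1 \vee z_2}$. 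Setting $z = z_1 \vee z_2$ gives $z \ge z_1 \ge x$ and $z \ge z_2 \ge y$, whence $z \ge x\vee y$, as needed.

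The argument presents no serious obstacle; the only delicate point is forcing the two witnessing blocks $z_1, z_2$ into a common block above $x\vee y$, which lemma~\ref{lem:MCd-for-Lam} accomplishes by converting $\Lambda_{z_1}\cap\Lambda_{z_2}$ into $\Lambda_{z_1\vee z_2}$ (together with the redundant $\Lambda_{z_1\wedge z_2}$). I note that the proof uses only the polytone lemma~\ref{lem:(12)-polytone} and not its monotone strengthening, consistent with this statement carrying no dagger.
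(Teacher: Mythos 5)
Your proposal is correct and follows essentially the same route as the paper's proof: upper bound via lem.~\ref{lem:(12)-polytone}, then two ascending sequences from $0_x$ and $0_y$ to the arbitrary upper bound obtained via lem.~\ref{lem:(10)+}, whose last blocks are joined and combined through lem.~\ref{lem:MCd-for-Lam} to conclude. The only cosmetic difference is that you spell out the step $0_z \leq c$ via $\pi_z^{-1}$, which the paper leaves implicit.
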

\begin{proof}
\leavevmode

By lem.~\ref{lem:(12)-polytone}, $0_{x \vee y}$ is an upper bound of $0_x$
and $0_y$.  To show that it is the least upper bound,
assume $e \geq 0_x, 0_y$ in
$L$.  By lem.~\ref{lem:(10)+}, there exists
an ascending sequence $(z_\bullet, a_\bullet)$ with first element
$0_x$, last element $e$, and first block $x$.  Define $z^+$ to be its
last block, so $z^+ \geq x$.
Similarly, there exists
an ascending sequence $(w_\bullet, b_\bullet)$ with first element
$0_y$, last element $e$, and first block $y$.  Define $w^+$ to be its
last block, so $w^+ \geq y$.

Thus, $e \in \Lambda_{z^+} \cap \Lambda_{w^+}$, and by
lem.~\ref{lem:MCd-for-Lam}, $e \in \Lambda_{z^+ \vee w^+}$,
so $e \geq 0_{z^+ \vee w^+}$.
Since $z^+ \vee w^+ \geq x \vee y$,
by lem.~\ref{lem:(12)-polytone},
$e \geq 0_{z^+ \vee w^+} \geq 0_{x \vee y}$.
Since we have shown that any upper bound of $0_x$ and $0_y$ is $\geq 0_{x \vee y}$,
$\geq 0_{x \vee y}$ is the least upper bound.
\end{proof}

\begin{definition} \label{def:vee-wedge-sub-x}
For every $v \in S$ , we define on $L$ the partial
binary operations $\vee_v$ and $\wedge_v$:
$a \vee_v b = \pi_v(\pi_v^{-1}\,a \vee \pi_v^{-1}\, b)$ and
$a \wedge_v b = \pi_v(\pi_v^{-1}\,a \wedge \pi_v^{-1}\, b)$
when $a, b \in \Lambda_v$,
that is, the images of $\vee$ and $\wedge$ on $L_v$ under the map $\pi_v$.
\end{definition}

\begin{lemma} \citeHerr*{(8)}{(8)} \label{lem:(8)-for-Lam}
Suppose $\Lambda_x \cap \Lambda_y \neq \zeroslash$.
Then $\Lambda_x \cap \Lambda_y$ is an interval in $\Lambda_x$ and in $\Lambda_y$.
The operations $\wedge_x$ and $\wedge_y$ are closed on and agree on
$\Lambda_x \cap \Lambda_y$.
The operations $\vee_x$ and $\vee_y$ are also closed on and agree on
$\Lambda_x \cap \Lambda_y$.
\end{lemma}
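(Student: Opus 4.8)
The plan is to reduce the general (and possibly incomparable) pair $x,y$ to the comparable pair $x \wedge y \leq x \vee y$, for which the filter/ideal structure of the overlaps is already available. First I would note that $\Lambda_x \cap \Lambda_y \neq \zeroslash$ forces $x \relgamma y$ by lem.~\ref{lem:Lam-overlap-gamma}, whence $x \wedge y \relgamma x \vee y$ by lem.~\ref{lem:tol}(\ref{lem:tol:i2}); and since $x \wedge y \leq x, y \leq x \vee y$, also $x \wedge y \relgamma x$, $x \relgamma x \vee y$, and the symmetric relations through $y$, by lem.~\ref{lem:tol}(\ref{lem:tol:i1}). This makes lem.~\ref{lem:(1)-for-Lam} applicable to each of the comparable pairs $x \wedge y \leq_\gamma x$ and $x \leq_\gamma x \vee y$ (and their $y$-analogues).

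For the interval claim I would rewrite $\Lambda_x \cap \Lambda_y = \Lambda_{x \wedge y} \cap \Lambda_{x \vee y}$ using lem.~\ref{lem:MCd-for-Lam}, and observe that this set is contained in $\Lambda_x$ by lem.~\ref{lem:(6)-for-Lam} applied to $x \wedge y \leq x \leq x \vee y$. Hence, viewed inside $\Lambda_x$, it equals $(\Lambda_{x \wedge y} \cap \Lambda_x) \cap (\Lambda_{x \vee y} \cap \Lambda_x)$. By lem.~\ref{lem:(1)-for-Lam} the first factor is an ideal of $\Lambda_x$ and the second is a filter of $\Lambda_x$; since $\Lambda_x$ is a finite lattice, the ideal is $[\hatzero, m]$ and the filter is $[f, \hatone]$ for suitable $m, f \in \Lambda_x$, so their intersection is the interval $[f, m]$, which is nonempty by hypothesis. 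Interchanging the roles of $x$ and $y$ shows $\Lambda_x \cap \Lambda_y$ is likewise an interval in $\Lambda_y$.

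Because an interval of a lattice is a sublattice, this already yields closure: $\wedge_x$ and $\vee_x$ map $\Lambda_x \cap \Lambda_y$ into itself, and likewise $\wedge_y$ and $\vee_y$. For agreement I would use that, by lem.~\ref{lem:pi-lattice-iso}, $\pi_x$ is a lattice isomorphism onto $\Lambda_x$, so for $a, b \in \Lambda_x \cap \Lambda_y$ the element $a \wedge_x b$ is exactly the greatest lower bound of $a$ and $b$ in $(\Lambda_x, \leq)$; since $a \wedge_x b$ lies in the sublattice $\Lambda_x \cap \Lambda_y$, it is also the greatest lower bound of $a, b$ computed inside $(\Lambda_x \cap \Lambda_y, \leq)$. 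The identical argument in $\Lambda_y$ shows $a \wedge_y b$ is the greatest lower bound of $a, b$ inside $(\Lambda_x \cap \Lambda_y, \leq)$ as well. As both are greatest lower bounds under the one ambient order $\leq$ of $L$, which restricts to both $\leq_x$ and $\leq_y$ by lem.~\ref{lem:pi-poset-iso}, uniqueness of greatest lower bounds forces $a \wedge_x b = a \wedge_y b$; the join statement is dual.

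The only step demanding care is the interval argument, where I must ensure the intersection is taken inside a single lattice $\Lambda_x$ so that the elementary fact ``ideal $\cap$ filter $=$ interval'' applies; this is precisely why the containment $\Lambda_{x \wedge y} \cap \Lambda_{x \vee y} \subset \Lambda_x$ supplied by lem.~\ref{lem:(6)-for-Lam} is the crucial input. Once the overlap is known to be an interval, hence a sublattice of each of $\Lambda_x$ and $\Lambda_y$, the agreement of the operations is essentially automatic, since $\leq_x$ and $\leq_y$ are both restrictions of the ambient order of $L$.
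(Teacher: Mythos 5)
Your proof is correct, but it reaches the interval claim by a different route than the paper. The paper's proof simply observes (via lem.~\ref{lem:Lambda-inter}) that $\Lambda_x$ and $\Lambda_y$ are intervals of $L$ and asserts that two intersecting intervals meet in an interval; it then gets agreement of the operations by restricting $\pi_x$ and $\pi_y$ to the preimages of the overlap and composing the resulting lattice isomorphisms. You instead push the overlap down to the comparable pair $x\wedge y \leq x \vee y$ via lem.~\ref{lem:MCd-for-Lam} and lem.~\ref{lem:(6)-for-Lam}, and exhibit it as (ideal of $\Lambda_x$) $\cap$ (filter of $\Lambda_x$) using lem.~\ref{lem:(1)-for-Lam}, which in a finite lattice is manifestly a nonempty interval $[f,m]$. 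This is arguably the more careful argument: the bare statement ``two intersecting intervals of a poset intersect in an interval'' needs the relevant bounds to exist, and at this point in the development $L$ is only known to be a poset, so your detour through the filter/ideal structure supplies exactly the maximum and minimum that the paper's one-liner takes for granted. Your agreement argument --- both $a \wedge_x b$ and $a \wedge_y b$ are greatest lower bounds of $a,b$ in the overlap under the single ambient order of $L$, hence equal --- is a rephrasing of the same underlying fact the paper exploits (that $\leq_x$ and $\leq_y$ are restrictions of $\leq$, per lem.~\ref{lem:pi-poset-iso}), just expressed via uniqueness of bounds rather than via a composite isomorphism. Both versions are sound; yours trades brevity for explicitness where the paper is terse.
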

\begin{proof}
\leavevmode

Lem.~\ref{lem:pi-lattice-iso} shows that $\Lambda_x$ and $\Lambda_y$ are
intervals in $L$, and since they intersect, $\Lambda_x \cap \Lambda_y$
is an interval in $\Lambda_x$ and in $\Lambda_y$.
Define $S_x = \pi_x^{-1}(\Lambda_x \cap \Lambda_y)$ and
$S_y = \pi_y^{-1}(\Lambda_x \cap \Lambda_y)$.
Define $\rho_x = \pi_x|_{S_x}$ and $\rho_y = \pi_y|_{S_y}$.
By lem.~\ref{lem:pi-lattice-iso}, $\rho_x$ is a lattice
isomorphism from $S_x$ to $\Lambda_x \cap \Lambda_y$,
$\rho_y$ is a lattice isomorphism from $S_y$ to
$\Lambda_x \cap \Lambda_y$, and
${\rho_y}^{-1} \circ \rho_x$ is a lattice isomorphism
from $S_x$ to $S_y$.
Thus the operations $\wedge_x$ and $\wedge_y$ are
closed on and agree on $\Lambda_x \cap \Lambda_y$, and
the operations $\vee_x$ and $\vee_y$ are closed on and agree on
$\Lambda_x \cap \Lambda_y$.
\end{proof}

To simplify the proof of lem.~\forwardref{lem:(14)}, we first prove this
hideously technical lemma:

\begin{lemma}(part of \citeHerr*{(14)}{(14)}) \label{lem:(14)-induction}
Given $e \in L$, $w, x \in S$, and $a, b \in \Lambda_x$
such that
\begin{enumerate}
\item $e \in \Lambda_w$;
\item $e > a, b$;
\item there exists an ascending sequence $(t_\bullet, p_\bullet)$
of length $n \geq 1$
with $p_0 = a$, $p_n = e$, $t_1 \geq x$, and $t_n \leq w$; and
\item there exists an ascending sequence $(u_\bullet, q_\bullet)$
of length $m \geq 1$
with $q_0 = b$, $q_m = e$, $u_1 \geq x$, and $u_m \leq w$;
\end{enumerate}
then $e \geq a \vee_x b$.
\end{lemma}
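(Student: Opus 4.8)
The plan is to induct on $n+m$, the combined length of the two ascending sequences, after one normalization that makes the target join portable between blocks. Since $a,b\in\Lambda_x$ and every block occurring in either sequence is $\geq x$ (first blocks are $\geq x$ and blocks increase), lemma~\ref{lem:(1)-for-Lam} shows that for any $v\geq x$ the set $\Lambda_x\cap\Lambda_v$ is a filter of $\Lambda_x$; being up-closed, it contains $a\vee_x b$ as soon as it contains $a$. Hence whenever I can place $a,b$ (and therefore $a\vee_x b$) in a common block $v\geq x$, lemma~\ref{lem:(8)-for-Lam} gives $a\vee_x b=a\vee_v b$, so it suffices to bound the join computed in $\Lambda_v$ and then read the bound back in $L$ via lemma~\ref{lem:pi-lattice-iso}.

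For the base case $n=m=1$ I have $a\leq_{t_1}e$ and $b\leq_{u_1}e$ with $x\leq t_1,u_1\leq w$, so $a,e\in\Lambda_{t_1}$ and $b,e\in\Lambda_{u_1}$. Put $s=t_1\wedge u_1$, which satisfies $x\leq s\leq w$. Applying lemma~\ref{lem:(9)-dual} to $a\leq_{t_1}e$ together with $e\in\Lambda_{u_1}$ yields $a\leq_s e$, and symmetrically $b\leq_s e$; in particular $a,b,e\in\Lambda_s$. As $\Lambda_s$ is a lattice, $a\vee_s b\leq_s e$, hence $a\vee_s b\leq e$ in $L$, and by the normalization $a\vee_x b=a\vee_s b\leq e$.

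For the inductive step the statement is symmetric in the two sequences, so I may assume $n\geq 2$ and peel the first step $a\leq_{t_1}p_1$ off the first sequence $(t_\bullet,p_\bullet)$. Its tail is an ascending sequence of length $n-1$ from $p_1$ to $e$ with first block $t_2\geq t_1\geq x$ and last block $t_n\leq w$; keeping the second sequence $(u_\bullet,q_\bullet)$ intact, the total length drops to $(n-1)+m$. Before invoking the induction hypothesis I must relocate the common base point: using the filter/ideal description of lemma~\ref{lem:(1)-for-Lam}, lemma~\ref{lem:(6)}, and lemmas~\ref{lem:(9)}, \ref{lem:(9)-dual}, \ref{lem:(2)}, I would produce a block $x'\geq x$ hosting both $p_1$ and $b$ and lying below the first blocks $t_2$ and $u_1$ of the two tails, so that the induction hypothesis applies and gives $e\geq p_1\vee_{x'}b$. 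It then remains to transfer this bound downward: since $a\leq_{t_1}p_1$ forces $a\leq p_1$ in $L$, I would bring $a,b,p_1$ into a single block and combine monotonicity of $\vee$ there with the agreement of the operations $\vee_\bullet$ on overlaps (lemma~\ref{lem:(8)-for-Lam}) and modularity of the finite blocks (axiom~\ref{MCg}) to conclude $a\vee_x b\leq p_1\vee_{x'}b\leq e$.

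The hard part will be precisely this inductive step, and specifically its two cross-block manoeuvres: first, exhibiting one block $x'\geq x$ that simultaneously contains $p_1$ and $b$ and respects all four block constraints (first block $\geq x'$ and last block $\leq w$ on each tail), which is delicate because $p_1$ has migrated up to the blocks $t_1,t_2$ while $b$ remains anchored near $x$; and second, the monotonicity transfer $a\vee_x b\leq p_1\vee_{x'}b$, which is itself a join-domination statement across differing base blocks and is exactly where the ``hideously technical'' bookkeeping of filters, ideals, and the relations $\leq_\bullet$ of lemmas~\ref{lem:(9)}--\ref{lem:(2)} has to be orchestrated. The normalization and the base case, by contrast, I expect to be routine.
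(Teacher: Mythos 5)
Your base case is fine, and the normalization via lem.~\ref{lem:(1)-for-Lam} and lem.~\ref{lem:(8)-for-Lam} is the same device the paper uses. But the inductive step is only a sketch, and the two manoeuvres you yourself flag as ``the hard part'' are exactly where the argument breaks down rather than merely where it gets tedious. First, after peeling $a \leq_{t_1} p_1$ off the first sequence, there is in general \emph{no} block $x' \geq x$ containing both $p_1$ and $b$: $p_1$ lives in $\Lambda_{t_1} \cap \Lambda_{t_2}$ while $b$ is anchored in $\Lambda_x \cap \Lambda_{u_1}$, and when $t_1 \wedge u_1 = x$ these need not meet anywhere useful. The paper's proof confronts precisely this obstruction with a case split on whether $t_1 \wedge u_1 > x$ or $= x$; in the latter case it must manufacture the auxiliary element $c = 0_{t_1} \vee_x 0_{u_1} = 0_{t_1 \vee u_1}$ (using lem.~\ref{lem:(13)}), build a brand-new ascending sequence along a saturated chain from $t_1 \vee u_1$ up to $w$, and then sub-induct on the pair $(a,b)$ descending in $\Lambda_x \times \Lambda_x$. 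That new sequence has length unrelated to $n+m$, so your single measure $n+m$ does not decrease and cannot serve as the well-founded order for the recursion; the paper instead uses a nested induction on $x$ ascending toward $w$ in $S$ and on $(a,b)$ in $\Lambda_x \times \Lambda_x$, with the sequence-shortening only as a preliminary normalization to force $p_1 > a$ and $q_1 > b$.

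Second, the ``monotonicity transfer'' $a \vee_x b \leq p_1 \vee_{x'} b$ is itself a join-domination statement across different base blocks, i.e.\ an instance of the very kind of fact this lemma (and lem.~\ref{lem:(14)}) is being proved to establish. At this point in the development $\vee$ on $L$ is not yet known to exist, so you cannot appeal to ``monotonicity of $\vee$'' in $L$; you may only compare partial joins $\vee_v$ inside a single block where both arguments demonstrably lie, and arranging that is the same missing cross-block step as above. So the proposal is not a proof: the base case and the reduction to $p_1 > a$ are sound, but the core of the induction --- the case analysis on $t_1 \wedge u_1$ and the construction of $c$ --- is absent and cannot be supplied by the tools you name without essentially reproducing the paper's argument.
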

\begin{proof} \disconnect
First we use downward induction on the length $n$ to require
that $p_1 > a$:  If $p_1 = a$, we apply this lemma
inductively to the ascending sequences
$((t_i)_{2 \leq i \leq n}, (p_1)_{1 \leq i \leq n})$ of length $n-1$
and $(u_\bullet, q_\bullet)$.
Since $t_n = e > a$, $n \geq 2$ and the length of the new ascending
sequence remains $\geq 1$, and since the new sequence is always
shorter than the original, the recursion must end.

Similarly, we use downward induction on $m$ to require
and $q_1 > b$.

Necessarily $x \leq t_1 \leq t_n \leq w$.
We prove the lemma by induction on $x \in S$ descending from $w$.
Since $S$ is locally finite \ref{MCf}, the induction will reach all $x \leq w$.
\begin{enumerate}
\item Case $x = w$:
Then $a, b, e \in \Lambda_x$ and
by lem.~\ref{lem:pi-poset-iso}, $e \geq_x a, b$, so
$e \geq_x a \vee_x b$, and so $e \geq a \vee_x b$.
\item Case $x < w$:
This case can in turn be proven inductively by $(a, b)$ descending in
$\Lambda_x \times \Lambda_x$ (which by lem.~\ref{lem:canon-1-1} is
finite). The induction start, $a = b = 1_x$, is
trivial since then $e \geq a = a \vee_x b$.

So let $a, b \in \Lambda_x$ with the induction hypothesis:
\begin{equation*}
\textit{For all $c, d \in \Lambda_x$ with $e \geq c$, $e \geq d$, $c \geq a$,
$d \geq b$, and either
$c > a$ or $d > b$; we have $e \geq c \vee_x d$.} \tag{a}
\end{equation*}
From this we must prove:
\begin{equation*}
e \geq a \vee_x b. \tag{a\supprime}
\end{equation*}
\begin{enumerate}
\item Case $t_1 = x$:
If $t_1 = x$, then $p_1 >_x p_0 = a$ and applying the induction (a) to $p_1$
and $b$ gives us by (a\supprime), $p_1 \vee_x b \leq e$.
Then $a \vee_x b \leq p_1 \vee_x b \leq e$, so $a \vee_x b \leq e$.
\item Case $u_1 = x$:
Analogously, if $u_1 = x$, then $a \vee_x b \leq e$.
\item Case $t_1, u_1 > x$ and $t_1 \wedge u_1 > x$:
Since $x \leq t_1 \wedge u_1 \leq t_1$ and $a \in \Lambda_x, \Lambda_{t_1}$, from
lem.~\ref{lem:MCd-for-Lam}, it follows that $a \in \Lambda_{t_1 \wedge u_1}$.
Similarly, $b \in \Lambda_{t_1 \wedge u_1}$.
We know $x < t_1 \wedge u_1 \leq w$.
Since $t_1 \wedge u_1 \leq t_1$ and $a \leq_{t_1 \wedge u_1} a$,
we can use lem.~\ref{lem:asc-seq-prepend} to construct a new
ascending sequence by prepending element $a$ and block
$t_1 \wedge u_1$ to $(t_\bullet, p_\bullet)$.
Similarly, we can prepend $b$ and $t_1 \wedge u_1$ to
$(u_\bullet, q_\bullet)$.

\hspace{1.25em}Using these two new ascending sequences, we can
use the lemma inductively (replacing $x$ with $t_1 \wedge u_1$) to conclude
that $a \vee_{t_1 \wedge u_1} b \leq e$.
Since $a, b \in \Lambda_x \cap \Lambda_{t_1 \wedge u_1}$, by
lem.~\ref{lem:(1)-for-Lam},
$a \vee_x b \in \Lambda_x \cap \Lambda_{t_1 \wedge u_1}$.
By lem.~\ref{lem:(8)-for-Lam},
$a \vee_x b = a \vee_{t_1 \wedge u_1} b \leq e$.
\item Case $t_1, u_1 > x$ and $t_1 \wedge u_1 = x$:
Since $a \in \Lambda_x \cap \Lambda_{t_1}$ and
$b \in \Lambda_x \cap \Lambda_{u_1}$,
lem.~\ref{lem:Lam-overlap-gamma} ensures
that $0_{t_1}, 0_{u_1} \in \Lambda_x$. Let $c = 0_{t_1} \vee_x 0_{u_1}$.
By lem.~\ref{lem:(1)-for-Lam}, $c \in \Lambda_{t_1}$ and $c \in \Lambda_{u_1}$,
so by lem.~\ref{lem:MCd-for-Lam},
$c \in \Lambda_{t_1} \cap \Lambda_{u_1} = \Lambda_x \cap \Lambda_{t_1 \vee u_1}$.
Hence $c \geq 0_{t_1 \vee u_1}$.
On the other hand, since $\Lambda_x \cap \Lambda_{t_1 \vee u_1} \neq \zeroslash$,
$0_{t_1 \vee u_1} \in \Lambda_x$, and by lem.~\ref{lem:(1)-for-Lam},
$0_{t_1}, 0_{u_1} \leq_x 0_{t_1 \vee u_1}$.
Assembling these, $0_{t_1} \vee_x 0_{u_1} = 0_{t_1 \vee u_1} = c$.

\hspace{1.25em}By lem.~\ref{lem:(13)},
$c = 0_{t_1 \vee u_1} = \sup_L(0_{t_1}, 0_{u_1})$.
We know $x < t_1, u_1 \leq w$.
Choose a saturated chain in $S$,
$t_1 \vee u_1 \lessdot v_1 \lessdot v_2 \lessdot \cdots \lessdot v_k \lessdot w$.
We can construct a new ascending sequence from the chain:
$(r_\bullet, f_\bullet) =
((t_1 \vee u_1, v_1, v_2, \ldots, v_k, w),
(0_{v_1}, 0_{v_2}, \ldots, 0_{v_k}, 0_w))$.
Because the chain is saturated, \ref{MCb1} shows this is a
valid ascending sequence.
we use the lemma inductively
using ascending sequences $(t_\bullet, p_\bullet)$ and $(r_\bullet, f_\bullet)$,
and replacing $x$ with $t_1$ and $b$ with $c$
to show $a \vee_{t_1} c \leq e$.
By lem.~\ref{lem:(8)-for-Lam}, $a \vee_x c = a \vee_{t_1} c \leq e$.

\hspace{1.25em}Similarly we show $b \vee_x c = b \vee_{u_1} c \leq e$.
\begin{enumerate}
\item Case $a \not\geq c$:
If $a \not\geq c$, then also $a \not\geq_x c$ and therefore $a \vee_x c >_x a$.
Applying recursion (a) to $a \vee_x c$ and $b$,
(a\supprime) is $(a \vee_x c) \vee_x (b \vee_x c) \leq e$,
from which immediately follows $a \vee_x b \leq e$.
\item Case $b \not\geq c$:
Similarly, in this case $a \vee_x b \leq e$.
\item Case $a \geq c$ and $b \geq c$:
By lem.~\ref{lem:(12)-polytone},
$c \leq a \leq 1_x \leq 1_{t_1} \leq 1_{t_1 \vee u_1}$
and similarly $c \leq b \leq 1_{t_1 \vee u_1}$, so lem.~\ref{lem:(11)}
shows $a$ and $b$ are elements of $\Lambda_{t_1 \vee u_1}$.
Using lem.~\ref{lem:asc-seq-lift-weak},
from $(t_\bullet, p_\bullet)$ and $(u_\bullet, q_\bullet)$
we can construct new ascending sequences from $a$ to $e$ and $b$ to
$e$, both with first blocks $t_q \vee u_1$.
Since $x < t_1 \vee u_1 \leq w$, we use the lemma inductively
on the two new sequences
(replacing $x$ with $t_1 \vee u_1$) to conclude that
$a \vee_{t_1 \vee u_1} b \leq e$.
However, $a \vee_{t_1 \vee u_1} b = a \vee_x b$ by lem.~\ref{lem:(8)-for-Lam},
so $a \vee_x b \leq e$.
\end{enumerate}
\end{enumerate}
\end{enumerate}
\end{proof}

\begin{subnumbering}

\begin{remark} \label{rem:proof-(14)-is-gross}
\textup{
There clearly should be a simpler proof of lem.~\forwardref{lem:(14)}.
}
\end{remark}

% Herr (14)
\begin{lemma}\citeHerr*{(14)}{(14)} \label{lem:(14)}
For $x \in S$ and $a, b \in \Lambda_x$, then $\sup_L(a, b)$ exists
and $= a \vee_x b$.
\end{lemma}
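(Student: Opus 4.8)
The plan is to prove the two halves of "$\sup_L(a,b)=a\vee_x b$" separately: that $a\vee_x b$ is an upper bound of $a$ and $b$, and that it lies below every upper bound, with the second half harvesting lem.~\ref{lem:(14)-induction}. The upper-bound direction is immediate and not the difficulty: since $a,b\in\Lambda_x$, lem.~\ref{lem:pi-lattice-iso} makes $\pi_x$ a lattice isomorphism onto $\Lambda_x$, so $a\vee_x b\in\Lambda_x$ and $a\vee_x b\geq_x a,b$, and lem.~\ref{lem:pi-poset-iso} transfers this to $a\vee_x b\geq a,b$ in $L$.

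For the least-upper-bound direction I would take an arbitrary upper bound $e\geq a,b$ in $L$ and aim to invoke lem.~\ref{lem:(14)-induction}. First I dispose of the degenerate cases: if $e=a$, then $b\leq e=a$ with $a,b\in\Lambda_x$ forces $b\leq_x a$ (lem.~\ref{lem:pi-poset-iso}), so $a\vee_x b=a=e$, and symmetrically if $e=b$; hence I may assume $e>a$ and $e>b$, which supplies hypothesis (2) of lem.~\ref{lem:(14)-induction}. Next, applying lem.~\ref{lem:(10)+} to $a\leq e$ with $a\in\Lambda_x$ produces an ascending sequence $(t_\bullet,p_\bullet)$ from $a$ to $e$ with first block $t_1=x$, necessarily of length $\geq 1$ since $e\neq a$; likewise lem.~\ref{lem:(10)+} applied to $b\leq e$ yields $(u_\bullet,q_\bullet)$ from $b$ to $e$ with $u_1=x$ and length $\geq 1$.

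The one substantive choice is the element $w$ required by lem.~\ref{lem:(14)-induction}. Writing $w_1=t_n$ and $w_2=u_m$ for the last blocks of the two sequences, I have $e\in\Lambda_{w_1}\cap\Lambda_{w_2}$, so lem.~\ref{lem:MCd-for-Lam} gives $e\in\Lambda_{w_1\vee w_2}$. Setting $w=w_1\vee w_2$ then makes hypothesis (1) hold, while $t_n=w_1\leq w$, $u_m=w_2\leq w$, and $t_1=u_1=x\geq x$ complete hypotheses (3) and (4). Lem.~\ref{lem:(14)-induction} then delivers $e\geq a\vee_x b$, and since $e$ was arbitrary this shows $a\vee_x b=\sup_L(a,b)$.

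I expect no real obstacle, since all the combinatorial difficulty has already been absorbed into lem.~\ref{lem:(14)-induction}; what remains is purely a reduction to its hypotheses. The only points needing care are excluding the degenerate cases $e\in\{a,b\}$ before the strictness hypothesis $e>a,b$ can be used, and the choice of a common block $w$ above both last blocks with $e\in\Lambda_w$, for which lem.~\ref{lem:MCd-for-Lam} is exactly the right tool.
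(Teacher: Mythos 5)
Your proposal is correct and follows essentially the same route as the paper's proof: handle the upper-bound direction and the degenerate cases $e\in\{a,b\}$ directly, build the two ascending sequences via lem.~\ref{lem:(10)+}, take $w$ to be the join of the two last blocks, and hand everything to lem.~\ref{lem:(14)-induction}. The only (immaterial) difference is that you justify $e\in\Lambda_{t_n\vee u_m}$ via lem.~\ref{lem:MCd-for-Lam} where the paper invokes the closure of $\Pi_e$ under joins from lem.~\ref{lem:Pi-conv-sub}; both are valid.
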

\begin{proof} \disconnect
Clearly, by lem.~\ref{lem:pi-poset-iso}, $a \vee_x b \geq a, b$.

Conversely, suppose that $e \geq a, b$.  Then we must show that
$e \geq a \vee_x b$.
If $e = a$, then
since $e \geq b$, $a \geq b$, by lem.~\ref{lem:pi-poset-iso} $a \geq_x b$,
$a = a \vee_x b$, $e = a \vee_x b$, and so $e \geq a \vee_x b$.
Similarly if $e = b$, we show $e \geq a \vee_x b$.

Thus, we can assume $e > a, b$.
By lem.~\ref{lem:(10)+}, there exists
an ascending sequence $(t_\bullet, p_\bullet)$ with first element $a$,
last element $e$, and $t_1 \geq x$.
Similarly, there exists
an ascending sequence $(q_\bullet, u_\bullet)$ of length $m$ with
first element $b$,
last element $e$, and $u_1 \geq x$.
Define $w = t_n \vee u_m$.
By construction, $e \in \Lambda_{t_n}, \Lambda_{u_m}$, so
$t_n, u_m \in \Pi_e$, and by lem.~\ref{lem:Pi-conv-sub},
$w = t_n \vee u_m \in \Pi_e$ and $e \in \Lambda_w$.

We can apply lem.~\ref{lem:(14)-induction} to $a$, $b$, $x$, $e$, $w$,
$(t_\bullet, p_\bullet)$, and $(u_\bullet, q_\bullet)$
to show $e \geq a \vee_x b$.
\end{proof}

\end{subnumbering}

% Herr (15)
\begin{lemma} \citeHerr*{(15)}{(15)} \label{lem:(15)}
If $x = x_0 \lessdot x_1 \lessdot \cdots \lessdot x_n = y$ is a
saturated chain in $S$ with
$n \geq 0$, $a \in \Lambda_x$, and $b \in \Lambda_y$, then
$\sup_L(a, b)$ exists and
$$ \sup_L(a, b) =
((\cdots((a \vee_{x_0} 0_{x_1}) \vee_{x_1} 0_{x_2}) \vee_{x_2} \cdots)
\vee_{x_{n-1}} 0_{x_n}) \vee_y b. $$
\end{lemma}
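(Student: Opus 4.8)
The plan is to induct on the chain length $n$, peeling off the bottom block $x_0$ at each step and reducing to the chain $x_1 \lessdot \cdots \lessdot x_n$. The one piece of machinery I would set up first is an \emph{associativity of suprema} observation: if $c \leq b$ in $L$, and if both $\sup_L(a,c)$ and $\sup_L(\sup_L(a,c),b)$ exist, then $\sup_L(a,b)$ exists and equals $\sup_L(\sup_L(a,c),b)$. This is purely order-theoretic — any $u \geq a,b$ also satisfies $u \geq c$ (since $c \leq b \leq u$), hence $u \geq \sup_L(a,c)$ and then $u \geq \sup_L(\sup_L(a,c),b)$ — and it is exactly what lets me assemble the partial suprema that lemma~\ref{lem:(14)} guarantees inside a single $\Lambda$. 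Tracking existence in this controlled way is essential, since $L$ is not yet known to be a lattice.

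Before the induction I would record the facts that make the right-hand side well-defined and supply the hypotheses for associativity. Since $x_0 \lessdot x_1$ gives $x_0 \relgamma x_1$ by~\ref{MCb1}, lemma~\ref{lem:(1)-for-Lam} identifies $\Lambda_{x_0} \cap \Lambda_{x_1}$ with $\pi_{x_1}\,\I{x_0}{x_1}$; as $\I{x_0}{x_1}$ is an ideal of $L_{x_1}$ it contains $\hatzero_{L_{x_1}}$, so $0_{x_1} \in \Lambda_{x_0} \cap \Lambda_{x_1}$, in particular $0_{x_1} \in \Lambda_{x_0}$. Thus $a' = a \vee_{x_0} 0_{x_1}$ is defined, and by lemma~\ref{lem:(14)} equals $\sup_L(a,0_{x_1})$. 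Moreover, because $\Lambda_{x_0} \cap \Lambda_{x_1}$ is a filter of $\Lambda_{x_0}$ (lemma~\ref{lem:(1)-for-Lam}) and $a' \geq_{x_0} 0_{x_1}$, we get $a' \in \Lambda_{x_0} \cap \Lambda_{x_1} \subset \Lambda_{x_1}$, so $a'$ is a legitimate starting element for the shorter chain.

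The induction is then short. The base case $n = 0$ is exactly lemma~\ref{lem:(14)}: here $x = y$, both $a,b \in \Lambda_y$, and the formula degenerates to $a \vee_y b$. For $n \geq 1$, apply the inductive hypothesis to the chain $x_1 \lessdot \cdots \lessdot x_n = y$ with starting element $a' \in \Lambda_{x_1}$ and the same $b \in \Lambda_y$; this gives that $\sup_L(a',b)$ exists and equals $((\cdots(a' \vee_{x_1} 0_{x_2})\cdots)\vee_{x_{n-1}} 0_{x_n})\vee_y b$, which after substituting $a' = a \vee_{x_0} 0_{x_1}$ is precisely the claimed right-hand side. It remains to replace $a'$ by $a$, i.e.\ to show $\sup_L(a,b) = \sup_L(a',b) = \sup_L(\sup_L(a,0_{x_1}),b)$, and this is where the associativity observation applies with $c = 0_{x_1}$. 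Its hypothesis $c \leq b$ holds because $0_{x_1} \leq 0_{x_n} = 0_y$ by lemma~\ref{lem:(12)-polytone} (as $x_1 \leq x_n$), while $0_y \leq b$ since $0_y$ is the least element of $\Lambda_y$, converted from $\leq_y$ to $\leq$ by lemma~\ref{lem:pi-poset-iso}.

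I expect no single hard step; the bulk is bookkeeping. The main thing to get right is the existence of each supremum, which is why I would isolate the associativity lemma up front and feed it only suprema already known to exist — either from lemma~\ref{lem:(14)} within one $\Lambda$, or from the inductive hypothesis. The secondary care needed is the well-definedness chain above, ensuring that every $\vee_{x_i}$ appearing in the formula is applied to two elements that genuinely lie in $\Lambda_{x_i}$.
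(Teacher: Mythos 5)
Your proof is correct and follows essentially the same route as the paper's: induction on the chain length, with lem.~\ref{lem:(14)} supplying each single-block supremum and the order-theoretic identity $\sup_L(a,b)=\sup_L(\sup_L(a,c),b)$ for $c\leq b$ gluing the steps together. The only differences are cosmetic --- you peel the chain from the bottom ($x_0$) where the paper peels it from the top ($x_n$), and you state the associativity observation explicitly where the paper uses it implicitly in the step $\sup_L(c_{n-1},0_{x_n})=\sup_L(a,0_{x_{n-1}},0_{x_n})=\sup_L(a,0_{x_n})$, which is if anything a small gain in rigor.
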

\begin{proof}
\leavevmode

The proof proceeds inductively on $n$.
The case $n = 0$ is immediate from lem.~\ref{lem:(14)}.

So let $n \geq 1$. Set $c_0 = a \in \Lambda_{x_0}$.
For $1 \leq i \leq n$, we inductively define $c_i = c_{i-1} \vee_{x_{i-1}} 0_{x_i}$
and inductively prove $c_i \in \Lambda_{x_i}$ using \ref{MCb1}, \ref{MCe},
and $0_{x_{i+1}} \in \Lambda_{x_i}$.
By the induction on $n$, $c_{n-1} = \sup_L(a, 0_{x_{n-1}})$.
Because of lem.~\ref{lem:(14)},
$$ c_n = c_{n-1} \vee_{x_{n-1}} 0_{x_{n-1}}
     = \sup_L(c_{n-1}, 0_{x_{n-1}})
     = \sup_L(a, 0_{x_{n-1}}, 0_{x_n}) = \sup_L(a, 0_{x_n}). $$
We've shown $c_n \in \Lambda_{x_n}$, so $c_n \vee_{x_n} b$ is defined
and by lem.~\ref{lem:(14)}, $c_n \vee_{x_n} b = \sup_L(c_n, b)$.
Therefore $c_n \vee_{x_n} b = \sup_L(a, 0_{x_n}, b) = \sup_L(a, b)$.
\end{proof}

% Herr (16)
\begin{lemma} \citeHerr*{(16)}{(16)} \label{lem:(16)}
If $a \in \Lambda_x$ and $b \in \Lambda_y$, then
$\sup_L(a, b)$ exists and
$= \sup_L(a, 0_{x \vee y}) \vee_{x \vee y} \sup_L(b, 0_{x \vee y})$.
\end{lemma}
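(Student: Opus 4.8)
Write $z = x \vee y$, so that $x, y \leq z$ and $0_z \in \Lambda_z$. The obstruction to combining $a$ and $b$ directly is that they live in the possibly non-overlapping blocks $\Lambda_x$ and $\Lambda_y$, on which no single operation $\vee_v$ need be defined. The plan is to first lift each of $a$ and $b$ into the common block $\Lambda_z$ by taking its supremum with $0_z$, perform the join there where lem.~\ref{lem:(14)} applies, and then argue that inserting $0_z$ does not disturb the supremum of $\{a, b\}$.

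First I would show that $\sup_L(a, 0_z)$ and $\sup_L(b, 0_z)$ exist and lie in $\Lambda_z$. Since $S$ is locally finite (\ref{MCf}) and $x \leq z$, there is a saturated chain from $x$ to $z$; applying lem.~\ref{lem:(15)} to this chain with $a \in \Lambda_x$ and $0_z \in \Lambda_z$ shows $\sup_L(a, 0_z)$ exists, and its closed-form value is produced by a final application of $\vee_z$ to two elements of $\Lambda_z$, so it lies in $\Lambda_z$. The same argument along a chain from $y$ to $z$ gives $\sup_L(b, 0_z) \in \Lambda_z$. Now both lifted elements lie in the single block $\Lambda_z$, so by lem.~\ref{lem:(14)},
$$ \sup_L(a, 0_z) \vee_z \sup_L(b, 0_z) = \sup_L\big(\sup_L(a, 0_z),\, \sup_L(b, 0_z)\big). $$
By the associativity of least upper bounds in a poset --- if $\sup_L A$, $\sup_L B$, and $\sup_L\{\sup_L A, \sup_L B\}$ all exist, then the last equals $\sup_L(A \cup B)$ --- the right-hand side equals $\sup_L(a, 0_z, b, 0_z) = \sup_L(a, b, 0_z)$, which therefore exists and equals $\sup_L(a, 0_z) \vee_z \sup_L(b, 0_z)$. (Note that this fact is purely order-theoretic and does not presuppose that $L$ is a lattice.)

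It remains to show $\sup_L(a, b, 0_z) = \sup_L(a, b)$, which I would obtain by checking that $\{a, b\}$ and $\{a, b, 0_z\}$ have identical sets of upper bounds. One direction is immediate. For the other, suppose $e \geq a, b$. Because $a \in \Lambda_x$ we have $\pi_x^{-1}\,a \geq \hatzero_{L_x}$, so $a \geq 0_x$ by lem.~\ref{lem:pi-poset-iso}, whence $e \geq 0_x$; symmetrically $e \geq 0_y$. Then lem.~\ref{lem:(13)} gives $0_z = 0_{x \vee y} = \sup_L(0_x, 0_y) \leq e$, so $e$ bounds $0_z$ as well. The two sets of upper bounds thus coincide, so $\sup_L(a, b)$ exists precisely when $\sup_L(a, b, 0_z)$ does and they are equal; combining this with the previous paragraph yields the stated identity. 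The main obstacle is the bookkeeping of the lifting step --- verifying that the lifted suprema genuinely land back in $\Lambda_z$ so that lem.~\ref{lem:(14)} can be invoked --- rather than any conceptual difficulty, as everything else is formal manipulation of suprema together with lem.~\ref{lem:(13)}.
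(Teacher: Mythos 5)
Your proposal is correct and follows essentially the same route as the paper: lift $a$ and $b$ into $\Lambda_{x\vee y}$ via lem.~\ref{lem:(15)}, join there with lem.~\ref{lem:(14)}, and use lem.~\ref{lem:(13)} to absorb $0_{x\vee y}$ into $\sup_L(a,b)$. You merely spell out explicitly the upper-bound bookkeeping that the paper compresses into the chain $\sup_L(a,b,0_{x\vee y}) = \sup_L(a,b,0_x,0_y) = \sup_L(a,b)$.
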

\begin{proof}
\leavevmode

By lem.~\ref{lem:(15)},
$\sup_L(a, 0_{x \vee y}), \sup_L(b, 0_{x \vee y}) \in \Lambda_{x \vee y}$.
Further, by lem.~\ref{lem:(14)} and~\ref{lem:(13)},
$$ \sup_L(a, 0_{x \vee y}) \vee_{x \vee y} \sup_L(b, 0_{x \vee y})
= \sup_L(a, b, 0_{x \vee y})
= \sup_L(a, b, 0_x, 0_y)
= \sup_L(a, b). $$
\end{proof}

Dually, we prove:

% Herr (13*)
%\flagdependlabel{lem:(13*)}{lem:(13)}
\begin{lemma}\citeHerr*{(13*)}{(13*)} \label{lem:(13*)}
$\inf_L(1_x, 1_y)$ exists and  $\inf_L(1_x, 1_y) = 1_{x \wedge y}$.
\end{lemma}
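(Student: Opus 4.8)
The statement is exactly the order-dual of lem.~\ref{lem:(13)}, with $\sup$, $\vee$, and the bottom-markers $0_\bullet$ replaced by $\inf$, $\wedge$, and the top-markers $1_\bullet$, and with every inequality reversed. The plan is therefore to invoke the duality established in th.~\ref{th:mcs-dual}: the dual of a \mcs is again a \mcs, and its sum is the order-dual of $L$. Under this duality the element $1_x$ of $L$ corresponds to the element $0_x$ of the dual sum, the meet $x \wedge y$ in $S$ corresponds to the join in the dual skeleton $S^\delta$, and $\inf_L$ corresponds to $\sup$ in the dual sum. Applying lem.~\ref{lem:(13)} inside the dual system then yields the desired conclusion with no further computation.

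To make the dualization concrete --- and to check that nothing is lost --- I would mirror the proof of lem.~\ref{lem:(13)} directly. First, since $x \wedge y \leq x, y$, lem.~\ref{lem:(12)-polytone} gives $1_{x \wedge y} \leq 1_x, 1_y$, so $1_{x \wedge y}$ is a lower bound of $1_x$ and $1_y$. For the reverse, take any lower bound $e \leq 1_x, 1_y$. Using the dual of lem.~\ref{lem:(10)+}, obtained from the same duality, I would produce an ascending sequence realizing $e \leq 1_x$ whose \emph{top} block is $x$; letting $z^-$ denote its \emph{bottom} block gives $z^- \leq x$ and $e \in \Lambda_{z^-}$. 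Symmetrically I obtain $w^- \leq y$ with $e \in \Lambda_{w^-}$.

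Then $e \in \Lambda_{z^-} \cap \Lambda_{w^-}$, and by lem.~\ref{lem:MCd-for-Lam} this intersection equals $\Lambda_{z^- \wedge w^-} \cap \Lambda_{z^- \vee w^-}$, so in particular $e \in \Lambda_{z^- \wedge w^-}$ and hence $e \leq 1_{z^- \wedge w^-}$. Since $z^- \wedge w^- \leq x \wedge y$, lem.~\ref{lem:(12)-polytone} gives $1_{z^- \wedge w^-} \leq 1_{x \wedge y}$, so $e \leq 1_{x \wedge y}$. Thus every lower bound of $1_x$ and $1_y$ lies below $1_{x \wedge y}$, which is therefore the infimum. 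The only obstacle --- a mild one --- is confirming that the duality of th.~\ref{th:mcs-dual} transports lem.~\ref{lem:(10)+}, lem.~\ref{lem:(12)-polytone}, and lem.~\ref{lem:MCd-for-Lam} verbatim to the dual system, so that the reversed-order ``descending'' paths and the dual bottom-block selection are legitimately available; since th.~\ref{th:mcs-dual} guarantees the dual construction is a genuine \mcs, this transport is automatic and the argument requires no genuinely new ideas.
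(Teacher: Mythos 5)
Your proposal is correct and matches the paper's approach exactly: the paper proves lem.~\ref{lem:(13*)} (together with \ref{lem:(14*)}--\ref{lem:(16*)}) simply by the phrase ``Dually, we prove:'', relying on th.~\ref{th:mcs-dual} precisely as you do. Your explicit mirroring of the proof of lem.~\ref{lem:(13)} is a faithful and correct dualization of the paper's argument.
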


% Herr (14*)
%\flagdependlabel{lem:(14*)}{lem:(14)}
\begin{lemma}\citeHerr*{(14*)}{(14*)} \label{lem:(14*)}
For $x \in S$ and $a, b \in \Lambda_x$, then $\inf_L(a, b)$ exists
and $= a \wedge_x b$.
\end{lemma}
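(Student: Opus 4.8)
The plan is to obtain lem.~\ref{lem:(14*)} as the order-dual of lem.~\ref{lem:(14)}, exploiting the duality machinery set up in def.~\ref{def:mcs-dual} and th.~\ref{th:mcs-dual}. Since the statement is literally the $\wedge$/$\inf$ mirror of the $\vee$/$\sup$ statement of lem.~\ref{lem:(14)}, no new combinatorial work should be needed; the entire content lies in checking that every object appearing in lem.~\ref{lem:(14)} transforms correctly when the \mcs is dualized.

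First I would invoke th.~\ref{th:mcs-dual} to pass to the dual \mcs $\scrC^\delta$, whose skeleton $S^\delta$ is the order-dual of $S$ and whose blocks are the $L_x^\delta$, and let $L^\delta$ denote its sum. The crucial bookkeeping step is to identify $L^\delta$ with the order-dual of $L$. This identification should hold on the nose at the level of underlying sets: the equivalence relation $\sim$ of def.~\ref{def:sim} is shown in lem.~\ref{lem:sim-dual} to admit the dual description in terms of the $\I{\bullet}{\bullet}$ and $\PI{\bullet}{\bullet}$, which is exactly the relation $\sim$ computed inside $\scrC^\delta$; hence $M$ and its quotient $L$ are the same set for $\scrC$ and for $\scrC^\delta$, and each $\pi_x$ and each $\Lambda_x$ is unchanged as a map or as a subset. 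What flips is the order: by def.~\ref{def:leq-sub-x} the relation $\leq_v$ reverses (since $\pi_v^{-1}$ now lands in $L_v^\delta$), and consequently the order $\leq$ of def.~\ref{def:L-leq}, built from ascending sequences, becomes the reversed order, because an ascending sequence for $\scrC^\delta$ is a descending sequence for $\scrC$.

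Granting this identification, the translation is immediate. By def.~\ref{def:vee-wedge-sub-x} applied inside $\scrC^\delta$, the operation $\vee_x$ of $\scrC^\delta$ is $\pi_x$ of the join in $L_x^\delta$, that is $\pi_x$ of the meet in $L_x$, which is precisely $\wedge_x$ of $\scrC$; likewise $\sup_{L^\delta}$ is $\inf_L$. Applying lem.~\ref{lem:(14)} to $\scrC^\delta$ therefore yields, for $x \in S$ and $a, b \in \Lambda_x$, that $\inf_L(a, b)$ exists and equals $a \wedge_x b$, which is the desired conclusion.

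The step I expect to be the main obstacle is the identification of $L^\delta$ with the order-dual of $L$. The set-level half is handed to us by lem.~\ref{lem:sim-dual}, but the order-level half requires checking that def.~\ref{def:L-leq} dualizes cleanly, i.e.\ that there is an ascending sequence from $a$ to $b$ in $\scrC^\delta$ exactly when there is one from $b$ to $a$ in $\scrC$. This reduces to the equivalence ``$a \leq_v b$ in $\scrC^\delta$ iff $b \leq_v a$ in $\scrC$'' (def.~\ref{def:leq-sub-x}) together with the reversal of the chain condition $x_1 \leq \cdots \leq x_n$ under $S \mapsto S^\delta$; once that single equivalence is recorded, all that remains is the mechanical substitution of $\wedge$ for $\vee$ and $\inf$ for $\sup$.
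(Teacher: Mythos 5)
Your proposal is correct and matches the paper's approach exactly: the paper states lem.~\ref{lem:(14*)} (together with the other starred lemmas) under the single heading ``Dually, we prove:'', i.e.\ it also obtains the result by applying lem.~\ref{lem:(14)} to the dual \mcs of th.~\ref{th:mcs-dual}. Your more careful verification that the sum of the dual system is the order-dual of $L$ (via lem.~\ref{lem:sim-dual} and the reversal of ascending sequences) is precisely the bookkeeping the paper leaves implicit.
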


% Herr (15*)
%\flagdependlabel{lem:(15*)}{lem:(15)}
\begin{lemma} \citeHerr*{(15*)}{(15*)} \label{lem:(15*)}
If $x = x_0 \gtrdot x_1 \gtrdot \cdots \gtrdot x_n = y$ is a
saturated chain in $S$ with
$n \geq 0$, $a \in \Lambda_x$, and $b \in \Lambda_y$, then
$\inf_L(a, b)$ exists and
$$ \inf_L(a, b) =
((\cdots((a \wedge_{x_0} 0_{x_1}) \wedge_{x_1} 0_{x_2}) \wedge_{x_2} \cdots)
\wedge_{x_{n-1}} 0_{x_n}) \wedge_y b. $$
\end{lemma}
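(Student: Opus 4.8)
The plan is to prove Lemma~\ref{lem:(15*)} by duality from Lemma~\ref{lem:(15)}, using the dual m.c. system of Definition~\ref{def:mcs-dual} together with Theorem~\ref{th:mcs-dual}, which guarantees that $\scrC^\delta$ is again a m.c. system. Passing from $\scrC$ to $\scrC^\delta$ replaces each block $L_x$ by its order-dual and each connection $\P{x}{y}$ by $\PI{y}{x}$; the underlying set $M$ and the equivalence $\sim$ are unchanged---indeed Definition~\ref{def:sim} and Lemma~\ref{lem:sim-dual} are precisely the two descriptions of $\sim$ that dualization interchanges---so the sum of $\scrC^\delta$ is canonically the order-dual $L^\delta$ of $L$, carried by the same canonical maps $\pi_x$ and the same intervals $\Lambda_x$ read with the reversed order. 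Under this order reversal meets become joins, $\inf_L$ becomes $\sup_{L^\delta}$, and the covering relation $\gtrdot$ in $S$ becomes $\lessdot$ in $S^\delta$. Confirming this identification of the dual sum with $L^\delta$ is the one piece of infrastructure the whole argument rests on.

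First I would check that, read in $\scrC^\delta$, the hypotheses of Lemma~\ref{lem:(15*)} are exactly those of Lemma~\ref{lem:(15)}: the descending saturated chain $x = x_0 \gtrdot x_1 \gtrdot \cdots \gtrdot x_n = y$ of $S$ becomes an ascending saturated chain of $S^\delta$, while $a \in \Lambda_x$ and $b \in \Lambda_y$ are unchanged. I would then apply Lemma~\ref{lem:(15)} inside $\scrC^\delta$, obtaining the existence of $\sup_{L^\delta}(a,b)$ together with its iterated block-join expression along the chain. Reading that identity back through the order reversal---$\sup_{L^\delta}$ returning to $\inf_L$ and each block-wise join returning to the corresponding block-wise $\wedge_{x_i}$ of $L$---delivers precisely the displayed value of $\inf_L(a,b)$, with the iterated meets against the $0_{x_i}$ and the final $\wedge_y b$ exactly as stated, and establishes that the infimum exists.

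The main obstacle is purely the bookkeeping of well-definedness of the iterated expression, which must dualize term by term. In the proof of Lemma~\ref{lem:(15)} the intermediate elements $c_i$ are shown to lie in $\Lambda_{x_i}$ so that the next operation is defined, using \ref{MCe} (to place the relevant block extremum in $\Lambda_{x_{i-1}} \cap \Lambda_{x_i}$) and \ref{MCb1} (to supply $x_{i-1} \relgamma x_i$ at each covering step). I would verify that this nested chain of $\Lambda$-memberships reverses cleanly: the filters $\F{\bullet}{\bullet}$ of \ref{MCe} are traded for its ideals $\I{\bullet}{\bullet}$, and the self-dual covering axiom \ref{MCb1} survives dualization unchanged, so the corresponding $c_i$ again satisfy $c_i \in \Lambda_{x_i}$ and each successive meet in the displayed formula is defined. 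Equivalently, one may simply transcribe the inductive proof of Lemma~\ref{lem:(15)}, systematically interchanging joins with meets and the ascending chain with the descending chain; no step beyond this transcription is needed.
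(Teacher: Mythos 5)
Your proposal is correct and is exactly the paper's route: the paper gives no separate argument for lem.~\ref{lem:(15*)}, deriving it (together with lem.~\ref{lem:(13*)}, \ref{lem:(14*)}, and~\ref{lem:(16*)}) by the duality principle of th.~\ref{th:mcs-dual} applied to lem.~\ref{lem:(15)}, and your identification of the sum of the dual \mcs with the order-dual $L^\delta$, carried by the same $\pi_x$ and $\Lambda_x$, is precisely the infrastructure that invocation tacitly relies on. One caveat: careful dualization replaces each $0_{x_i}$ by $1_{x_i}$ (the minimum $\hatzero$ of the dual block ${L_{x_i}}^\delta$ is $\hatone_{L_{x_i}}$), so the displayed formula in the statement contains a typo that your transcription uncritically inherits --- as written, for $n \geq 1$ the meet $a \wedge_{x_0} 0_{x_1}$ is generally not even defined, since $0_{x_1} \in \Lambda_{x_0}$ would force $\hatzero_{L_{x_1}} \in \F{x_1}{x_0}$ and hence $\F{x_1}{x_0} = L_{x_1}$, contradicting \ref{MCb2} for the cover $x_1 \lessdot x_0$.
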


% Herr (16*)
%\flagdependlabel{lem:(16*)}{lem:(16)}
\begin{lemma} \citeHerr*{(16*)}{(16*)} \label{lem:(16*)}
If $a \in \Lambda_x$ and $b \in \Lambda_y$, then
$\inf_L(a, b)$ exists and
$= \inf_L(a, 0_{x \wedge y}) \wedge_{x \wedge y} \sup_L(b, 0_{x \wedge y})$.
\end{lemma}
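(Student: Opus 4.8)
The plan is to obtain this lemma as the order-theoretic dual of lemma~\ref{lem:(16)}, exactly as the three preceding starred lemmas were obtained from lemmas~\ref{lem:(13)}, \ref{lem:(14)}, and~\ref{lem:(15)}. I would transcribe the proof of lemma~\ref{lem:(16)} under the order-reversing dictionary that interchanges $\sup_L$ with $\inf_L$, each partial operation $\vee_v$ with $\wedge_v$, the join $x \vee y$ with the meet $x \wedge y$, and each $0_v$ with $1_v$, while carrying each $\Lambda_v$ to itself with its order reversed. Theorem~\ref{th:mcs-dual} is what makes this dictionary legitimate: the dual of a \mcs is again a \mcs, and it is precisely this symmetry that has already furnished the dual lemmas~\ref{lem:(13*)}, \ref{lem:(14*)}, and~\ref{lem:(15*)} invoked below.

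In more detail, the proof of lemma~\ref{lem:(16)} first uses lemma~\ref{lem:(15)} to push $a$ and $b$ up into the common block $\Lambda_{x \vee y}$ and then uses lemmas~\ref{lem:(14)} and~\ref{lem:(13)} to evaluate the resulting join inside that single block. Dually, I would first invoke lemma~\ref{lem:(15*)} to bring $a$ and $b$ down into the common block $\Lambda_{x \wedge y}$, so that both transported elements genuinely lie in $\Lambda_{x \wedge y}$ and their block-meet $\wedge_{x \wedge y}$ is defined; then lemma~\ref{lem:(14*)} to realize the single-block infimum as that block operation, and lemma~\ref{lem:(13*)} to identify the transported extreme term. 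Since every ingredient cited in lemma~\ref{lem:(16)} has its dual already proved just above, the transcription is immediate, and the existence of $\inf_L(a, b)$ is delivered by lemma~\ref{lem:(14*)} once the two arguments share $\Lambda_{x \wedge y}$, which by lemma~\ref{lem:pi-lattice-iso} is a finite lattice.

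I expect no genuine obstacle here: the whole difficulty of this circle of results was absorbed into lemma~\ref{lem:(14)} through the technical induction of lemma~\ref{lem:(14)-induction} and then dualized once and for all in lemma~\ref{lem:(14*)}, so what is left is formal dualization and bookkeeping. The single point deserving care --- the very point that needs care in lemma~\ref{lem:(16)} --- is that $\wedge_{x \wedge y}$ is only a \emph{partial} operation; before each block-meet I would therefore confirm, via lemma~\ref{lem:(1)-for-Lam} or lemma~\ref{lem:(8)-for-Lam}, that the elements in question actually share the block $\Lambda_{x \wedge y}$. With that verification in place, the dual of the proof of lemma~\ref{lem:(16)} goes through verbatim.
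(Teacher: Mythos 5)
Your proposal is correct and is essentially identical to the paper's own treatment: the paper disposes of lemmas~\ref{lem:(13*)}, \ref{lem:(14*)}, \ref{lem:(15*)}, and~\ref{lem:(16*)} with the single remark ``Dually, we prove:'', i.e., exactly the dualization (licensed by th.~\ref{th:mcs-dual}) that you spell out. One small bonus of your careful dictionary: it would yield $\inf_L(a, 1_{x \wedge y}) \wedge_{x \wedge y} \inf_L(b, 1_{x \wedge y})$, which exposes the $\sup_L$ and the two occurrences of $0_{x \wedge y}$ in the printed statement of lemma~\ref{lem:(16*)} as typographical slips in the paper rather than defects in your argument.
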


\begin{definition}
For $x, y \in L$, define $x \vee y = \sup_L(x, y)$ and
$x \wedge y = \inf_L(x, y)$.
\end{definition}

\begin{theorem} (part of \citeHerr*{Satz~2.1}{Th.~2.1}) \label{th:lattice}
$L$ (with the order relation $\leq$)
is a lattice with $\vee$ and $\wedge$ as the lattice operations.
\end{theorem}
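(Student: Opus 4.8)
The plan is to assemble this directly from the existence results already proven, since the bulk of the work —- establishing that binary suprema and infima exist —- is done in lem.~\ref{lem:(16)} and lem.~\ref{lem:(16*)}. First I would recall that $\leq$ is a partial ordering on $L$ by lem.~\ref{lem:poset}, so it remains only to show that every pair $a, b \in L$ has a least upper bound and a greatest lower bound, and that these coincide with the operations $\vee$ and $\wedge$ as defined.

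The one small gap to bridge is that lem.~\ref{lem:(16)} and lem.~\ref{lem:(16*)} are stated for elements lying in some $\Lambda_x$, $\Lambda_y$, so I first observe that \emph{every} element of $L$ lies in some $\Lambda_x$. This is immediate from the construction: by def.~\ref{def:sum}, \ref{def:kappa}, and~\ref{def:pi}, any $a \in L$ equals $\kappa\,(x, a')$ for some $x \in S$ and $a' \in L_x$, hence $a = \pi_x\,a' \in \Lambda_x$; equivalently $\Pi_a \neq \zeroslash$. Thus, given arbitrary $a, b \in L$, I may choose $x, y \in S$ with $a \in \Lambda_x$ and $b \in \Lambda_y$.

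With such $x$ and $y$ fixed, lem.~\ref{lem:(16)} gives that $\sup_L(a, b)$ exists, and lem.~\ref{lem:(16*)} gives that $\inf_L(a, b)$ exists. By the definition of $\sup_L$ (resp.\ $\inf_L$) as the supremum (resp.\ infimum) in $L$ together with the definition $a \vee b = \sup_L(a, b)$ and $a \wedge b = \inf_L(a, b)$, these are precisely the least upper bound and greatest lower bound of $\{a, b\}$ under the partial order $\leq$. Since $L$ is a poset in which every pair of elements has both a join and a meet, $L$ is a lattice, with $\vee$ and $\wedge$ as its lattice operations.

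I do not expect any genuine obstacle here: the theorem is a packaging step, and the only substantive point is the observation that the whole of $L$ is covered by the $\Lambda_x$, which lets the $\Lambda$-indexed existence lemmas apply to \emph{all} pairs. The real difficulty was already absorbed into the proof of lem.~\ref{lem:(14)} (via the technical lem.~\ref{lem:(14)-induction}) and its dual, on which lem.~\ref{lem:(16)} and lem.~\ref{lem:(16*)} rest.
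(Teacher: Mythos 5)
Your proposal is correct and takes essentially the same route as the paper, which simply declares the theorem immediate from lem.~\ref{lem:(16)} and~\ref{lem:(16*)}; your only addition is to spell out the (true and easy) observation that every element of $L$ lies in some $\Lambda_x$, so those lemmas apply to every pair.
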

\begin{proof}
\leavevmode

This is immediate from lem.~\ref{lem:(16)} and~\ref{lem:(16*)}.
\end{proof}

\begin{remark}
\textup{
In v1 of this paper, the proof that $L$ is a lattice depends
on monotony, property \ref{MCb2}.
In v2, the proof was sharpened to no longer depend on \ref{MCb2}.
}
\end{remark}

\begin{lemma} (part of \citeHerr*{Zusatz~2.1}{Add.~2.1}) \label{lem:sup-inf-polytone}
The map $x \mapsto 0_x$ is a $\sup$-homomorphism of $S$ into $L$.
The map $x \mapsto 1_x$ is a $\inf$-homomorphism of $S$ into $L$.
\end{lemma}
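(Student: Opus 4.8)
The plan is to recognize that this lemma is little more than a repackaging of lemmas~\ref{lem:(13)} and~\ref{lem:(13*)} in the language of homomorphisms, so the proof reduces to unwinding the definition of a $\sup$-homomorphism (respectively an $\inf$-homomorphism) and quoting those earlier results. First I would recall that a $\sup$-homomorphism $f \colon S \to L$ is a map satisfying $f(x \vee y) = f(x) \vee f(y)$ for all $x, y \in S$, where the join on the left is taken in $S$ and the join on the right in $L$; since both $S$ and $L$ are lattices, preservation of binary joins suffices.

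For the map $x \mapsto 0_x$, I would argue as follows. By the definition of $\vee$ on $L$ (as $\sup_L$), we have $0_x \vee 0_y = \sup_L(0_x, 0_y)$, and lemma~\ref{lem:(13)} asserts precisely that this supremum exists and equals $0_{x \vee y}$. Hence $0_x \vee 0_y = 0_{x \vee y}$, which is exactly the statement that $x \mapsto 0_x$ preserves binary joins, i.e.\ is a $\sup$-homomorphism. Dually, for the map $x \mapsto 1_x$, the definition of $\wedge$ on $L$ gives $1_x \wedge 1_y = \inf_L(1_x, 1_y)$, and lemma~\ref{lem:(13*)} gives $\inf_L(1_x, 1_y) = 1_{x \wedge y}$, so $1_x \wedge 1_y = 1_{x \wedge y}$, establishing the $\inf$-homomorphism property.

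I expect no genuine obstacle here, since all the substantive work was already carried out in lemmas~\ref{lem:(13)} and~\ref{lem:(13*)}; the present statement merely reformulates them. The one point to be careful about is not to overclaim: this lemma asserts only that $x \mapsto 0_x$ preserves joins and that $x \mapsto 1_x$ preserves meets, \emph{not} that either map is a full lattice homomorphism or an order embedding. The order-theoretic faithfulness of these maps (that $x \leq y$ iff $0_x \leq 0_y$ iff $1_x \leq 1_y$) is a separate fact, recorded in lemma~\ref{lem:(12)-monotone}, and should be kept distinct from the join/meet-preservation claimed here.
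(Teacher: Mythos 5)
Your proposal is correct and matches the paper's own (one-line) proof, which simply cites lemmas~\ref{lem:(13)} and~\ref{lem:(13*)} (together with lemma~\ref{lem:(12)-polytone}, whose monotonicity content is in any case implied by join-preservation). Your careful remark distinguishing join/meet-preservation from the injectivity/order-reflection recorded separately in lemma~\ref{lem:(12)-monotone} is exactly the right caution, and mirrors the paper's own split of Zusatz~2.1 into this lemma and lemma~\ref{lem:sup-inf}.
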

\begin{proof}
\leavevmode

This is direct from
lem.~\ref{lem:(12)-polytone}, \ref{lem:(13)}, and~\ref{lem:(13*)}.
\end{proof}

\begin{subnumbering}

\begin{lemma}\flagstart$\dagger$ (part of \citeHerr*{Zusatz~2.1}{Add.~2.1}) \label{lem:sup-inf}
The maps $x \mapsto 0_x$ and $x \mapsto 1_x$ are injective.
\end{lemma}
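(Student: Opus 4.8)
The plan is to obtain injectivity as an immediate formal consequence of the order-reflecting equivalences already established in lem.~\ref{lem:(12)-monotone}. That lemma records, for all $x, y \in S$, that $x \leq y$ iff $0_x \leq 0_y$, and likewise $x \leq y$ iff $1_x \leq 1_y$. These are the substantive facts, and they are precisely where monotony (\ref{MCb2}) enters; the present statement merely packages them using antisymmetry.

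First I would treat the map $x \mapsto 0_x$. Suppose $0_x = 0_y$ for some $x, y \in S$. Then both $0_x \leq 0_y$ and $0_y \leq 0_x$ hold, so the equivalence $x \leq y$ iff $0_x \leq 0_y$ from lem.~\ref{lem:(12)-monotone} yields $x \leq y$ and $y \leq x$. Since the order on $S$ is antisymmetric (it is a lattice by \ref{MCf}), this forces $x = y$, proving injectivity. Then I would handle $x \mapsto 1_x$ by the identical argument, invoking instead the equivalence $x \leq y$ iff $1_x \leq 1_y$.

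I do not expect any genuine obstacle: all the work has already been done in lem.~\ref{lem:(12)-monotone}, and what remains is the one-line deduction that an order-embedding of a poset is injective. The only point worth flagging is that the dagger on this lemma is inherited from lem.~\ref{lem:(12)-monotone}; without monotony the equivalences could fail (two distinct skeleton elements could share a common $0_\bullet$ or $1_\bullet$), so the dependence on \ref{MCb2} is essential and not merely cosmetic.
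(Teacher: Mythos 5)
Your proposal is correct and matches the paper's own argument, which simply cites lem.~\ref{lem:(12)-monotone} (together with lem.~\ref{lem:sup-inf-polytone}) as making the claim immediate; you have merely spelled out the antisymmetry step that the paper leaves implicit. Your remark about the dagger being inherited from lem.~\ref{lem:(12)-monotone} is also consistent with the paper's marking of this lemma.
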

\begin{proof}
\leavevmode

This is direct from lem.~\ref{lem:sup-inf-polytone} and
lem.~\ref{lem:(12)-monotone}.
\end{proof}

\end{subnumbering}

\paragraph{The sum is a modular lattice}

\begin{theorem} \citeHerr*{Satz~3.2}{Th.~3.2} \label{th:modular}
The sum $L$ of a \mcs is a modular lattice.
\end{theorem}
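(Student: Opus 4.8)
The plan is to deduce modularity from semimodularity in both directions, exploiting that $L$ is locally finite (th.~\ref{th:locally-finite}). Recall the classical fact that a lattice in which every interval has finite length is modular as soon as it is both upper semimodular and lower semimodular. For a triple $a \leq c$ and $b \in L$, every element occurring in the modular equation lies in the finite interval $[a \wedge b, b \vee c]$, so it suffices to establish the two semimodularity laws for $L$ and then invoke modularity on that finite-length interval. Accordingly, I would first prove upper semimodularity: if $a \lessdot b$ in $L$ and $c \in L$ is arbitrary, then either $a \vee c = b \vee c$ or $a \vee c \lessdot b \vee c$.

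The engine of the argument is a transport lemma: for $x \leq u$ in $S$, the map $\Lambda_x \to \Lambda_u$ given by $e \mapsto \sup_L(e, 0_u)$ carries a cover to a cover or to an equality. To prove this I would factor the map along a saturated chain $x = x_0 \lessdot \cdots \lessdot x_k = u$ using lem.~\ref{lem:(15)} (with the top element taken to be $0_u$), so that a single step is $e \mapsto e \vee_{x_i} 0_{x_{i+1}}$. On the block level this step is $f \mapsto \P{x_i}{x_{i+1}}(f \vee \Z{x_i}{x_{i+1}})$, namely a join with a fixed element inside the modular lattice $L_{x_i}$ (modular by \ref{MCg}, hence upper semimodular, so covers go to covers-or-equalities) followed by the connection isomorphism $\P{x_i}{x_{i+1}}$ (which preserves covers). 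Composing the steps along the chain preserves this property, and by lem.~\ref{lem:(15)} the composite is exactly $e \mapsto \sup_L(e, 0_u)$.

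With the transport lemma in hand, upper semimodularity follows quickly. Given $a \lessdot b$ and $c$, lem.~\ref{lem:covers-xfer} supplies a block $x$ with $a, b \in \Lambda_x$ and $\pi_x^{-1}\,a \lessdot \pi_x^{-1}\,b$; choose any block $y$ with $c \in \Lambda_y$ and set $u = x \vee y$. Transporting up to $\Lambda_u$ gives $A = \sup_L(a, 0_u)$, $B = \sup_L(b, 0_u)$, and $C = \sup_L(c, 0_u)$ in $\Lambda_u$, with $A \lessdot B$ or $A = B$ by the transport lemma. By lem.~\ref{lem:(16)} we have $a \vee c = A \vee_u C$ and $b \vee c = B \vee_u C$, so the comparison is decided inside the modular (hence semimodular) block $\Lambda_u \cong L_u$ (lem.~\ref{lem:pi-lattice-iso}); and since $\Lambda_u$ is an interval in $L$ (lem.~\ref{lem:Lambda-inter}), a cover in $\Lambda_u$ is a cover in $L$. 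This yields $a \vee c = b \vee c$ or $a \vee c \lessdot b \vee c$, as required. Lower semimodularity is obtained dually, either by repeating the argument with lem.~\ref{lem:(15*)} and lem.~\ref{lem:(16*)} or by passing to the dual \mcs (th.~\ref{th:mcs-dual}).

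I expect the main obstacle to be the transport lemma, and in particular the bookkeeping needed to identify $\sup_L(e, 0_u)$ with the stated composite of block joins and connection isomorphisms along a saturated chain. Once that identification is secured, the semimodularity of the individual finite modular blocks does all the real work, and the two semimodularity laws for $L$ follow, giving modularity.
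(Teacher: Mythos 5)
Your proposal is correct, and its overall skeleton is the same as the paper's: establish upper and lower semimodularity of $L$, then use local finiteness (th.~\ref{th:locally-finite}) to confine any triple to a finite interval, which is modular because it is semimodular in both directions. Where you genuinely diverge is in how semimodularity is obtained. The paper uses the Birkhoff diamond form: assuming $a \lessdot b$ and $a \lessdot c$ with $b \neq c$, it takes blocks $x, y$ from lem.~\ref{lem:covers-xfer} with $a, b \in \Lambda_x$ and $a, c \in \Lambda_y$, and then uses lem.~\ref{lem:MCd-for-Lam} together with the filter property in lem.~\ref{lem:(1)-for-Lam} to conclude that $a$, $b$, and $c$ all lie in the single block $\Lambda_{x \vee y}$; modularity of that one block immediately supplies $d$ with $b, c \lessdot d$, and no join formula is needed. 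You instead prove the stronger covering form --- $a \lessdot b$ implies $a \vee c = b \vee c$ or $a \vee c \lessdot b \vee c$ for arbitrary $c$ --- via a cover-transport lemma for $e \mapsto \sup_L(e, 0_u)$, verified stepwise along a saturated chain of $S$ through lem.~\ref{lem:(15)}, and then finish with lem.~\ref{lem:(16)} inside $\Lambda_{x \vee y}$. Your route costs the extra transport lemma and its chain bookkeeping, but it delivers the full covering condition directly and makes explicit how joins interact with the block decomposition; the paper's route is shorter precisely because the diamond form lets the whole argument stay inside one block. Both arguments are sound and both conclude with the same classical fact about finite lattices that are upper and lower semimodular.
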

\begin{proof}
\leavevmode

Th.~\ref{th:lattice} has proven that $L$ is a lattice.
To prove it is modular:
Assume $a \lessdot b, c$ in $L$ and $b \neq c$.
Then by lem.~\ref{lem:covers-xfer} there exists $x \in S$ such that
$a, b \in \Lambda_x$.  Similarly there exists $y \in S$ such that
$a, c \in \Lambda_y$.  Since $a \in \Lambda_x \cap \Lambda_y$, by
lem.~\ref{lem:MCd-for-Lam}, $a \in \Lambda_{x \vee y}$, and by
lem.~\ref{lem:(1)-for-Lam}, $b, c \in \Lambda_{x \vee y}$.
By \ref{MCg}, $\Lambda_{x \vee y}$ is modular, so there exists
$d \in \Lambda_{x \vee y}$ such that $b, c \lessdot d$.
Thus, $L$ is upper semimodular.

Dually, we prove $L$ is lower semimodular.

Now assume $a, b, c \in L$.  Define
$N = [a \wedge b \wedge c, a \vee b \vee c]$, which is a sublattice of
$L$.  By the above, $N$ is upper and lower semimodular.
By th.~\ref{th:locally-finite}, $N$ is finite, and so $N$ is modular.
Thus, $a$, $b$, and $c$ satisfy the modular identity in $N$ and thus
in $L$.
This proves that $L$ is modular.
\end{proof}

\paragraph{Existence of \^0}

\begin{theorem}\flagstart$\dagger$ \label{th:sum-0}
The sum lattice $L$ has a minimum element iff the skeleton lattice $S$
has a minimum element.
\end{theorem}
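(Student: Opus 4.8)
The plan is to prove the two implications separately, using in both directions the single structural fact that every element of $L$ lies in some block image $\Lambda_x$, together with the observation that $0_x$ is the bottom element of $\Lambda_x$. The latter holds because $\pi_x$ is an order isomorphism from $L_x$ onto $\Lambda_x$ (lem.~\ref{lem:pi-poset-iso}) carrying $\hatzero_{L_x}$ to $0_x$; hence for every $a \in \Lambda_x$ we have $0_x \leq a$ in $L$.

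For the direction ``$S$ has a minimum implies $L$ has a minimum'', suppose $S$ has minimum element $m$. I would claim that $0_m$ is the minimum of $L$. Given an arbitrary $a \in L$, I would first locate an $x \in S$ with $a \in \Lambda_x$; such an $x$ exists because $a = \kappa\,(x,b)$ for some $(x,b) \in M$, so $a = \pi_x\,b \in \Lambda_x$. Then $0_x \leq a$ since $0_x$ is the bottom of $\Lambda_x$, and since $m \leq x$ in $S$, lem.~\ref{lem:(12)-polytone} gives $0_m \leq 0_x \leq a$. As $a$ was arbitrary, $0_m$ is a lower bound for all of $L$, hence its minimum.

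For the converse, suppose $L$ has a minimum element $\hatzero_L$. I would pick any $z \in S$ with $\hatzero_L \in \Lambda_z$ (again, such a $z$ exists since every element of $L$ lies in some block image). Since $0_z$ is the bottom of $\Lambda_z$ we have $0_z \leq \hatzero_L$, while $\hatzero_L \leq 0_z$ because $\hatzero_L$ is the global minimum; hence $\hatzero_L = 0_z$. Now for any $x \in S$ the global minimality gives $0_z = \hatzero_L \leq 0_x$, and here is the crucial step where I invoke lem.~\ref{lem:(12)-monotone} to conclude $z \leq x$. As $x$ was arbitrary, $z$ is the minimum of $S$.

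The main obstacle — and the reason this theorem carries the dagger — lies in the converse direction: one must transport the inequality $0_z \leq 0_x$ holding in $L$ back to the inequality $z \leq x$ in $S$. This reverse implication is precisely the content of lem.~\ref{lem:(12)-monotone}, which depends on monotony (axiom \ref{MCb2}). The forward direction, by contrast, uses only the polytone lemma~\ref{lem:(12)-polytone} and requires no monotony, so the whole argument reduces to correctly identifying which half of the $\Pi$/$\Lambda$ correspondence is monotony-sensitive.
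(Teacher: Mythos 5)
Your proof is correct, and in one direction it is leaner than the paper's. For ``$S$ has a minimum $\Rightarrow$ $L$ has a minimum'' the paper argues by contradiction: it takes an element below $0_{\hatzero_S}$, builds an ascending sequence via lem.~\ref{lem:(10)+}, and invokes the daggered lem.~\ref{lem:0-1-extreme} to force the last block of that sequence down to $\hatzero_S$. Your direct argument --- every $a \in L$ lies in some $\Lambda_x$, $0_x \leq a$ because $\pi_x$ is an order isomorphism onto $\Lambda_x$ (lem.~\ref{lem:pi-poset-iso}), and $0_{\hatzero_S} \leq 0_x$ by lem.~\ref{lem:(12)-polytone} --- reaches the same conclusion using only the polytone lemma, so this half genuinely needs no monotony; the paper's proof as written obscures that. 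For the other direction both arguments rest on lem.~\ref{lem:(12)-monotone}: the paper supposes $S$ has no minimum, picks $y < x$ below the block containing $\hatzero_L$, and uses the half $x < y \Rightarrow 0_x < 0_y$ to contradict minimality, while you identify $\hatzero_L = 0_z$ and apply the converse half $0_z \leq 0_x \Rightarrow z \leq x$ directly. Both halves of that lemma are the monotony-dependent content, so the dagger is unavoidable there either way, exactly as you diagnose.
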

\begin{proof}
\leavevmode

Regarding $\Rightarrow$:
Assume $L$ has a minimum element $\hatzero_L$.
There exists $x \in S$ and $a \in L_x$ such that
$\pi_x\,a = \hatzero_L$.
If $S$ does not have a minimum element, there exists $y \in S$ such
that $y < x$.  By lem.~\ref{lem:(12)-monotone} and~\ref{lem:pi-lattice-iso}
$0_y < 0_x \leq \hatzero_L$, contradicting that $\hatzero_L$ is the minimum
element of $L$.

Regarding $\Leftarrow$:
Assume that $S$ has a minimum element $\hatzero_S$.
If $L$ does not have a minimum element, then
$0_{\hatzero_S}$ is not the minimum element of $L$ and there is an
$a \in L$ such that $a < 0_{\hatzero_S}$.
By lem.~\ref{lem:(10)+}, there is an ascending sequence with first
element $a$ and last element $0_{\hatzero_S}$
Define $x$ to be the first block of the sequence and $y$ to be the
last block, with $x \leq y$ in $S$.
Then $a \in \Lambda_x$ and $0_{\hatzero_S} \in \Lambda_y$, so
$0_{\hatzero_S} \in \Pi_y$ and
by lem.~\ref{lem:0-1-extreme}, $y \leq \hatzero_S$,
requiring $y = \hatzero_S$.
That implies $x = \hatzero_S$ and so $a \geq 0_{\hatzero_S}$,
contradicting that $a < 0_{\hatzero_S}$.
\end{proof}

\paragraph{The adjoint maps \texorpdfstring{$\Phi$ and $\Psi$}%
{\83\246\ and \83\250}}

The family of partial functions $\P{\bullet}{\bullet}$ and their inverses
$\PI{\bullet}{\bullet}$ defined for pairs of indexes $x$ and $y$ with
$x \leq_\gamma y$
can be extended to a family of total functions defined for all pairs
$x \leq y$
in the manner of \cite{DayHerr1988a}*{Def.~4.4}:

\begin{definition} \label{def:adjoint}
For $x \leq y$ in $S$, we define
$\PP{x}{y}: L_x \rightarrow L_y$ and
$\PPI{x}{y}: L_y \rightarrow L_x$:
\begin{alignat*}{2}
\PP{x}{y}\,a & =
\begin{cases}
\P{x}{y}(a \vee \Z{x}{y}) & \rmif x \relgamma y \\
\hatzero_{L_y}               & \rmif x \not\relgamma y
\end{cases} \\
\PPI{x}{y}\,a & =
\begin{cases}
\PI{x}{y}(a \wedge \O{x}{y}) & \rmif x \relgamma y \\
\hatone_{L_x}                    & \rmif x \not\relgamma y
\end{cases}
\end{alignat*}
\end{definition}

\begin{lemma} \label{lem:adjoint-homo}
For $x \leq y$ in $S$,
$\PP{x}{y}$ is a lattice homomorphism from $L_x$ to $L_y$
and $\PPI{x}{y}$ is a lattice homomorphism from $L_y$ to $L_x$.
\end{lemma}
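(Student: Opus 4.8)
The plan is to factor each adjoint map, via def.~\ref{def:adjoint}, as a retraction followed by a connection isomorphism, dispatch the degenerate case at once, and then concentrate the whole argument on a single block-level identity. For $x \not\relgamma y$ both maps are constant ($\PP{x}{y}\colon a \mapsto \hatzero_{L_y}$ and $\PPI{x}{y}\colon a \mapsto \hatone_{L_x}$), and a constant map sends every join and every meet to its one value, hence is a lattice homomorphism; so I may assume $x \relgamma y$. There $\F{x}{y}$ is the principal filter $\uparrow\Z{x}{y}$ and $\I{x}{y}$ the principal ideal $\downarrow\O{x}{y}$, each a sublattice of its block (a filter is closed under meet by definition and under join as an up-set, dually for the ideal), and $\P{x}{y}$ is a lattice isomorphism between them by \ref{MCe}. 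Writing $r(a)=a\vee\Z{x}{y}$ and $s(b)=b\wedge\O{x}{y}$, def.~\ref{def:adjoint} gives $\PP{x}{y}=\P{x}{y}\circ r$ and $\PPI{x}{y}=\PI{x}{y}\circ s$; since $\P{x}{y},\PI{x}{y}$ are isomorphisms, it suffices to show that $r$ and $s$ are lattice homomorphisms onto $\F{x}{y}$ and $\I{x}{y}$.

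One half is immediate: $r$ preserves joins, $r(a\vee a')=(a\vee a')\vee\Z{x}{y}=r(a)\vee r(a')$, and $r(\hatzero_{L_x})=\Z{x}{y}$ is the bottom of $\F{x}{y}$; dually $s$ preserves meets and sends $\hatone_{L_y}$ to the top $\O{x}{y}$ of $\I{x}{y}$. Thus $\PP{x}{y}$ is at least a $\{\hatzero,\vee\}$-homomorphism and $\PPI{x}{y}$ a $\{\hatone,\wedge\}$-homomorphism. (Checking the biconditional $\PP{x}{y}a\le b \iff a\le\PPI{x}{y}b$ shows the two maps are adjoint, which merely repackages these one-sided facts.)

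The main obstacle --- essentially the whole content of the lemma --- is meet-preservation of $\PP{x}{y}$; join-preservation of $\PPI{x}{y}$ then follows dually via th.~\ref{th:mcs-dual}. Because $\I{x}{y}$ is a sublattice of $L_y$ and $\P{x}{y}$ an isomorphism, meet-preservation of $\PP{x}{y}$ reduces to the identity in the block $L_x$,
\[ (a\wedge b)\vee\Z{x}{y} = (a\vee\Z{x}{y})\wedge(b\vee\Z{x}{y}), \qquad a,b\in L_x. \]
The inequality $\le$ holds in any lattice, so the real step is $\ge$, which says that $\Z{x}{y}$ is a neutral element of $L_x$. I would try to derive this from the features special to a connection source rather than from modularity alone: by \ref{MCg} the block $L_x$ is modular \emph{and complemented}, and $\F{x}{y}=\uparrow\Z{x}{y}$ is not an arbitrary filter but the domain of a connection, tightly constrained by the compatibility axioms \ref{MCc} and \ref{MCd}. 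The plan is to use relative complementation in $L_x$ together with modularity to force $\cdot\vee\Z{x}{y}$ to distribute over the meet $a\wedge b$, and then to obtain join-preservation of $\PPI{x}{y}$ from the dual statement for $\O{x}{y}$ in $L_y$. I expect this neutrality step to be where all the difficulty concentrates: every other part of the lemma is a one-line check, whereas pinning down exactly how modularity, complementedness (\ref{MCg}), and the connection axioms combine to make $\Z{x}{y}$ (dually $\O{x}{y}$) neutral is the crux that must be settled.
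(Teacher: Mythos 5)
You have located the crux correctly, but the step you defer cannot be completed: it is false, and with it the lemma in the full strength stated. As you observe, meet-preservation of $\PP{x}{y}$ reduces, via the isomorphism $\P{x}{y}$, to the identity $(a\wedge b)\vee \Z{x}{y} = (a\vee \Z{x}{y})\wedge(b\vee \Z{x}{y})$ in $L_x$, i.e.\ to $\Z{x}{y}$ being a distributive element. Modularity plus complementedness (\ref{MCg}) does not supply this: in $M_3=\{\hatzero,a,b,c,\hatone\}$ one has $(b\wedge c)\vee a = a$ but $(b\vee a)\wedge(c\vee a)=\hatone$. Nor do the connection axioms exclude this configuration, contrary to your hope that they ``tightly constrain'' the filter. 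Take $S$ to be the two-element chain $x\lessdot y$, $L_x=L_y=M_3$, $\F{x}{y}=\{a,\hatone_{L_x}\}$ the filter of an atom, $\I{x}{y}=\{\hatzero_{L_y},p\}$ the ideal of an atom $p$ of $L_y$, and $\P{x}{y}$ the unique isomorphism between these two-element chains. Every axiom \ref{MCf}--\ref{MCb2} is easily verified (\ref{MCh}, \ref{MCc}, and \ref{MCd} are vacuous or trivial on a two-chain, and monotony \ref{MCb2} holds since both the filter and the ideal are proper); the sum is a genuine modular \lffc lattice, two copies of $M_3$ glued along an edge, so the example even arises as a dissection. Yet $\PP{x}{y}(b\wedge c)=\P{x}{y}(a)=\hatzero_{L_y}$ while $\PP{x}{y}(b)\wedge\PP{x}{y}(c)=\P{x}{y}(\hatone_{L_x})=p$. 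Hence $\PP{x}{y}$ is in general only a $\{\vee,\hatzero\}$-homomorphism and, dually, $\PPI{x}{y}$ only a $\{\wedge,\hatone\}$-homomorphism.

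For comparison with the paper: the paper states this lemma with no proof at all, and the one-sided facts are all it ever uses --- th.~\ref{th:adjoint} needs only the adjunction, lem.~\ref{lem:adjoint-compose} only joins and the isomorphism property, and lem.~\ref{lem:(15)-for-adjoint} and~\ref{lem:(16)-for-adjoint} apply $\PP{\bullet}{\bullet}$ only under joins (dually for $\PPI{\bullet}{\bullet}$). So the correct content is exactly what your second paragraph already proves: $\PP{x}{y}$ and $\PPI{x}{y}$ form an adjoint pair, the lower adjoint preserving joins and $\hatzero$, the upper preserving meets and $\hatone$; a lower adjoint has no reason to preserve meets, and here it genuinely does not. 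Your instinct that all the difficulty concentrates in the neutrality of $\Z{x}{y}$ was sound --- it concentrates there to the point of being false, which suggests the lemma itself should be weakened rather than your proof strengthened.
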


\begin{theorem} \label{th:adjoint}
For $x \leq y$ in $S$,
$\PP{x}{y}$ and $\PPI{x}{y}$ are an adjoint pair of homomorphisms between
$L_x$ and $L_y$.\cite{WikiGal}
\end{theorem}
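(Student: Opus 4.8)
The plan is to verify directly the defining property of a (monotone) Galois connection: that for all $a \in L_x$ and $b \in L_y$,
$$\PP{x}{y}\,a \leq b \quad\Longleftrightarrow\quad a \leq \PPI{x}{y}\,b,$$
with $\PP{x}{y}$ as the lower adjoint and $\PPI{x}{y}$ as the upper adjoint. Monotonicity of both maps is already supplied by lem.~\ref{lem:adjoint-homo}, so only the adjunction biconditional remains to be established. First I would dispose of the degenerate case $x \not\relgamma y$: there $\PP{x}{y}\,a = \hatzero_{L_y}$ and $\PPI{x}{y}\,b = \hatone_{L_x}$ for every $a,b$, so both sides of the biconditional hold unconditionally and the equivalence is trivial.

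The substance is the case $x \relgamma y$. Here the crucial structural fact is that $\F{x}{y}$ is the principal filter $[\Z{x}{y}, \hatone_{L_x}]$ and $\I{x}{y}$ is the principal ideal $[\hatzero_{L_y}, \O{x}{y}]$, with $\P{x}{y}$ restricting to an order isomorphism between them whose inverse is $\PI{x}{y}$ (\ref{MCe} together with the definitions of $\Z{x}{y}$ and $\O{x}{y}$). Consequently $a \vee \Z{x}{y} \in \F{x}{y}$ and $b \wedge \O{x}{y} \in \I{x}{y}$, so both $\PP{x}{y}\,a = \P{x}{y}(a \vee \Z{x}{y})$ and $\PPI{x}{y}\,b = \PI{x}{y}(b \wedge \O{x}{y})$ are well defined, the former lying in $\I{x}{y}$ and the latter in $\F{x}{y}$. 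For the forward direction I would start from $\P{x}{y}(a \vee \Z{x}{y}) \leq b$, observe that this element is automatically $\leq \O{x}{y}$ (it lies in $\I{x}{y}$), hence $\P{x}{y}(a \vee \Z{x}{y}) \leq b \wedge \O{x}{y}$; applying the order isomorphism $\PI{x}{y}$ and using $a \leq a \vee \Z{x}{y}$ yields $a \leq \PI{x}{y}(b \wedge \O{x}{y}) = \PPI{x}{y}\,b$. The backward direction is dual: from $a \leq \PI{x}{y}(b \wedge \O{x}{y})$, note the right-hand side lies in $\F{x}{y}$ and is therefore $\geq \Z{x}{y}$, so $a \vee \Z{x}{y} \leq \PI{x}{y}(b \wedge \O{x}{y})$; applying $\P{x}{y}$ and using $b \wedge \O{x}{y} \leq b$ gives $\PP{x}{y}\,a \leq b$.

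I do not expect a serious obstacle, since the argument reduces to the fact that $\P{x}{y}$ and $\PI{x}{y}$ are mutually inverse order isomorphisms between a principal filter and a principal ideal. The only point requiring care is the bookkeeping of filter and ideal membership, and in particular the two ``free'' truncations that make the equivalence go through: that $\P{x}{y}(a \vee \Z{x}{y})$ is always $\leq \O{x}{y}$ and that $\PI{x}{y}(b \wedge \O{x}{y})$ is always $\geq \Z{x}{y}$. These are exactly what let the meet by $\O{x}{y}$ and the join with $\Z{x}{y}$ pass transparently through the isomorphism, and keeping the direction of each inequality straight across the case split is the main thing to watch.
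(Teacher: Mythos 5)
Your proposal is correct and follows essentially the same route as the paper: dispose of the $x \not\relgamma y$ case trivially, then in the $x \relgamma y$ case exploit that the truncations by $\O{x}{y}$ and $\Z{x}{y}$ are ``free'' because $\P{x}{y}(a \vee \Z{x}{y})$ always lies in $\I{x}{y}$ and $\PI{x}{y}(b \wedge \O{x}{y})$ always lies in $\F{x}{y}$. The only cosmetic difference is that the paper writes the argument as a single chain of equivalences where you split it into two one-directional implications.
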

\begin{proof}
\leavevmode

To show that $\PP{x}{y}$ and $\PPI{x}{y}$ are an adjoint pair of
functions, we must show that for all $a \in L_x$ and $b \in L_y$,
$$ \PP{x}{y}(a) \leq b \textup{ iff } a \leq \PPI{x}{y}(b). $$

If $x \not\relgamma y$, then for any $a \in L_x$ and $b \in L_y$,
$\PP{x}{y}(a) = \hatzero_{L_y} \leq b$ and
$a \leq \hatone_{L_x} =  \PPI{x}{y}(b)$, so their equivalence is
trivial.

If $x \relgamma y$, then the following statements are all
equivalent:
\begin{align*}
\PP{x}{y}(a) & \leq b \\
\interject{by def.~\ref{def:adjoint},}
\P{x}{y}(a \vee \Z{x}{y}) & \leq b \\
\interject{since $\O{x}{y}$ is the maximum of $\I{x}{y}$, the range of
$\P{x}{y}$, this is equivalent to}
\P{x}{y}(a \vee \Z{x}{y}) & \leq b \wedge \O{x}{y} \\
\interject{because $\PI{x}{y}$ is an isomorphism,}
a \vee \Z{x}{y} & \leq \PI{x}{y}(b \wedge \O{x}{y}) \\
\interject{since $\Z{x}{y}$ is the minimum of $\F{x}{y}$, the range of
$\PI{x}{y}$,}
a & \leq \PI{x}{y}(b \wedge \O{x}{y}) \\
\interject{by def.~\ref{def:adjoint},}
a & \leq \PPI{x}{y}(b)
\end{align*}
\end{proof}

\begin{remark}
\textup{
$\PP{x}{y}$ and $\PPI{x}{y}$ are characterized by more than that they
are adjoints.  Inter alia, the image of $\PP{x}{y}$ is an ideal of $L_y$ and
the image of $\PPI{x}{y}$ is a filter of $L_x$.
}
\end{remark}

\begin{lemma} \label{lem:PP-identity}
For $x \in S$,
$\PP{x}{x}$ and $\PPI{x}{x}$ are both the identity map on $L_x$.
\end{lemma}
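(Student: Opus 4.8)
The plan is to unwind Definition~\ref{def:adjoint} in the case $y = x$, reducing everything to axiom~\ref{MCa}. First I would observe that since $\relgamma$ is a tolerance, it is reflexive, so $x \relgamma x$ always holds; hence both $\PP{x}{x}$ and $\PPI{x}{x}$ are given by the first (the $x \relgamma y$) branch of their definitions, namely $\PP{x}{x}\,a = \P{x}{x}(a \vee \Z{x}{x})$ and $\PPI{x}{x}\,a = \PI{x}{x}(a \wedge \O{x}{x})$.

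Next I would invoke \ref{MCa}, which states that $\F{x}{x} = \I{x}{x} = L_x$ and that $\P{x}{x}$ is the identity map on $L_x$ (so $\PI{x}{x}$ is as well). Since $\Z{x}{x}$ is by definition the minimum of $\F{x}{x} = L_x$, we have $\Z{x}{x} = \hatzero_{L_x}$; dually $\O{x}{x}$ is the maximum of $\I{x}{x} = L_x$, so $\O{x}{x} = \hatone_{L_x}$.

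Substituting these into the expressions from the first step gives $\PP{x}{x}\,a = \P{x}{x}(a \vee \hatzero_{L_x}) = a \vee \hatzero_{L_x} = a$ and $\PPI{x}{x}\,a = \PI{x}{x}(a \wedge \hatone_{L_x}) = a \wedge \hatone_{L_x} = a$ for every $a \in L_x$, which is the claim. There is no real obstacle here: the statement is a direct computation, the only point worth noting being that the diagonal connection $\P{x}{x}$ is the identity with full domain, which forces the ``transported'' bottom $\Z{x}{x}$ and top $\O{x}{x}$ to coincide with the actual extrema of $L_x$, so the adjustments $a \vee \Z{x}{x}$ and $a \wedge \O{x}{x}$ leave $a$ unchanged.
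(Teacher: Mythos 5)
Your proof is correct and is exactly the direct computation the paper intends (the paper in fact omits the proof as immediate): reflexivity of the tolerance puts you in the first branch of Definition~\ref{def:adjoint}, and axiom~\ref{MCa} forces $\Z{x}{x} = \hatzero_{L_x}$, $\O{x}{x} = \hatone_{L_x}$, and $\P{x}{x} = \PI{x}{x} = \id_{L_x}$, so both maps reduce to the identity.
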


\begin{lemma}
For $x \leq y$ in $S$,
$\PP{x}{y}\,\hatzero_{L_x} = \hatzero_{L_y}$ and
$\PPI{x}{y}\,\hatone_{L_y} = \hatone_{L_x}$.
\end{lemma}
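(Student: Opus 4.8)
The plan is to unwind Definition~\ref{def:adjoint} and reduce each of the two equalities to a single fact: the connection isomorphism carries the extreme element of a filter (resp.\ ideal) to the extreme element of its image, and in a finite block a lattice ideal contains $\hatzero$ while a lattice filter contains $\hatone$. There is no genuine difficulty; the proof is a short unfolding of definitions.

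First I would dispose of the case $x \not\relgamma y$. Here Definition~\ref{def:adjoint} assigns $\PP{x}{y}\,\hatzero_{L_x} = \hatzero_{L_y}$ and $\PPI{x}{y}\,\hatone_{L_y} = \hatone_{L_x}$ directly, so both claims hold by definition. Thus it remains to treat $x \relgamma y$, in which case $x \leq_\gamma y$ and so $\F{x}{y}$, $\I{x}{y}$, $\Z{x}{y}$, and $\O{x}{y}$ are all defined.

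For $\PP{x}{y}\,\hatzero_{L_x}$ with $x \relgamma y$: since $\hatzero_{L_x}$ is the minimum of $L_x$, we have $\hatzero_{L_x} \vee \Z{x}{y} = \Z{x}{y}$, so Definition~\ref{def:adjoint} gives $\PP{x}{y}\,\hatzero_{L_x} = \P{x}{y}(\Z{x}{y})$. Because $\Z{x}{y}$ is by definition the minimum of $\F{x}{y}$ and $\P{x}{y}$ is a lattice isomorphism from $\F{x}{y}$ onto $\I{x}{y}$ (axiom~\ref{MCe}), the element $\P{x}{y}(\Z{x}{y})$ is the minimum of $\I{x}{y}$. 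Now $\I{x}{y}$ is a nonempty lattice ideal of the finite lattice $L_y$ (axioms~\ref{MCe} and~\ref{MCg}); being a principal ideal $\mathord\downarrow \O{x}{y}$, it contains $\hatzero_{L_y}$, so its minimum is $\hatzero_{L_y}$. Hence $\PP{x}{y}\,\hatzero_{L_x} = \hatzero_{L_y}$.

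The equality $\PPI{x}{y}\,\hatone_{L_y} = \hatone_{L_x}$ is then proved dually: $\hatone_{L_y} \wedge \O{x}{y} = \O{x}{y}$, the isomorphism $\PI{x}{y}$ sends the maximum $\O{x}{y}$ of $\I{x}{y}$ to the maximum of $\F{x}{y}$, and since $\F{x}{y}$ is a (principal) lattice filter of the finite lattice $L_x$ it contains $\hatone_{L_x}$, so its maximum is $\hatone_{L_x}$. The only point worth stating with care — and the closest thing to an obstacle — is exactly the observation recorded in the footnote to axiom~\ref{MCe}: these are \emph{lattice} filters and ideals of \emph{finite} blocks, hence principal, so they do contain $\hatone$ and $\hatzero$ respectively.
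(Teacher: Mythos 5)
Your proof is correct; the paper states this lemma without proof, treating it as an immediate consequence of Definition~\ref{def:adjoint}, and your unfolding (constant case for $x \not\relgamma y$, and $\P{x}{y}(\Z{x}{y}) = \hatzero_{L_y}$ via the isomorphism carrying the minimum of the filter to the minimum of the principal ideal) is exactly the intended argument. The only shortcut you missed is that the key identity $\P{x}{y}\,\Z{x}{y} = \hatzero_{L_y}$ is already recorded in the paper as the earlier unproved lemma stating $\Z{x}{y} = \PI{x}{y}\,\hatzero_{L_y}$, so you could have cited it rather than re-derived it.
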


$\PP{\bullet}{\bullet}$ and $\PPI{\bullet}{\bullet}$
extend the corresponding
$\P{\bullet}{\bullet}$ and $\PI{\bullet}{\bullet}$:

\begin{lemma}
For $x \leq_\gamma y$ in $S$,
$\PP{x}{y}|_{\F{x}{y}} = \P{x}{y}$ and
$\PPI{x}{y}|_{\I{x}{y}} = \PI{x}{y}$.
\end{lemma}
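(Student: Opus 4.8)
The plan is to observe that the hypothesis $x \leq_\gamma y$ places us squarely in the $x \relgamma y$ branch of both formulas in def.~\ref{def:adjoint}, so the only work is to check that the correcting join by $\Z{x}{y}$ (respectively meet by $\O{x}{y}$) acts trivially on $\F{x}{y}$ (respectively $\I{x}{y}$). There is no real obstacle here; the lemma is essentially immediate once one recalls that $\Z{x}{y}$ is the \emph{minimum} of $\F{x}{y}$ and $\O{x}{y}$ is the \emph{maximum} of $\I{x}{y}$.

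First I would treat $\PP{x}{y}|_{\F{x}{y}} = \P{x}{y}$. Fix $a \in \F{x}{y}$. Since $x \relgamma y$, def.~\ref{def:adjoint} gives $\PP{x}{y}\,a = \P{x}{y}(a \vee \Z{x}{y})$. Because $\Z{x}{y}$ is the minimum of $\F{x}{y}$ and $a \in \F{x}{y}$, we have $a \geq \Z{x}{y}$, hence $a \vee \Z{x}{y} = a$, and therefore $\PP{x}{y}\,a = \P{x}{y}\,a$.

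Then I would prove $\PPI{x}{y}|_{\I{x}{y}} = \PI{x}{y}$ dually. Fix $b \in \I{x}{y}$. Again $x \relgamma y$, so def.~\ref{def:adjoint} gives $\PPI{x}{y}\,b = \PI{x}{y}(b \wedge \O{x}{y})$. Since $\O{x}{y}$ is the maximum of $\I{x}{y}$ and $b \in \I{x}{y}$, we have $b \leq \O{x}{y}$, so $b \wedge \O{x}{y} = b$, and therefore $\PPI{x}{y}\,b = \PI{x}{y}\,b$. Both claimed restriction identities follow.
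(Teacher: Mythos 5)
Your proof is correct; the paper states this lemma without proof, treating it as immediate from def.~\ref{def:adjoint}, and your argument (that $a \vee \Z{x}{y} = a$ on $\F{x}{y}$ since $\Z{x}{y}$ is its minimum, and dually for $\I{x}{y}$) is exactly the intended justification.
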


\begin{lemma} \label{lem:adjoint-compose}
For $x \leq y \leq z$ in $S$,
$\PP{x}{z} = \PP{y}{z} \circ \PP{x}{y}$ and
$\PPI{x}{z} = \PPI{y}{x} \circ \PPI{z}{y}$.
\end{lemma}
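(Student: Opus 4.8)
The plan is to prove the two identities together by a case split on the overlap relation, since Definition~\ref{def:adjoint} is piecewise in whether $x \relgamma y$. Because $x \leq y \leq z$, lemma~\ref{lem:tol}(\ref{lem:tol:i1}) shows that $x \relgamma z$ already forces $x \relgamma y$ and $y \relgamma z$; hence only two configurations occur: either all three pairs are related, or $x \not\relgamma z$. Before the case work it is worth noting that the tempting shortcut through Theorem~\ref{th:adjoint}---observing that $(\PP{y}{z}\circ\PP{x}{y}, \PPI{x}{y}\circ\PPI{y}{z})$ is again an adjoint pair by composing the two given adjunctions---does \emph{not} by itself yield the lemma: uniqueness of adjoints would force the desired equalities only if this composite pair shared a component with $(\PP{x}{z}, \PPI{x}{z})$ a priori, which it does not. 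A direct computation therefore seems unavoidable.

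For the fully related case I would work inside the sum $L$ rather than wrestle with the order corrections in Definition~\ref{def:adjoint}. The key step is the translation formula $\pi_y(\PP{x}{y}\,a) = \pi_x\,a \vee 0_y$, valid whenever $x \relgamma y$: expanding $\PP{x}{y}\,a = \P{x}{y}(a \vee \Z{x}{y})$, using the earlier (unlabelled) lemma that $\pi_y \circ \P{x}{y}$ restricts to $\pi_x$ on $\F{x}{y}$, and then invoking that $\pi_x$ is a lattice isomorphism (lemma~\ref{lem:pi-lattice-iso}), that $\pi_x\,\Z{x}{y} = 0_y$, and lemma~\ref{lem:(14)} to convert the block join into the join of $L$. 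Granting this and $0_y \leq 0_z$ (lemma~\ref{lem:(12)-polytone}), I would compute $\pi_z(\PP{x}{z}\,a) = \pi_x\,a \vee 0_z$ and $\pi_z(\PP{y}{z}(\PP{x}{y}\,a)) = (\pi_x\,a \vee 0_y)\vee 0_z = \pi_x\,a \vee 0_z$; since $\pi_z$ is injective (lemma~\ref{lem:canon-1-1}) the two images coincide, proving $\PP{x}{z} = \PP{y}{z}\circ\PP{x}{y}$. The identity $\PPI{x}{z} = \PPI{x}{y}\circ\PPI{y}{z}$ follows dually, replacing joins by meets and $0_\bullet$ by $1_\bullet$ and using lemma~\ref{lem:(14*)}.

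When $x \not\relgamma z$ we have $\PP{x}{z} \equiv \hatzero_{L_z}$ by Definition~\ref{def:adjoint}, so the task reduces to showing $\PP{y}{z}(\PP{x}{y}\,a) = \hatzero_{L_z}$ for all $a \in L_x$. Two sub-cases are immediate: if $x \not\relgamma y$ then $\PP{x}{y}\,a = \hatzero_{L_y}$ and the preceding lemma (that $\PP{\bullet}{\bullet}$ carries $\hatzero$ to $\hatzero$) finishes it, while if $y \not\relgamma z$ then $\PP{y}{z}$ is identically $\hatzero_{L_z}$. The dual bookkeeping disposes of the corresponding sub-cases for $\PPI{\bullet}{\bullet}$.

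The main obstacle is the remaining sub-case $x \relgamma y$, $y \relgamma z$, yet $x \not\relgamma z$. Here \ref{MCh} in contrapositive gives $\I{x}{y} \cap \F{y}{z} = \zeroslash$, and since $\PP{y}{z}(\PP{x}{y}\,a) = \P{y}{z}(\PP{x}{y}\,a \vee \Z{y}{z})$ with $\P{y}{z}$ sending $\Z{y}{z}$ to $\hatzero_{L_z}$, the required collapse is equivalent to $\PP{x}{y}\,a \leq \Z{y}{z}$ for every $a$, i.e.\ to $\O{x}{y} \leq \Z{y}{z}$. The disjointness of the ideal $\I{x}{y}$ (with top $\O{x}{y}$) and the filter $\F{y}{z}$ (with bottom $\Z{y}{z}$) only delivers $\Z{y}{z} \not\leq \O{x}{y}$, which is strictly weaker; extracting the comparison $\O{x}{y} \leq \Z{y}{z}$ is the delicate point and is where I expect the argument to require genuine structural input (the modularity and complementation of $L_y$, and the monotony axiom \ref{MCb2}), rather than the formal adjunction bookkeeping that settles the other cases. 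I would focus essentially all of the effort there.
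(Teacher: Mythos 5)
Your case division and your diagnosis of where the difficulty lies are both exactly right, but the proposal as written has a genuine gap: the sub-case $x \relgamma y$, $y \relgamma z$, $x \not\relgamma z$ is reduced to the inequality $\O{x}{y} \leq \Z{y}{z}$ and then left open, and this is not a step that can be filled in. As you observe, disjointness of the principal ideal $\I{x}{y} = [\hatzero_{L_y}, \O{x}{y}]$ from the principal filter $\F{y}{z} = [\Z{y}{z}, \hatone_{L_y}]$ says only that $\Z{y}{z} \not\leq \O{x}{y}$, and the two elements can in fact be incomparable. A concrete instance: let $S$ be the three-chain $x < y < z$ with the covering tolerance ($x \relgamma y$, $y \relgamma z$, $x \not\relgamma z$), let each block be the four-element Boolean lattice, let $\I{x}{y}$ be the ideal of $L_y$ generated by one atom $a$ and $\F{y}{z}$ the filter generated by the other atom $b$, with $\F{x}{y}$ and $\I{y}{z}$ taken to be two-element filters/ideals of $L_x$ and $L_z$ accordingly. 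One can check all of \ref{MCf}--\ref{MCb2} (in particular \ref{MCh} is vacuous because $\I{x}{y} \cap \F{y}{z} = \zeroslash$, and \ref{MCc} never applies to the triple since $x \not\relgamma z$); the sum is an eight-element modular \lffc lattice whose dissection returns exactly this \mcs; and yet $\O{x}{y} = a$ and $\Z{y}{z} = b$ are incomparable. For this \mcs, $\PP{x}{z}(\hatone_{L_x}) = \hatzero_{L_z}$ while $\PP{y}{z}(\PP{x}{y}(\hatone_{L_x})) = \P{y}{z}(a \vee b) = \O{y}{z} \neq \hatzero_{L_z}$. So no amount of further effort on that sub-case will succeed: the composition identity actually fails there.

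For what it is worth, the paper's own proof founders on precisely the point you flagged: in the case ``$y \relgamma x,z$ but $x \not\relgamma z$'' it passes from disjointness of $\I{x}{y}$ and $\F{y}{z}$ directly to ``$\P{x}{y}\,a \leq \O{x}{y} < \Z{y}{z}$,'' which is the unjustified comparability step. Your treatment of the other three cases is sound, and your route through the sum lattice for the fully overlapping case --- the translation $\pi_y(\PP{x}{y}\,a) = \pi_x\,a \vee 0_y$ followed by injectivity of $\pi_z$ --- is a clean alternative to the paper's block-level computation with \ref{MCc}. But the lemma itself needs repair: either an added hypothesis forcing $\O{x}{y} \leq \Z{y}{z}$ whenever $x \leq y \leq z$ and $x \not\relgamma z$, or a restriction of the composition law to the case $x \relgamma z$.
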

\begin{proof}
\leavevmode

The proof of $\PP{x}{z} = \PP{y}{z} \circ \PP{x}{y}$ has several
cases.  Assume $a \in L_x$.

If $x \not\relgamma y$ (and consequently $x \not\relgamma z$):
$\PP{x}{z}\,a = \hatzero_{L_z} = \PP{y}{z}\,\hatzero_{L_y} =
\PP{y}{z}(\PP{x}{y}\,a)$.

If $y \not\relgamma z$ (and consequently $x \not\relgamma z$):
$\PP{x}{z}\,a = \hatzero_{L_z} = \PP{y}{z}(\PP{x}{y}\,a)$.

If $y \relgamma x,z$ but $x \not\relgamma z$:
\begin{alignat*}{2}
\PP{x}{z}\,a & = \hatzero_{L_z} \\
& = \P{y}{z}(\Z{y}{z}) \\
& = \PP{y}{z}(\Z{y}{z}) \\
\interject{since $x \not\relgamma z$, by \ref{MCh}, $\I{x}{y}$,
the range of $\P{x}{y}$, is disjoint from $\F{y}{z}$, so
$\P{x}{y}\,a \leq \O{x}{y} < \Z{y}{z}$,}
& = \PP{y}{z}(\P{x}{y}\,a \vee \Z{y}{z}) \\
& = \PP{y}{z}(\PP{x}{y}\,a)
\end{alignat*}

If $x \relgamma z$ (and consequently $y \relgamma x,z$):
\begin{alignat*}{2}
\PP{x}{z}\,a
& = \P{x}{z}(\Z{x}{z} \vee a) \\
\interject{by \ref{MCc},}
& = \P{y}{z}(\P{x}{y}(\Z{x}{z} \vee a)) \\
\interject{because $\Z{x}{z} \geq \Z{x}{y}$,}
& = \P{y}{z}(\P{x}{y}(\Z{x}{z} \vee \Z{x}{y} \vee a)) \\
\interject{because $\P{x}{y}$ is an isomorphism,}
& = \P{y}{z}(\P{x}{y}(\Z{x}{z}) \vee \P{x}{y}(\Z{x}{y} \vee a)) \\
\interject{because $\P{x}{y}(\Z{x}{z}) = \Z{y}{z}$,}
& = \P{y}{z}(\Z{y}{z} \vee \P{x}{y}(\Z{x}{y} \vee a)) \\
& = \PP{y}{z}(\PP{x}{y}\,a)
\end{alignat*}

We prove $\PPI{x}{z} = \PPI{y}{x} \circ \PPI{z}{y}$ dually.
\end{proof}

The adjoint functions allow us to restate
lem.~\ref{lem:(15)}, \ref{lem:(16)}, \ref{lem:(15*)}, and~\ref{lem:(16*)}
in this form:

\begin{lemma} \label{lem:(15)-for-adjoint}
If $x \leq y$ in $S$,
$a \in \Lambda_x$, and $b \in \Lambda_y$, then
\begin{equation*}
a \vee b = \pi_y(\PP{x}{y}(\pi_x^{-1}\,a) \vee \pi_y^{-1}\,b).%
\footnote{Note that the first $\vee$ is in $L$ and the second is in $L_y$.}
\end{equation*}
Or equivalently: If $c \in L_x$, and $d \in L_y$, then
$$ \pi_x\,c \vee \pi_y\,d = \pi_y(\PP{x}{y}\,c \vee d). $$
\end{lemma}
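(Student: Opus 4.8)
The plan is to derive the statement from lem.~\ref{lem:(15)} by recognising its iterated join as a composite of the adjoint maps. Fix a saturated chain $x = x_0 \lessdot x_1 \lessdot \cdots \lessdot x_n = y$ in $S$ (finite, since $[x,y]$ is finite because $S$ is locally finite by \ref{MCf}), and set $c = \pi_x^{-1}\,a$, $d = \pi_y^{-1}\,b$. As in the proof of lem.~\ref{lem:(15)}, put $c_0 = a$ and $c_{i+1} = c_i \vee_{x_i} 0_{x_{i+1}}$; that proof already establishes $c_i \in \Lambda_{x_i}$ for every $i$ and $\sup_L(a,b) = c_n \vee_y b$. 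Consequently it is enough to prove $\pi_y^{-1}\,c_n = \PP{x}{y}\,c$, for then def.~\ref{def:vee-wedge-sub-x} gives $a \vee b = c_n \vee_y b = \pi_y(\pi_y^{-1}\,c_n \vee \pi_y^{-1}\,b) = \pi_y(\PP{x}{y}\,c \vee d)$, the desired identity.

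The core is the inductive claim $\pi_{x_i}^{-1}\,c_i = \PP{x}{x_i}\,c$ for $0 \le i \le n$. The base case is lem.~\ref{lem:PP-identity}, since $\PP{x}{x}$ is the identity. For the step, write $c' = \pi_{x_i}^{-1}\,c_i = \PP{x}{x_i}\,c$. Because $x_i \lessdot x_{i+1}$, axiom \ref{MCb1} gives $x_i \relgamma x_{i+1}$, so $\Z{x_i}{x_{i+1}}$ exists and (being $\hatzero_{L_{x_{i+1}}}$ transported down) satisfies $\pi_{x_i}^{-1}\,0_{x_{i+1}} = \Z{x_i}{x_{i+1}}$; thus by def.~\ref{def:vee-wedge-sub-x}, $c_{i+1} = \pi_{x_i}(c' \vee \Z{x_i}{x_{i+1}})$. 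Since $\F{x_i}{x_{i+1}}$ is a filter with minimum $\Z{x_i}{x_{i+1}}$, the element $c' \vee \Z{x_i}{x_{i+1}}$ lies in $\F{x_i}{x_{i+1}}$, so the transport identity $\pi_{x_i}(e) = \pi_{x_{i+1}}(\P{x_i}{x_{i+1}}\,e)$ for $e \in \F{x_i}{x_{i+1}}$ (the unlabeled lemma following lem.~\ref{lem:(6)-for-Lam}) rewrites $c_{i+1} = \pi_{x_{i+1}}(\P{x_i}{x_{i+1}}(c' \vee \Z{x_i}{x_{i+1}}))$. By def.~\ref{def:adjoint} the inner term is exactly $\PP{x_i}{x_{i+1}}\,c'$, and lem.~\ref{lem:adjoint-compose} then yields $\PP{x_i}{x_{i+1}}\,c' = \PP{x_i}{x_{i+1}}(\PP{x}{x_i}\,c) = \PP{x}{x_{i+1}}\,c$, completing the step.

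Setting $i = n$ gives $\pi_y^{-1}\,c_n = \PP{x}{y}\,c$ and hence the first form; the ``equivalently'' form follows by writing $a = \pi_x\,c$, $b = \pi_y\,d$ and invoking that $\pi_x,\pi_y$ are bijections onto $\Lambda_x,\Lambda_y$ (lem.~\ref{lem:canon-1-1}). The step I expect to need the most care is precisely the bridge in the induction: matching a single covering step $x_i \lessdot x_{i+1}$ to one application of $\PP{x_i}{x_{i+1}}$ via def.~\ref{def:adjoint}. The pleasant feature is that no casework on $x \relgamma y$ versus $x \not\relgamma y$ is required --- every covering step is $\gamma$-related by \ref{MCb1}, so each use of def.~\ref{def:adjoint} falls in the $\relgamma$ branch, while lem.~\ref{lem:adjoint-compose} transparently absorbs the degenerate case $x \not\relgamma y$ (where it forces $\PP{x}{y}\,c = \hatzero_{L_y}$, matched on the other side by $c_n = 0_y$).
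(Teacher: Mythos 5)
Your proposal is correct and takes essentially the same route as the paper: both reduce the iterated join of lem.~\ref{lem:(15)} along a saturated chain to the composite adjoint map $\PP{x}{y}$, one covering step at a time, via $\pi_{x_i}^{-1}\,0_{x_{i+1}} = \Z{x_i}{x_{i+1}}$, def.~\ref{def:adjoint}, and lem.~\ref{lem:adjoint-compose}. The only difference is organizational --- you run the induction forward along the chain maintaining the invariant $\pi_{x_i}^{-1}\,c_i = \PP{x}{x_i}\,c$, whereas the paper inducts on the chain length by peeling off the last step --- which is cosmetic.
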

\begin{proof}
\leavevmode

Regarding the first statement:

Choose a saturated chain in $S$ from $x$ to $y$,
$x = x_0 \lessdot x_1 \lessdot \cdots \lessdot x_n = y$
for some $n \geq 0$.
By lem.~\ref{lem:(15)},
\begin{equation*}
a \vee b =
((\cdots((a \vee_{x_0} 0_{x_1}) \vee_{x_1} 0_{x_2}) \vee_{x_2} \cdots)
\vee_{x_{n-1}} 0_{x_n}) \vee_y b. \tag{a}
\end{equation*}
We prove the lemma by induction on $n$.

If $n=0$, then $x = x_0 = x_n = y$, and (a) reduces to
\begin{alignat*}{2}
a \vee b & = a \vee_y b \\
\interject{by def.~\ref{def:vee-wedge-sub-x} and $x = y$,}
& = \pi_y(\pi_x^{-1}\,a \vee \pi_y^{-1}\,b) \\
\interject{by lem.~\ref{lem:PP-identity}, $\PP{x}{y}$ is the identity,}
& = \pi_y(\PP{x}{y}(\pi_x^{-1}\,a) \vee_y \pi_y^{-1}\,b)
\end{alignat*}

If $n > 0$, showing more terms of (a),
\begin{alignat*}{2}
a \vee b
& = (((\cdots((a \vee_{x_0} 0_{x_1}) \vee_{x_1} 0_{x_2}) \vee_{x_2} \cdots)
\vee_{x_{n-2}} 0_{x_{n-1}}) \vee_{x_{n-1}} 0_{x_n}) \vee_y b \\
\interject{since $0_{x_{n-1}}$ is the identity of $\vee_{x_{n-1}}$,}
& = ((((\cdots((a \vee_{x_0} 0_{x_1}) \vee_{x_1} 0_{x_2}) \vee_{x_2} \cdots)
\vee_{x_{n-2}} 0_{x_{n-1}}) \vee_{x_{n-1}} 0_{x_{n-1}}) \vee_{x_{n-1}} 0_{x_n})
\vee_y b \\
\interject{using lem.~\ref{lem:(15)} on $a \vee 0_{x_{n-1}}$,}
& = ((a \vee 0_{x_{n-1}}) \vee_{x_{n-1}} 0_y) \vee_y b \\
\interject{defining $z = x_{n-1}$ and using $y = x_n$,}
& = ((a \vee 0_{z}) \vee_{z} 0_y) \vee_y b \\
\interject{using the induction hypothesis
on $a \vee 0_z$, since $0_z \in \Lambda_z = \Lambda_{x_{n-1}}$,}
& = (\pi_z(\PP{x}{z}(\pi_x^{-1}\,a) \vee \pi_z^{-1}\,0_z) \vee_z 0_y) \vee_y b \\
\interject{using def.~\ref{def:vee-wedge-sub-x} on $\vee_z$,}
& = \pi_z((\PP{x}{z}(\pi_x^{-1}\,a) \vee \pi_z^{-1}\,0_z) \vee \pi_z^{-1}\,0_y)
 \vee_y b \\
\interject{since $\pi_z^{-1}\,0_z = \hatzero_{L_z}$ and
$\PP{x}{z}(\pi_x^{-1}\,a) \in L_z$,}
& = \pi_z(\PP{x}{z}(\pi_x^{-1}\,a) \vee \pi_z^{-1}\,0_y) \vee_y b \\
\interject{since $\pi_z^{-1}\,0_y = \Z{z}{y}$,}
& = \pi_z(\PP{x}{z}(\pi_x^{-1}\,a) \vee \Z{z}{y}) \vee_y b \\
\interject{since $\PP{x}{z}(\pi_x^{-1}\,a) \vee \Z{z}{y} \in \F{z}{y}$
and $\pi_z = \pi_y \circ \P{z}{y}$ on $\F{z}{y}$,}
& = \pi_y(\P{z}{y}(\PP{x}{z}(\pi_x^{-1}\,a) \vee \Z{z}{y})) \vee_y b \\
\interject{by def.~\ref{def:adjoint},}
& = \pi_y(\PP{z}{y}(\PP{x}{z}(\pi_x^{-1}\,a))) \vee_y b \\
\interject{by lem.~\ref{lem:adjoint-compose},}
& = \pi_y(\PP{x}{y}(\pi_x^{-1}\,a)) \vee_y b \\
\interject{using def.~\ref{def:vee-wedge-sub-x} on $\vee_y$,}
& = \pi_y(\PP{x}{y}(\pi_x^{-1}\,a) \vee \pi_y^{-1}\,b) \\
\end{alignat*}

Regarding the second statement:
This follows from the first statement by setting $a = \pi_x\,c$ and
$b = \pi_y\,d$.
\end{proof}

\begin{lemma} \label{lem:(16)-for-adjoint}
If $a \in \Lambda_x$, and $b \in \Lambda_y$, then
\begin{equation*}
a \vee b =
\pi_{x \vee y}(\PP{x}{x \vee y}(\pi_x^{-1}\,a) \vee
\PP{y}{x \vee y}(\pi_y^{-1}\,b)).%
\footnote{Note that the first $\vee$ is in $L$ and the second $\vee$
is in $L_{x \vee y}$.}
\end{equation*}
Or equivalently:  If $c \in L_x$, and $d \in L_y$, then
$$ \pi_x\,c \vee \pi_y\,d =
\pi_{x \vee y}(\PP{x}{x \vee y}\,c \vee \PP{x}{x \vee y}\,d). $$
\end{lemma}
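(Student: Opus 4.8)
The plan is to obtain this identity directly from lem.~\ref{lem:(16)}, whose two summands can each be rewritten with lem.~\ref{lem:(15)-for-adjoint}. Since $x, y \leq x \vee y$, lem.~\ref{lem:(15)-for-adjoint} applies to the pair $(x, x \vee y)$ with elements $a$ and $0_{x \vee y}$, and to the pair $(y, x \vee y)$ with elements $b$ and $0_{x \vee y}$. These are exactly the two suprema appearing in lem.~\ref{lem:(16)}.

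First I would compute $\sup_L(a, 0_{x \vee y})$. By lem.~\ref{lem:(15)-for-adjoint}, $\sup_L(a, 0_{x \vee y}) = \pi_{x \vee y}(\PP{x}{x \vee y}(\pi_x^{-1}\,a) \vee \pi_{x \vee y}^{-1}\,0_{x \vee y})$. Since $\pi_{x \vee y}^{-1}\,0_{x \vee y} = \hatzero_{L_{x \vee y}}$ (by def.~\ref{def:Lam}) is the identity for $\vee$ in $L_{x \vee y}$, this simplifies to $\pi_{x \vee y}(\PP{x}{x \vee y}(\pi_x^{-1}\,a))$. Symmetrically, $\sup_L(b, 0_{x \vee y}) = \pi_{x \vee y}(\PP{y}{x \vee y}(\pi_y^{-1}\,b))$. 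Both summands lie in $\Lambda_{x \vee y}$ (already established within lem.~\ref{lem:(15)}), so $\vee_{x \vee y}$ is legitimately defined on them.

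Then I would combine the two summands using def.~\ref{def:vee-wedge-sub-x}: applying $\pi_{x \vee y}^{-1}$ to each $\pi_{x \vee y}(\cdots)$ cancels, yielding
$$\sup_L(a, 0_{x \vee y}) \vee_{x \vee y} \sup_L(b, 0_{x \vee y}) = \pi_{x \vee y}(\PP{x}{x \vee y}(\pi_x^{-1}\,a) \vee \PP{y}{x \vee y}(\pi_y^{-1}\,b)),$$
which by lem.~\ref{lem:(16)} equals $a \vee b$, giving the first statement. The second statement then follows by substituting $a = \pi_x\,c$ and $b = \pi_y\,d$, exactly as in the proof of lem.~\ref{lem:(15)-for-adjoint}.

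This lemma is essentially a bookkeeping restatement of lem.~\ref{lem:(16)} in the language of the adjoint maps, so I do not expect a genuine obstacle. The only points needing care are (i) confirming that lem.~\ref{lem:(15)-for-adjoint} is applicable to both pairs, which is immediate from $x, y \leq x \vee y$, and (ii) checking that the two intermediate suprema land in $\Lambda_{x \vee y}$ so that $\vee_{x \vee y}$ is defined, which is guaranteed by lem.~\ref{lem:(15)}.
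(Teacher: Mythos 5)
Your proposal is correct and follows essentially the same route as the paper: apply lem.~\ref{lem:(16)}, rewrite each summand $\sup_L(a,0_{x\vee y})$ and $\sup_L(b,0_{x\vee y})$ via lem.~\ref{lem:(15)-for-adjoint}, absorb the $\hatzero_{L_{x\vee y}}$ term, and collapse the outer $\vee_{x\vee y}$ using def.~\ref{def:vee-wedge-sub-x}. The attention you pay to the two intermediate suprema lying in $\Lambda_{x\vee y}$ matches the paper's implicit use of that fact.
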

\begin{proof}
\leavevmode

Regarding the first statement:
By lem.~\ref{lem:(16)},
\begin{alignat*}{2}
a \vee b
& = (a \vee 0_{x \vee y}) \vee_{x \vee y} (b \vee 0_{x \vee y}) \\
\interject{applying lem.~\ref{lem:(15)-for-adjoint} to both terms,}
& = \pi_{x \vee y}(\PP{x}{{x \vee y}}(\pi_x^{-1}\,a) \vee
\pi_{x \vee y}^{-1}\,0_{x \vee y}) \vee_{x \vee y}
\pi_{x \vee y}(\PP{y}{{x \vee y}}(\pi_y^{-1}\,b) \vee
\pi_{x \vee y}^{-1}\,0_{x \vee y}) \\
\interject{since $\pi_{x \vee y}^{-1}\,0_{x \vee y} = \hatzero_{L_{x \vee y}}$
and the ranges of $\PP{x}{x \vee y}$ and $\PP{y}{x \vee y}$ are
$\subset L_{x \vee y}$,}
& = \pi_{x \vee y}(\PP{x}{{x \vee y}}(\pi_x^{-1}\,a)) \vee_{x \vee y}
\pi_{x \vee y}(\PP{y}{{x \vee y}}(\pi_y^{-1}\,b)) \\
\interject{using def.~\ref{def:vee-wedge-sub-x} on $\vee_{x \vee y}$,}
& = \pi_{x \vee y}(\PP{x}{{x \vee y}}(\pi_x^{-1}\,a) \vee
\PP{y}{{x \vee y}}(\pi_y^{-1}\,b))
\end{alignat*}

Regarding the second statement:
This follows from the first statement by setting $a = \pi_x\,c$ and
$b = \pi_x\,d$.
\end{proof}

Dually, we prove:

%\flagdependlabel{lem:(15*)-for-adjoint}{lem:(15)-for-adjoint}
\begin{lemma} \label{lem:(15*)-for-adjoint}
If $x \leq y$ in $S$,
$a \in \Lambda_x$, and $b \in \Lambda_y$, then
\begin{equation*}
a \wedge b = \pi_x(\pi_x^{-1}\,a \wedge \PPI{x}{y}(\pi_y^{-1}\,b)).
\end{equation*}
Or equivalently: If $c \in L_x$, and $d \in L_y$, then
$$ \pi_x\,c \wedge \pi_y\,d = \pi_x(c \wedge \PPI{x}{y}\,d). $$
\end{lemma}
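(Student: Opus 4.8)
The plan is to obtain this identity as the order-dual of Lemma~\ref{lem:(15)-for-adjoint}, exactly in the style of the other dualized results in this section. By Theorem~\ref{th:mcs-dual} the dual construction is again a \mcs, whose sum is the dual lattice $L^\delta$; under this correspondence the join $\vee$ in $L$ becomes the meet $\wedge$ in $L^\delta$, each $0_z$ becomes $1_z$, the canonical maps $\pi_\bullet$ are unchanged on underlying sets, and the skeleton order is reversed, so that $x \leq y$ in $S$ reads as $y \leq x$ in $S^\delta$.

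First I would pin down how the adjoint maps transform, which is the only point that needs care. By Theorem~\ref{th:adjoint}, $(\PP{x}{y}, \PPI{x}{y})$ is an adjoint pair between $L_x$ and $L_y$, with $\PP{x}{y}$ the lower adjoint and $\PPI{x}{y}$ the upper adjoint. Since an adjunction $f(a) \leq b \iff a \leq g(b)$ reverses direction under order-dualization, in the dual \mcs the two roles swap: the primal upper adjoint $\PPI{x}{y}$ becomes the ``push-up'' adjoint of the dual \mcs for the pair $y \leq x$ in $S^\delta$. This is also visible directly in def.~\ref{def:adjoint}, whose two defining formulas are manifestly dual to one another, with the dual connection $\PI{x}{y}$ playing the role that $\P{x}{y}$ plays primally.

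With this dictionary fixed, I would apply Lemma~\ref{lem:(15)-for-adjoint} inside the dual \mcs to the skeleton pair $y \leq x$ (in $S^\delta$) and the elements $b \in \Lambda_y$, $a \in \Lambda_x$. Its conclusion, read through the correspondences $\vee \leftrightarrow \wedge$ and $\PP{x}{y} \leftrightarrow \PPI{x}{y}$, is
$$a \wedge b = \pi_x(\pi_x^{-1}\,a \wedge \PPI{x}{y}(\pi_y^{-1}\,b)),$$
which is precisely the claimed identity. The equivalent formulation in terms of $c \in L_x$ and $d \in L_y$ then follows by setting $a = \pi_x\,c$ and $b = \pi_y\,d$, exactly as in the primal lemma. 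The main obstacle is thus only the bookkeeping of this dualization dictionary; once the correspondences $\vee \leftrightarrow \wedge$ and $\PP{x}{y} \leftrightarrow \PPI{x}{y}$ are in place, no further computation is required.
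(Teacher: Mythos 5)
Your proposal is correct and matches the paper's approach: the paper derives this lemma (together with lem.~\ref{lem:(16*)-for-adjoint}) purely by dualization of lem.~\ref{lem:(15)-for-adjoint} via th.~\ref{th:mcs-dual}, stating only ``Dually, we prove:'' without further detail. Your explicit verification that the dual system's lower adjoint $\PP{y}{x}$ (for $y \leq x$ in $S^\delta$) coincides with the primal $\PPI{x}{y}$ is exactly the bookkeeping the paper leaves implicit.
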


%\flagdependlabel{lem:(16*)-for-adjoint}{lem:(16)-for-adjoint}
\begin{lemma} \label{lem:(16*)-for-adjoint}
If $a \in \Lambda_x$, and $b \in \Lambda_y$, then
\begin{equation*}
a \wedge b =
\pi_{x \wedge y}(\PPI{x \wedge y}{x}(\pi_x^{-1}\,a) \wedge
\PPI{x \wedge y}{y}(\pi_y^{-1}\,b)).
\end{equation*}
Or equivalently:  If $c \in L_x$, and $d \in L_y$, then
$$ \pi_x\,c \wedge \pi_y\,d =
\pi_{x \wedge y}(\PPI{x \wedge y}{x}\,c \wedge \PPI{x \wedge y}{x}\,d). $$
\end{lemma}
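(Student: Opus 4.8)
The plan is to prove this statement dually to lem.~\ref{lem:(16)-for-adjoint}, relying on the fact that every ingredient of that proof has already been recorded in dual form. Concretely, I would first apply lem.~\ref{lem:(16*)} to rewrite the meet $a \wedge b$, which a priori is taken across the two distinct blocks $\Lambda_x$ and $\Lambda_y$, as a meet computed entirely inside the single block $\Lambda_{x \wedge y}$:
\begin{equation*}
a \wedge b = \inf_L(a, 1_{x \wedge y}) \wedge_{x \wedge y} \inf_L(b, 1_{x \wedge y}).
\end{equation*}

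Next I would evaluate each of the two factors with lem.~\ref{lem:(15*)-for-adjoint}, the adjoint form of the dual of lem.~\ref{lem:(15)}. Since $x \wedge y \leq x$ and $x \wedge y \leq y$, applying that lemma with lower element $x \wedge y$ gives $\inf_L(1_{x \wedge y}, a) = \pi_{x \wedge y}(\hatone_{L_{x \wedge y}} \wedge \PPI{x \wedge y}{x}(\pi_x^{-1}\,a))$ and likewise for $b$. Because $\pi_{x \wedge y}^{-1}\,1_{x \wedge y} = \hatone_{L_{x \wedge y}}$ is the identity for $\wedge$ in $L_{x \wedge y}$, these collapse to $\pi_{x \wedge y}(\PPI{x \wedge y}{x}(\pi_x^{-1}\,a))$ and $\pi_{x \wedge y}(\PPI{x \wedge y}{y}(\pi_y^{-1}\,b))$. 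Substituting these two expressions into the display above and using def.~\ref{def:vee-wedge-sub-x} to pull the block meet $\wedge_{x \wedge y}$ inside a single application of $\pi_{x \wedge y}$ then yields exactly the claimed formula; the equivalent second form follows by setting $a = \pi_x\,c$ and $b = \pi_y\,d$.

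I do not expect a genuine obstacle here, since the heavy lifting lives in lem.~\ref{lem:(16*)} and lem.~\ref{lem:(15*)-for-adjoint}, both already established. The only point demanding care is the bookkeeping of the dualization: one must verify that the range of each $\PPI{x \wedge y}{\bullet}$ indeed lies in $L_{x \wedge y}$, so that the two factors are genuinely comparable under $\wedge_{x \wedge y}$, and that the top element $1_{x \wedge y}$ --- rather than the bottom $0_{x \wedge y}$ --- is the correct neutral element to introduce when dualizing the role played by $0_{x \vee y}$ in lem.~\ref{lem:(16)-for-adjoint}. Keeping the $\hatzero$/$\hatone$ and $\vee$/$\wedge$ substitutions consistent throughout is essentially the entire content of the ``dually'' argument.
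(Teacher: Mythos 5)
Your proposal is correct and is exactly the argument the paper intends: the paper proves this lemma simply by dualizing lem.~\ref{lem:(16)-for-adjoint} (rewrite via lem.~\ref{lem:(16*)}, evaluate each factor with lem.~\ref{lem:(15*)-for-adjoint}, absorb the neutral element $\hatone_{L_{x \wedge y}}$, and collapse $\wedge_{x \wedge y}$ through $\pi_{x \wedge y}$ via def.~\ref{def:vee-wedge-sub-x}). Your care in using $1_{x \wedge y}$ and $\inf_L$ in both factors is also the correct reading of lem.~\ref{lem:(16*)}, whose printed statement contains evident transcription slips ($0_{x \wedge y}$ and $\sup_L$ where the dual requires $1_{x \wedge y}$ and $\inf_L$).
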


\begin{subnumbering}

\begin{lemma} \label{lem:leq-for-adjoint}
For any $a, b \in L$ with $a \in \Lambda_x$ and $b \in \Lambda_y$,
$a \leq b$ iff
\begin{equation*}
x \vee y \relgamma y \textup{, }
\pi_y^{-1}\,b \in \F{y}{x \vee y} \textup{, and }
\PP{x}{x \vee y}(\pi_x^{-1}\,a) \leq
\P{y}{x \vee y}(\pi_y^{-1}\,b).
\end{equation*}
Or equivalently,
For any $c \in L_x$ and $d \in L_y$,
$\pi_x\,c \leq \pi_y\,d$ iff
\begin{equation*}
x \vee y \relgamma y \textup{, }
d \in \F{y}{x \vee y} \textup{, and }
\PP{x}{x \vee y}\,c \leq
\P{y}{x \vee y}\,d.
\end{equation*}
\end{lemma}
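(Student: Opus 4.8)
The plan is to reduce everything to the lattice identity ``$a \leq b$ iff $a \vee b = b$'' and to feed the join formula of lem.~\ref{lem:(16)-for-adjoint} into it. The two displayed formulations are interchangeable under the substitution $a = \pi_x\,c$, $b = \pi_y\,d$ (using that $\pi_x,\pi_y$ are bijective, lem.~\ref{lem:canon-1-1}), so I would prove only the first. Throughout I abbreviate $z = x \vee y$, $c = \pi_x^{-1}\,a$, $d = \pi_y^{-1}\,b$, and note $y \leq z$, so that $y \leq_\gamma z$ and $y \relgamma z$ are the same condition. Lem.~\ref{lem:(16)-for-adjoint} gives $a \vee b = \pi_z(\PP{x}{z}\,c \vee \PP{y}{z}\,d) \in \Lambda_z$, which is the engine of both directions.

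For the forward direction I would assume $a \leq b$, hence $a \vee b = b$. The formula above then places $b$ in $\Lambda_z$, while $b \in \Lambda_y$ by hypothesis, so $\Lambda_z \cap \Lambda_y \neq \zeroslash$ and lem.~\ref{lem:Lam-overlap-gamma} yields $z \relgamma y$ (the first condition). Since now $y \leq_\gamma z$, lem.~\ref{lem:(1)-for-Lam} identifies $\Lambda_y \cap \Lambda_z$ with $\pi_y\,\F{y}{z}$, and membership of $b$ there gives $d \in \F{y}{z}$ (the second condition). As $d \in \F{y}{z}$ forces $d \geq \Z{y}{z}$, def.~\ref{def:adjoint} simplifies $\PP{y}{z}\,d$ to $\P{y}{z}\,d$, and the transport identity $\pi_z(\P{y}{z}\,d) = \pi_y\,d = b$ holds. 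Substituting this into $a \vee b = b$ and cancelling the injective $\pi_z$ leaves $\PP{x}{z}\,c \vee \P{y}{z}\,d = \P{y}{z}\,d$, i.e.\ $\PP{x}{z}\,c \leq \P{y}{z}\,d$ (the third condition).

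For the converse I would assume the three conditions. Exactly as above, $d \in \F{y}{z}$ makes $\PP{y}{z}\,d = \P{y}{z}\,d$ and $\pi_z(\P{y}{z}\,d) = b$. The third condition, $\PP{x}{z}\,c \leq \P{y}{z}\,d = \PP{y}{z}\,d$, then collapses the join to $\PP{x}{z}\,c \vee \PP{y}{z}\,d = \P{y}{z}\,d$, so lem.~\ref{lem:(16)-for-adjoint} gives $a \vee b = \pi_z(\P{y}{z}\,d) = b$, that is $a \leq b$.

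I expect the only real care to be needed in the two ``transport'' steps — replacing $\PP{y}{z}$ by $\P{y}{z}$ on $\F{y}{z}$ via def.~\ref{def:adjoint}, and the identity $\pi_z(\P{y}{z}\,d) = \pi_y\,d$ — and in extracting $d \in \F{y}{z}$ in the forward direction through lem.~\ref{lem:(1)-for-Lam}; once these are in hand, injectivity of $\pi_z$ (lem.~\ref{lem:canon-1-1}) finishes both directions mechanically.
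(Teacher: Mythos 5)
Your proposal is correct and follows essentially the same route as the paper: both reduce $a \leq b$ to $a \vee b = b$, substitute the join formula of lem.~\ref{lem:(16)-for-adjoint}, and then unpack the resulting equality of an element of $\Lambda_{x \vee y}$ with an element of $\Lambda_y$ into the three stated conditions. The only cosmetic difference is that the paper extracts those conditions in one step via def.~\ref{def:sim} (as a chain of equivalences), whereas you split into two directions and route through lem.~\ref{lem:Lam-overlap-gamma} and lem.~\ref{lem:(1)-for-Lam}, which amounts to the same thing.
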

\begin{proof}
Define $c = \pi_x^{-1}\,a$ and $d = \pi_y^{-1}\,b$.
The following statements are all equivalent:
\begin{alignat*}{2}
a & \leq b \\
a \vee b & = b \\
\interject{by lem.~\ref{lem:(16)-for-adjoint},}
\pi_{x \vee y}(\PP{x}{x \vee y}\,c \vee \PP{y}{x \vee y}\,d) & = b \\
\interject{since the left side is $\in \Lambda_{x \vee y}$ and the
right side is $\in \Lambda_y$, by def.~\ref{def:sim},}
x \vee y \relgamma y \textrm{, }
d \in \F{y}{x \vee y} \textrm{, and }
\PP{x}{x \vee y}\,c \vee
\PP{y}{x \vee y}\,d & = \P{y}{x \vee y}\,d \\
\interject{by def.~\ref{def:adjoint} and that $\Z{y}{x \vee y}$
is the minimum of $\F{y}{x \vee y}$,}
x \vee y \relgamma y \textrm{, }
d \in \F{y}{x \vee y} \textrm{, and }
\PP{x}{x \vee y}\,c \vee \P{y}{x \vee y}\,d
& = \P{y}{x \vee y}\,d \\
x \vee y \relgamma y \textrm{, }
d \in \F{y}{x \vee y} \textrm{, and }
\PP{x}{x \vee y}\,c & \leq \P{y}{x \vee y}\,d \\
\end{alignat*}
\end{proof}

Dually, we prove:

%\flagdependlabel{lem:geq-for-adjoint}{lem:leq-for-adjoint}
\begin{lemma} \label{lem:geq-for-adjoint}
For any $a, b \in L$ with $a \in \Lambda_x$ and $b \in \Lambda_y$,
$a \leq b$ iff
\begin{equation*}
x \wedge y \relgamma x \textup{, }
\pi_x^{-1}\,a \in \I{x \wedge y}{x} \textup{, and }
\PI{x \wedge y}{x}(\pi_x^{-1}\,a) \leq
\PPI{x \wedge y}{y}(\pi_y^{-1}\,b).
\end{equation*}
Or equivalently,
For any $c \in L_x$ and $d \in L_y$,
$\pi_x\,c \leq \pi_y\,d$ iff
\begin{equation*}
x \wedge y \relgamma x \textup{, }
c \in \I{x \wedge y}{x} \textup{, and }
\PI{x \wedge y}{x}\,c \leq
\PPI{x \wedge y}{y}\,d.
\end{equation*}
\end{lemma}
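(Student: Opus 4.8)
The plan is to obtain this as the exact dual of Lemma~\ref{lem:leq-for-adjoint}, exploiting the dual \mcs of Definition~\ref{def:mcs-dual}. By Theorem~\ref{th:mcs-dual} the construction $\scrC^\delta$ is again a \mcs, and its sum is the order-dual lattice $L^\delta$ (same underlying set, the maps $\pi_x$ unchanged, $\Lambda_x^\delta = \Lambda_x$). Under dualization the skeleton join and meet interchange, so that the join of $x$ and $y$ computed in $S^\delta$ equals $x \wedge y$ in $S$; moreover the filters and ideals swap ($\F{\bullet}{\bullet}^\delta$ is an $\I{\bullet}{\bullet}$ of the original system and vice versa), the connections swap ($\P{\bullet}{\bullet} \leftrightarrow \PI{\bullet}{\bullet}$), and the adjoint families swap ($\PP{\bullet}{\bullet} \leftrightarrow \PPI{\bullet}{\bullet}$).

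First I would apply Lemma~\ref{lem:leq-for-adjoint}, read inside $\scrC^\delta$, to the pair $b \in \Lambda_y^\delta$ and $a \in \Lambda_x^\delta$, characterizing $b \leq_{L^\delta} a$. Writing $m = x \wedge y$ (the join of $x$ and $y$ in $S^\delta$), that lemma yields three conditions: $m \relgamma x$; that $\pi_x^{-1}\,a$ lies in the dual filter $\F{x}{m}^\delta$; and that $\PP{y}{m}^\delta(\pi_y^{-1}\,b) \leq \P{x}{m}^\delta(\pi_x^{-1}\,a)$, the inequality taken in $L_m^\delta$.

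Next I would translate each condition back through the dictionary. Since $b \leq_{L^\delta} a$ is precisely $a \leq b$ in $L$, and $m = x \wedge y$, the first condition reads $x \wedge y \relgamma x$. The dual filter $\F{x}{m}^\delta$ is the original ideal $\I{m}{x} = \I{x \wedge y}{x}$, so the second condition is $\pi_x^{-1}\,a \in \I{x \wedge y}{x}$. For the third, $\PP{y}{m}^\delta = \PPI{m}{y} = \PPI{x \wedge y}{y}$ and $\P{x}{m}^\delta = \PI{m}{x} = \PI{x \wedge y}{x}$; because the order in $L_m^\delta$ is reversed, the inequality flips to $\PI{x \wedge y}{x}(\pi_x^{-1}\,a) \leq \PPI{x \wedge y}{y}(\pi_y^{-1}\,b)$ in $L_{x \wedge y}$. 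These are exactly the three asserted conditions, and the equivalent ``$c$, $d$'' formulation follows by setting $c = \pi_x^{-1}\,a$, $d = \pi_y^{-1}\,b$.

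The main obstacle is purely the bookkeeping: one must verify that skeleton join/meet, the filter/ideal roles, the connection versus inverse connection, the two adjoint families, and the order within each block all dualize in a mutually consistent way, so in particular that the final inequality flips in the correct direction. As an alternative that avoids invoking $\scrC^\delta$ explicitly, one could mirror the proof of Lemma~\ref{lem:leq-for-adjoint} line by line: rewrite $a \leq b$ as $a \wedge b = a$, expand $a \wedge b$ via Lemma~\ref{lem:(16*)-for-adjoint}, and then read off the three conditions from Lemma~\ref{lem:sim-dual} together with Definition~\ref{def:adjoint}, using that $\O{x \wedge y}{x}$ is the maximum of $\I{x \wedge y}{x}$ in place of the minimality of $\Z{y}{x \vee y}$ used in the original argument.
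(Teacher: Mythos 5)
Your proposal is correct and matches the paper's own treatment, which simply states the lemma under the heading ``Dually, we prove:'' --- i.e., it is obtained as the dual of lem.~\ref{lem:leq-for-adjoint} exactly as you do, and your bookkeeping (skeleton join $\leftrightarrow$ meet, $\F{}{} \leftrightarrow \I{}{}$, $\P{}{} \leftrightarrow \PI{}{}$, $\PP{}{} \leftrightarrow \PPI{}{}$, order reversal in the block) checks out. Your spelled-out dictionary and the alternative line-by-line mirror via lem.~\ref{lem:(16*)-for-adjoint} are both more explicit than what the paper records, but they are the same argument.
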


\end{subnumbering}

The import of
lem.~\ref{lem:(16)-for-adjoint}, \ref{lem:(16*)-for-adjoint}, \ref{lem:leq-for-adjoint}, and~\ref{lem:geq-for-adjoint}
are that the
operations in $L$ can be divided into a ``global'' part that depends
only on the blocks the elements are contained in, and a ``local'' part
relating elements within each block.

\paragraph{Gluing is natural}
Gluing is ``natural'' in that it is compatible with
isomorphism of \mcss (see def.~\ref{def:mcs-iso}) and isomorphism of
lattices.

\begin{theorem} \label{th:glue-natural}
Gluing is compatible with isomorphism of \mcss and isomorphism of
lattices:  If two \mcss are isomorphic, their sum lattices are
isomorphic.
\end{theorem}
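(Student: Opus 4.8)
The plan is to lift the given \mcs isomorphism $\chi = (\chi_S, (\chi_{Bx})_{x \in S})$ of Definition \ref{def:mcs-iso} to the level of the underlying sets and then descend it through the quotients of Definition \ref{def:sim}. Writing $\scrC$ and $\scrC'$ for the two isomorphic systems (with $M$, $\sim$, $\kappa$, $\pi_x$, $L$ for $\scrC$ and the primed analogues for $\scrC'$), I would first define the map $\tilde\chi: M \to M'$ by $\tilde\chi(x, a) = (\chi_S(x), \chi_{Bx}(a))$. Since $\chi_S$ is a lattice isomorphism of skeletons and each $\chi_{Bx}: L_x \to L'_{\chi_S(x)}$ is a lattice isomorphism of blocks, $\tilde\chi$ is a bijection, with inverse induced by the \mcs isomorphism $\chi^{-1}$ (which exists because isomorphism of \mcss is an equivalence relation, Theorem \ref{th:mcs-iso-equiv}).

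The heart of the argument is to show that $\tilde\chi$ respects the two equivalence relations in both directions: $(x, a) \sim (y, b)$ iff $\tilde\chi(x, a) \sim' \tilde\chi(y, b)$. Unwinding Definition \ref{def:sim}, the left side asserts $x \relgamma y$, $a \in \F{x}{x \vee y}$, $b \in \F{y}{x \vee y}$, and $\P{x}{x \vee y}\,a = \P{y}{x \vee y}\,b$; note that $x \relgamma y$ gives $x, y \leq_\gamma x \vee y$ by lem.~\ref{lem:tol}, so the isomorphism clauses apply to these pairs. I would treat the four conditions term by term using the three clauses of Definition \ref{def:mcs-iso}: clause (1) converts $x \relgamma y$ into the corresponding tolerance in $S'$; the fact that $\chi_S$ preserves joins gives $\chi_S(x \vee y) = \chi_S(x) \vee \chi_S(y)$; clause (2), together with the bijectivity of $\chi_{Bx}$ and $\chi_{By}$, converts the two membership conditions into the corresponding membership conditions for the filters of $\scrC'$; and clause (3), after applying the bijection $\chi_{B(x \vee y)}$ to both sides of $\P{x}{x \vee y}\,a = \P{y}{x \vee y}\,b$, converts the equality of connection values into the corresponding equality in $L'_{\chi_S(x \vee y)}$. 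Because each step is a logical equivalence, this yields the desired ``iff'' and shows that $\tilde\chi$ descends to a well-defined bijection $\hat\chi: L \to L'$ satisfying $\hat\chi \circ \pi_x = \pi'_{\chi_S(x)} \circ \chi_{Bx}$.

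It then remains to verify that $\hat\chi$ is an order isomorphism; since both sums are lattices by Theorem \ref{th:lattice}, this makes $\hat\chi$ a lattice isomorphism. From the intertwining relation and the bijectivity of the $\chi_{Bx}$ it follows that $\hat\chi$ carries each $\Lambda_x$ bijectively onto $\Lambda'_{\chi_S(x)}$ and that, for $a, b \in \Lambda_x$, one has $a \leq_x b$ iff $\hat\chi(a) \leq'_{\chi_S(x)} \hat\chi(b)$ (Definition \ref{def:leq-sub-x}), because $\chi_{Bx}$ is an order isomorphism of the blocks. Consequently $\hat\chi$ sends every ascending sequence of $\scrC$ (Definition \ref{def:asc-seq-new}) to an ascending sequence of $\scrC'$, so by Definition \ref{def:L-leq}, $a \leq b$ in $L$ implies $\hat\chi(a) \leq \hat\chi(b)$ in $L'$. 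Applying the identical argument to $\chi^{-1}$ shows $\hat\chi^{-1}$ is likewise order-preserving, whence $\hat\chi$ is an order isomorphism and therefore a lattice isomorphism.

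The main obstacle I anticipate is the bidirectional preservation of $\sim$ in the second paragraph: one must chase all three isomorphism clauses simultaneously, using crucially that $\chi_S$ respects $\vee$ (so that the ``meeting point'' $x \vee y$ maps to $\chi_S(x) \vee \chi_S(y)$) and that $\chi_{B(x \vee y)}$ is injective in order to transport the connection equality across. Everything after that is bookkeeping exploiting the symmetry between $\scrC$ and $\scrC'$ through $\chi^{-1}$, so that only one direction of order preservation requires a genuine proof.
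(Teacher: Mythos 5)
Your proposal is correct and follows essentially the same route as the paper: lift the \mcs isomorphism to a bijection of the underlying sets $M \to M'$, verify that it preserves $\sim$ in both directions by unwinding def.~\ref{def:sim} against the clauses of def.~\ref{def:mcs-iso}, descend to a well-defined bijection of the quotients, and conclude by matching ascending sequences to get a poset (hence lattice) isomorphism. The paper states the $\sim$-preservation step as immediate from the definitions, whereas you spell out the term-by-term chase; otherwise the two arguments coincide.
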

\begin{proof}\hspace{-\labelsep}%
\footnote{It seems like there ought to be a metatheorem that takes
note of how the sum of a \mcs is defined and immediately concludes
that the process is natural.  But I do not know of one.}
\leavevmode

Assume we have two \mcss,
$\scrC$ (comprised of skeleton $S$, overlap tolerance $\relgamma$,
blocks $L_\bullet$, connections $\P{\bullet}{\bullet}$,
connection sources
$\F{\bullet}{\bullet}$, and connection targets
$\I{\bullet}{\bullet}$) and
$\scrC^\prime$ (comprised of skeleton $S^\prime$, overlap
tolerance $\relgamma^\prime$,
blocks $L_\bullet^\prime$, connections
$\P{\bullet}{\bullet}^\prime$, connection sources
$\F{\bullet}{\bullet}^\prime$, and connection targets
$\I{\bullet}{\bullet}^\prime$).
Thus for $\scrC$ we have derived objects $M$, $\sim$, $\kappa$,
$\pi_\bullet$, and its sum $L$,
and for $\scrC^\prime$ derived objects $M^\prime$, $\sim^\prime$,
$\kappa^\prime$, $\pi^\prime_\bullet$ and its sum $L^\prime$.

Assume we have an isomorphism $\chi$ from $\scrC$ to $\scrC^\prime$.
Thus, $\chi_S$ is a lattice isomorphism from $S$ to $S^\prime$ and
$(\chi_{Bx})_{x \in S}$ is a family of lattice isomorphisms, each from $L_x$ to
$L^\prime_{\chi_S(x)}$.  Then immediately from the definitions we
have:
\begin{enumerate}
\item From def.~\ref{def:sim}, for $x \in S$ and $a \in \bigcup_{w \in S} L_w$,
$(x,a) \in M$ iff $(\chi_S\,x, \chi_{Bx}\,a) \in M^\prime$.
\item From def.~\ref{def:sim}, for $(x, a), (y, b) \in M$,
$(x, a) \sim (y, b)$ iff
$(\chi_S\,x, \chi_{Bx}\,a) \sim^\prime (\chi_S\,y, \chi_{By}\,b)$.
\item From def.~\ref{def:kappa}, for $(x, a), (y, b) \in M$,
$\kappa\,(x,a) = \kappa\,(y,b)$ iff
$\kappa^\prime\,(\chi_S\,x, \chi_{Bx}\,a) =
\kappa^\prime\,(\chi_S\,y,\chi_{By},b)$.
\item From def.~\ref{def:pi}, for $(x, a), (y, b) \in M$,
$\pi_x\,a = \pi_y\,b$ iff
$\pi^\prime_{\chi_S\,x}\,\chi_{Bx}\,a =
\pi^\prime_{\chi_S\,y}\,\chi_{By}\,b$.
\end{enumerate}

For $(x, a), (y, b) \in M$, we define
$\chi_L(\kappa((x,a))) = \kappa^\prime((\chi_S\,x,\chi_{Bx}\,a))$
from $L$ to $L^\prime$.
Because of (3) and since by def.~\ref{def:sum},
$L$ is $M$ modulo $\sim$ and $L^\prime$ is
$M^\prime$ modulo $\sim^\prime$, $\chi_L$ is well-defined as a
function from $L$ to $L^\prime$:
$\kappa^\prime((\chi_S\,x,\chi_{Bx}\,a))$ is the same for all $(x,a)$
that are $\sim$.
Since $\chi_S$ and the $\chi_{B\bullet}$ are bijections,
$\chi_L(\kappa((\chi_S^{-1}\,x^\prime,\chi_{B\chi_S^{-1}(x)}^{-1}\,a^\prime))) =
\kappa^\prime((x^\prime,a^\prime))$
for all $(x^\prime, a^\prime) \in M$, showing that $\chi_L$ is bijective.

Then it is straightforward that for any $a, b \in L$,
there exists an ascending sequence $(x_\bullet, a_\bullet)$ with
$a = a_0$ and $b = a_n$ iff
there exists an ascending sequence $(x_\bullet^\prime, a_\bullet^\prime)$ with
$\chi_L\,a = a_0^\prime$ and $\chi_L\,b = a_n^\prime$.
From this, it follows from def.~\ref{def:L-leq}
that for any $a, b \in L$, $a \leq b$ iff
$\chi_L\,a \leq \chi_L\,b$, which shows that $L$ and $L^\prime$ are
isomorphic as posets and consequently isomorphic as lattices.
\end{proof}

\section{Dissection}
The dissection of a lattice into maximal complemented blocks was
introduced in Herrmann \citeHerr*{sec.~6}{sec.~6}.  We mostly use
Haiman's notation \cite{Haim1991a}*{sec.~1}.

In this section, let $L$ be a modular, \lffc lattice.

\paragraph{Basic properties of dissection}

\begin{definition} \citeHerr*{sec.~6}{sec.~6} \label{def:star}
For $x \in L$:
\begin{equation*}
x^* = \begin{cases}
\hatone_L                 & \text{if $\hatone_L$ exists and $a = \hatone_L$} \\
\bigvee_{y, x \lessdot y} y & \text{otherwise} \\
\end{cases}
\end{equation*}
\begin{equation*}
x_* = \begin{cases}
\hatzero_L              & \text{if $\hatzero_L$ exists and $a = \hatzero_L$} \\
\bigwedge_{y, y \lessdot x} y & \text{otherwise}
\end{cases}
\end{equation*}
\end{definition}

\begin{lemma} \label{lem:star}
$x^*$ and $x_*$ are well-defined.
\end{lemma}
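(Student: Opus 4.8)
The plan is to verify, for each branch of def.~\ref{def:star}, that the stated expression names an actual element of $L$. For $x^*$, the first branch merely names $\hatone_L$, which exists by the case hypothesis, so nothing is needed there. All the work is in the ``otherwise'' branch, where I must show that the join $\bigvee_{y,\, x \lessdot y} y$ exists.

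The finite-covers half of the \lffc hypothesis gives immediately that the set of upper covers $\{y : x \lessdot y\}$ is finite, and a finite join always exists in a lattice by iterating the binary operation. The one remaining danger is that this set might be \emph{empty}: an empty join is a bottom element, which need not exist in $L$. Ruling this out is the main obstacle.

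To rule it out I would show that, under the ``otherwise'' hypothesis, $x$ is not the maximum of $L$, and then invoke local finiteness. If $\hatone_L$ does not exist, then $L$ has no maximum; since in any lattice a maximal element $m$ satisfies $m \vee w = m$, hence $w \leq m$, for every $w$, a maximal element would in fact be a maximum, so $x$ cannot be maximal and there is some $z > x$. If instead $\hatone_L$ exists, the ``otherwise'' hypothesis forces $x \neq \hatone_L$, so again there is some $z > x$, namely $z = \hatone_L$. In either subcase, local finiteness makes the interval $[x, z]$ finite, so it contains an element covering $x$; thus $\{y : x \lessdot y\}$ is nonempty, and the finite join $x^*$ is well-defined.

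Finally, $x_*$ is handled by the dual argument, using lower covers in place of upper covers and local finiteness of $[z, x]$. Note that modularity is not needed for this lemma; only the lattice structure and the \lffc hypothesis are used.
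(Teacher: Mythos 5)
Your proof is correct and follows the same route as the paper's one-line argument: the finite-covers hypothesis makes the join (resp.\ meet) in the ``otherwise'' branch a join over a finite set, which exists in any lattice. You additionally verify, via local finiteness, that the set of upper (resp.\ lower) covers is nonempty in that branch --- a point the paper's proof leaves implicit but which is genuinely needed, since an empty join would require a bottom element; your handling of that edge case is sound.
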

\begin{proof}
\leavevmode

That $x^*$ and $x_*$ are well-defined follows from $L$ having finite covers.
\end{proof}

\begin{lemma}\citeHerr*{Lem.~6.1}{Lem.~6.1} \label{lem:star-props}
For all $a, b \in L$:
\begin{align*}
& \textrm{(a) If $a \leq b$, then $a^* \leq b^*$.} &
    & \textrm{(a\supdelta) If $a \leq b$, then $a_* \leq b_*$.} \\
& \textrm{(b) $a \leq {a_*}^* \leq a^*$.} &
    & \textrm{(b\supdelta) $a_* \leq {a^*}_* \leq a$.} \\
& \textrm{(c) ${{a^*}_*}^* = a^*$.} &
    & \textrm{(c\supdelta) ${{a_*}^*}_* = a_*$.} \\
& \textrm{(d) If $a = {a^*}_*$ and $b = {b^*}_*$,
        then $a \vee b = {(a \vee b )^*}_*$.} &
    & \textrm{(d\supdelta) If $a = {a_*}^*$ and $b = {b_*}^*$,
        then $a \wedge b = {(a \wedge b)_*}^*$.} \\
& \textrm{(e) $a_* \vee b_* = (a \vee b)_*$.} &
    & \textrm{(e\supdelta) $a^* \wedge b^* = (a \wedge b)^*$.}
\end{align*}
\end{lemma}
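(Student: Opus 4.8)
The plan is to prove the left-hand column (a)--(e) and obtain each daggered statement (a\supdelta)--(e\supdelta) by order-duality, since the dual of a modular \lffc lattice is again modular \lffc and the operation $x \mapsto x^*$ on $L$ corresponds to $x \mapsto x_*$ on the dual. I will freely use that $x \leq x^*$ and $x_* \leq x$ (every non-top element has an upper cover by local finiteness, dually for lower covers), and the key structural fact extracted from Theorem~\ref{th:complemented}: each finite interval $[x_*,x]$ is complemented, hence atomistic, because $x_*$ is by definition the meet of the coatoms of $[x_*,x]$ (its lower covers of $x$); dually $[x,x^*]$ is complemented and $x^*$ is the join of the atoms (upper covers of $x$).

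For (a), given $a \leq b$ it suffices to show each upper cover $c$ of $a$ satisfies $c \leq b^*$, since $a^* = \bigvee_{a \lessdot c} c$. If $c \leq b$ this is clear; otherwise $b \wedge c \in [a,c]$ is not $c$, so $b \wedge c = a$ as $a \lessdot c$, and the modular transposition $[b \wedge c, c] \cong [b, b \vee c]$ forces $b \lessdot b \vee c$, whence $c \leq b \vee c \leq b^*$. For (b), the inequality ${a_*}^* \leq a^*$ is (a) applied to $a_* \leq a$; for $a \leq {a_*}^*$, use that $[a_*,a]$ is atomistic, so $a$ is the join of its atoms, each an upper cover of $a_*$ and thus $\leq {a_*}^*$. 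For (c), apply (b) to $a^*$ to get $a^* \leq {({a^*})_*}^*$, while (b\supdelta) gives ${a^*}_* \leq a$ so that (a) yields ${{a^*}_*}^* \leq a^*$. For (d), set $\phi(x) = {x^*}_*$: by (a)/(a\supdelta) it is monotone, by (b\supdelta) decreasing, and by (c) idempotent, so $\phi$ is an interior operator whose fixed set is join-closed; if $a,b$ are fixed then $a,b \leq \phi(a\vee b) \leq a\vee b$ gives $\phi(a\vee b)=a\vee b$, which is exactly (d).

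For (e), the inequality $a_* \vee b_* \leq (a\vee b)_*$ is monotonicity (a\supdelta), and the reverse is the crux. Write $X = a_* \vee b_*$ and $d = a\vee b$; I will show the finite modular interval $[X,d]$ is complemented. Since $[a_*,a]$ and $[b_*,b]$ are atomistic, $a$ and $b$ are joins of upper covers of $a_*$ and of $b_*$; for such a cover $p$ (say $a_* \lessdot p \leq a$) we have $a_* \leq p \wedge X$, and the modular isomorphism $[p \wedge X, p] \cong [X, p \vee X]$ shows $p \vee X$ is either $X$ or an upper cover of $X$ in $[X,d]$. Hence $a \vee X$ and $b \vee X$ are joins of atoms of $[X,d]$, so $d = (a\vee X)\vee(b\vee X)$ is the join of the atoms of $[X,d]$, and Theorem~\ref{th:complemented} makes $[X,d]$ complemented. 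Then $X$ is the meet of the coatoms of $[X,d]$; since those coatoms are among the lower covers of $d$, $(a\vee b)_* = \bigwedge_{z \lessdot d} z \leq \bigwedge(\text{coatoms of }[X,d]) = X$, completing (e).

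I expect this Key Lemma in (e)/(e\supdelta) --- that $[a_* \vee b_*,\, a\vee b]$ is complemented --- to be the main obstacle; the rest is either formal bookkeeping (c, d) or a single use of modularity and the complementation theorem (a, b). The delicate point is that $a_* \vee b_*$ need not lie below $a$ or below $b$, so one cannot merely restrict the complemented intervals $[a_*,a]$ and $[b_*,b]$; the transposition step is precisely what converts ``joins of covers of $a_*,b_*$'' into ``joins of covers of $X$'' inside $[X,d]$, after which Theorem~\ref{th:complemented} closes the argument.
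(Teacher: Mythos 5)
Your proof is correct. Note, though, that the paper itself does not prove this lemma at all --- its ``proof'' is a one-line deferral to Herrmann's Lem.~6.1 --- so what you have produced is a self-contained argument where the paper has only a citation. Your overall architecture (prove the left column, obtain the \supdelta{} statements by order-duality of the self-dual class of modular \lffc lattices) is sound, and the individual steps check out: (a) via the modular transposition $[b \wedge c, c] \cong [b, b \vee c]$ forcing $b \lessdot b \vee c$; (b) via the complementedness/atomisticity of $[a_*,a]$, which is exactly the paper's lem.~\ref{lem:blocks-modular} combined with th.~\ref{th:complemented}; (c) and (d) as formal consequences (your interior-operator packaging of (d) is clean); and for (e), your key lemma that $[a_* \vee b_*,\, a \vee b]$ is complemented, obtained by transposing the atoms of $[a_*,a]$ and $[b_*,b]$ up to atoms of $[X,d]$ and invoking th.~\ref{th:complemented}(\ref{th:complemented:one})$\Rightarrow$(\ref{th:complemented:zero}), is precisely the right mechanism and correctly handles the point you flag, namely that $a_* \vee b_*$ need not lie below $a$ or $b$. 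The degenerate cases (e.g.\ $a = \hatzero_L$, or $p \leq X$ so that $p \vee X = X$ contributes an empty join) all collapse harmlessly as you implicitly assume. This is essentially Herrmann's own route, reconstructed from the tools the paper already establishes, and it would be a worthwhile addition to make the paper self-contained on this point.
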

\begin{proof}
\leavevmode

Items (a) through (e\supdelta) are proved
in \citeHerr*{Lem.~6.1}{Lem.~6.1}.
\end{proof}

\begin{remark}
\textup{
Although $\bullet^*$ and $\bullet_*$ resemble a Galois connection
between $L$ and itself, in general they are not:\cite{WikiGal}
Define $L = \{0,1,2\} \times \{0,1\}$, the product of a 2-chain and a
1-chain.  Define $x = (0,1)$ and $y = (2,1)$.
Then $x_* = (0,0)$, $x^* = (1,1)$, $y_* = (1,0)$, and $y^* = (2,1)$.
Thus $x_* = (1,1) \leq (2,2) = y$ but
$x = (0,1) \not\leq (1,0) = y_*$, showing the two operations are not
adjoints.
}
\end{remark}

\begin{lemma}\citeHerr*{sec.~6}{sec.~6} \label{lem:blocks-modular}
For any $x$, the intervals $[x, x^*]$ and $[x_*, x]$ are finite,
modular, and complemented.
\end{lemma}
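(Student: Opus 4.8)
The plan is to dispatch finiteness and modularity immediately and then reduce complementedness to Theorem~\ref{th:complemented}. For finiteness, note that each upper cover $y$ of $x$ satisfies $x \le y \le x^*$ by the definition of $x^*$ as the join of all such $y$ (def.~\ref{def:star}, the join existing by lem.~\ref{lem:star}), so in particular $x \le x^*$; since $L$ is locally finite, the interval $[x,x^*]$ is finite, and dually $[x_*,x]$ is finite. Modularity is automatic: an interval of a modular lattice is a (convex) sublattice and hence modular.

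The substance is complementedness. I would argue that the top element $x^*$ of the interval $[x,x^*]$ is the join of the atoms of that interval, and then invoke Theorem~\ref{th:complemented}~(\ref{th:complemented:one}~$\Rightarrow$~\ref{th:complemented:comp}). The key observation is that the atoms of $[x,x^*]$ are exactly the upper covers of $x$ in $L$: because $[x,x^*]$ is convex, the covering relation it inherits from $L$ agrees with that of $L$, so an atom of $[x,x^*]$ is precisely an element $y \in [x,x^*]$ with $x \lessdot y$ in $L$; conversely every upper cover $y$ of $x$ lies in $[x,x^*]$ since $y \le x^*$ by def.~\ref{def:star}. Hence the set of atoms of $[x,x^*]$ coincides with the (finite, by finite covers) set of upper covers of $x$, and $x^* = \bigvee_{y,\,x \lessdot y} y$ is exactly the join of these atoms. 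Theorem~\ref{th:complemented} then yields that the finite modular lattice $[x,x^*]$ is complemented.

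For $[x_*,x]$ I would argue dually, using that the coatoms of $[x_*,x]$ are exactly the lower covers of $x$ and that $x_* = \bigwedge_{y,\,y \lessdot x} y$ is their meet, so the bottom element $x_*$ is the meet of coatoms and Theorem~\ref{th:complemented}~(\ref{th:complemented:zero}~$\Rightarrow$~\ref{th:complemented:comp}) applies; alternatively one may simply observe that the class of modular \lffc lattices is self-dual and appeal to the statement already proven for $[x,x^*]$. The only case needing separate (but trivial) attention is $x = \hatone_L$ (resp.\ $x = \hatzero_L$), where $x^* = x$ (resp.\ $x_* = x$) and the interval is a one-element lattice, which is vacuously finite, modular, and complemented. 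The main obstacle is really just the correct identification of the atoms of the interval with the upper covers of $x$, together with the verification that the inherited covering relation is the right one; once that is in place the result is an immediate application of Theorem~\ref{th:complemented}.
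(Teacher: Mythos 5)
Your proposal is correct and takes essentially the same approach as the paper: finiteness from local finiteness, modularity from being an interval of a modular lattice, and complementedness by observing that $x^*$ is the join of the atoms of $[x,x^*]$ (dually for $[x_*,x]$) and invoking Theorem~\ref{th:complemented}. The paper states the key fact tersely; you have merely filled in the identification of the atoms of the interval with the upper covers of $x$, which is the right justification.
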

\begin{proof}
\leavevmode

These intervals are modular because $L$ is modular.
They are finite because $L$ is locally finite.
For $[x, x^*]$, the maximum of the interval is a join of atoms and
for $[x_*, x]$, the minimum of the interval is a meet of coatoms,
so in either case th.~\ref{th:complemented} can be applied to show
that the interval is complemented.
\end{proof}

\begin{definition} \label{def:block}
A \emph{block} of $L$ is a maximal (under inclusion) complemented
interval of $L$.  The \emph{dissection skeleton} $S$ of $L$ is the set of all
blocks of $L$.  Given a block $B \in S$, $0_B$ is the minimum element of the
block and $1_B$ is the maximum element of the block.
\end{definition}

\begin{lemma} \label{lem:block-distinct}
If two blocks are distinct, their minimum elements are distinct and
their maximum elements are distinct.
\end{lemma}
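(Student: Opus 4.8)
The plan is to show that each block is completely determined by its minimum element, via the identity $B = [0_B, (0_B)^*]$, and dually that it is determined by its maximum element via $B = [(1_B)_*, 1_B]$; the lemma then follows immediately by contraposition.

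First I would fix a block $B$ and set $a = 0_B$, so that $B = [a, 1_B]$. By def.~\ref{def:block}, $B$ is complemented, and since $L$ is modular and locally finite, $B$ is a finite modular complemented lattice. By th.~\ref{th:complemented} (\ref{th:complemented:comp} $\Rightarrow$ \ref{th:complemented:one}), its top element $1_B$ is the join of the atoms of $B$. Because $B$ is a convex interval of $L$, an element $y \in B$ is an atom of $B$ (that is, $a \lessdot y$ within $B$) exactly when $a \lessdot y$ in $L$; hence every atom of $B$ is an upper cover of $a$ in $L$ lying below $1_B$. Consequently $1_B = \bigvee\{y : a \lessdot y \text{ in } L,\ y \leq 1_B\} \leq \bigvee\{y : a \lessdot y\} = a^*$ by def.~\ref{def:star} (and when $a = \hatone_L$ both sides equal $a$).

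This gives $B = [a, 1_B] \subset [a, a^*]$. By lem.~\ref{lem:blocks-modular}, $[a, a^*]$ is itself a complemented interval, so it is a complemented interval containing $B$; since $B$ is maximal among complemented intervals, $B = [a, a^*]$, and thus $B$ is completely determined by $a = 0_B$. If two blocks $B_1, B_2$ satisfied $0_{B_1} = 0_{B_2}$, they would both equal the interval $[0_{B_1}, (0_{B_1})^*]$ and hence coincide; contrapositively, distinct blocks have distinct minimum elements. Dually, using the interval $[x_*, x]$ from lem.~\ref{lem:blocks-modular} together with th.~\ref{th:complemented} (\ref{th:complemented:comp} $\Rightarrow$ \ref{th:complemented:zero}), one shows $B = [(1_B)_*, 1_B]$, so that distinct blocks likewise have distinct maximum elements.

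The main obstacle is the inequality $1_B \leq a^*$: one must observe that the top of the complemented lattice $B$ is a join of elements each covering $a$ in $L$, not merely in $B$. This is precisely where the convexity of the interval $B$ and the resulting equivalence of ``covers in $B$'' with ``covers in $L$'' are needed. Once that inequality is established, maximality of $B$ closes the argument with no further computation.
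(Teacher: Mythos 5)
Your proposal is correct and follows essentially the same route as the paper: both use th.~\ref{th:complemented} to write $1_B$ as a join of atoms of $B$ (which, since $B$ is an interval, are covers of $0_B$ in $L$), conclude $1_B \leq (0_B)^*$, and then invoke maximality of $B$ against the complemented interval $[0_B,(0_B)^*]$ to get $B=[0_B,(0_B)^*]$, with the dual argument for maxima. Your write-up merely makes explicit the ``covers in $B$ are covers in $L$'' step that the paper leaves implicit.
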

\begin{proof}
\leavevmode

Let $B$ and $C$ be two blocks with $0_B = 0_C$.
Since $B$ is complemented, by th.~\ref{th:complemented}, $1_B$ is the
join of atoms of $B$, and so $1_B \leq {0_B}^*$
Since $B$ is a maximal complemented interval and $[0_B, {0_B}^*]$
is a complemented interval, $1_B = {0_B}^*$.

Similarly we show $1_C = {0_C}^*$.  But since $0_B = 0_C$, this
implies $1_B = 1_C$ and thus $B$ and $C$ are identical.

Dually, we show that $1_B = 1_C$ implies $B$ and $C$ are identical.
\end{proof}

\begin{lemma} \citeHerr*{Satz~6.2 and~6.4}{Th.~6.2 and~6.4} \label{lem:diss-skel}
~	% ~ is needed to make the enumerate work correctly as well as
	% the \label.
\begin{enumerate}
\item[(1)] $[x, y]$ is a block iff $x^* = y$ and $y_* = x$.
\item[(2)] The set of minimum elements of blocks is a lattice under the join
operation $x \vee y$ and the meet operation ${(x \wedge y)^*}_*$.
Consequently, this set is a sub-$\sup$-semilattice (and thus sub-poset) of $L$.
\item[(3)] The set of maximum elements of blocks is a lattice under the join
operation ${(x \vee y)_*}^*$ and the meet operation $x \wedge y$.
Consequently, this set is a sub-$\inf$-semilattice (and thus sub-poset) of $L$.
\item[(4)] The set of minimum elements and the set of maximum elements are
isomorphic lattices under the mutually inverse isomorphisms
$x \mapsto x^*$ and $x \mapsto x_*$.
\end{enumerate}
\end{lemma}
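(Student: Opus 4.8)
The plan is to reduce everything to the formal properties of $\bullet^*$ and $\bullet_*$ recorded in lem.~\ref{lem:star-props}, invoking lem.~\ref{lem:blocks-modular} only to connect the order-theoretic definition of a block to these operations, and to establish the parts in the order (1), then the characterizations of the two sets, then (2) and (3), and finally (4).

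First I would prove (1). For the forward direction, suppose $[x,y]$ is a block; since it is complemented, its maximum $y$ is a join of atoms of $[x,y]$, each of which covers $x$ in $L$, so $y \le x^*$. Then $[x,y] \subset [x, x^*]$, and as $[x,x^*]$ is complemented by lem.~\ref{lem:blocks-modular}, maximality of the block forces $y = x^*$; dually $x = y_*$. (This is essentially the computation already made inside the proof of lem.~\ref{lem:block-distinct}.) For the converse, suppose $x^* = y$ and $y_* = x$; then $[x,y] = [x,x^*]$ is complemented by lem.~\ref{lem:blocks-modular}, so only maximality remains. Given any complemented interval $[u,v] \supset [x,y]$ (so $u \le x$ and $y \le v$), complementedness and th.~\ref{th:complemented} give $v \le u^*$ and $u \ge v_*$; applying $\bullet^*$ to $u \le x$ and chaining $u^* \le x^* = y \le v \le u^*$ forces $v = y = u^*$, whence $v_* = y_* = x$ and $u \ge v_* = x$, so $u = x$ and $v = y$. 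This maximality step is the one I expect to require the most care.

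Part (1) immediately yields the characterizations $S_0 = \{x : {x^*}_* = x\}$ for the set of block minima and $S_1 = \{y : {y_*}^* = y\}$ for the block maxima; by (c\supdelta) and (c) of lem.~\ref{lem:star-props} these are exactly the images of $\bullet_*$ and of $\bullet^*$, respectively. For (2), closure of $S_0$ under the ambient join is precisely property (d), so the join in $S_0$ is the $L$-join and $S_0$ is a sub-$\sup$-semilattice, hence a sub-poset. For the meet I would check that ${(x \wedge y)^*}_*$ lies in $S_0$ (it is in the image of $\bullet_*$), that it is a lower bound of $x,y$ (from $x \wedge y \le x$ and monotonicity (a), (a\supdelta) one gets ${(x\wedge y)^*}_* \le {x^*}_* = x$ since $x \in S_0$), and that it is the greatest such (any $z \in S_0$ below both satisfies $z \le x \wedge y$, hence $z = {z^*}_* \le {(x\wedge y)^*}_*$). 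Part (3) is proved dually, with meet the $L$-meet and join ${(x \vee y)_*}^*$.

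Finally, for (4), the characterizations show that $\bullet^*$ maps $S_0$ into $S_1$ and $\bullet_*$ maps $S_1$ into $S_0$, and on these sets they are mutually inverse by the defining equations ${x^*}_* = x$ and ${y_*}^* = y$. Both maps are monotone by (a) and (a\supdelta), so being mutually inverse they are poset isomorphisms between the lattices $S_0$ and $S_1$ of (2) and (3), and therefore lattice isomorphisms. The only genuinely order-theoretic work is the maximality argument in part (1); everything after that is bookkeeping with lem.~\ref{lem:star-props}.
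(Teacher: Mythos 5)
Your proposal is correct, but it is worth noting that the paper does not actually prove this lemma at all: its ``proof'' is a bare citation to Herrmann's Satz~6.2, Lem.~6.3, and Satz~6.4, so you have supplied the self-contained argument that the paper delegates to the reference. Your route is the natural one and, as far as I can tell, sound: local finiteness of $L$ makes every interval finite and modular, so th.~\ref{th:complemented} legitimately converts ``complemented'' into ``top is a join of atoms / bottom is a meet of coatoms,'' which is exactly what drives both directions of (1) --- in particular your maximality chain $u^* \leq x^* = y \leq v \leq u^*$ is the key step and it closes correctly. The identification of the block minima with the fixed points of ${\bullet^*}_*$ (equivalently the image of $\bullet_*$, via (c\supdelta)) is the right bridge from (1) to (2)--(4), and from there everything does reduce to bookkeeping with lem.~\ref{lem:star-props}, with (d) giving closure under the ambient join and (a), (a\supdelta) giving the meet and the monotonicity needed for (4). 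The only practical difference from the paper's approach is that the paper leans on Herrmann's Lemma~6.3 as an intermediate step, whereas you go directly from lem.~\ref{lem:star-props}; what your version buys is that the paper would become self-contained at this point (modulo lem.~\ref{lem:star-props} itself, which is still only cited), at the cost of a page or so of argument that partially duplicates the computation already present in the proof of lem.~\ref{lem:block-distinct}.
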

\begin{proof}
\leavevmode

These are demonstrated in
\citeHerr*{Satz~6.2, Lem.~6.3, and Satz~6.4}{Th.~6.2, Lem.~6.3, and Th.~6.4}
\end{proof}

\begin{definition} \label{def:block-ops}
We define $\leq$, $\vee$, and $\wedge$ on blocks in $S$
as the corresponding operations on the minimum elements of the blocks, which by
lem.~\ref{lem:diss-skel}(4) are identical to the corresponding operations on
the maximum elements of the blocks.
\end{definition}

\begin{lemma}
$S$ is a lattice under the operations defined in
def.~\ref{def:block-ops}.
\end{lemma}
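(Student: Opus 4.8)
The plan is to realize the set $S$ of blocks, via transport of structure, as a copy of the lattice of minimum elements of blocks, which has already been shown to be a lattice in lem.~\ref{lem:diss-skel}(2). First I would establish that the map $B \mapsto 0_B$ sending each block to its minimum element is a bijection from $S$ onto the set $M_0$ of minimum elements of blocks. Surjectivity is immediate from def.~\ref{def:block}, since every member of $M_0$ is by definition the minimum of some block. Injectivity is exactly lem.~\ref{lem:block-distinct}, which guarantees that distinct blocks have distinct minimum elements; combined with lem.~\ref{lem:diss-skel}(1), which shows that a block $[x,y]$ is recovered from its minimum via $y = x^*$, this makes $B \mapsto 0_B$ a bijection whose inverse sends $x \in M_0$ to the block $[x, x^*]$.

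Next I would invoke lem.~\ref{lem:diss-skel}(2), which states that $M_0$ is a lattice under the join $x \vee y$ and the meet ${(x \wedge y)^*}_*$. By def.~\ref{def:block-ops}, the relations $\leq$, $\vee$, and $\wedge$ on blocks are precisely these operations carried back through the bijection $B \mapsto 0_B$: that is, $B \leq C$ iff $0_B \leq 0_C$, while $B \vee C$ is the block with minimum $0_B \vee 0_C$ and $B \wedge C$ is the block with minimum ${(0_B \wedge 0_C)^*}_*$. Since transporting a lattice structure along a bijection yields a lattice, $S$ is a lattice, as claimed.

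I do not expect a serious obstacle here; the only point requiring care is confirming that the operations of def.~\ref{def:block-ops} are well-defined, i.e.\ that each element of $M_0$ produced by a join or meet is again the minimum of a \emph{unique} block. This is immediate because $M_0$ is closed under these operations by lem.~\ref{lem:diss-skel}(2), and the bijection above identifies each such element with a unique block. For completeness I would note that lem.~\ref{lem:diss-skel}(4) shows the identical lattice structure results if one instead computes with the maximum elements $1_B$ through the isomorphism $x \mapsto x^*$, which is exactly what validates the parenthetical claim of equality in def.~\ref{def:block-ops}.
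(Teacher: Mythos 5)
Your proof is correct and is exactly the reasoning the paper relies on: the paper states this lemma with no proof at all, treating it as immediate from lem.~\ref{lem:diss-skel}(2)--(4) and def.~\ref{def:block-ops}. Your transport-of-structure argument via the bijection $B \mapsto 0_B$ (injective by lem.~\ref{lem:block-distinct}, with inverse $x \mapsto [x, x^*]$ by lem.~\ref{lem:diss-skel}(1)) simply makes explicit what the paper leaves tacit.
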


\begin{definition} \label{def:diss-tol}
The dissection tolerance%
\footnote{\cite{DayHerr1988a} calls the dissection tolerance the
``glue tolerance''.}
$\relgamma$ on $S$ is the relationship between blocks $B$ and
$C$, $B \relgamma C$ iff $B \cap C \neq \emptyset$.
\end{definition}

\begin{lemma} \label{lem:diss-tol}
The dissection tolerance $\relgamma$ of $L$ is a tolerance on $S$.
\end{lemma}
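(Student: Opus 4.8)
The plan is to verify directly from def.~\ref{def:diss-tol} the four defining properties of a tolerance: reflexivity, symmetry, and compatibility with $\vee$ and $\wedge$. Reflexivity is immediate, since every block $B$ is a nonempty interval and so $B \cap B = B \neq \emptyset$; symmetry is immediate because $B \cap C = C \cap B$. The real content is the two compatibility conditions, and for those I would first record a convenient reformulation of the relation.

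Since each block is an interval $B = [0_B, 1_B]$ and $L$ is a lattice, the intersection of two blocks is the interval $[0_B \vee 0_C,\, 1_B \wedge 1_C]$, which is nonempty exactly when $0_B \vee 0_C \leq 1_B \wedge 1_C$. Thus I would replace ``$B \relgamma C$'' throughout by this inequality. I would also record two facts: from lem.~\ref{lem:diss-skel}(1), each block satisfies $(0_B)^* = 1_B$ and $(1_B)_* = 0_B$; and from def.~\ref{def:block-ops} together with lem.~\ref{lem:diss-skel}, $0_{B_1 \vee B_2} = 0_{B_1} \vee 0_{B_2}$ with $1_{B_1 \vee B_2} = (0_{B_1} \vee 0_{B_2})^*$, and dually $1_{B_1 \wedge B_2} = 1_{B_1} \wedge 1_{B_2}$ with $0_{B_1 \wedge B_2} = (1_{B_1} \wedge 1_{B_2})_*$.

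For join-compatibility, assume $B_1 \relgamma C_1$ and $B_2 \relgamma C_2$, i.e.\ $0_{B_i} \vee 0_{C_i} \leq 1_{B_i} \wedge 1_{C_i}$ for $i = 1,2$. I must show $0_{B_1} \vee 0_{B_2} \vee 0_{C_1} \vee 0_{C_2} \leq (0_{B_1} \vee 0_{B_2})^* \wedge (0_{C_1} \vee 0_{C_2})^*$. For the first meetand, $0_{B_1} \vee 0_{B_2} \leq (0_{B_1} \vee 0_{B_2})^*$ holds by lem.~\ref{lem:star-props}(b), while each $0_{C_i} \leq 1_{B_i} = (0_{B_i})^* \leq (0_{B_1} \vee 0_{B_2})^*$ follows from the hypothesis, $(0_{B_i})^* = 1_{B_i}$, and monotonicity of $\bullet^*$ (lem.~\ref{lem:star-props}(a)); joining these four bounds gives the claim. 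The second meetand follows by the symmetric computation with the roles of $B$ and $C$ exchanged. This yields $B_1 \vee B_2 \relgamma C_1 \vee C_2$.

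Meet-compatibility is proved dually, using lem.~\ref{lem:star-props}(a\supdelta) and (b\supdelta): from $(1_{B_1} \wedge 1_{B_2})_* \leq (1_{B_i})_* = 0_{B_i} \leq 1_{C_i}$ together with $(1_{B_1} \wedge 1_{B_2})_* \leq 1_{B_1} \wedge 1_{B_2}$ one gets $0_{B_1 \wedge B_2} \leq 1_{B_1} \wedge 1_{B_2} \wedge 1_{C_1} \wedge 1_{C_2}$, and symmetrically for $0_{C_1 \wedge C_2}$, giving $B_1 \wedge B_2 \relgamma C_1 \wedge C_2$. I expect no deep obstacle: the only care needed is bookkeeping --- correctly reading off the minimum and maximum of a joined (resp.\ met) block from lem.~\ref{lem:diss-skel}, and feeding the two separate hypotheses into the ``cross'' inequalities $0_{C_i} \leq (0_{B_i})^*$ (resp.\ $(1_{B_i})_* \leq 1_{C_i}$) in the right order.
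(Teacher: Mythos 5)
Your proof is correct and follows essentially the same route as the paper's: both establish join-compatibility by producing a common element of $B_1\vee B_2$ and $C_1\vee C_2$ (the paper joins arbitrary witnesses $x\in B_1\cap C_1$, $y\in B_2\cap C_2$ and bounds $x\vee y$ above by $1_{B_1\vee B_2}$ and $1_{C_1\vee C_2}$ via lem.~\ref{lem:star-props} and lem.~\ref{lem:diss-skel}, while you verify the equivalent inequality $0_{B_1\vee B_2}\vee 0_{C_1\vee C_2}\leq 1_{B_1\vee B_2}\wedge 1_{C_1\vee C_2}$, whose left-hand side is just the canonical such witness), and both handle meet-compatibility dually. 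The bookkeeping in your cross-inequalities $0_{C_i}\leq (0_{B_i})^*\leq (0_{B_1}\vee 0_{B_2})^*$ checks out.
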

\begin{proof}
\leavevmode

The dissection tolerance is clearly reflexive and symmetric.  It remains to
be shown that it is compatible with the meet and join operations on $S$.

Let there be four blocks $P$, $Q$, $R$, and $S$ for which
$P \relgamma Q$ and $R \relgamma S$. Then there exist $x, y \in L$ for
which $x \in [0_P,1_P] \cap [0_Q,1_Q]$ and
$y \in [0_R,1_R] \cap [0_S,1_S]$.

Then $0_P \leq x \leq 1_P$ and $0_R \leq y \leq 1_R$, so
$0_P \vee 0_R \leq x \vee y \leq 1_P \vee 1_R
\leq {(1_P \vee 1_R)_*}^* = 1_{P \vee R}$ by
lem.~\ref{lem:star-props}(b) and~\ref{lem:diss-skel}(3).
Thus $x \vee y \in [0_{P \vee R}, 1_{P \vee R}]$.

Similarly $0_Q \leq x \leq 1_Q$ and $0_S \leq y \leq 1_S$, so
$0_Q \vee 0_S \leq x \vee y \leq 1_Q \vee 1_S
\leq {(1_Q \vee 1_S)_*}^* = 1_{Q \vee S}$.
Thus $x \vee y \in [0_{Q \vee S}, 1_{Q \vee S}]$.

This shows that $P \vee R$ and $Q \vee S$ both contain $x \vee y$ and
so $P \vee R \relgamma Q \vee S$.

Dually, we prove that
$P \wedge R \relgamma Q \wedge S$.  These show that $\relgamma$
is compatible with meet and join.
\end{proof}

\begin{definition} \label{def:block-phi}
For any blocks $B$, $C$ with $B \leq C$
and $B \cap C \neq \emptyset$, define $\F{B}{C} = \I{B}{C} = B \cap C$.
Define $\P{B}{C}$ as the identity map from $\F{B}{C}$ to $\I{B}{C}$.
\end{definition}

\begin{lemma} \label{lem:block-interval}
For any blocks $B$, $C$ with $B \leq C$
and $B \cap C \neq \emptyset$, $\F{B}{C} = \I{B}{C} = [0_C, 1_B]$, which is
a filter of $B$ and an ideal of $C$.
\end{lemma}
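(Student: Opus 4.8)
The plan is to reduce the whole statement to the two endpoint inequalities $0_B \leq 0_C$ and $1_B \leq 1_C$ that are secretly encoded in the hypothesis $B \leq C$. By def.~\ref{def:block-ops}, $B \leq C$ means $0_B \leq 0_C$ as elements of $L$. Since lem.~\ref{lem:diss-skel}(1) gives ${0_B}^* = 1_B$ and ${0_C}^* = 1_C$, and lem.~\ref{lem:diss-skel}(4) makes $x \mapsto x^*$ an order isomorphism from block minima to block maxima, this is equivalent to $1_B \leq 1_C$. So the first step is simply to record both inequalities.

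Next I would compute the intersection directly. Writing $B = [0_B, 1_B]$ and $C = [0_C, 1_C]$ as intervals of $L$, an element $z$ lies in $B \cap C$ exactly when $0_B, 0_C \leq z \leq 1_B, 1_C$, i.e.\ when $0_B \vee 0_C \leq z \leq 1_B \wedge 1_C$. Using $0_B \leq 0_C$ and $1_B \leq 1_C$ this collapses to $0_C \leq z \leq 1_B$, so $B \cap C = [0_C, 1_B]$, and therefore $\F{B}{C} = \I{B}{C} = [0_C, 1_B]$ by def.~\ref{def:block-phi}. The hypothesis $B \cap C \neq \emptyset$ guarantees $0_C \leq 1_B$, so this interval is nonempty and well-formed; combined with $0_B \leq 0_C$ it shows $0_C \in B$, and combined with $1_B \leq 1_C$ it shows $1_B \in C$.

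Finally I would identify the filter/ideal structure. Within $B$, the set $[0_C, 1_B] = \{ z \in B : z \geq 0_C \}$ is the principal filter of $B$ generated by $0_C$ (an element of $B$ by the previous step), hence a lattice filter of $B$; in particular it is upward closed in $B$ and closed under meets. Dually, within $C$ the set $[0_C, 1_B] = \{ z \in C : z \leq 1_B \}$ is the principal ideal of $C$ generated by $1_B \in C$, hence a lattice ideal of $C$. This matches the subscript/superscript convention that $\F{B}{C} \subset B$ is a filter while $\I{B}{C} \subset C$ is an ideal.

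There is no substantive obstacle here: the computation of the interval intersection and the recognition of the result as a principal filter and a principal ideal are routine. The only point requiring care is the very first step, namely translating the block order $B \leq C$ of def.~\ref{def:block-ops} into the simultaneous endpoint inequalities $0_B \leq 0_C$ and $1_B \leq 1_C$ by way of lem.~\ref{lem:diss-skel}; once both inequalities are in hand, everything else follows mechanically.
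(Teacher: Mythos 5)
Your proposal is correct and follows essentially the same route as the paper: translate $B \leq C$ into the endpoint inequalities $0_B \leq 0_C$ and $1_B \leq 1_C$, then compute $B \cap C = [0_C, 1_B]$ directly as an intersection of intervals. You are in fact slightly more thorough than the paper's proof, which stops after the interval computation and leaves the principal-filter/principal-ideal identification implicit.
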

\begin{proof}
\leavevmode

Given that $B \leq C$, we know $0_B \leq 0_C$ and $1_B \leq 1_C$.
That implies
\begin{alignat*}{2}
B \cap C
& = \{ x \in L \mvert 0_B \leq x \leq 1_B \textrm{ and }
0_C \leq x \leq 1_C \} \\
& = \{ x \in L \mvert 0_C \leq x \leq 1_B \} \\
& = [0_C, 1_B]
\end{alignat*}
\end{proof}

\begin{lemma} \label{lem:block-identity}
For any block $B$, $\F{B}{B} = \I{B}{B} = B$ and $\P{B}{B}$ is the
identity map from $B$ to itself.
\end{lemma}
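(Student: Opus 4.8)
The plan is to apply Definition~\ref{def:block-phi} directly in the special case $C = B$. First I would verify that the two hypotheses of that definition are met when we take the pair of blocks to be $(B, B)$. The condition $B \leq B$ holds by reflexivity of the order $\leq$ on $S$ (Definition~\ref{def:block-ops}): that order is defined via the minimum elements of blocks, and $0_B \leq 0_B$ by reflexivity of $\leq$ on $L$. The condition $B \cap B \neq \emptyset$ holds because a block is a complemented interval $[0_B, 1_B]$ of $L$ and hence nonempty (it contains at least $0_B$).

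With both hypotheses in hand, Definition~\ref{def:block-phi} specializes to $C = B$ and yields $\F{B}{B} = \I{B}{B} = B \cap B = B$, and that $\P{B}{B}$ is the identity map from $\F{B}{B} = B$ to $\I{B}{B} = B$, which is precisely the assertion. As an alternative route, one could instead invoke Lemma~\ref{lem:block-interval} with $C = B$, which gives $\F{B}{B} = \I{B}{B} = [0_B, 1_B] = B$ directly.

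I do not expect any genuine obstacle here: the statement is an immediate unwinding of the definitions, and the only supporting fact worth stating is the nonemptiness of a block, which is automatic since a block is an interval possessing both a minimum $0_B$ and a maximum $1_B$.
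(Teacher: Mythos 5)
Your proposal is correct and matches the paper's (implicit) justification: the paper states this lemma without proof precisely because it is the immediate specialization of Definition~\ref{def:block-phi} to $C = B$, with the hypotheses $B \leq B$ and $B \cap B \neq \emptyset$ holding trivially. Your verification of those two hypotheses and the alternative route via Lemma~\ref{lem:block-interval} are both sound.
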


\begin{lemma} \label{lem:block-cover}
For blocks $B \lessdot C$ in $S$, $B \cap C \neq \emptyset$,
$B \not\subset C$, and $C \not\subset B$.
\end{lemma}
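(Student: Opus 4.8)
The three assertions split cleanly, and the two non-containment claims are immediate. Since $B \lessdot C$ gives $B < C$ in the skeleton, def.~\ref{def:block-ops} together with lem.~\ref{lem:block-distinct} yields the \emph{strict} inequalities $0_B < 0_C$ and $1_B < 1_C$ (distinct blocks have distinct extreme elements). If $B \not\subset C$ failed, then $0_B \in C = [0_C, 1_C]$ would force $0_C \le 0_B$, contradicting $0_B < 0_C$; dually, $C \subset B$ would put $1_C \in B = [0_B, 1_B]$ and force $1_C \le 1_B$, contradicting $1_B < 1_C$. So the whole content of the lemma is the overlap claim $B \cap C \neq \emptyset$, which by lem.~\ref{lem:block-interval} amounts to showing $0_C \le 1_B$.

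For the overlap I would exploit local finiteness (from $L$ being \lffc): the interval $[0_B, 0_C]$ of $L$ is finite, so I fix a saturated chain $0_B = z_0 \lessdot z_1 \lessdot \cdots \lessdot z_m = 0_C$ in $L$ and track the ``block-minimum closure'' $f(i) = (z_i^*)_*$ of each rung. First I would verify that each $f(i)$ is genuinely a block minimum: it has the form $b_*$ with $b = z_i^*$, hence is closed, $(f(i)^*)_* = f(i)$, by lem.~\ref{lem:star-props}(c\supdelta). Next I would confine it to the interval: applying the monotone operations $\bullet^*$ and $\bullet_*$ (lem.~\ref{lem:star-props}(a),(a\supdelta)) to $0_B \le z_i \le 0_C$ gives $1_B \le z_i^* \le 1_C$ and then $0_B = (1_B)_* \le f(i) \le (1_C)_* = 0_C$, where $(1_B)_* = 0_B$ and $(1_C)_* = 0_C$ come from lem.~\ref{lem:diss-skel}(1).

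Now the covering hypothesis enters: a block minimum lying in $[0_B, 0_C]$ is the minimum of a block $D$ with $B \le D \le C$, and $B \lessdot C$ forces $D \in \{B, C\}$, so each $f(i) \in \{0_B, 0_C\}$. Since $f(0) = 0_B$ and $f(m) = 0_C$, there is a jump index $i$ with $f(i) = 0_B$ and $f(i+1) = 0_C$. From $f(i) = 0_B$ I obtain $z_i^* = ((z_i^*)_*)^* = 0_B^* = 1_B$ by lem.~\ref{lem:star-props}(c); from $f(i+1) = 0_C$ and lem.~\ref{lem:star-props}(b\supdelta) I obtain $0_C \le z_{i+1}$. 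Finally, $z_i \lessdot z_{i+1}$ makes $z_{i+1}$ an upper cover of $z_i$ (and $z_i \neq \hatone_L$, so def.~\ref{def:star} gives $z_i^*$ as the join of upper covers), whence $z_{i+1} \le z_i^* = 1_B$. Combining with $z_{i+1} \ge z_0 = 0_B$, I get $z_{i+1} \in [0_C, 1_B] \subset B \cap C$, so $B \cap C \neq \emptyset$.

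I expect the main obstacle to be the middle step — checking that the closure $f(i)$ is a block minimum \emph{and} stays inside $[0_B, 0_C]$ — since only then can the covering relation $B \lessdot C$ be invoked to pin $f(i)$ down to the two endpoints $0_B, 0_C$. Once the ``jump'' rung $z_i \lessdot z_{i+1}$ is isolated, the two identities $z_i^* = 1_B$ and $z_{i+1} \le z_i^*$ produce the overlapping element almost for free, so no delicate perspectivity or modularity argument about the prime quotient is actually needed.
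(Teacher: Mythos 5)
Your proof is correct and follows essentially the same route as the paper's: both use the closure $x \mapsto {(x^*)}_*$ together with the covering relation $B \lessdot C$ (which forces every block minimum lying in $[0_B, 0_C]$ to be $0_B$ or $0_C$) to isolate a cover pair $z_i \lessdot z_{i+1}$ with ${(z_i^*)}_* = 0_B$ and $z_{i+1} \geq 0_C$, and then conclude $0_C \leq z_{i+1} \leq z_i^* = 1_B$. In fact your jump rung satisfies $z_{i+1} = 0_C$ (it lies on a chain ending at $0_C$), so it is exactly the paper's witness; the paper merely locates the same pair by taking $x$ minimal in $[0_B, 0_C]$ with ${(x^*)}_* > 0_B$ instead of walking a saturated chain.
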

\begin{proof}
\leavevmode

By lem.~\ref{lem:block-distinct}, $0_B < 0_C$ and $1_B < 1_C$.
Thus $0_B \in B$ but $0_B \not\in C$, showing $B \not\subset C$.
And  $1_C \in C$ but $1_C \not\in B$, showing $C \not\subset B$.

Let $x$ be a minimal element in $[0_B, 0_C]$ for which
${x^*}_* > 0_B$. Necessarily ${x^*}_* = x$ as otherwise we would have
${x^*}_* < x$, which would allow $y = {x^*}_*$ to be a smaller element
with ${y^*}_* = {{{x^*}_*}^*}_* = {x^*}_* > 0_B$.

Let $X$ be the block $[x, x^*]$.  By construction, $B < X \leq C$ and
since $B \lessdot C$, $C = X$.
Since $x > 0_B$,
choose $y$ such that $0_B \leq y \lessdot x$.
$y^* = {{y^*}_*}^* = {0_B}^* = 1_B$, so every upward cover of $y$ is
$\leq 1_B$.  This implies $x \in B$.
Since $x \in X = C$, $x \in B \cap C$ and $B \cap C \neq \emptyset$.
\end{proof}

\begin{lemma} \label{lem:skeleton-lffc}
$S$ is a \lffc lattice.
\end{lemma}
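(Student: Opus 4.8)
The plan is to transfer both defining properties of \lffc from $L$ (which is modular and \lffc by hypothesis) to $S$ through the order-embedding $B \mapsto 0_B$ sending each block to its minimum element. By def.~\ref{def:block-ops} this map is order-preserving ($B \leq C$ iff $0_B \leq 0_C$), and by lem.~\ref{lem:block-distinct} it is injective, so it realizes $S$ as a sub-poset of $L$ (indeed lem.~\ref{lem:diss-skel}(2) already identifies its image as a sub-$\sup$-semilattice). Since the preceding lemma already shows $S$ is a lattice, what remains is local finiteness and finite covers.

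\emph{Local finiteness.} Fix $B \leq C$ in $S$. If $B \leq D \leq C$ then $0_B \leq 0_D \leq 0_C$, so $D \mapsto 0_D$ carries the interval $[B, C]$ of $S$ injectively into the interval $[0_B, 0_C]$ of $L$. Because $L$ is locally finite the latter is finite, hence $[B, C]$ is finite. This step is routine.

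\emph{Finite covers.} For the upper covers, let $B \lessdot C$ in $S$. By lem.~\ref{lem:block-cover}, $B \cap C \neq \emptyset$, and then lem.~\ref{lem:block-interval} gives $B \cap C = [0_C, 1_B]$; nonemptiness forces $0_C \leq 1_B$. Since $B \leq C$ and $B \neq C$, lem.~\ref{lem:block-distinct} yields $0_B < 0_C$. Hence $0_C$ lies in $(0_B, 1_B] \subseteq B$. Because $L$ is locally finite, the block $B = [0_B, 1_B]$ is a finite interval, so there are only finitely many possible values of $0_C$; and distinct upper covers have distinct minima by lem.~\ref{lem:block-distinct}. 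Therefore $B$ has finitely many upper covers. The argument for lower covers is dual, using the embedding $B \mapsto 1_B$: for a lower cover $C \lessdot B$, lem.~\ref{lem:block-cover} and lem.~\ref{lem:block-interval} give $C \cap B = [0_B, 1_C] \neq \emptyset$, so $1_C \in [0_B, 1_B)$, again a finite set, and distinct lower covers have distinct maxima.

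I expect no serious obstacle, since the real substance has already been established: lem.~\ref{lem:block-cover} (that covering blocks meet) and lem.~\ref{lem:block-interval} (that the overlap is exactly the interval $[0_C, 1_B]$). The only point requiring care is the direction of the inequalities—that the minimum of an upper cover is squeezed \emph{into} the finite block $B$ rather than escaping upward—which is precisely what the nonemptiness of $B \cap C$ for a covering pair guarantees.
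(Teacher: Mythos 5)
Your proposal is correct and follows essentially the same route as the paper: local finiteness via the order-embedding $B \mapsto 0_B$ realizing $S$ as a sub-poset of the locally finite lattice $L$, and finite covers via lem.~\ref{lem:block-cover}, lem.~\ref{lem:block-interval}, and lem.~\ref{lem:block-distinct} to squeeze the minima of upper covers into the finite block $B$. Your write-up merely spells out in more detail what the paper states tersely.
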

\begin{proof}
\leavevmode

Because $S$ is a subposet of $L$ and $L$ is locally finite, $S$ is
locally finite.

Given any $B \in S$, for each $C \gtrdot B$ in $S$,
$B \cap C \neq \zeroslash$ by lem.~\ref{lem:block-cover}.
By lem.~\ref{lem:block-interval}, $0_C \in B$.
Since every distinct $C$ has a distinct $0_C$ (by
lem.~\ref{lem:block-distinct}) and $B$ is finite, there are a finite
number of distinct blocks $C \gtrdot B$.
Thus, $S$ has finite upward covers.

Dually, we prove $S$ has finite downward covers.
\end{proof}

\begin{lemma} \label{lem:block-mid-intersect}
For blocks $X$, $Y$, and $Z$ with $X \leq Z \leq Y$,
$X \cap Y \subset Z$ and $X \cap Y = (X \cap Z) \cap (Z \cap Y)$.
\end{lemma}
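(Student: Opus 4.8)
The plan is to compute all three intersections explicitly as intervals in $L$ and to reduce both claims to elementary comparisons of endpoints. First I would record that, by def.~\ref{def:block-ops} together with lem.~\ref{lem:diss-skel}(4), the chain of blocks $X \leq Z \leq Y$ is equivalent to the two chains of lattice elements $0_X \leq 0_Z \leq 0_Y$ and $1_X \leq 1_Z \leq 1_Y$. Since each block $B$ is the interval $[0_B, 1_B]$, an element $w \in L$ lies in $X \cap Y$ iff $0_X \leq w \leq 1_X$ and $0_Y \leq w \leq 1_Y$; because $0_X \leq 0_Y$ and $1_X \leq 1_Y$, the outer constraints are redundant, so $w \in X \cap Y$ iff $0_Y \leq w \leq 1_X$, i.e.\ $X \cap Y = \{ w \mvert 0_Y \leq w \leq 1_X \}$ (in agreement with lem.~\ref{lem:block-interval}). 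This characterization is valid whether or not the intersection is empty.

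For the containment $X \cap Y \subset Z$, I would take any $w \in X \cap Y$ and chain the endpoint inequalities: from $0_Z \leq 0_Y \leq w$ and $w \leq 1_X \leq 1_Z$ we get $0_Z \leq w \leq 1_Z$, hence $w \in Z$.

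For the equality $X \cap Y = (X \cap Z) \cap (Z \cap Y)$, I would expand the right-hand side by the same recipe. Using $0_X \leq 0_Z$ and $1_X \leq 1_Z$ gives $X \cap Z = \{ w \mvert 0_Z \leq w \leq 1_X \}$, and using $0_Z \leq 0_Y$ and $1_Z \leq 1_Y$ gives $Z \cap Y = \{ w \mvert 0_Y \leq w \leq 1_Z \}$. Intersecting, $w$ must satisfy $0_Z \leq w \leq 1_X$ and $0_Y \leq w \leq 1_Z$; since $0_Z \leq 0_Y$ and $1_X \leq 1_Z$, the surviving constraints are exactly $0_Y \leq w \leq 1_X$, which is the description of $X \cap Y$ obtained above.

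There is no real obstacle here; the only point requiring a moment's care is that the intersections may be empty, but the interval description handles this uniformly — if $0_Y \not\leq 1_X$ then all three sets are empty and both claims hold vacuously. The one external input is the translation of the block order into endpoint inequalities via lem.~\ref{lem:diss-skel}(4), which is precisely what makes the redundancies above available.
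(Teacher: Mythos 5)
Your proposal is correct and follows essentially the same route as the paper: translate the block ordering into endpoint inequalities, describe each intersection as the interval between the larger minimum and the smaller maximum, and conclude both the containment $X \cap Y \subset Z$ and the set identity by comparing endpoints. The only cosmetic difference is that the paper derives the equality directly from the containment via $X \cap Y = X \cap Z \cap Y$, whereas you expand $(X \cap Z) \cap (Z \cap Y)$ explicitly; both are immediate.
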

\begin{proof}
\leavevmode
% avoid extra vspace
\begin{alignat*}{2}
0_Z \leq 0_Y & \textrm{ and } 1_X \leq 1_Z \\
0_Z \leq 0_X \vee 0_Y & \textrm{ and } 1_X \wedge 1_Y \leq 1_Z \\
[0_X \vee 0_Y, 1_X \wedge 1_Y] & \subset [0_Z, 1_Z] \\
[0_X, 1_X] \cap [0_Y, 1_Y] & \subset [0_Z, 1_Z] \\
X \cap Y & \subset Z \\
X \cap Y & = X \cap Z \cap Y \\
X \cap Y & = (X \cap Z) \cap (Z \cap Y)
\end{alignat*}
\end{proof}

\begin{lemma} \label{lem:block-mj}
For blocks $X$ and $Y$,
$X \cap Y = (X \wedge Y) \cap (X \vee Y)$
where $X \vee Y$ and $X \wedge Y$ are the join and meet operations
in $S$.
\end{lemma}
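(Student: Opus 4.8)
The plan is to reduce both sides of the claimed identity to the same interval of $L$, namely $[0_X \vee 0_Y,\, 1_X \wedge 1_Y]$, by direct computation with the extremal elements of the blocks involved, in the same spirit as lem.~\ref{lem:block-mid-intersect}. First I would dispose of the left-hand side: since $X = [0_X, 1_X]$ and $Y = [0_Y, 1_Y]$ and $L$ is a lattice, an element lies in $X \cap Y$ iff it is $\geq 0_X, 0_Y$ and $\leq 1_X, 1_Y$, so $X \cap Y = [0_X \vee 0_Y,\, 1_X \wedge 1_Y]$ (with the usual convention that this interval is empty when its lower bound fails to lie below its upper bound).

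For the right-hand side I would first pin down the extremal elements of $X \wedge Y$ and $X \vee Y$. By def.~\ref{def:block-ops} together with lem.~\ref{lem:diss-skel}(2) and~(4), the join $X \vee Y$ in $S$ has minimum element $0_{X \vee Y} = 0_X \vee 0_Y$; dually, by def.~\ref{def:block-ops} together with lem.~\ref{lem:diss-skel}(3) and~(4), the meet $X \wedge Y$ has maximum element $1_{X \wedge Y} = 1_X \wedge 1_Y$. Since $X \wedge Y \leq X \vee Y$ in $S$, def.~\ref{def:block-ops} (read through both the minimum- and the maximum-element descriptions) gives $0_{X \wedge Y} \leq 0_{X \vee Y}$ and $1_{X \wedge Y} \leq 1_{X \vee Y}$. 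Intersecting the two intervals then collapses the bounds:
\begin{equation*}
(X \wedge Y) \cap (X \vee Y)
= [\,0_{X \wedge Y} \vee 0_{X \vee Y},\; 1_{X \wedge Y} \wedge 1_{X \vee Y}\,]
= [\,0_{X \vee Y},\; 1_{X \wedge Y}\,].
\end{equation*}

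Finally I would substitute the identified extremal elements to obtain $(X \wedge Y) \cap (X \vee Y) = [\,0_X \vee 0_Y,\; 1_X \wedge 1_Y\,]$, which is exactly the interval found for $X \cap Y$, completing the proof. (Once one knows $X \wedge Y \leq X \vee Y$ and that the intersection is nonempty, lem.~\ref{lem:block-interval} also yields $(X \wedge Y) \cap (X \vee Y) = [\,0_{X \vee Y},\, 1_{X \wedge Y}\,]$ directly; but the collapsing computation above avoids treating the empty case separately, since the interval notation handles it automatically.)

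The computation is essentially routine; the only point requiring care is the bookkeeping of which extremal element realizes each lattice operation on $S$. The join in $S$ is most naturally read off from minimum elements and the meet from maximum elements, so the crux is invoking lem.~\ref{lem:diss-skel}(4) to pass between these two descriptions and thereby pin down $0_{X \vee Y} = 0_X \vee 0_Y$ and $1_{X \wedge Y} = 1_X \wedge 1_Y$ at the same time.
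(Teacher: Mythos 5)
Your proposal is correct and follows essentially the same route as the paper's proof: both reduce $X \cap Y$ to the interval $[0_X \vee 0_Y,\, 1_X \wedge 1_Y]$, identify this with $[0_{X \vee Y},\, 1_{X \wedge Y}]$ via lem.~\ref{lem:diss-skel}(2)(3), and then use $X \wedge Y \leq X \vee Y$ together with def.~\ref{def:block-ops} to collapse $(X \wedge Y) \cap (X \vee Y)$ to that same interval. The only cosmetic difference is that you work from both sides toward the middle while the paper writes it as a single chain of equalities.
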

\begin{proof}
\leavevmode

\begin{alignat*}{2}
X \cap Y & = [0_X, 1_X] \cap [0_Y, 1_Y] \\
& = [0_X \vee 0_Y, 1_X \wedge 1_Y] \\
\interject{by lem.~\ref{lem:diss-skel}(2)(3),}
& = [0_{X \vee Y}, 1_{X \wedge Y}] \\
\interject{since $X \wedge Y \leq X \vee Y$ and
def.~\ref{def:block-ops} imply $0_{X \wedge Y} \leq 0_{X \vee Y}$ and
$1_{X \wedge Y} \leq 1_{X \vee Y}$,}
& = [0_{X \wedge Y} \vee 0_{X \vee Y}, 1_{X \wedge Y} \wedge 1_{X \vee Y}] \\
& = [0_{X \wedge Y}, 1_{X \wedge Y}] \cap [0_{X \vee Y}, 1_{X \vee Y}] \\
& = (X \wedge Y) \cap (X \vee Y)
\end{alignat*}
\end{proof}

\begin{lemma} \label{lem:block-meet-join}
For blocks $X$ and $Y$,
\begin{enumerate}
\item[(1)]
$X \cap Y \subset X \vee Y$,
\item[(2)]
$X \cap Y \subset X \wedge Y$,
\item[(3)]
$X \cap Y \cap (X \vee Y) = (X \wedge Y) \cap (X \vee Y)$, and
\item[(4)]
$X \cap Y \cap (X \wedge Y) = (X \vee Y) \cap (X \wedge Y)$,%
\footnote{Statement (4) is dual statement (3), but that is
hard to see from the formulas: Duality on $L$
interchanges meet ($\wedge$) and join ($\vee$) as expected, but it leaves
the intersection of blocks ($\cap$) unchanged.}
\end{enumerate}
where $X \vee Y$ and $X \wedge Y$ are the join and meet operations
in $S$.
\end{lemma}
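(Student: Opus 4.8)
The plan is to derive all four statements directly from lem.~\ref{lem:block-mj}, which asserts $X \cap Y = (X \wedge Y) \cap (X \vee Y)$, using nothing more than elementary set-theoretic manipulation of intersections; the interval structure of blocks is already fully encapsulated by that lemma.

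First I would establish (1) and (2). Since lem.~\ref{lem:block-mj} exhibits $X \cap Y$ as an intersection of the two sets $X \wedge Y$ and $X \vee Y$, it is contained in each factor. Containment in the second factor gives $X \cap Y \subset X \vee Y$, which is (1), and containment in the first factor gives $X \cap Y \subset X \wedge Y$, which is (2).

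Next I would prove (3) and (4) by combining these containments with lem.~\ref{lem:block-mj}. For (3), the containment from (1) makes intersection with $X \vee Y$ idempotent, so $X \cap Y \cap (X \vee Y) = X \cap Y$; meanwhile lem.~\ref{lem:block-mj} rewrites $X \cap Y$ as $(X \wedge Y) \cap (X \vee Y)$, so both sides of (3) equal $X \cap Y$. The argument for (4) is symmetric: (2) gives $X \cap Y \cap (X \wedge Y) = X \cap Y$, while lem.~\ref{lem:block-mj}, together with the commutativity of intersection, identifies $(X \vee Y) \cap (X \wedge Y)$ with $X \cap Y$, so again both sides coincide with $X \cap Y$.

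Since each step is a one-line consequence of lem.~\ref{lem:block-mj} and the monotonicity of intersection, there is no substantive obstacle. The only point requiring care is keeping track of which of the two factors $X \wedge Y$, $X \vee Y$ serves as the absorbing set in each statement; proving (3) and (4) directly in this symmetric fashion sidesteps the awkwardness, noted in the footnote to the statement, that the usual dualization argument is obscured by the fact that $\cap$ on blocks is self-dual.
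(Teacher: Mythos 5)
Your proposal is correct and matches the paper exactly: the paper's entire proof reads ``These are all immediate from lem.~\ref{lem:block-mj},'' and your argument simply spells out the elementary set-theoretic steps (containment in each factor of the intersection, followed by absorption) that make that immediacy explicit. No gaps.
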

\begin{proof}
\leavevmode

These are all immediate from lem.~\ref{lem:block-mj}.
\end{proof}

\begin{theorem}\flagstart$\dagger$ \label{th:dissection}
For any modular \lffc lattice $L$,
\begin{enumerate}
\item the skeleton lattice $S$ (of blocks),
\item the dissection tolerance $\relgamma$,
\item the blocks $S$, indexed by themselves (as elements of $S$), and
\item the connections $\P{\bullet}{\bullet}$
\end{enumerate}
form a \mcs.
\end{theorem}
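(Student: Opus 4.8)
The plan is to verify each of the eight axioms \ref{MCf} through \ref{MCb2} in turn, relying on the lemmas already established in the Dissection section. First I would dispatch the structural axioms: \ref{MCf} (the skeleton $S$ is \lffc) is exactly lem.~\ref{lem:skeleton-lffc}; \ref{MCg} (the blocks are finite modular complemented lattices) follows from lem.~\ref{lem:blocks-modular} together with the definition of a block as a maximal complemented interval; \ref{MCe} (each $\P{B}{C}$ is a lattice isomorphism from a filter of $B$ to an ideal of $C$) is lem.~\ref{lem:block-interval}, since $\P{B}{C}$ is the identity on $\F{B}{C}=\I{B}{C}=[0_C,1_B]$; and \ref{MCa} (the diagonal connections are identities) is lem.~\ref{lem:block-identity}. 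The overlap tolerance $\relgamma$ is a genuine tolerance by lem.~\ref{lem:diss-tol}.

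Next I would handle the compatibility and composition axioms. For \ref{MCc} I must show, for blocks $X\leq Z\leq Y$ with $X\relgamma Y$, that the connection factors through $Z$; since every connection is an inclusion of the form $[0_C,1_B]$ and composition of these identity maps is again the identity, this reduces to the set-theoretic identities $\F{X}{Y}=\PI{X}{Z}(\I{X}{Z}\cap\F{Z}{Y})$ etc., which unwind to $X\cap Y = (X\cap Z)\cap(Z\cap Y)$ --- precisely lem.~\ref{lem:block-mid-intersect}. Axiom \ref{MCd} (the two intersection inclusions $\I{X}{X\vee Y}\cap\I{Y}{X\vee Y}\subset\I{X\wedge Y}{X\vee Y}$ and its dual) is delivered by parts (1) and (2) of lem.~\ref{lem:block-meet-join}, after translating each $\I{}{}$ and $\F{}{}$ into the corresponding block intersection via lem.~\ref{lem:block-interval} and lem.~\ref{lem:block-mj}. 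Axiom \ref{MCh} (if $X\leq_\gamma Z\leq_\gamma Y$ and $\I{X}{Z}\cap\F{Z}{Y}\neq\emptyset$ then $X\relgamma Y$) I would argue directly: a common element $w\in (X\cap Z)\cap(Z\cap Y)$ lies in $X\cap Y$, so $X\cap Y\neq\emptyset$ and hence $X\relgamma Y$ by def.~\ref{def:diss-tol}.

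The covering axioms \ref{MCb1} and \ref{MCb2} are where monotony enters, and I expect \ref{MCb2} to be the main obstacle. Axiom \ref{MCb1} (if $X\lessdot Y$ then $X\relgamma Y$) is immediate from lem.~\ref{lem:block-cover}, which asserts $B\cap C\neq\emptyset$ for covering blocks. For \ref{MCb2} I must show that for $X\lessdot Y$ both $\F{X}{Y}\neq X$ and $\I{X}{Y}\neq Y$, i.e.\ that neither block is contained in the other; this is exactly the remaining content $X\not\subset Y$ and $Y\not\subset X$ of lem.~\ref{lem:block-cover}, which in turn rests on lem.~\ref{lem:block-distinct} giving $0_X<0_Y$ and $1_X<1_Y$. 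The delicate point is that lem.~\ref{lem:block-cover} establishes strict inclusion of the endpoints precisely because distinct blocks have distinct minima and maxima, so monotony of the dissected system is not an extra hypothesis but a consequence of maximality of complemented intervals. Having verified all eight axioms, I conclude that the stated four-tuple $(S,\relgamma,(B)_{B\in S},\P{\bullet}{\bullet})$ satisfies def.~\ref{def:mcs} and is therefore a \mcs.
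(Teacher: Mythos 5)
Your proposal is correct and follows essentially the same route as the paper: each of the axioms \ref{MCf}--\ref{MCb2} is discharged by the same supporting lemma (lem.~\ref{lem:skeleton-lffc}, \ref{lem:blocks-modular}, \ref{lem:block-interval}, \ref{lem:block-identity}, \ref{lem:block-mid-intersect}, \ref{lem:block-meet-join}, and \ref{lem:block-cover}), with \ref{MCc} and \ref{MCh} reduced to block-intersection identities via the fact that all connections are identity maps, and \ref{MCb2} obtained from the non-containment clauses of lem.~\ref{lem:block-cover} exactly as in the paper. Your explicit invocation of lem.~\ref{lem:diss-tol} to confirm that $\relgamma$ is a tolerance is a small completeness improvement over the paper's stated proof, which leaves that point implicit.
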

\begin{proof}
\leavevmode

We prove the axioms:

Regarding \ref{MCf}: By lem.~\ref{lem:skeleton-lffc}, $S$ is a \lffc lattice.

Regarding \ref{MCg}: By lem.~\ref{lem:blocks-modular}, each block in $S$ is
finite, modular, and complemented.

Regarding \ref{MCe}: By def.~\ref{def:block-phi}, each $\P{B}{C}$ is the
identity map on the lattice $\F{B}{C} = \I{B}{C}$, and so is a lattice
isomorphism.  By lem.~\ref{lem:block-interval}, each $\F{B}{C}$ is a
filter of $B$ and and $\I{B}{C}$ is an ideal of $C$.

Regarding \ref{MCa}: This is proved as lem.~\ref{lem:block-identity}.

Regarding \ref{MCh}: If $B \leq C \leq D$ in $S$ and
$\I{B}{C} \cap \F{C}{D} \neq \zeroslash$, then
by def.~\ref{def:block-phi} and lem.~\ref{lem:block-mid-intersect},
$\zeroslash \neq (B \cap C) \cap (C \cap D) = B \cap D$, implying
$B \relgamma D$.

Regarding \ref{MCc}: This follows from lem.~\ref{lem:block-mid-intersect}
and that all of the $\P{\bullet}{\bullet}$ are identity maps.

Regarding \ref{MCd}: This is proved as lem.~\ref{lem:block-meet-join}.

Regarding \ref{MCb1} and \ref{MCb2}: These are proved as
lem.~\ref{lem:block-cover}.
\end{proof}

\begin{definition} \label{def:dissection}
We define the \mcs $(S, \gamma, S, \phi)$ described in
th.~\ref{th:dissection} as the \emph{dissection} of $L$.
\end{definition}

\begin{theorem} \label{th:block-minimum}
The lattice $L$ has a minimum element iff its skeleton lattice $S$ has
a minimum element.
\end{theorem}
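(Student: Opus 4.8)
The plan is to prove both implications directly, after first isolating a single structural fact: every element of $L$ lies in at least one block. Granting this, both directions reduce to the identification (def.~\ref{def:block-ops}, lem.~\ref{lem:diss-skel}(2)) of the order on $S$ with the order on block minima inside $L$. So first I would record the auxiliary claim that for every $a \in L$ the interval $[{a^*}_*, a^*]$ is a block containing $a$. By lem.~\ref{lem:star-props}(c) we have ${{a^*}_*}^* = a^*$, while $(a^*)_* = {a^*}_*$ holds by definition, so lem.~\ref{lem:diss-skel}(1) identifies $[{a^*}_*, a^*]$ as a block. It contains $a$ because lem.~\ref{lem:star-props}(b\supdelta) gives ${a^*}_* \leq a$ and lem.~\ref{lem:star-props}(b) gives $a \leq {a_*}^* \leq a^*$. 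Hence every element of $L$ sits in some block.

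For the direction where $S$ has a minimum element $B_0$, I would show $0_{B_0}$ is the minimum of $L$. Given any $a \in L$, choose by the auxiliary claim a block $B$ with $a \in B = [0_B, 1_B]$, so $0_B \leq a$. Since $B_0 \leq B$ in $S$, def.~\ref{def:block-ops} and lem.~\ref{lem:diss-skel}(2) give $0_{B_0} \leq 0_B$ as elements of $L$, whence $0_{B_0} \leq 0_B \leq a$. Thus $0_{B_0}$ is a lower bound for all of $L$, i.e.\ its minimum.

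For the converse, suppose $L$ has a minimum $\hatzero_L$. From def.~\ref{def:star} we have $\hatzero_L{}_* = \hatzero_L$, and then lem.~\ref{lem:star-props}(b\supdelta) applied with $a = \hatzero_L$ squeezes ${\hatzero_L^*}_*$ between $\hatzero_L{}_* = \hatzero_L$ and $\hatzero_L$, forcing $(\hatzero_L^*)_* = \hatzero_L$; since trivially $(\hatzero_L)^* = \hatzero_L^*$, lem.~\ref{lem:diss-skel}(1) shows $[\hatzero_L, \hatzero_L^*]$ is a block $B_0$ with $0_{B_0} = \hatzero_L$. Now for any block $B$ we have $0_B \geq \hatzero_L = 0_{B_0}$ because $\hatzero_L$ is the global minimum, so by def.~\ref{def:block-ops} $B_0 \leq B$; hence $B_0$ is the minimum of $S$. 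The only delicate point is the auxiliary claim, and even there the work is purely formal manipulation of the $\bullet^*$/$\bullet_*$ calculus of lem.~\ref{lem:star-props}; once every element is known to lie in a block, both implications are immediate, so I expect no genuine obstacle beyond keeping the star identities straight.
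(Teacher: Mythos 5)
Your proposal is correct and follows essentially the same route as the paper's proof: both directions exhibit the block $[\hatzero_L,(\hatzero_L)^*]$ (resp.\ $[{a^*}_*,a^*]$ for an arbitrary $a$) and then transfer the minimality through the order correspondence of lem.~\ref{lem:diss-skel}(2) and def.~\ref{def:block-ops}. Your version merely spells out more explicitly, via lem.~\ref{lem:star-props} and lem.~\ref{lem:diss-skel}(1), why these intervals are blocks, which the paper leaves implicit.
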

\begin{proof}
\leavevmode

Regarding $\Rightarrow$:
Assume $L$ has a minimum element $\hatzero_L$.
Then $X = [\hatzero_L, (\hatzero_L)^*]$ is a block with minimum element
$\hatzero_L$.  By lem.~\ref{lem:diss-skel}(2), the fact that
$\hatzero_L$ is the minimum of $L$ implies $X$ is $\leq$
all blocks in $S$.

Regarding $\Leftarrow$:
Assume $S$ has a minimum element $\hatzero_S$.
Every element $c$ of $L$ is a member of the block $[{c^*}_*, c^*]$,
By lem.~\ref{lem:diss-skel}(2), the fact that $\hatzero_S$ is the
minimum of $S$ implies that $0_{\hatzero_S} \leq {c^*}_*$.
Thus $0_{\hatzero_S} \leq {c^*}_* \leq $c, showing that
$\hatzero_L = 0_{\hatzero_S}$.
\end{proof}

\paragraph{Blocks containing an element}

\begin{definition} \label{def:delta}
For $x \in L$, we define $\nabla_x = [x_*,{x_*}^*]$ and
$\Delta_x = [{x^*}_*,x^*]$.
\end{definition}

\begin{lemma} \label{lem:x-in-delta-nabla}
For $x \in L$, $\nabla_x$ and $\Delta_x$ are blocks $\in S$,
$x \in \nabla_x$ and $x \in \Delta_x$.
\end{lemma}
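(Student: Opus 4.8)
The plan is to reduce both assertions entirely to the block characterization in lem.~\ref{lem:diss-skel}(1) together with the idempotency and bound relations collected in lem.~\ref{lem:star-props}. The key observation is that each of $\nabla_x$ and $\Delta_x$ is defined so that one of the two block-defining equations of lem.~\ref{lem:diss-skel}(1) holds by construction, leaving only the ``other'' star operation to be collapsed by an idempotency identity. I would treat $\nabla_x$ and $\Delta_x$ in parallel (they are dual to each other), and handle the ``is a block'' claims first, then the two membership claims.

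First I would show $\nabla_x = [x_*, {x_*}^*]$ is a block. By lem.~\ref{lem:diss-skel}(1), an interval $[a,b]$ is a block iff $a^* = b$ and $b_* = a$. Taking $a = x_*$ and $b = {x_*}^*$, the first condition $(x_*)^* = {x_*}^*$ holds by the very definition of the upper endpoint, while the second condition $({x_*}^*)_* = x_*$ is precisely lem.~\ref{lem:star-props}(c\supdelta), namely ${{a_*}^*}_* = a_*$ applied with $a = x$. Hence $\nabla_x$ is a block of $S$. Symmetrically, for $\Delta_x = [{x^*}_*, x^*]$ I take $a = {x^*}_*$ and $b = x^*$: here $b_* = (x^*)_* = {x^*}_* = a$ is immediate, and $a^* = ({x^*}_*)^* = x^*$ is lem.~\ref{lem:star-props}(c). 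So $\Delta_x$ is a block as well.

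For the membership claims I would invoke the bound relations. To get $x \in \nabla_x$ I need $x_* \leq x \leq {x_*}^*$: the right inequality is the left half of lem.~\ref{lem:star-props}(b), which gives $x \leq {x_*}^*$, and the left inequality follows from lem.~\ref{lem:star-props}(b\supdelta), whose outer bound gives $x_* \leq x$. To get $x \in \Delta_x$ I need ${x^*}_* \leq x \leq x^*$: the left inequality ${x^*}_* \leq x$ is lem.~\ref{lem:star-props}(b\supdelta) and the right inequality $x \leq x^*$ is the outer bound of lem.~\ref{lem:star-props}(b). I do not expect any genuine obstacle, since the statement is a bookkeeping consequence of the already-established properties of $\bullet^*$ and $\bullet_*$; the only point requiring care is matching the exact forms of the idempotency identities (c) and (c\supdelta) to the two endpoints, so as not to confuse which star has been pre-applied in the definition of $\nabla_x$ versus $\Delta_x$.
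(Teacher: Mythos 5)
Your proof is correct and follows essentially the same route as the paper's: it establishes that both intervals are blocks via the characterization in lem.~\ref{lem:diss-skel}(1) combined with the idempotency identities lem.~\ref{lem:star-props}(c)(c\supdelta), and derives the memberships from the bound relations lem.~\ref{lem:star-props}(b)(b\supdelta). Your write-up is merely more explicit about which endpoint each identity collapses (and, incidentally, cites lem.~\ref{lem:diss-skel}(1) correctly where the paper's proof has an apparent typo referring to a part ``(10)'' that does not exist).
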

\begin{proof}
\leavevmode

By lem.~\ref{lem:star-props}(c)(c\supdelta) and
lem.~\ref{lem:diss-skel}(10), $[{x^*}_*, x^*]$ and $[x_*, {x_*}^*]$
are both blocks.
By lem.~\ref{lem:star-props}(b)(b\supdelta),
$x \in [{x^*}_*, x^*]$ and $x \in [x_*, {x_*}^*]$.
\end{proof}

\begin{definition} \label{def:Gamma}
$\Gamma_x$ for $x \in L$ is defined as the set of blocks containing $x$.
\end{definition}

Note that $\Gamma_x$ in a dissection is the parallel of $\Pi_x$ in a gluing.

\begin{lemma} \label{lem:Gamma-convex}
For $x \in L$, $\Gamma_x$ is convex.
\end{lemma}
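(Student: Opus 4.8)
The plan is to unwind the definitions and reduce everything to the already-proven intersection lemma for blocks. By definition~\ref{def:Gamma}, $\Gamma_x$ consists of exactly those blocks $B \in S$ with $x \in B$, i.e.\ with $0_B \leq x \leq 1_B$. Convexity of $\Gamma_x$ in the lattice $S$ means: whenever $X, Z \in \Gamma_x$ and $X \leq Y \leq Z$ in $S$ (for some block $Y$), we must have $Y \in \Gamma_x$ as well. So the goal is simply to show $x \in Y$.

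First I would fix blocks $X, Z \in \Gamma_x$ and an arbitrary block $Y$ with $X \leq Y \leq Z$. By the definition of $\Gamma_x$ we have $x \in X$ and $x \in Z$, hence $x \in X \cap Z$. Next I would invoke lem.~\ref{lem:block-mid-intersect}, which states that for blocks $A, C, B$ with $A \leq C \leq B$ one has $A \cap B \subset C$. Applying it with $A = X$, $C = Y$, and $B = Z$ (so that the hypothesis $X \leq Y \leq Z$ is exactly what is required), I obtain $X \cap Z \subset Y$.

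Combining these two observations gives $x \in X \cap Z \subset Y$, so $x \in Y$ and therefore $Y \in \Gamma_x$. This establishes convexity.

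There is no real obstacle here: the lemma is an immediate corollary of lem.~\ref{lem:block-mid-intersect}, and the only point requiring a moment of care is matching the variable ordering in that lemma (where the \emph{middle} block in the chain is the one whose containment is concluded) to the present setting, so that the chain $X \leq Y \leq Z$ yields $X \cap Z \subset Y$ rather than some other inclusion. Everything else is routine unwinding of def.~\ref{def:Gamma} and def.~\ref{def:block-ops}.
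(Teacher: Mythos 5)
Your proof is correct and follows essentially the same route as the paper: the paper's own proof inlines the inequality chase $0_C \leq 0_D \leq x \leq 1_B \leq 1_C$, which is exactly the content of lem.~\ref{lem:block-mid-intersect} that you cite instead. Your version is a legitimate (and slightly tidier) packaging of the same argument, and lem.~\ref{lem:block-mid-intersect} does appear earlier in the paper, so there is no circularity.
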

\begin{proof}
\leavevmode

Suppose $B, D \in \Gamma_x$ and $C \in S$ with $B \leq C \leq D$.
Then $0_B \leq x \leq 1_B$, $0_D \leq x \leq 1_D$,
$0_B \leq 0_C \leq 0_D$ and $1_B \leq 1_C \leq 1_D$.
From these follows $0_C \leq 0_D \leq x \leq 1_B \leq 1_C$,
so $C \in \Gamma_x$.
\end{proof}

\begin{lemma} \label{lem:Gamma-extremes} \cite{Haim1991a}*{sec.~1}
$\nabla_x$ is the lowest block in $\Gamma_x$ and
$\Delta_x$ is the highest block in $\Gamma_x$.%
\footnote{Thus there is this mnemonic for $\nabla_\bullet$ and
$\Delta_\bullet$:  $\nabla_x$ points \emph{downward} and is
the \emph{lowest} block containing $x$, and conversely
$\Delta_\bullet$ points \emph{upward} and is the \emph{highest}
block containing $x$.}
\end{lemma}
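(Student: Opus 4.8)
The plan is to invoke lem.~\ref{lem:x-in-delta-nabla}, which already tells us that $\nabla_x$ and $\Delta_x$ are blocks containing $x$ and hence belong to $\Gamma_x$; it then remains only to show that they bound $\Gamma_x$ from below and from above. Throughout I will use the fact (def.~\ref{def:block-ops} together with lem.~\ref{lem:diss-skel}(4)) that the order on blocks may be tested on either the minimum endpoints or the maximum endpoints: for blocks $B, C$ we have $B \leq C$ iff $0_B \leq 0_C$ iff $1_B \leq 1_C$. The structural facts I will lean on are that for any block $B$ the endpoints satisfy $1_B = (0_B)^*$ and $0_B = (1_B)_*$ (lem.~\ref{lem:diss-skel}(1)), together with the monotonicity of $\bullet^*$ and $\bullet_*$ (lem.~\ref{lem:star-props}(a) and (a\supdelta)).

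To show that $\nabla_x$ is the lowest block of $\Gamma_x$, I compare minimum elements. Recall that $\nabla_x = [x_*, (x_*)^*]$ has minimum $x_*$. Let $B$ be any block in $\Gamma_x$, so $x \leq 1_B$. Since $B$ is a block, $1_B = (0_B)^*$, hence $x \leq (0_B)^*$; applying the order-preserving map $\bullet_*$ and using $((0_B)^*)_* = 0_B$ (again lem.~\ref{lem:diss-skel}(1)) yields $x_* \leq 0_B$. As the block order is the order of the minimum elements, this gives $\nabla_x \leq B$, so $\nabla_x$ is the least block of $\Gamma_x$.

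The claim for $\Delta_x$ is exactly dual, and I would phrase it by comparing maximum elements instead: $\Delta_x = [(x^*)_*, x^*]$ has maximum $x^*$, and for any $B \in \Gamma_x$ we have $0_B \leq x$, whence $1_B = (0_B)^* \leq x^*$ by monotonicity of $\bullet^*$, giving $B \leq \Delta_x$. I would simply remark that this half follows from the $\nabla$ case under the order-reversing duality of $L$.

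There is no serious obstacle here; the whole argument is a two-line application of monotonicity and the endpoint identities. The only point requiring care is to keep straight which endpoint (minimum versus maximum) to compare in each half, and to invoke the fixed-point identity $((0_B)^*)_* = 0_B$ rather than any adjointness of $\bullet^*$ and $\bullet_*$ --- the preceding remark in the paper explicitly warns that these two operations do \emph{not} in general form a Galois connection.
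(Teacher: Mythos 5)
Your proposal is correct and takes essentially the same route as the paper: membership via lem.~\ref{lem:x-in-delta-nabla}, then $x \leq 1_B$ gives $x_* \leq (1_B)_* = 0_B$ (so $\nabla_x \leq B$ by comparing minima) and dually $0_B \leq x$ gives $1_B = (0_B)^* \leq x^*$ (so $B \leq \Delta_x$ by comparing maxima). Your small detour through $1_B = (0_B)^*$ and the fixed-point identity $({0_B}^*)_* = 0_B$ is just an unfolding of the paper's direct use of $(1_B)_* = 0_B$ from lem.~\ref{lem:diss-skel}(1), so there is nothing substantive to distinguish the two arguments.
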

\begin{proof}
\leavevmode

By lem.~\ref{lem:x-in-delta-nabla}, both $\nabla_x$ and $\Delta_x$ are
$\in \Gamma_x$.

Let $B \in \Gamma_x$ and so $x \in B$,
$x \leq 1_B$, so $x_* \leq (1_B)_* = 0_B$.
Thus $\nabla_x = [x_*,{x_*}^*] \leq B$ by def.~\ref{def:block-ops}.
Similarly, $x \geq 0_B$, so $x^* \geq (0_B)^* = 1_B$.
Thus $\Delta_x = [{x^*}_*,x^*] \geq B$ by def.~\ref{def:block-ops}.
\end{proof}

\begin{lemma} \label{lem:Gamma-finite}
For $x \in L$, $\Gamma_x$ is the interval $[\nabla_x, \Delta_x]$ of
$S$, which is finite.
\end{lemma}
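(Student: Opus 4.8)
The plan is to combine the two immediately preceding lemmas with the local finiteness of $S$. By lem.~\ref{lem:Gamma-extremes}, $\nabla_x$ is the lowest (hence minimum) block in $\Gamma_x$ and $\Delta_x$ is the highest (hence maximum) block in $\Gamma_x$. So every $B \in \Gamma_x$ satisfies $\nabla_x \leq B \leq \Delta_x$, which gives the inclusion $\Gamma_x \subset [\nabla_x, \Delta_x]$.

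For the reverse inclusion, I would invoke lem.~\ref{lem:Gamma-convex}, that $\Gamma_x$ is convex. Given any $C \in S$ with $\nabla_x \leq C \leq \Delta_x$, the endpoints $\nabla_x$ and $\Delta_x$ both lie in $\Gamma_x$ (by lem.~\ref{lem:Gamma-extremes}, or directly by lem.~\ref{lem:x-in-delta-nabla}), so convexity forces $C \in \Gamma_x$. This yields $[\nabla_x, \Delta_x] \subset \Gamma_x$, and combining the two inclusions gives $\Gamma_x = [\nabla_x, \Delta_x]$.

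Finiteness is then immediate: by lem.~\ref{lem:skeleton-lffc}, $S$ is \lffc and in particular locally finite, so every interval of $S$ — including $[\nabla_x, \Delta_x]$ — is finite.

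There is no real obstacle here; the statement is essentially a corollary of lem.~\ref{lem:Gamma-convex} and lem.~\ref{lem:Gamma-extremes} together with local finiteness, and the only point requiring a moment's care is the general order-theoretic fact being used: a convex subset of a poset that possesses both a minimum and a maximum element coincides with the interval spanned by those elements. Once that is noted, the proof is a two-line assembly.
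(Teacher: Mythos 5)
Your proposal is correct and follows exactly the paper's own argument: combine lem.~\ref{lem:Gamma-extremes} (minimum and maximum of $\Gamma_x$) with lem.~\ref{lem:Gamma-convex} (convexity) to identify $\Gamma_x$ with the interval $[\nabla_x, \Delta_x]$, then invoke lem.~\ref{lem:skeleton-lffc} for local finiteness of $S$. You merely spell out the two inclusions that the paper leaves implicit.
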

\begin{proof}
\leavevmode

By lem.~\ref{lem:Gamma-extremes}, $\Gamma_x$ has maximum element
$\Delta_x$ and minimum element $\nabla_x$ and by
lem.~\ref{lem:Gamma-convex}, $\Gamma_x$ is convex.
Thus $\Gamma_x$ is the interval $[\nabla_x, \Delta_x]$ of $S$.
By
lem.~\ref{lem:skeleton-lffc}, $S$ is locally finite, so $\Gamma_x$ is finite.
\end{proof}

\paragraph{Dissection is natural}
Dissection is ``natural'' in that it is compatible with
isomorphism of lattices and isomorphism of
\mcss (see def.~\ref{def:mcs-iso}).

\begin{theorem}\flagstart$\dagger$ \label{th:diss-natural}
Dissection is compatible with isomorphism of lattices and isomorphism of
\mcss:  If two lattices are isomorphic, their dissections are
isomorphic.
\end{theorem}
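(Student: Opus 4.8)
The plan is to derive the required isomorphism of \mcss directly from a lattice isomorphism of the underlying lattices, in exact parallel with the gluing case treated in th.~\ref{th:glue-natural}. Suppose $L$ and $L^\prime$ are isomorphic modular, \lffc lattices and fix a lattice isomorphism $\psi \colon L \to L^\prime$. Write $(S, \relgamma, \F{\bullet}{\bullet}, \P{\bullet}{\bullet})$ for the dissection of $L$ and $(S^\prime, \relgamma^\prime, \F{\bullet}{\bullet}^\prime, \P{\bullet}{\bullet}^\prime)$ for the dissection of $L^\prime$; both are \mcss by th.~\ref{th:dissection}. The key observation on which everything rests is that $\psi$ carries blocks to blocks. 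Since $\psi$ is an order isomorphism it sends an interval $[a,b]$ onto $[\psi(a), \psi(b)]$, it preserves relative complementation inside that interval, and it preserves interval containment in both directions; hence it sends maximal complemented intervals onto maximal complemented intervals. Therefore $B \mapsto \psi(B)$, where the image set $\psi(B)$ equals $[\psi(0_B), \psi(1_B)]$, is a bijection of $S$ onto $S^\prime$.

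I would then define $\chi_S(B) = \psi(B)$ and, for each block $X \in S$, take the block isomorphism $\chi_{BX}$ to be the restriction $\psi|_X$, which is a lattice isomorphism from $X$ onto $\chi_S(X) = \psi(X)$. To check that $\chi_S$ is itself a lattice isomorphism of skeletons, I recall from def.~\ref{def:block-ops} that the order and operations on $S$ are computed through the minimum elements of the blocks; since $\psi$ maps $0_B$ to $0_{\psi(B)}$ and is an order isomorphism, $B \leq C$ holds iff $\psi(B) \leq \psi(C)$, so $\chi_S$ is an order isomorphism of the skeletons and hence a lattice isomorphism.

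It then remains to verify the three conditions of def.~\ref{def:mcs-iso}, all of which collapse to the single identity $\psi(B \cap C) = \psi(B) \cap \psi(C)$. For condition (1), def.~\ref{def:diss-tol} gives $B \relgamma C$ iff $B \cap C \neq \zeroslash$ iff $\psi(B) \cap \psi(C) \neq \zeroslash$ iff $\chi_S(B) \relgamma^\prime \chi_S(C)$. For condition (2), def.~\ref{def:block-phi} and lem.~\ref{lem:block-interval} give $\F{B}{C} = \I{B}{C} = B \cap C$, so $\chi_{BB}(\F{B}{C}) = \psi(B \cap C) = \psi(B) \cap \psi(C) = \F{\chi_S(B)}{\chi_S(C)}^\prime$, and likewise $\chi_{BC}(\I{B}{C}) = \I{\chi_S(B)}{\chi_S(C)}^\prime$. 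Condition (3) is immediate because every connection is an identity map: both $\chi_{BC} \circ \P{B}{C}$ and $\P{\chi_S(B)}{\chi_S(C)}^\prime \circ \chi_{BB}|_{\F{B}{C}}$ reduce to $\psi$ restricted to $B \cap C$. Applying the same construction to $\psi^{-1}$ yields the inverse, so $\chi$ is an isomorphism of \mcss.

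The only genuinely substantive step is the opening observation that $\psi$ preserves blocks, and within it the preservation of \emph{maximality}. I expect that to be the one point needing care: it follows because $\psi$ is a bijection respecting interval containment, so any complemented interval strictly containing $[\psi(0_B), \psi(1_B)]$ would pull back under $\psi^{-1}$ to a complemented interval strictly containing $B$, contradicting the maximality of $B$ as a block. Everything after that is bookkeeping driven by $\psi$ commuting with intersection.
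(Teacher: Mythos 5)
Your proposal is correct and follows essentially the same route as the paper's own proof: define $\chi_S$ by sending the block $[0_B,1_B]$ to $[\psi(0_B),\psi(1_B)]$, take each block isomorphism to be the restriction of $\psi$, and verify the three conditions of def.~\ref{def:mcs-iso} using lem.~\ref{lem:block-interval} and the fact that the dissection connections are identity maps. Your explicit justification that $\psi$ preserves \emph{maximality} of complemented intervals (by pulling back a strictly larger complemented interval under $\psi^{-1}$) is a point the paper asserts without elaboration, so that small addition is welcome but does not change the argument.
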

\begin{proof}
\leavevmode

Assume we have two lattices, $L$ and $L^\prime$, and an isomorphism
$\chi$ from $L$ to $L^\prime$.

Let $\scrC$ be the dissection of $L$ and be
comprised of skeleton $S$, overlap tolerance $\relgamma$,
blocks $L_\bullet$, connections $\P{\bullet}{\bullet}$,
connection sources
$\F{\bullet}{\bullet}$, and connection targets
$\I{\bullet}{\bullet}$.
Let $\scrC^\prime$ be the dissection of $L^\prime$
and be comprised of skeleton $S^\prime$, overlap
tolerance $\relgamma^\prime$,
blocks $L_\bullet^\prime$, connections
$\P{\bullet}{\bullet}^\prime$, connection sources
$\F{\bullet}{\bullet}^\prime$, and connection targets
$\I{\bullet}{\bullet}^\prime$.
Thus for $\scrC$ we have derived objects $M$, $\sim$, $\kappa$,
$\pi_\bullet$, and its sum $L$,
and for $\scrC^\prime$ derived objects $M^\prime$, $\sim^\prime$,
$\kappa^\prime$, $\pi^\prime_\bullet$ and its sum $L^\prime$.

Define $\chi_S$ from $S$ to $S^\prime$ that maps a block
$x = [0_x,1_x]\in S$ with $0_x, 1_x \in L$ into
$[\chi\,0_x, \chi\,1_x] \in S^\prime$.
Since $\chi$ is a lattice isomorphism,
$[\chi\,0_x, \chi\,1_x]$ is a block of $L^\prime$.
Conversely, $\chi_S^{-1}\,x^\prime =
\chi_S^{-1}\,[0_{x^\prime}, 1_{x^\prime}] =
[\chi^{-1}\,0_{x^\prime}, \chi^{-1}\,1_{x^\prime}]$
so $\chi_S^{-1}$ maps the blocks of $L^\prime$ to blocks of $L$, showing
that $\chi_S$ is a bijection.
Since $\chi$ is a lattice isomorphism, def.~\ref{def:block-ops} shows
that $\chi_S$ is a lattice isomorphism from $S$ to $S^\prime$.

For any $x \in S$, define $\chi_{Bx}$ on $L_x$ by
$\chi_{Bx}\,a = \chi\,a$ for all $a \in L_x$.
Then given $a \in L_x$,
\begin{alignat*}{2}
a & \in [0_x, 1_x] \\
\interject{because $\chi$ is an isomorphism,}
\chi\,a & \in [\chi\,0_x, \chi\,1_x] \\
\interject{by the definition of $\chi_S$,}
\chi\,a & \in \chi_S\,[0_x, 1_x] = \chi_S\,x \\
\interject{by the definition of $\chi_{Bx}$,}
\chi_{Bx}\,a & \in \chi_S\,x \\
\interject{since the blocks in $S^\prime$ are their own indexes
(th.~\ref{th:dissection}),}
\chi_{Bx}\,a & \in L_{\chi_S\,x}^\prime
\end{alignat*}
showing that $\chi_{Bx}$ maps $L_x$ to $L_{\chi_S\,x}^\prime$.

Similarly, $\chi_{Bx}^{-1}\,a^\prime = \chi^{-1}\,a^\prime$ and
$\chi^{-1}$ maps $L_{\chi_S\,x}^\prime$ to $L_x$.
Those facts, combined with the fact that $\chi$ is a lattice
isomorphism, shows that $\chi_{Bx}$ is a lattice
isomorphism from $L_x$ to $L_{\chi_S\,x}^\prime$.

Since $\relgamma$ on $S$ and $\relgamma^\prime$ on $S^\prime$ are
defined as non-null intersection of blocks (def.~\ref{def:diss-tol})
and $\chi_S$ on $S$ is defined as
applying the bijection $\chi$ on the elements of its argument,
for $x, y \in S$,
$x \relgamma y$ iff $\chi_S(x) \relgamma^\prime \chi_S(y)$.

Given $x \leq_\gamma y \in S$, by lem.~\ref{lem:block-interval},
$\F{x}{y} = [0_y, 1_x]$, so
\begin{alignat*}{2}
\chi_{Bx}(\F{x}{y})
& = \chi_{Bx}([0_y, 1_x]) \\
\interject{by the definition of $\chi_{Bx}$,}
& = \chi([0_y, 1_x]) \\
\interject{applying $\chi$ to each member of $[0_y, 1_x]$,}
& = [\chi\,0_y, \chi\,1_x] \\
\interject{by the definition of $\chi_S$,}
& = [0_{\chi_S\,y}, 1_{\chi_S\,x}] \\
\interject{by lem.~\ref{lem:block-interval},}
& = \F{\chi_S\,x}{\chi_S\,y}^\prime
\end{alignat*}
Thus showing that $\F{\chi_S(x)}{\chi_S(y)}^\prime = \chi_{Bx}(\F{x}{y})$.

Similarly, we show that $\I{\chi_S(x)}{\chi_S(y)}^\prime = \chi_{By}(\I{x}{y})$.

Since the connections $\P{\bullet}{\bullet}$ in dissections are
identity maps (def.~\ref{def:block-phi}), for all $a \in \F{x}{y}$,
$\chi_{By}(\P{x}{y}(a)) = \chi_{By}(a) =
\P{\chi_S(x)}{\chi_S(y)}^\prime(\chi_{Bx}(a))$, or equivalently,
$\chi_{By} \circ \P{x}{y} =
\P{\chi_S(x)}{\chi_S(y)}^\prime \circ \chi_{Bx}|_{\F{x}{y}}$.%

These facts together show that $\chi_S$ and $\chi_{B\bullet}$ comprise
a \mcs isomorphism (def.~\ref{def:mcs-iso} from $\scrC$ to $\scrC^\prime$.
\end{proof}

\section{Dissection of a gluing}

In this section, we prove that if $\scrC^\prime$ is the dissection of the
sum lattice $L$ of a \mcs $\scrC$, then $\scrC^\prime$ is isomorphic to
$\scrC$.

We choose $\scrC$ to be an arbitrary \mcs.
We define $L$ to be the sum (def.~\ref{def:sum}) of $\scrC$.
We define $\scrC^\prime$ to be the dissection
(def.~\ref{def:dissection}) of $L$.

From $\scrC$, we define its components:
the skeleton $S$, the overlap tolerance $\relgamma$, and
the blocks $L_\bullet$,
From $\scrC$, we define its derived objects:
the sum $L$, the canonical projections (def.~\ref{def:pi})
$\pi_\bullet$,
the blocks $\Lambda_\bullet$ (def.~\ref{def:Lam}) of $L$, and
the elements $0_\bullet$ and $1_\bullet$ (def.~\ref{def:Lam}) in $L$.

From $L$, we define its derived objects:
the set of blocks (def.~\ref{def:block}) $S^\prime$,
the dissection tolerance (def.~\ref{def:diss-tol})
$\relgamma^\prime$, and
the dissection $\scrC^\prime$.

\begin{definition} \label{def:chi-S}
We define $\chi_S$:  for any $x \in S$, $\chi_S(x)$ is the interval
$\Lambda_x = [0_x, 1_x]$ of $L$.
\end{definition}

\begin{lemma}\flagstart$\dagger$ \label{lem:Lam-is-block}
For each $x \in S$, $\Lambda_x$ is a block of $L$.
\end{lemma}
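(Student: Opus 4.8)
The plan is to first observe that $\Lambda_x$ is a finite, modular, complemented interval of $L$, and then to upgrade this to \emph{maximality}, which is exactly what it means to be a block (def.~\ref{def:block}). By lem.~\ref{lem:pi-lattice-iso}, $\pi_x$ is a lattice isomorphism from $L_x$ onto $\Lambda_x$, so by \ref{MCg} the image $\Lambda_x$ is finite, modular, and complemented; by lem.~\ref{lem:Lambda-inter} it equals the interval $[0_x, 1_x]$ of $L$. To prove maximality I would invoke lem.~\ref{lem:diss-skel}(1), which says that $[a,b]$ is a block exactly when $a^* = b$ and $b_* = a$. Thus it suffices to prove $(0_x)^* = 1_x$ and, dually, $(1_x)_* = 0_x$, where $(\cdot)^*$ is the join of upper covers from def.~\ref{def:star}.

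The inequality $(0_x)^* \geq 1_x$ is the routine half. Since $L_x$ is complemented, th.~\ref{th:complemented} gives $\hatone_{L_x} = \bigvee\{a : a \text{ an atom of } L_x\}$. By lem.~\ref{lem:covers-xfer} each atom $a$ of $L_x$ satisfies $0_x = \pi_x\,\hatzero_{L_x} \lessdot \pi_x\,a$ in $L$, so each $\pi_x\,a \leq (0_x)^*$; taking the join and using that $\pi_x$ is a lattice isomorphism yields $1_x = \pi_x\,\hatone_{L_x} = \bigvee_a \pi_x\,a \leq (0_x)^*$.

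The inequality $(0_x)^* \leq 1_x$ is the crux, and amounts to showing that every upper cover of $0_x$ in $L$ already lies inside $\Lambda_x$. Given an upper cover $c$ of $0_x$, lem.~\ref{lem:covers-xfer} produces a $z \in S$ with $0_x, c \in \Lambda_z$. Because $0_x \in \Lambda_z$ we have $z \in \Pi_{0_x}$, and here is where monotony enters (hence the dagger): lem.~\ref{lem:0-1-extreme} identifies $x$ as the maximum of $\Pi_{0_x}$, so $z \leq x$. Since $0_x \in \Lambda_z \cap \Lambda_x$, lem.~\ref{lem:Lam-overlap-gamma} gives $z \relgamma x$, i.e.\ $z \leq_\gamma x$, and lem.~\ref{lem:(1)-for-Lam} tells us that $\Lambda_z \cap \Lambda_x$ is a filter of $\Lambda_z$; its minimum is $0_x$ (the minimum of $\Lambda_x$), so this filter is exactly $\{d \in \Lambda_z : d \geq 0_x\}$. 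As $c \in \Lambda_z$ and $c > 0_x$, we get $c \in \Lambda_z \cap \Lambda_x \subseteq \Lambda_x$, whence $c \leq 1_x$. Taking the join over all upper covers of $0_x$ gives $(0_x)^* \leq 1_x$.

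Combining the two inequalities gives $(0_x)^* = 1_x$; the dual argument (using the lower-cover operation $(\cdot)_*$, the coatoms of $L_x$, the dual fact in lem.~\ref{lem:0-1-extreme} that $x$ is the minimum of $\Pi_{1_x}$, and the ideal half of lem.~\ref{lem:(1)-for-Lam}) gives $(1_x)_* = 0_x$. By lem.~\ref{lem:diss-skel}(1), $\Lambda_x = [0_x, 1_x]$ is therefore a block. The main obstacle is the second inequality: everything hinges on the monotony-dependent fact that no skeleton element strictly above $x$ sees $0_x$ (lem.~\ref{lem:0-1-extreme}), which is precisely what forces the covers of $0_x$ back into $\Lambda_x$ and so prevents $\Lambda_x$ from being enlarged to a bigger complemented interval.
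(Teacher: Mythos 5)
Your proof is correct and follows essentially the same route as the paper: reduce to showing $(0_x)^* = 1_x$ (and dually $(1_x)_* = 0_x$) via lem.~\ref{lem:diss-skel}(1), get $\geq$ from the atoms of $L_x$, and get $\leq$ by using lem.~\ref{lem:covers-xfer} to place each cover of $0_x$ in some $\Lambda_z$ and then invoking monotony to force $z \leq x$. The only cosmetic difference is that you route the monotony step through lem.~\ref{lem:0-1-extreme} and the filter structure of $\Lambda_z \cap \Lambda_x$, where the paper uses lem.~\ref{lem:(12)-monotone} and the containment $0_x \leq a \leq 1_z \leq 1_x$; both are sound.
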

\begin{proof}
\leavevmode

$\hatone_{L_x}$ is the join of the atoms of $L_x$.
$\Lambda_x$ is isomorphic to $L_x$ and the atoms of $L_x$ map via $\pi_x$
to elements that cover $0_x$.
Thus, ${0_x}^* \geq 1_x$.

Suppose there is an element $a \in L$ covering $0_x$ that is not in
$\Lambda_x$.
Then by lem.~\ref{lem:covers-xfer} there is a $y \in S$ such that
$0_x$ and $a$ are in $\Lambda_y$.
Then $0_x \in \Lambda_y$, so $0_x \geq 0_y$.
By lem.~\ref{lem:(12)-monotone}, $x \geq y$.
But if $x = y$, then $a \in \Lambda_x$, which is contrary to hypothesis,
so we must have $x > y$.
By lem.~\ref{lem:(12)-monotone}, $1_x \geq 1_y$.
Then, $0_x \leq a \leq 1_y \leq 1_x$, and since $\Lambda_x = [0_x,1_x]$,
$a \in \Lambda_x$, contrary to hypothesis.
Thus all $a \gtrdot 0_x$ are $\in \Lambda_x$ and ${0_x}^* \leq 1_x$.

So we have shown $(0_x)^* = 1_x$.
Dually, we show $(1_x)_* = 0_x$.
Thus by lem.~\ref{lem:diss-skel}(1), $\Lambda_x = [0_x,1_x]$ is a block.
\end{proof}

\begin{lemma}\flagstart$\dagger$ \label{lem:block-is-Lam}
Every block of $L$ is a $\Lambda_x$ for some $x \in S$.
\end{lemma}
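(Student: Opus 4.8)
The plan is to reduce the statement to a containment: I will show that any block $B$ of $L$ is contained in some $\Lambda_x$. Since $\Lambda_x$ is itself a block by lem.~\ref{lem:Lam-is-block} and $B$ is by def.~\ref{def:block} a \emph{maximal} complemented interval, such a containment $B \subseteq \Lambda_x$ immediately forces $B = \Lambda_x$. So the whole task is to locate a single $x \in S$ with $0_B, 1_B \in \Lambda_x$. Write $p = 0_B$ and $q = 1_B$, so $B = [p,q]$. If $p = q$, the block is a single point; any $x$ with $p \in \Lambda_x$ (one exists because $\kappa$ maps $M$ onto $L$, so every element of $L$ lies in some $\Lambda_x$) gives $B \subseteq \Lambda_x$, and maximality finishes this case. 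Thus I may assume $p < q$.

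Next I would extract the covers of $p$. By lem.~\ref{lem:diss-skel}(1) we have $q = p^*$, and by def.~\ref{def:star} together with th.~\ref{th:finite-cover} the element $p$ has only finitely many upper covers $a_1, \ldots, a_n$ in $L$, with $q = p^* = a_1 \vee \cdots \vee a_n$. By lem.~\ref{lem:covers-xfer}, for each $i$ there is a $z_i \in S$ with $p, a_i \in \Lambda_{z_i}$. Each $z_i$ lies in $\Pi_p$, and since $\Pi_p$ is a sublattice of $S$ (lem.~\ref{lem:Pi-conv-sub}), the join $x = z_1 \vee \cdots \vee z_n$ also lies in $\Pi_p$; in particular $p \in \Lambda_x$. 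Here the finiteness of the cover set (th.~\ref{th:finite-cover}) is exactly what makes this single join $x$ available.

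The main step is to absorb each cover $a_i$ into this one block $\Lambda_x$. Fix $i$. Then $z_i \leq x$, and since $p \in \Lambda_{z_i} \cap \Lambda_x$ the intersection is nonempty, so lem.~\ref{lem:Lam-overlap-gamma} gives $z_i \relgamma x$, whence $z_i \leq_\gamma x$. By lem.~\ref{lem:(1)-for-Lam}, $\Lambda_{z_i} \cap \Lambda_x$ is a filter of $\Lambda_{z_i}$; it contains $p$, and since $a_i \geq p$ with $a_i \in \Lambda_{z_i}$, upward closure yields $a_i \in \Lambda_{z_i} \cap \Lambda_x \subseteq \Lambda_x$. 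Because $\Lambda_x = [0_x, 1_x]$ is an interval of $L$ (lem.~\ref{lem:Lambda-inter}), each $a_i \leq 1_x$ forces $q = a_1 \vee \cdots \vee a_n \leq 1_x$; together with $q \geq p \geq 0_x$ this gives $q \in \Lambda_x$.

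Finally, $p, q \in \Lambda_x$, so by the convexity of $\Lambda_x$ in lem.~\ref{lem:(11)} we get $B = [p,q] \subseteq \Lambda_x$, and since $\Lambda_x$ is a complemented interval (a block, lem.~\ref{lem:Lam-is-block}) containing the maximal complemented interval $B$, we conclude $B = \Lambda_x$. The point I expect to be the main obstacle is the transfer step of the third paragraph: it is what genuinely uses both that $\Pi_p$ is join-closed, so that a \emph{single} $x$ can simultaneously dominate all of $p$'s covers, and the filter description of block overlaps in lem.~\ref{lem:(1)-for-Lam}; everything else is bookkeeping around maximality and convexity.
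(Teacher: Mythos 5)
Your proposal is correct and follows essentially the same route as the paper's proof: take the upper covers of $0_B$ (which join to $1_B = {0_B}^*$), transfer each into a block $\Lambda_{z_i}$ via lem.~\ref{lem:covers-xfer}, join the $z_i$ to a single $x$ using closure of $\Pi_{0_B}$ under joins, pull the covers into $\Lambda_x$ via the filter property of lem.~\ref{lem:(1)-for-Lam}, and finish by maximality of blocks. The only cosmetic differences are your separate treatment of the degenerate one-point case and your use of the interval description $[0_x,1_x]$ in place of the paper's appeal to $\Lambda_x$ being a sublattice.
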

\begin{proof}
\leavevmode

Let $[a, b]$ be a block of $L$.
Assuming $L$ is not the trivial lattice, $a \neq b$.
By lem.~\ref{lem:diss-skel}(1), $a^* = b$ and $b_* = a$.

Let $c_1, c_2, \ldots, c_n$ be the atoms of $[a,b]$, that is, the elements of
$[a, b]$ that cover $a$.
(This is a finite set because $L$ has finite covers.)
For each $c_i$, by lem.~\ref{lem:covers-xfer}, there exists $x_i \in S$ such that
$a, c_i \in \Lambda_{x_i}$.
Define $x = \bigvee_i x_i$.
By lem.~\ref{lem:MCd-for-Lam}, $a \in \Lambda_x$.
By lem.~\ref{lem:(1)-for-Lam}, $\Lambda_{x_i} \cap \Lambda_x$ is a
filter of $\Lambda_x$, and so $c_i \in \Lambda_x$.
Because $\Lambda_x$ is a sublattice of $L$, $\bigvee_i c_i \in \Lambda_x$,
so $b = a^* = \bigvee_i c_i \in \Lambda_x$.

This shows that $[a,b] \subset \Lambda_x$.
By lem.~\ref{lem:Lam-is-block}, $\Lambda_x$ is a block,
and since all blocks are maximal under containment, $[a,b] = \Lambda_x$.
\end{proof}

\begin{lemma}\flagstart$\dagger$
$\chi_S$ is a bijection from $S$ to $S^\prime$.
\end{lemma}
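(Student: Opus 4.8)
The plan is to assemble the claim directly from the three preceding structural facts, since almost all of the real work has already been done. The map $\chi_S$ sends $x \in S$ to the interval $\Lambda_x = [0_x, 1_x]$ of $L$ (def.~\ref{def:chi-S}), so there are three things to verify: that $\chi_S$ actually lands in $S^\prime$, that it is surjective onto $S^\prime$, and that it is injective.

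First I would check that $\chi_S$ is well-defined as a map into $S^\prime$. By lem.~\ref{lem:Lam-is-block}, for each $x \in S$ the set $\Lambda_x$ is a block of $L$, and by def.~\ref{def:block} and the construction of $\scrC^\prime$ the blocks of $L$ are exactly the elements of $S^\prime$. Hence $\chi_S(x) = \Lambda_x \in S^\prime$ for every $x$, so $\chi_S$ is a map from $S$ to $S^\prime$. Surjectivity is then immediate from lem.~\ref{lem:block-is-Lam}: every block of $L$ equals $\Lambda_x$ for some $x \in S$, so every element of $S^\prime$ is in the image of $\chi_S$.

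For injectivity I would argue by contraposition using lem.~\ref{lem:Lam-incomp}. Suppose $\chi_S(x) = \chi_S(y)$, that is, $\Lambda_x = \Lambda_y$. Then in particular $\Lambda_x \subset \Lambda_y$, and lem.~\ref{lem:Lam-incomp} states that this inclusion can hold only when $x = y$. (Equivalently, one could extract the minimum elements: $\Lambda_x = \Lambda_y$ forces $0_x = 0_y$, and the injectivity of $x \mapsto 0_x$ from lem.~\ref{lem:sup-inf} then gives $x = y$.) This establishes that $\chi_S$ is one-to-one.

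I do not expect any genuine obstacle here, precisely because lem.~\ref{lem:Lam-is-block} and lem.~\ref{lem:block-is-Lam} already carry the analytic content (the former showing $(0_x)^* = 1_x$ and its dual, the latter reconstructing an arbitrary block as a $\Lambda_x$). The only point worth flagging is that the argument rests on monotony: both lem.~\ref{lem:Lam-incomp} and the injectivity of $x \mapsto 0_x$ are dagger results, which is consistent with this lemma itself being marked with a dagger ($\dagger$).
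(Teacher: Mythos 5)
Your proof is correct and follows the paper's argument exactly: well-definedness from lem.~\ref{lem:Lam-is-block}, surjectivity from lem.~\ref{lem:block-is-Lam}, and injectivity from lem.~\ref{lem:Lam-incomp}. The alternative injectivity argument via lem.~\ref{lem:sup-inf} is a valid bonus, and your remark about the dagger dependencies is accurate.
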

\begin{proof}
\leavevmode

By lem.~\ref{lem:Lam-is-block}, $\chi_S$ maps $S$ to $S^\prime$.
By lem.~\ref{lem:Lam-incomp}, $\chi_S$ is one-to-one.
By lem.~\ref{lem:block-is-Lam}, $\chi_S$ is onto.
Thus $\chi_S$ is a bijection from $S$ to $S^\prime$.
\end{proof}

\begin{lemma} \label{lem:gamma-iso}
For $x, y \in S$, $x \relgamma y$ iff
$\chi_S(x) \relgamma^\prime \chi_S(y)$.
\end{lemma}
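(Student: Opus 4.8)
The plan is to observe that this statement is almost immediate once the definitions are unwound, since the real content was already established in lem.~\ref{lem:Lam-overlap-gamma}. By def.~\ref{def:chi-S}, $\chi_S(x)$ is the block $\Lambda_x$ and $\chi_S(y)$ is the block $\Lambda_y$ of $L$ (these are genuine blocks in $S^\prime$ by lem.~\ref{lem:Lam-is-block}). The dissection tolerance $\relgamma^\prime$ on $S^\prime$ is defined (def.~\ref{def:diss-tol}) precisely as non-empty intersection of blocks, so $\chi_S(x) \relgamma^\prime \chi_S(y)$ is by definition the assertion $\Lambda_x \cap \Lambda_y \neq \zeroslash$.

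First I would spell out this identification: $\chi_S(x) \relgamma^\prime \chi_S(y)$ iff $\Lambda_x \cap \Lambda_y \neq \zeroslash$. Then I would invoke lem.~\ref{lem:Lam-overlap-gamma}, which states exactly that $\Lambda_x \cap \Lambda_y \neq \zeroslash$ iff $x \relgamma y$. Chaining these two equivalences yields $x \relgamma y$ iff $\chi_S(x) \relgamma^\prime \chi_S(y)$, which is the claim.

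There is no real obstacle here: all the work lies in lem.~\ref{lem:Lam-overlap-gamma}, whose nontrivial direction (that $x \relgamma y$ forces the blocks to overlap) was handled there via lem.~\ref{lem:(7)} by transporting a common element of $\I{x \wedge y}{x \vee y}$ down into both $L_x$ and $L_y$. The present lemma is simply the bookkeeping step that matches the overlap tolerance of the original \mcs with the dissection tolerance of its sum lattice under the correspondence $\chi_S$. Concretely, the proof reads:

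\begin{proof}
\leavevmode

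By def.~\ref{def:chi-S}, $\chi_S(x) = \Lambda_x$ and $\chi_S(y) = \Lambda_y$,
which are blocks of $L$ (lem.~\ref{lem:Lam-is-block}).
By def.~\ref{def:diss-tol}, the dissection tolerance $\relgamma^\prime$
is non-empty intersection of blocks, so
$\chi_S(x) \relgamma^\prime \chi_S(y)$ holds iff
$\Lambda_x \cap \Lambda_y \neq \zeroslash$.
By lem.~\ref{lem:Lam-overlap-gamma},
$\Lambda_x \cap \Lambda_y \neq \zeroslash$ iff $x \relgamma y$.
Combining these equivalences,
$x \relgamma y$ iff $\chi_S(x) \relgamma^\prime \chi_S(y)$.
\end{proof}
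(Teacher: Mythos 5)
Your proof is correct and follows essentially the same route as the paper's: both unwind def.~\ref{def:chi-S} and def.~\ref{def:diss-tol} to reduce the claim to $\Lambda_x \cap \Lambda_y \neq \zeroslash$ and then invoke lem.~\ref{lem:Lam-overlap-gamma}. The only cosmetic difference is that you phrase it as one chain of equivalences while the paper treats the two directions separately.
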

\begin{proof}
\leavevmode

Regarding $\Rightarrow$:
Suppose $x \relgamma y$.  By lem.~\ref{lem:Lam-overlap-gamma},
$\Lambda_x \cap \Lambda_y \neq \zeroslash$.
Thus by def.~\ref{def:chi-S}, $\chi_S(x) \cap \chi_S(y) \neq \zeroslash$ and by
def.~\ref{def:diss-tol}, $\chi_S(x) \relgamma^\prime \chi_S(y)$.

Regarding $\Leftarrow$:
Suppose $\chi_S(x) \relgamma^\prime \chi_S(y)$.
By def.~\ref{def:diss-tol} and~\ref{def:chi-S},
$\Lambda_x \cap \Lambda_y \neq \zeroslash$.
By lem.~\ref{lem:Lam-overlap-gamma}, $x \relgamma y$.
\end{proof}

\begin{definition} \label{def:chi-B}
For $x \in S$, we define
$\chi_{Bx}: L_x \rightarrow \Lambda_x: y \mapsto \pi_x\,y$,
which is a lattice isomorphism from $L_x$ to $\Lambda_x$.
\end{definition}

\begin{theorem} \label{th:dissection-of-sum}
The dissection $\scrC^\prime$ of the sum $L$ of the \mcs $\scrC$
is isomorphic to $\scrC$.
\end{theorem}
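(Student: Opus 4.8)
The plan is to verify that the pair $(\chi_S, (\chi_{Bx})_{x \in S})$ furnished by def.~\ref{def:chi-S} and def.~\ref{def:chi-B} satisfies every requirement of an \mcs isomorphism (def.~\ref{def:mcs-iso}). Most of the groundwork is already laid: the preceding lemmas show that $\chi_S$ is a bijection from $S$ to $S^\prime$, that each $\chi_{Bx} = \pi_x$ is a lattice isomorphism from $L_x$ onto $\Lambda_x = \chi_S(x)$ (def.~\ref{def:chi-B}), and that $x \relgamma y$ iff $\chi_S(x) \relgamma^\prime \chi_S(y)$ (lem.~\ref{lem:gamma-iso}), which is exactly condition~(1) of the isomorphism. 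So what remains is to show that $\chi_S$ is a \emph{lattice} isomorphism and to verify conditions~(2) and~(3) on the connections.

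For $\chi_S$ being a lattice isomorphism, I would invoke that the order on $S^\prime$ is defined (def.~\ref{def:block-ops}) via the minimum elements of blocks. Since $\chi_S(x) = \Lambda_x = [0_x, 1_x]$ has minimum element $0_x$, we have $\chi_S(x) \leq \chi_S(y)$ in $S^\prime$ iff $0_x \leq 0_y$ in $L$, and by lem.~\ref{lem:(12)-monotone} this holds iff $x \leq y$ in $S$. Thus the bijection $\chi_S$ both preserves and reflects order, hence is a lattice isomorphism. In particular, for $x \leq_\gamma y$ we then have $\chi_S(x) \leq \chi_S(y)$ together with $\chi_S(x) \relgamma^\prime \chi_S(y)$, so the dissection connection $\F{\chi_S(x)}{\chi_S(y)}^\prime$ is defined.

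For condition~(2), fix $x \leq_\gamma y$. By def.~\ref{def:block-phi} the dissection connection source is $\F{\chi_S(x)}{\chi_S(y)}^\prime = \chi_S(x) \cap \chi_S(y) = \Lambda_x \cap \Lambda_y$. On the other side, lem.~\ref{lem:(1)-for-Lam} gives $\Lambda_x \cap \Lambda_y = \pi_x\,\F{x}{y} = \chi_{Bx}(\F{x}{y})$ and, identically, $\Lambda_x \cap \Lambda_y = \pi_y\,\I{x}{y} = \chi_{By}(\I{x}{y})$, which is precisely what condition~(2) demands. For condition~(3), recall that the dissection connection $\P{\chi_S(x)}{\chi_S(y)}^\prime$ is the identity map on $\Lambda_x \cap \Lambda_y$ (def.~\ref{def:block-phi}); so for $a \in \F{x}{y}$ I must check $\pi_y(\P{x}{y}\,a) = \pi_x\,a$. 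This is immediate from lem.~\ref{lem:sim-leq-z}: setting $b = \P{x}{y}\,a$ with $a \in \F{x}{y}$ gives $(x,a) \sim (y,b)$, whence $\pi_x\,a = \pi_y\,b = \pi_y(\P{x}{y}\,a)$.

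The main obstacle is not any single hard argument but making sure the identifications line up coherently: the one set $\Lambda_x \cap \Lambda_y$ must simultaneously be the connection set of the dissection (via def.~\ref{def:block-phi} and lem.~\ref{lem:block-interval}) and the common $\pi_\bullet$-image of $\F{x}{y}$ and $\I{x}{y}$ (via lem.~\ref{lem:(1)-for-Lam}), with the dissection's identity connection corresponding under the $\pi_\bullet$ to the original $\P{x}{y}$. Once these matchings are confirmed, the theorem is a direct assembly of the preceding lemmas.
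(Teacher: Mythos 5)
Your proposal is correct and follows essentially the same route as the paper: both verify that $\chi_S$ and $(\chi_{Bx})_{x\in S}$ from def.~\ref{def:chi-S} and~\ref{def:chi-B} satisfy the three conditions of def.~\ref{def:mcs-iso} by assembling lem.~\ref{lem:gamma-iso}, def.~\ref{def:block-phi}, and lem.~\ref{lem:(1)-for-Lam}. You supply somewhat more detail than the paper does --- notably the explicit check via lem.~\ref{lem:(12)-monotone} that $\chi_S$ is a lattice (not merely set) isomorphism, and the use of lem.~\ref{lem:sim-leq-z} for condition~(3) --- but these are elaborations of the same argument, not a different one.
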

\begin{proof}
\leavevmode

We will prove that $\chi_S$ and $(\chi_{Bx})_{x \in S}$ as defined by
def.~\ref{def:chi-S} and~\ref{def:chi-B} are a \mcs isomorphism
(def.~\ref{def:mcs-iso}) from $\scrC$ to $\scrC^\prime$.
The needed characteristics of a \mcs isomorphism are proved as
follows:
\begin{enumerate}
\item This is proven by lem.~\ref{lem:gamma-iso}.
\item This is proven by def.~\ref{def:diss-tol},
def.~\ref{def:block-phi}, and lem.~\ref{lem:(1)-for-Lam}.
\item This is proven by def.~\ref{def:chi-B} and lem.~\ref{lem:(1)-for-Lam}.
\end{enumerate}
\end{proof}

\section{Gluing of a dissection}

In this section, we prove that if $L^\prime$ is the sum of the
dissection $\scrC$ of a modular, \lffc lattice $L$, then $L^\prime$ is
isomorphic to $L$.

We choose $L$ to be an arbitrary modular, \lffc lattice.
We define $\scrC$ to be the \mcs which is the
dissection (def.~\ref{def:dissection}) of $L$.
We define $L^\prime$ to be the sum (def.~\ref{def:sum}) of $\scrC$.

From the dissection of $L$, we define its derived objects:
the set of blocks (def.~\ref{def:block}) $S$,
the operations $\bullet_*$, $\bullet^*$ (def.~\ref{def:star}),
the blocks $\Delta_\bullet$, $\nabla_\bullet$ (def.~\ref{def:delta}),
and
the sets of blocks $\Gamma_\bullet$ (def.~\ref{def:Gamma}).

From $\scrC$, we define its components:
the family of blocks $L_\bullet^\prime$, which is the family
of blocks (def.~\ref{def:block-phi}) of $L$ indexed by themselves (as
elements of $S$); and
the connections $\P{\bullet}{\bullet}^\prime$ and their
sources and targets $\F{\bullet}{\bullet}^\prime$,
$\I{\bullet}{\bullet}^\prime$.
From $\scrC$, we define its derived objects:
$M^\prime$, $\sim^\prime$, (def.~\ref{def:sim}),
$\kappa^\prime$ (def.~\ref{def:kappa}),
the set of ascending sequences
(def.~\ref{def:asc-seq-new}),
and the sum $L^\prime$.

\begin{definition} \label{def:chi}
We define
$\chi:L \rightarrow L^\prime: a \mapsto \kappa^\prime\,(\nabla_a, a)$.
\end{definition}

That is, $\chi$ takes an element $a \in L$, derives the block of the
dissection $\nabla_a \in S$, then maps the element/block pair through
the gluing using $\kappa^\prime$ to an element of $L^\prime$.

\begin{lemma}
$\chi$ is well-defined.
\end{lemma}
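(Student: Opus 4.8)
The plan is to unwind the definition $\chi(a) = \kappa'(\nabla_a, a)$ and check that for each $a \in L$ this expression denotes a single, legitimately-formed element of $L'$. There are three things to verify in turn: that $\nabla_a$ is a well-defined element of the skeleton $S$; that the pair $(\nabla_a, a)$ actually lies in $M'$; and that applying $\kappa'$ to this pair yields a unique element of $L'$. Since the input to $\chi$ is a bare element $a \in L$ (not an equivalence class), there is no quotient ambiguity on the domain side, so ``well-defined'' here amounts purely to these three membership/uniqueness checks.

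First I would observe that $\nabla_a = [a_*, (a_*)^*]$ is meaningful: by lem.~\ref{lem:star}, $a_*$ and $(a_*)^*$ are well-defined, and by lem.~\ref{lem:x-in-delta-nabla}, $\nabla_a$ is a block of $L$ and hence an element of $S$. Next, the same lemma~\ref{lem:x-in-delta-nabla} gives $a \in \nabla_a$. Because in the dissection \mcs the blocks are indexed by themselves --- the block $\nabla_a$ \emph{is} the lattice $L_{\nabla_a}'$ (th.~\ref{th:dissection}) --- membership $a \in \nabla_a$ is exactly the statement $a \in L_{\nabla_a}'$. Therefore $(\nabla_a, a) \in M' = \{(x,b) \mvert x \in S,\ b \in L_x'\}$.

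Finally, since $\kappa'$ is the (total) canonical projection of $M'$ onto $L'$ (def.~\ref{def:kappa}), the value $\kappa'(\nabla_a, a)$ is a uniquely determined element of $L'$. As $\nabla_a$ is determined by $a$ and the pairing and projection are single-valued, $\chi$ assigns to each $a \in L$ exactly one element of $L'$, so $\chi$ is well-defined. I expect no real obstacle here: the content is entirely discharged by the prior lemmas, and the only point worth making explicit is that the block $\nabla_a$ simultaneously serves as both an element of $S$ and as the lattice $L_{\nabla_a}'$ in which $a$ lives, so the pair $(\nabla_a, a)$ is a genuine member of $M'$.
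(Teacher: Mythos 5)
Your proof is correct and follows essentially the same route as the paper's: establish that $\nabla_a$ is a block containing $a$ (you cite lem.~\ref{lem:x-in-delta-nabla}, the paper cites lem.~\ref{lem:Gamma-extremes}, which rests on the same fact) and then observe that $\kappa'$ therefore maps the pair $(\nabla_a,a)\in M'$ to a unique element of $L'$. Your version is merely a bit more explicit about the pair belonging to $M'$ because blocks index themselves in the dissection.
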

\begin{proof}
\leavevmode

Choose $a \in L$.
By lem.~\ref{lem:Gamma-extremes}, $\nabla_a$ is a block, and is
the lowest block in $\Gamma_a$, which is the set of blocks containing
$a$.
Thus, $a \in \nabla_a$, and so $\kappa^\prime$ maps $(\nabla_a, a)$
into an element of the sum $L^\prime$.
\end{proof}

\begin{lemma} \label{lem:chi-any-block}
Given any $x, y \in S$, $p \in L^\prime_x$, and $q \in L^\prime_y$, then
$\kappa^\prime\,(x, p) = \kappa^\prime\,(y, q)$ (in $L^\prime$) iff $p = q$ (in $L$).
\end{lemma}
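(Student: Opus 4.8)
The plan is to unwind the definitions and exploit the fact that, in the dissection \mcs, the blocks $L_x^\prime$ are literally intervals of $L$ and the connections $\P{\bullet}{\bullet}^\prime$ are identity maps (def.~\ref{def:block-phi}). By def.~\ref{def:kappa}, $\kappa^\prime\,(x,p) = \kappa^\prime\,(y,q)$ holds exactly when $(x,p) \sim^\prime (y,q)$, so I would reduce the entire statement to understanding $\sim^\prime$ on this \mcs. Expanding def.~\ref{def:sim}, the relation $(x,p) \sim^\prime (y,q)$ reads: $x \relgamma^\prime y$, $p \in \F{x}{x \vee y}^\prime$, $q \in \F{y}{x \vee y}^\prime$, and $\P{x}{x \vee y}^\prime\,p = \P{y}{x \vee y}^\prime\,q$. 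Because every connection is an identity map on its (coinciding) source and target, this last equation is simply $p = q$ in $L$. The forward direction is then immediate: if $(x,p) \sim^\prime (y,q)$, the final clause already yields $p = q$ in $L$.

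For the reverse direction I would assume $p = q$ and verify the three side conditions that, together with the (automatically satisfied) identity-map clause, give $(x,p) \sim^\prime (y,q)$. Since $p \in L_x^\prime = x$ and $q = p \in L_y^\prime = y$, the common element lies in $x \cap y$, so $x \cap y \neq \zeroslash$ and hence $x \relgamma^\prime y$ by def.~\ref{def:diss-tol}; by lem.~\ref{lem:tol} the tolerance then holds among $x$, $y$, $x \wedge y$, and $x \vee y$, so $\F{x}{x \vee y}^\prime$ and $\F{y}{x \vee y}^\prime$ are defined. To place $p$ in $\F{x}{x \vee y}^\prime$, I would invoke lem.~\ref{lem:block-interval}, which identifies $\F{x}{x \vee y}^\prime$ with the interval $[0_{x \vee y}, 1_x]$: from $p \in x$ we have $p \leq 1_x$, and from $p \geq 0_x, 0_y$ together with $0_{x \vee y} = 0_x \vee 0_y$ (lem.~\ref{lem:diss-skel}(2)) we have $p \geq 0_{x \vee y}$, so $p \in [0_{x \vee y}, 1_x] = \F{x}{x \vee y}^\prime$. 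Symmetrically $q \in \F{y}{x \vee y}^\prime$. The identity-map clause is $p = q$, which holds by assumption, so def.~\ref{def:sim} is fully satisfied and $\kappa^\prime\,(x,p) = \kappa^\prime\,(y,q)$.

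I do not expect a genuine obstacle: the lemma is essentially a bookkeeping consequence of the dissection \mcs having identity connections, which collapses the non-trivial part of $\sim^\prime$ to the bare equation $p = q$. The only point needing a small verification is the filter membership $p \in \F{x}{x \vee y}^\prime$ in the reverse direction, and this is dispatched by the interval description of lem.~\ref{lem:block-interval} with lem.~\ref{lem:diss-skel}(2); equivalently, one may simply cite lem.~\ref{lem:block-meet-join}(1), namely $x \cap y \subset x \vee y$, to conclude $p \in x \cap (x \vee y) = \F{x}{x \vee y}^\prime$.
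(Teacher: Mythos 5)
Your proposal is correct and follows essentially the same route as the paper: unwind $\kappa^\prime$ to $\sim^\prime$, use the fact that the dissection connections are identity maps to collapse the equation to $p=q$, and for the converse verify the filter memberships via $x \cap y \subset x \vee y$ (lem.~\ref{lem:block-meet-join}(1)), which is exactly the paper's argument. Your explicit check that $x \relgamma^\prime y$ holds (from $p=q \in x \cap y$) is a small point the paper leaves implicit, but otherwise the two proofs coincide.
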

\begin{proof}
\leavevmode

First we note that if $p = q$ is $\in L^\prime_x, L^\prime_y$, then
$p = q \in L^\prime_{x \vee y}$ by lem.~\ref{lem:block-meet-join}(1).
That shows that
$p \in \F{x}{x \vee y}^\prime = L^\prime_x \cap L^\prime_{x \vee y}$ and
$q \in \F{y}{x \vee y}^\prime = L^\prime_y \cap L^\prime_{x \vee y}$ by
def.~\ref{def:block-phi}.

The following statements are all equivalent:
\begin{alignat*}{2}
\kappa^\prime\,(x, p) & = \kappa^\prime\,(y, q) \\
\interject{by def.~\ref{def:kappa},}
(x, p) & \sim^\prime (y, q) \\
\interject{by def.~\ref{def:sim},}
\P{x}{x \vee y}^\prime\,p & = \P{y}{x \vee y}^\prime\,q \textup{ and }
p \in \F{x}{x \vee y}^\prime \textup{ and }
q \in \F{y}{x \vee y}^\prime \\
\interject{by def.~\ref{def:block-phi}, the $\P{\bullet}{\bullet}^\prime$ are
identity maps;
and in the reverse direction,
by the above derivation that $p = q$ implies
$p \in \F{x}{x \vee y}^\prime = L^\prime_x \cap L^\prime_{x \vee y}$ and
$q \in \F{y}{x \vee y}^\prime = L^\prime_y \cap L^\prime_{x \vee y}$,}
p = q
\end{alignat*}
\end{proof}

This shows that our choice of $\nabla_a$ as the block in
def.~\ref{def:chi} was arbitrary; any other block in $\Gamma_a$
(i.e., any other block containing $a$) could be used in the definition
without changing $\chi\,a$.

\begin{lemma} \label{lem:chi-bij}
$\chi$ is a bijection from $L$ to $L^\prime$.
\end{lemma}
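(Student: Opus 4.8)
The plan is to derive both injectivity and surjectivity of $\chi$ almost immediately from lem.~\ref{lem:chi-any-block}, which has already isolated the essential content: in a dissection every block $L^\prime_x$ is literally the interval $x = [0_x,1_x] \subset L$, so $\kappa^\prime$-equality of two pairs collapses to equality of their second coordinates as elements of $L$. With that lemma in hand, the bijection statement is essentially formal.

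For injectivity, I would suppose $\chi(a) = \chi(b)$, i.e.\ $\kappa^\prime\,(\nabla_a, a) = \kappa^\prime\,(\nabla_b, b)$. Since $\nabla_a$ (resp.\ $\nabla_b$) is a block containing $a$ (resp.\ $b$) by lem.~\ref{lem:Gamma-extremes}, we have $a \in L^\prime_{\nabla_a}$ and $b \in L^\prime_{\nabla_b}$, so lem.~\ref{lem:chi-any-block} applies with $x = \nabla_a$, $y = \nabla_b$, $p = a$, $q = b$, and yields $a = b$. Thus $\chi$ is one-to-one.

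For surjectivity, I would take any $c^\prime \in L^\prime$. By def.~\ref{def:sum} there exist $x \in S$ and $p \in L^\prime_x$ with $c^\prime = \kappa^\prime\,(x, p)$; since $L^\prime_x = [0_x,1_x] \subset L$, the element $p$ is an element of $L$. The claim is that $\chi(p) = c^\prime$. Indeed $p \in \nabla_p = L^\prime_{\nabla_p}$ (lem.~\ref{lem:Gamma-extremes}) and $p \in L^\prime_x$, so lem.~\ref{lem:chi-any-block} applied with both second coordinates equal to $p$ gives $\kappa^\prime\,(\nabla_p, p) = \kappa^\prime\,(x, p)$, i.e.\ $\chi(p) = c^\prime$. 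Hence $\chi$ is onto, and combining the two halves shows $\chi$ is a bijection.

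The only point requiring care --- and the closest thing to an obstacle --- is verifying the membership hypotheses of lem.~\ref{lem:chi-any-block} in each application (that the chosen element actually lies in the chosen block); these are exactly what lem.~\ref{lem:Gamma-extremes} and def.~\ref{def:sum} supply. Everything else is formal, and the remark following lem.~\ref{lem:chi-any-block} --- that the block $\nabla_a$ in def.~\ref{def:chi} may be replaced by any block containing $a$ --- makes the surjectivity argument transparent, since the witnessing block $x$ for $c^\prime$ is itself a block containing $p$.
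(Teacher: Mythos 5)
Your proposal is correct and follows essentially the same route as the paper: surjectivity is argued identically via lem.~\ref{lem:chi-any-block}, and your injectivity step simply invokes that same lemma where the paper instead unfolds the definition of $\sim^\prime$ and the identity connections directly (the content is the same, and your version is if anything slightly cleaner).
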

\begin{proof}
\leavevmode

Regarding that $\chi$ is one-to-one:
Assume that $a, a^\prime \in L$ and $\chi\,a = \chi\,a^\prime$.
Then $\kappa^\prime\,(\nabla_a, a) = \kappa^\prime\,(\nabla_{a^\prime}, a^\prime)$.
By the definition of $\kappa^\prime$, that requires that
$(\nabla_a, a) \sim^\prime (\nabla_{a^\prime}, a^\prime)$ which implies that
$\P{\nabla_a}{\nabla_a \vee \nabla_{a^\prime}}^\prime\, a =
\P{\nabla_{a^\prime}}{\nabla_a \vee \nabla_{a^\prime}}^\prime\, a^\prime$.
But since the $\P{\bullet}{\bullet}^\prime$ are identity maps, that means that
$a = a^\prime$.

Regarding that $\chi$ is onto:
Choose any $a^\prime \in L^\prime$.
Since $L^\prime$ is the set of values of $\kappa^\prime$, there exists
a $B \in S$ and an $a \in B$ such that $\kappa^\prime\,(B, a) = a^\prime$.
By def.~\ref{def:block}, $B$ is a block of $L$ and $a \in L$,
so $B \in \Gamma_a$.
Then by lem.~\ref{lem:chi-any-block},
$a^\prime = \kappa^\prime\,(B, a) = \chi\,a$.

Thus, $\chi$ is a bijection from $L$ to $L^\prime$.
\end{proof}

\begin{theorem} \label{th:sum-of-dissection}
The sum $L^\prime$ of the \mcs $\scrC$ which is
the dissection of a modular, \lffc lattice $L$ is
is isomorphic to $L$.
\end{theorem}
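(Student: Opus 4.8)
The plan is to show that the map $\chi\colon L \to L^\prime$ of def.~\ref{def:chi}, already proven bijective in lem.~\ref{lem:chi-bij}, is an order isomorphism; since $L$ is a lattice by hypothesis and $L^\prime$ is a lattice by th.~\ref{th:lattice} (applied to the sum of $\scrC$), an order-preserving and order-reflecting bijection between them is automatically a lattice isomorphism. The whole argument rests on one observation that I would record first. For a block $x \in S$ and elements $p, q$ of $L$ lying in that block (i.e.\ $p, q \in L_x^\prime = [0_x, 1_x]$), lem.~\ref{lem:chi-any-block} gives $\chi\,p = \kappa^\prime\,(x, p)$ and $\chi\,q = \kappa^\prime\,(x, q)$, which are exactly the images of $p$ and $q$ under the canonical projection $\pi^\prime_x$ (def.~\ref{def:pi}) of the sum. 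By lem.~\ref{lem:pi-poset-iso} applied to $L^\prime$, $\pi^\prime_x$ is a poset isomorphism from $L_x^\prime$ onto $\Lambda^\prime_x$, and since the order on $L_x^\prime = [0_x, 1_x]$ is inherited from $L$, this yields the \emph{within-block equivalence}: $p \leq q$ in $L$ iff $\chi\,p \leq \chi\,q$ in $L^\prime$.

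For the forward direction, suppose $a \leq b$ in $L$. Because $L$ is \lffc, the interval $[a, b]$ is finite and so admits a saturated chain $a = a_0 \lessdot a_1 \lessdot \cdots \lessdot a_n = b$. For each covering $a_{i-1} \lessdot a_i$ I would note that both endpoints lie in the block $\Delta_{a_{i-1}} = [{a_{i-1}^{*}}_*, a_{i-1}^{*}]$: indeed $a_{i-1} \in \Delta_{a_{i-1}}$ by lem.~\ref{lem:x-in-delta-nabla}, while $a_i \leq a_{i-1}^{*}$ by def.~\ref{def:star} and $a_i \geq a_{i-1} \geq {a_{i-1}^{*}}_*$ by lem.~\ref{lem:star-props}(b\supdelta). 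The within-block equivalence then gives $\chi\,a_{i-1} \leq \chi\,a_i$ in $L^\prime$ for every $i$, and transitivity of $\leq$ on $L^\prime$ (lem.~\ref{lem:poset}) chains these into $\chi\,a \leq \chi\,b$.

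For the backward direction, suppose $\chi\,a \leq \chi\,b$ in $L^\prime$. By def.~\ref{def:L-leq} there is an ascending sequence $((x_i)_{1 \leq i \leq n}, (c_i)_{0 \leq i \leq n})$ in $L^\prime$ with $c_0 = \chi\,a$, $c_n = \chi\,b$, and $c_{i-1} \leq_{x_i} c_i$ for each $i$. Using that $\chi$ is a bijection I would write $c_i = \chi\,d_i$ with $d_i \in L$, $d_0 = a$, $d_n = b$. The key bookkeeping step, which I expect to be the most delicate point, is to identify $d_{i-1}$ and $d_i$ as elements of the block $x_i$ and as the $\pi^\prime_{x_i}$-preimages of $c_{i-1}, c_i$: since $c_{i-1} = \kappa^\prime\,(\nabla_{d_{i-1}}, d_{i-1}) \in \Lambda^\prime_{x_i}$, we have $c_{i-1} = \kappa^\prime\,(x_i, p)$ for some $p \in x_i$, and lem.~\ref{lem:chi-any-block} forces $p = d_{i-1}$, so $d_{i-1} \in x_i$ and $\pi_{x_i}^{\prime\,-1}\,c_{i-1} = d_{i-1}$ (and likewise for $d_i$). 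Then $c_{i-1} \leq_{x_i} c_i$ unwinds via def.~\ref{def:leq-sub-x} to $d_{i-1} \leq d_i$ in $L_{x_i}^\prime$, i.e.\ in $L$. Chaining gives $a = d_0 \leq \cdots \leq d_n = b$, hence $a \leq b$.

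Together with the forward direction this shows $\chi$ is an order isomorphism, completing the proof. I would emphasize that the genuinely substantive content lives entirely in the earlier lemmas, namely the block structure of $L$ and lem.~\ref{lem:chi-any-block}; here the only real work is the careful translation of an ascending sequence in $L^\prime$ back into a chain in $L$, which is where the identification of each $d_i$ with a value in its indexing block must be made precise.
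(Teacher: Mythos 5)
Your proposal is correct and follows essentially the same route as the paper: both directions rest on the same bijection $\chi$, the forward direction walks a saturated chain of $L$ and places each covering pair in a common block (you use $\Delta_{a_{i-1}}$ where the paper uses $\nabla_{a_i}$, a dual but equivalent choice), and the backward direction unwinds an ascending sequence exactly as the paper does via lem.~\ref{lem:chi-any-block}. Your packaging of the per-block step as a ``within-block equivalence'' plus transitivity, rather than assembling an explicit ascending sequence, is only a cosmetic difference.
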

\begin{proof}
\leavevmode

We will show that $\chi$ is a lattice isomorphism from $L$ to
$L^\prime$.  In lem.~\ref{lem:chi-bij} we have already shown that
$\chi$ is a bijection, so what remains is to show that for any
$a, b \in L$, $a \leq b$ in $L$ iff $\chi\,a \leq \chi\,b$ in
$L^\prime$.

Regarding $\Rightarrow$:

Assume $a \leq b$ in $L$.
Choose a saturated chain in $L$ from $a$ to $b$:
\begin{equation*}
a = c_0 \lessdot c_1 \lessdot c_2 \lessdot \cdots
\lessdot c_{n-1} \lessdot c_n = b. \tag{a}
\end{equation*}
For $1 \leq i \leq n$, define $d_i = \nabla_{c_i} \in S$.
Since the $L^\prime_\bullet$ are themselves the blocks of $L$, which are
the members of $S$, each $d_i$ is also one of the $L^\prime_\bullet$, specifically
$d_i \in L^\prime_{d_i}$.

Since $c_{i-1} \leq c_i$ in $L$,
by lem.~\ref{lem:star-props}(a\supdelta), $(c_{i-1})_* \leq (c_i)_*$,
then by def.~\ref{def:block-ops},
$[(c_{i-1})_*, {(c_{i-1})_*}^*] \leq [(c_i)_*, {(c_i)_*}^*]$ in $S$,
and then by def.~\ref{def:delta},
$d_{i-1} = \nabla_{c_{i-1}} \leq \nabla_{c_i} = d_i$ in $S$.
Thus, in $S$,
\begin{equation*}
d_1 \leq d_2 \leq \cdots \leq d_{n-1} \leq d_n. \tag{b}
\end{equation*}

That $c_i \in d_i = \nabla_{c_i}$ is immediate.
Thus $(d_i, c_i) \in M^\prime$ and
$\chi\,c_i = \kappa^\prime\,(d_i, c_i)$ exists in $L^\prime$.

By def.~\ref{def:star}, $c_{i-1} \lessdot c_i$ implies $(c_i)_* \leq c_{i-1}$.
Then by lem.~\ref{lem:star-props}(b),
$c_{i-1} < c_i \leq {(c_i)_*}^*$.
These show
$c_{i-1} \in [(c_i)_*, {(c_i)_*}^*] = \nabla_{c_i} = d_i$.
Thus $(d_i, c_{i-1}) \in M^\prime$ and $\kappa^\prime\,(d_i, c_{i-1})$ exists in
$L^\prime$.
By lem.~\ref{lem:chi-any-block},
$\kappa^\prime\,(d_i, c_{i-1}) = \kappa^\prime\,(d_{i-1}, c_{i-1})$.

Because $c_{i-1}, c_i \in d_i$ and $c_{i-1} \leq c_i$
(in the order of $L^\prime_{d_i}$, which is inherited from the order of $L$),
by def.~\ref{def:leq-sub-x},
$\chi\,c_{i-1} = \kappa^\prime\,(d_{i-1}, c_{i-1})  = \kappa^\prime\,(d_i, c_{i-1})
\leq_{d_i} \kappa^\prime\,(d_i, c_i) = \chi\,c_i$ in $L^\prime$.

Thus, in $L^\prime$,
\begin{equation*}
\chi(a) = \chi(c_0) \leq_{d_1} \chi(c_1) \leq_{d_2} \chi(c_2) \leq_{d_3} \cdots
\leq_{d_{n-1}} \chi(c_{n-1}) \leq_{d_n} \chi(c_n) = \chi(b). \tag{c}
\end{equation*}

By def.~\ref{def:asc-seq-new}, (c) and (b) show that
$(d_\bullet, \chi\,c_\bullet)$ is
an ascending sequence (in $\scrC$), and so by def.~\ref{def:L-leq},
$\chi\,a \leq \chi\,b$.

Regarding $\Leftarrow$:

Assume $a, b \in L$ and $\chi\,a \leq \chi\,b$ in $L^\prime$.
Then by def.~\ref{def:L-leq}, there is an ascending sequence (in $\scrC$)
$(z_\bullet, e_\bullet)$ so that in $L^\prime$
\begin{equation*}
\chi(a) = e_0 \leq_{z_1} e_1 \leq_{z_2} e_2 \leq_{z_3} \cdots
\leq_{z_{n-1}} e_{n-1} \leq_{z_n} e_n = \chi(b). \tag{d}
\end{equation*}
and in $S$
\begin{equation*}
z_1 \leq z_2 \leq z_3 \leq \cdots \leq z_{n-1} \leq z_n. \tag{e}
\end{equation*}
Define $z_0 = \nabla_a$ so $a = e_0 \in L^\prime_{z_0}$,
and since $a = e_0 \in L^\prime_{z_1}$ from (d), $z_0 \leq L^\prime_{z_1}$.
For $0 \leq i \leq n$, define $f_i = \pi_{z_i}^{-1}\,e_i \in z_i$.

For $1 \leq i \leq n$, from (d) and the definition of $z_0$,
$e_{i-1} \in L^\prime_{z_{i-1}}$ and $e_{i-1} \in L^\prime_{z_i}$.
By def.~\ref{def:kappa},
$\kappa^\prime\,(z_{i-1}, \pi^{-1}_{z_{i-1}}\,e_{i-1}) = e_{i-1} =
\kappa^\prime\,(z_i, \pi^{-1}_{z_i}\,e_{i-1})$.
By lem.~\ref{lem:chi-any-block},
$\pi^{-1}_{z_{i-1}}\,e_{i-1} = \pi^{-1}_{z_i}\,e_{i-1}$, showing
$f_{i-1} = \pi^{-1}_{z_i}\,e_{i-1}$.

For $1 \leq i \leq n$, from (d),
$e_{i-1} \leq_{z_i} e_i$, which is, by def.~\ref{def:leq-sub-x},
$\pi^{-1}_{z_i}\,e_{i-1} \leq \pi^{-1}_{z_i}\,e_i$,
which by the above shows $f_{i-1} \leq f_i$ in $L^\prime_{z_i}$.
Since $L^\prime_{z_i}$ inherits its order from $L$, $f_{i-1} \leq f_i$ in
$L$.  Thus in $L$
\begin{equation*}
f_0 \leq f_1 \leq f_2 \leq \cdots \leq f_{n-1} \leq f_n. \tag{f}
\end{equation*}

Then
$\kappa^\prime\,(z_0, f_0) = \kappa^\prime\,(z_0, \pi_{z_0}^{-1}\,e_0) =
e_0 = \chi\,a = \kappa^\prime\,(\nabla_a, a)$,
so by lem.~\ref{lem:chi-any-block}, $f_0 = a$.
By lem.~\ref{lem:chi-any-block}, $\kappa^\prime\,(z_n, f_n) = e_n = \chi\,b$
implies $f_n = b$ in $L$.  These facts, with (f), show $a \leq b$ in $L$.
\end{proof}

\section{Conclusion}

\begin{theorem}\flagstart$\dagger$ \label{th:inverses}
The two mappings (1) gluing a \mcs to create a modular, \lffc lattice and
(2) dissecting a modular, \lffc lattice to create a \mcs are mutually
inverse mappings between
the category of isomorphism classes of \mcss and
the category of isomorphism classes of modular, \lffc lattices.
Thus, the two mappings are both bijective.
The the skeleton of the \mcs has a
minimum element iff the corresponding lattice has a minimum element.
\end{theorem}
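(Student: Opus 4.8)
The plan is to assemble this statement directly from the naturality, inversion, and structural results already proved, since all of the genuine content has been established earlier. First I would verify that gluing is a well-defined mapping, call it $G$, from isomorphism classes of \mcss to isomorphism classes of modular, \lffc lattices. The target class is correct because the sum of a \mcs is a lattice (th.~\ref{th:lattice}), is modular (th.~\ref{th:modular}), is locally finite (th.~\ref{th:locally-finite}), and has finite covers (th.~\ref{th:finite-cover}); and th.~\ref{th:glue-natural} shows that isomorphic \mcss have isomorphic sums, so $G$ descends to a mapping on isomorphism classes. Dually, I would verify that dissection is a well-defined mapping $D$ from isomorphism classes of modular, \lffc lattices to isomorphism classes of \mcss: th.~\ref{th:dissection} shows the dissection of such a lattice is a \mcs, and th.~\ref{th:diss-natural} shows isomorphic lattices have isomorphic dissections, so $D$ likewise descends to isomorphism classes.

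Next I would establish that $D$ and $G$ are mutually inverse. Th.~\ref{th:dissection-of-sum} shows that the dissection of the sum of a \mcs $\scrC$ is isomorphic to $\scrC$, giving $D \circ G = \id$ on isomorphism classes of \mcss. Th.~\ref{th:sum-of-dissection} shows that the sum of the dissection of a modular, \lffc lattice $L$ is isomorphic to $L$, giving $G \circ D = \id$ on isomorphism classes of lattices. A pair of maps between two classes that compose to the identity in both orders are inverse bijections, so both gluing and dissection are bijective.

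The claim about minimum elements then follows immediately. Th.~\ref{th:sum-0} shows the sum of a \mcs has a minimum element iff its skeleton does, and th.~\ref{th:block-minimum} shows a lattice has a minimum element iff its dissection skeleton does; since the two constructions are inverse, either reading transports the minimum-element property exactly as stated.

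The main obstacle is not in this final assembly but was dispatched earlier: the substantive work lies in the two inversion theorems (th.~\ref{th:dissection-of-sum} and th.~\ref{th:sum-of-dissection}) and the two naturality theorems. The only point requiring care here is confirming that each composite agrees with the identity \emph{on isomorphism classes} rather than on the nose---that is, that the correspondences produced in th.~\ref{th:dissection-of-sum} and th.~\ref{th:sum-of-dissection} are genuine \mcs and lattice isomorphisms respectively---but this is precisely what those theorems supply, so the conclusion is a direct corollary.
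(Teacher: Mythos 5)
Your proposal is correct and follows the same route as the paper: it assembles the theorem from the naturality results (th.~\ref{th:glue-natural}, th.~\ref{th:diss-natural}), the two inversion theorems (th.~\ref{th:dissection-of-sum}, th.~\ref{th:sum-of-dissection}), and the minimum-element result (th.~\ref{th:sum-0}). Your explicit verification that the sum lands in the class of modular, \lffc lattices and your citation of th.~\ref{th:block-minimum} are slightly more thorough than the paper's own proof, but the argument is the same.
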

\begin{proof} \disconnect
That the two processes map between isomorphism classes of
modular, \lffc lattices and isomorphism classes of \mcss is proven in
th.~\ref{th:glue-natural} and~\ref{th:diss-natural}.

That the two processes are inverses is proven in
th.~\ref{th:dissection-of-sum} and~\ref{th:sum-of-dissection}.

That the lattice has a minimum element iff the skeleton has a
minimum element is proven in th.~\ref{th:sum-0}.
\end{proof}

This bijection allows us to informally conflate a lattice and its
corresponding \mcs.  This allows us to consider the elements of the
lattice to be simultaneously elements of the blocks of the \mcs,
to conflate the blocks $L_\bullet$ of the \mcs with the blocks of the
lattice, etc.%
\footnote{We use the notation
``$\leadingcolon X \rightarrow Y:x \mapsto P(x)$''
to denote an anonymous function.}
This resembles the approach in \citeHerr*{sec.~1}{sec.~1}.
\begin{alignat*}{2}
L_x & \longleftrightarrow \Lambda_x \\
x \relgamma y & \longleftrightarrow \Lambda_x \cap \Lambda_y \neq \zeroslash \\
\F{x}{y} & \longleftrightarrow \Lambda_x \cap \Lambda_y \\
\I{x}{y} & \longleftrightarrow \Lambda_x \cap \Lambda_y \\
\P{x}{y} & \longleftrightarrow
\leadingcolon \Lambda_x \cap \Lambda_y \rightarrow \Lambda_x \cap \Lambda_y
: a \mapsto a \\
\pi_x & \longleftrightarrow
\leadingcolon \Lambda_x \rightarrow \Lambda_x : a \mapsto a \\
\kappa\,(x, a) & \longleftrightarrow a \\
0_{L_x} & \longleftrightarrow 0_x \\
\Z{x}{y} & \longleftrightarrow 0_y \\
1_{L_x} & \longleftrightarrow 1_x \\
\I{x}{y} & \longleftrightarrow 1_x \\
\PP{x}{y} & \longleftrightarrow
\leadingcolon \Lambda_x \rightarrow \Lambda_y : a \mapsto a \vee 0_y \\
\PPI{x}{y} & \longleftrightarrow
\leadingcolon \Lambda_y \rightarrow \Lambda_x : a \mapsto a \wedge 1_x
\end{alignat*}

This equivalence is a base for the further explication of the structure of
modular, \lffc lattices.

\section*{References}

\begin{biblist}[\normalsize]*{labels={alphabetic}}

%\bibselect{bibliography}

\DefineSimpleKey{bib}{identifier}{}
\DefineSimpleKey{bib}{location}{}
\DefineSimpleKey{bib}{primaryclass}{}
% These must be global definitions, as bibinclude's are executed in a
% different scope than bib's.
% Cribbed from the definition of \MR in amsart.cls.
\gdef\Zbl#1{\relax\ifhmode\unskip\spacefactor3000 \space\fi
  \href{https://zbmath.org/#1}{Zbl~#1}}
% Generate Google Scholar URL, wrapped in \url{...}
\gdef\GS#1{\relax\ifhmode\unskip\spacefactor3000 \space\fi
  \href{https://scholar.google.com/scholar?cluster=#1}{GS~#1}}

\BibSpec{arXiv}{%
    +{}{\PrintAuthors}                  {author}
    +{,}{ \textit}                      {title}
    +{,} { \PrintTranslatorsC}          {translator}
    +{}{ \parenthesize}                 {date}
    +{,}{ arXiv }                       {identifier}
    +{,}{ primary class }               {primaryclass}
    +{,} { \PrintDOI}                   {doi}
    +{,} { available at \eprint}        {eprint}
    +{.} { }                            {note}
}

\BibSpec{article}{%
    +{}  {\PrintAuthors}                {author}
    +{,} { \textit}                     {title}
    +{.} { }                            {part}
    +{:} { \textit}                     {subtitle}
    +{,} { \PrintTranslatorsC}          {translator}
    +{,} { \PrintContributions}         {contribution}
    +{.} { \PrintPartials}              {partial}
    +{,} { }                            {journal}
    +{}  { \textbf}                     {volume}
    +{}  { \PrintDatePV}                {date}
    +{,} { \issuetext}                  {number}
    +{,} { \eprintpages}                {pages}
    +{,} { }                            {status}
    +{,} { \PrintDOI}                   {doi}
    +{,} { available at \eprint}        {eprint}
    +{}  { \parenthesize}               {language}
    +{}  { \PrintTranslation}           {translation}
    +{;} { \PrintReprint}               {reprint}
    +{.} { }                            {note}
    +{.} {}                             {transition}
    +{}  {\SentenceSpace \PrintReviews} {review}
}

\BibSpec{partial}{%
    +{}  {}                             {part}
    +{:} { \textit}                     {subtitle}
    +{,} { \PrintContributions}         {contribution}
    +{,} { }                            {journal}
    +{}  { \textbf}                     {volume}
    +{}  { \PrintDatePV}                {date}
    +{,} { \issuetext}                  {number}
    +{,} { \eprintpages}                {pages}
    +{,} { \PrintDOI}                   {doi}
    +{,} { available at \eprint}        {eprint}
    +{.} { }                            {note}
}

\BibSpec{presentation}{%
    +{}{\PrintAuthors}                  {author}
    +{,}{ \textit}                      {title}
    +{,}{ }                             {date}
    +{,}{ }                             {location}
    +{,}{ }                             {series}
    +{,} { \PrintDOI}                   {doi}
    +{,} { available at \eprint}        {eprint}
    +{.} { }                            {note}
}

\bib*{xref-BogFreesKung1990a}{book}{
  title={The Dilworth theorems: Selected papers of Robert P.\ Dilworth},
  editor={Bogard, Kenneth P.},
  editor={Freese, Ralph S.},
  editor={Kung, Joseph P.\ S.},
  date={1990},
  publisher={Springer},
  address={New York},
  series={Contemporary Mathematicians},
  doi={10.1007/978-1-4899-3558-8},
}

\bib*{xref-Stan1999a}{book}{
  title={Enumerative Combinatorics, Volume 2},
  author={Stanley, Richard P.},
  date={1999},
  publisher={Cambridge University Press},
  address={Cambridge},
  series={Cambridge Studies in Advanced Mathematics},
  volume={62},
}

\bib*{xref-Stant1990a}{book}{
  title={Invariant Theory and Tableaux},
  editor={Stanton, Dennis},
  publisher={Springer-Verlag},
  series={IMA Volumes in Math. and Its Appls.},
  volume={19},
  address={Berlin and New York},
  date={1990},
}

\bib{Band1981a}{article}{
  label={Band1981a},
  author={Bandelt, Hans-J.},
  title={Tolerance relations on lattices},
  journal={Bull.\ Austral.\ Math.\ Soc.},
  volume={23},
  date={1981},
  pages={367--381},
  review={\Zbl {0449.06005}},
  doi={10.1017/S0004972700007255},
  eprint={https://www.cambridge.org/core/journals/bulletin-of-the-australian-mathematical-society/article/tolerance-relations-on-lattices/B7A754195C89ED13DB0FE16ACF761E80},
  note={\GS {559901885079221523}},
}

\bib{Birk1967a}{book}{
  label={Birk1967a},
  author={Birkhoff, Garrett},
  title={Lattice theory},
  edition={3},
  date={1967},
  publisher={American Mathematical Society},
  address={Providence},
  series={American Mathematical Society Colloquium Publications},
  volume={25},
  review={\Zbl {0153.02501}},
  eprint={https://archive.org/details/latticetheory0000birk},
  note={Original edition 1940. \GS {10180976689018188837}},
}

\bib{DayFrees1990a}{article}{
  label={DayFrees1990a},
  author={Day, Alan},
  author={Freese, Ralph S.},
  title={The Role of Gluing Constructions in Modular Lattice Theory},
  book={ xref = {xref-BogFreesKung1990a}, },
  pages={251--260},
  note={\GS {17122291477764575874}},
}

\bib{DayHerr1988a}{article}{
  label={DayHerr1988a},
  author={Day, Alan},
  author={Herrmann, Christian},
  title={Gluings of modular lattices},
  journal={Order},
  volume={5},
  date={1988},
  pages={85--101},
  review={\Zbl {0669.06007}},
  doi={10.1007/BF00143900},
  eprint={https://link.springer.com/article/10.1007/BF00143900},
  note={\GS {4405123069571633945}},
}

\bib{Fom1994a}{article}{
  label={Fom1994a},
  author={Fomin, Sergey V.},
  title={Duality of Graded Graphs},
  journal={Journal of Algebraic Combinatorics},
  volume={3},
  date={1994},
  pages={357--404},
  review={\Zbl {0810.05005}},
  doi={10.1023/A:1022412010826},
  eprint={https://link.springer.com/content/pdf/10.1023/A:1022412010826.pdf},
  note={\GS {3401296478290474488}},
}

\bib{Fom1995a}{article}{
  label={Fom1995a},
  author={Fomin, Sergey V.},
  title={Schensted Algorithms for Dual Graded Graphs},
  journal={Journal of Algebraic Combinatorics},
  volume={4},
  date={1995},
  pages={5--45},
  review={\Zbl {0817.05077}},
  doi={10.1023/A:1022404807578},
  eprint={https://link.springer.com/content/pdf/10.1023/A:1022404807578.pdf},
  note={\GS {9003315695694762360}},
}

\bib{Haim1991a}{article}{
  label={Haim1991a},
  author={Haiman, Mark D.},
  title={Arguesian lattices which are not type-1},
  journal={Algebra Universalis},
  volume={28},
  date={1991},
  pages={128--137},
  review={\Zbl {0724.06004}},
  doi={10.1007/BF01190416},
  eprint={https://link.springer.com/article/10.1007/BF01190416},
  note={\GS {13577691971007877653}},
}

\bib{Herr1973a}{article}{
  label={Herr1973a},
  author={Herrmann, Christian},
  title={$S$-verklebte Summen von Verb\"anden [$S$-glued sums of lattices]},
  journal={Math.\ Z.},
  volume={130},
  date={1973},
  pages={255--274},
  review={\Zbl {0275.06007}},
  doi={10.1007/BF01246623},
  eprint={https://link.springer.com/article/10.1007/BF01246623},
  note={\GS {5554875835071000456} English translation in \cite {Herr1973a-en}},
}

\bib{Herr1973a-en}{arXiv}{
  label={Herr1973a-en},
  author={Herrmann, Christian},
  title={$S$-glued sums of lattices},
  translator={Worley, Dale R.},
  identifier={2409.10738},
  primaryclass={math.CO},
  doi={10.48550/arXiv.2409.10738},
  eprint={https://arxiv.org/abs/2409.10738},
  note={English translation of \cite {Herr1973a} \GS {11656565639169386626}},
}

\bib{WikiGal}{webpage}{
  label={WikiGal},
  accessdate={2024-04-21},
  url={https://en.wikipedia.org/wiki/Galois_connection},
}

\bib{Wor2024c}{presentation}{
  label={Wor2024c},
  author={Worley, Dale R.},
  title={Extending Birkhoff's representation theorem to modular lattices},
  date={2024-06-03},
  conference={ title = {The Many Combinatorial Legacies of Richard P. Stanley}, eprint = {https://www.math.harvard.edu/event/math-conference-honoring-richard-p-stanley/}, },
  location={Cambridge, Mass., U.S.},
  eprint={https://theworld.com/~worley/Math/representation-modular-lattices.v1.pdf},
  note={video at \url {https://youtu.be/Oa7iXILyN8U?t=1390}},
}

\end{biblist}

% Push the e-mail address down a bit.
\vspace{3em}

\end{document}